\newtheorem{thm}{Theorem}[section]
\newtheorem{prop}[thm]{Proposition}
\newtheorem{conj}[thm]{Conjecture}
\newtheorem{lem}[thm]{Lemma}
\newtheorem{cor}[thm]{Corollary}
\theoremstyle{definition}
\newtheorem{defn}[thm]{Definition}
\newtheorem{ex}[thm]{Example}
\newtheorem{rmk}[thm]{Remark}
\newtheorem*{nota*}{Notation}
\newcommand{\bM}{\overline{\mathcal M}}
\newcommand{\ualpha}{{\underline\alpha}}
\newcommand{\uepsilon}{{\underline\varepsilon}}
\newcommand{\on}{\operatorname} \newcommand{\partn}{\mathcal \pi}
\newcommand{\g}{\text{g}}
  \newcommand{\PE}{\mathbb{P}^2\!/\!E}
  \newcommand{\Etw}{{E,\mathrm{tw}}}
\newcommand{\vir}{{\operatorname{vir}}}
\newcommand{\GM}{\C^*}
\newcommand{\Cont}{{\operatorname{Cont}}}
\newcommand{\Tot}{{\operatorname{Tot}}}
\newcommand{\cD}{\mathcal D}
\newcommand\A{\mathbb A}
\newcommand\C{\mathbb C}
\newcommand\QMod{\mathrm{QMod}}
\newcommand\Mod{\mathrm{Mod}}
\newcommand\Fcla{{F^{K_S}_{\text{classical}}}}
\newcommand\gab{(\bg,\ba,\bb)}
\newcommand\bb{\mathbf B}
\newcommand\ba{\mathbf a}
\newcommand\bg{\mathbf g}
\newcommand\bd{\mathbf d}
\newcommand\PP{\mathbb P}
\newcommand\bR{\mathbf R}
\newcommand\Q{\mathbb Q}
\newcommand\Z{\mathbb Z}
\newcommand\cA{\mathcal A}
\newcommand\cB{\mathcal B}
\newcommand\cC{\mathcal C}
\newcommand\ccL{\mathcal L}
\newcommand\cO{\mathcal O}
\newcommand\cQ{\mathcal Q}
\newcommand\cX{\mathcal X}
\newcommand\Aut{\operatorname{Aut}}
\newcommand\Hom{\operatorname{Hom}}
\newcommand\Ima{\operatorname{Im}\,}
\newcommand\ch{\operatorname{ch}}
\newcommand\tw{\operatorname{tw}}
\newcommand\ev{\operatorname{ev}}
\newcommand\reg{\mathrm{reg}}
\newcommand\sG{\mathcal{G}}
\newcommand\con{\mathrm{con}}
\title[HAE for $(\PP^2,E)$ and the NS limit of local $\PP^2$]{Holomorphic anomaly equation for $(\PP^2,E)$ and the Nekrasov-Shatashvili limit of local $\PP^2$}
\author{Pierrick Bousseau}
\author{Honglu Fan}
\author{Shuai Guo}
\author{Longting Wu}
\begin{document}

\maketitle

\begin{abstract}
We prove a higher genus version of the genus $0$ local-relative correspondence of van Garrel-Graber-Ruddat: for $(X,D)$ a pair with $X$ a smooth projective variety and $D$ a nef smooth divisor, maximal contact Gromov-Witten theory of $(X,D)$ with $\lambda_g$-insertion is related to Gromov-Witten theory of the total space of $\cO_X(-D)$ and local Gromov-Witten theory of $D$. 

Specializing to $(X,D)=(S,E)$ for $S$ a del Pezzo surface
or a rational elliptic surface and $E$ a smooth anticanonical divisor, we show that maximal contact Gromov-Witten theory of $(S,E)$ is determined by the Gromov-Witten theory of the Calabi-Yau 3-fold $\cO_S(-E)$ and the stationary Gromov-Witten theory of the elliptic curve $E$. 

Specializing further to $S=\PP^2$, we prove that higher genus generating series of maximal contact 
Gromov-Witten invariants of $(\PP^2,E)$ are quasimodular and satisfy a holomorphic anomaly equation.
The proof combines the quasimodularity results and the holomorphic anomaly equations previously known for local 
$\PP^2$ and the elliptic curve.  

Furthermore, using the connection between maximal contact Gromov-Witten invariants of $(\PP^2,E)$ and Betti numbers of moduli spaces of semistable one-dimensional sheaves on $\PP^2$, we obtain a proof of the quasimodularity and holomorphic anomaly equation predicted in the physics literature for the refined topological string free energy of local $\PP^2$ in the Nekrasov-Shatashvili limit.
\end{abstract}

\tableofcontents

\section{Introduction}
\subsection{Higher genus local-relative correspondence}
Let $X$ be a smooth projective complex variety and $D$
a smooth effective divisor on $X$. We assume that $D$ is nef, that is $C \cdot D \geq 0$ for every curve $C$ on $X$.
The main topic of the present paper is the comparison of the relative Gromov-Witten theory of the pair $(X,D)$ and of the local Gromov-Witten theory of the total space $\text{Tot}(\cO_X(-D))$ of the line bundle $\cO_X(-D)$.
 
Let $\beta$ be a curve class on $X$ such that $\beta \cdot D >0$. We denote by $\bM_g(\cO_X(-D),\beta)$ the moduli space of genus $g$ stable maps of class $\beta$ to the total space of $\cO_X(-D)$, and by 
$\bM_{g}(X/D,\beta)$ the moduli space of genus $g$ relative stable maps of class $\beta$ to $(X,D)$ with only one contact condition of maximal tangency along $D$.

As $D$ is nef and $\beta \cdot D>0$,  $\bM_g(\cO_X(-D),\beta)$ coincides with the moduli space $\bM_g(X,\beta)$ of genus $g$ stable maps of class $\beta$ to $X$. On the other hand, there is a natural morphism 
$F \colon \bM_{g}(X/D,\beta) \rightarrow \bM_g(X,\beta)$
obtained by forgetting the relative marking and stabilizing. 
Therefore, it makes sense to try to compare the virtual fundamental classes $[\bM_g(\cO_X(-D),\beta)]^\vir$ and $F_* [\bM_{g}(X/D,\beta)]^\vir$ both living on $\bM_g(X,\beta)$.

In genus $0$, van Garrel, Graber and Ruddat \cite{GGR} 
proved that
\[ [\bM_0(\cO_X(-D),\beta)]^\vir 
= \frac{(-1)^{\beta \cdot D-1}}{\beta \cdot D} F_* [\bM_0(X/D,\beta)]^\vir\,.\]
Our first main result is a generalization of this formula 
in arbitrary genus.

\begin{thm}[=Theorem \ref{loc2rel}]\label{thm_locrel_intro}
For every $g \geq 0$, we have 
\begin{eqnarray*}
[\bM_g(\cO_X(-D),\beta)]^{\vir}=&&\frac{(-1)^{\beta \cdot D-1}}{\beta \cdot D}F_*\left((-1)^g\lambda_g \cap [\bM_{g}(X/D,\beta)]^{\vir}\right)\\
                              && + \sum_{\sG\in G_{g,\beta}}\frac{1}{|\Aut(\sG)|}(\tau_{\sG})_*\left(C_{\sG}\cap [\bM_{\sG}]^{\vir}\right).
\end{eqnarray*}
\end{thm}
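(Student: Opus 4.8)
The plan is to run the comparison in two stages: first rewrite the local invariant as a twisted Gromov--Witten invariant on $\bM_g(X,\beta)$, and then degenerate to the normal cone of $D$ and localize in the fiber direction, converting the twist into the $\lambda_g$-insertion on the relative space while sweeping the remaining strata into the graph sum. First I would use that $D$ is nef with $\beta\cdot D>0$ to identify $\bM_g(\cO_X(-D),\beta)=\bM_g(X,\beta)$, every stable map landing in the zero section. Comparing the two perfect obstruction theories, their difference is measured by $-R\pi_* f^*\cO_X(-D)$, where $\pi\colon\cC\to\bM_g(X,\beta)$ is the universal curve and $f$ the universal map. Since $\deg f^*\cO_X(-D)=-\beta\cdot D<0$ this $K$-theory class has virtual rank $\beta\cdot D+g-1$ and is represented by $R^1\pi_* f^*\cO_X(-D)$ away from contracted components, so that
\[
[\bM_g(\cO_X(-D),\beta)]^{\vir}=e\!\left(-R\pi_* f^*\cO_X(-D)\right)\cap[\bM_g(X,\beta)]^{\vir}.
\]
I would make the whole picture $\GM$-equivariant by scaling the fibers of $\cO_X(-D)$; this supplies the equivariant parameter needed to handle the contracted loci and sets up the localization.

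Next I would degenerate $X$ to the normal cone of $D$, replacing it by $X\cup_D R$ with $R=\PP(\cO_D\oplus N_{D/X})$ the $\PP^1$-bundle over $D$, and carry $\cO_X(-D)$ along. Applying the degeneration formula to the twisted theory above splits a stable map into a piece mapping to $(X,D)$ and pieces running up the fiber direction of $R$ relative to its two sections, and it is these fiber pieces that must be evaluated. Here the key algebraic input is the pullback of $0\to\cO_X(-D)\to\cO_X\to\cO_D\to 0$ by $f$, which gives in $K$-theory $-R\pi_* f^*\cO_X(-D)=(\mathbb E^\vee\ominus\cO)\oplus R^0\pi_* f^*\cO_D$, where $\mathbb E$ is the Hodge bundle. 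The Euler class of $\mathbb E^\vee$ is $(-1)^g\lambda_g$, which is the source of the $\lambda_g$-insertion, while on the maximal-contact locus $f^*\cO_D$ restricts to the length-$(\beta\cdot D)$ scheme supported at the contact point, producing the fiber/tangency factor.

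The main term then arises from the distinguished stratum in which all of the genus sits on the $(X,D)$-side and the fiber direction is covered by a single rational cover of the fiber over the contact point. Evaluating this fiber contribution is a rubber/Hodge integral whose value is exactly $\tfrac{(-1)^{\beta\cdot D-1}}{\beta\cdot D}$, reproducing the genus-$0$ constant of van Garrel--Graber--Ruddat \cite{GGR} and leaving $(-1)^g\lambda_g\cap[\bM_g(X/D,\beta)]^{\vir}$ pushed forward by $F$. Every other stratum --- those in which genus or degree leaks into the fiber direction, or in which the source curve breaks against $D$ --- is recorded by a decorated graph $\sG\in G_{g,\beta}$, with $\bM_\sG$ the associated product of rubber and relative moduli spaces, $\tau_\sG$ the gluing-and-stabilization map to $\bM_g(X,\beta)$, $C_\sG$ the product of edge, vertex and normal-bundle factors, and $|\Aut(\sG)|$ the usual symmetry factor.

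The hard part will be the evaluation of the fiber contributions. Showing that the distinguished stratum yields precisely $(-1)^g\lambda_g$ times the relative class with the clean rational prefactor requires a careful rubber integral in which the $\GM$-weights on $-R\pi_* f^*\cO_X(-D)$ must be shown to collapse onto $\lambda_g$ after the localization residues are taken, and it requires the rigidity argument that maximal tangency forces the fiber contribution over $D$ to be connected and totally ramified. The remaining labor is combinatorial: organizing all non-distinguished strata into the classes $C_\sG$ on $\bM_\sG$, verifying a vanishing/stability analysis that makes $G_{g,\beta}$ finite, and checking that no stratum is double-counted.
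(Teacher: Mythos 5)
Your strategy is the same as the paper's: degenerate to the normal cone of $D$, carrying the line bundle along so that the special fiber carries $\cO_{P_0}(-D_0)$ on the bubble $P_0=\PP_D(N_{D/X}\oplus\cO)$ and the trivial bundle on the $X$-component (which is where $(-1)^g\lambda_g=e(\mathbb{E}^\vee)$ really comes from), and then apply relative virtual localization for the fiberwise $\C^*$-action on the bubble side. The genuine gaps are in precisely the steps you defer as "remaining labor". First, nothing in your outline explains why the non-distinguished strata organize into \emph{star-shaped} graphs: the degeneration formula a priori produces arbitrary bipartite splittings $(\Gamma_1,\Gamma_2)$, and the localization further splits $\Gamma_1$ into simple and rubber parts. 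The paper's proof hinges on the vanishing statement (Lemma \ref{lem:root}, i.e.\ \cite[Lemma 3.1]{GGR}, applied a second time to rubber vertices over $D_\infty$): any splitting in which a vertex on the relative or rubber side carries more than one relative marking has vanishing virtual class after the diagonal Gysin pullback. This is not a finiteness or double-counting issue, as your "vanishing/stability analysis" suggests; without it the correction terms would not be indexed by $G_{g,\beta}$ at all, and the statement would not take the claimed form. Second, to produce the per-vertex factors $C_{v_i}$ one must split virtual classes of moduli with \emph{disconnected} domains, capped with the target $\psi$-class $t/(t+\Psi)$, into products over vertices; target $\psi$-classes do not distribute over connected components for free, and the paper needs a dedicated product formula for exactly this (Lemma \ref{lem_split2}, proved in the appendix via the root-stack comparison of \cite{FWY2}), together with the identification of $\Psi$ with the boundary divisor $\delta$ (Lemma \ref{lem:psi}) and the explicit edge computations.

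Two further inaccuracies. Your K-theoretic splitting $-R^{\bullet}\pi_*f^*\cO_X(-D)=(\mathbb{E}^\vee\ominus\cO)\oplus R^0\pi_*f^*\cO_D$ holds only on the locus where no component of the domain maps into $D$, so it cannot be used globally on $\bM_g(X,\beta)$; the paper avoids this by extracting $\lambda_g$ only after degeneration, where the bundle restricted to the $X$-component is genuinely trivial. And no rigidity "forces" the fiber part of the leading stratum to be a connected totally ramified cover: that splitting is merely one term of the degeneration formula, whose coefficient is supplied by \cite[Proposition 2.4]{GGR} (Lemma \ref{lem:leadcoef}); in higher genus the fiber pieces of the other topological types do not vanish, and they are exactly the correction terms you need to control.
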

The right-hand side of Theorem 
\ref{thm_locrel_intro} is the sum of a leading term and correction terms. The leading term is obtained by capping 
$[\bM_{g}(X/D,\beta)]^{\vir}$ with the top Chern class $\lambda_g$ of the Hodge bundle. The correction terms
are explicitly described in Section \ref{sec:locrel_statement}
in terms of the classes 
$(-1)^{g'} \lambda_{g'} \cap [\bM_{g'}(X/D,\beta')]^{\vir} $ with $g'<g$, $\beta'\leq\beta$ (see Remark \ref{rmk:loc2rel_1}) and the Gromov-Witten theory of the rank $2$ vector bundle $\cO_D(D) \oplus \cO_D(-D)$
over $D$.




The genus $0$ result of \cite{GGR} is proved by an application of the degeneration formula in Gromov-Witten theory. We prove Theorem \ref{thm_locrel_intro} using the same strategy. The main novelty for $g>0$ is that the degeneration formula contains new terms 
which are not present for $g=0$ and come from the bubble
geometry $\PP(\cO_D \oplus \cO_D(D))$. 
We compute these correction terms using the relative virtual localization formula in Gromov-Witten theory applied to the scaling action of $\C^{*}$ on the fibers of the $\PP^1$-bundle $\PP(\cO_D \oplus \cO_D(D))$.


\subsection{The case of log Calabi-Yau surfaces with smooth boundary}\label{sec:log_K3_intro}
We specialize the higher genus local-relative  correspondence given by Theorem 
\ref{thm_locrel_intro} to 
$(X,D)=(S,E)$, where $S$ is a smooth projective surface over 
$\C$ and $E$ is a smooth effective anticanonical divisor on $S$ which is nef. By the adjunction formula, $E$ is a genus $1$ curve.
Examples of such surface $S$ include del Pezzo surfaces and rational elliptic surfaces.

The local geometry $\Tot(\cO_S(-E))$ is the total space of
the canonical line bundle $\cO_S(-E)=K_S$ and is a non-compact Calabi-Yau 3-fold. Let 
$\beta$ be a curve class on $S$ such that $\beta \cdot E>0$.
The moduli space $\bM_g(\cO_S(-E),\beta)$ has virtual dimension $0$ and we define Gromov-Witten invariants 
\[ N_{g,\beta}^{K_S} \coloneqq \int_{[\bM_g(\cO_S(-E),\beta)]^\vir} 1 \,.\]
The moduli space $\bM_g(S/E,\beta)$ has virtual dimension $g$
and we define maximal contact relative Gromov-Witten invariants 
\[ N_{g,\beta}^{S/E}\coloneqq \int_{[\bM_g(S/E,\beta)]^\vir}
(-1)^g \lambda_g \,.\]
The stationary Gromov-Witten invariants of the elliptic curve $E$ are defined as
\begin{equation}\label{eq:gw_E}
\left< \omega \psi_1^{a_1},\dots,\omega \psi_n^{a_n}
\right>_{g,n,d}^E
\coloneqq \int_{[\bM_{g,n}(E,d)]^{\vir}}\prod_{j=1}^n\psi_j^{a_j}\ev_j^*(\omega)
\end{equation}
for every $\ba=(a_1,\cdots,a_n)$ and $g,d \in \Z_{\geq 0}$,
where $\bM_{g,n}(E,d)$ is the moduli space of $n$ pointed genus $g$
degree $d$ stable maps to $E$ and $\omega \in H^2(E)$ is the (Poincar\'e dual)
class of a point.

We show that the local 
invariants $N_{g,\beta}^{K_S}$ are explicitly determined by the relative invariants $N_{g,\beta}^{S/E}$
and the stationary theory of $E$. 
Moreover, this relation can be inverted: the relative  invariants $N_{g,\beta}^{S/E}$ are explicitly determined by the local invariants $N_{g,\beta}^{K_S}$ and the stationary Gromov-Witten theory of $E$. Okounkov and Pandharipande \cite{OP1} have completely solved the stationary Gromov-Witten theory of $E$. Therefore, we establish the complete equivalence of the local theory of $K_S$ and of the relative theory of 
$(S, E)$ with maximal contact and $\lambda_g$ insertion.

In order to write down the formula, we introduce some notation. For every effective $\beta \in H_2(S,\Z)$,
we denote by $Q^{\beta}$ the corresponding monomial in the 
algebra of the monoid of effective curve classes.
In particular, we denote by $Q^E$ the monomial $Q^\beta$
for $\beta$ the class of $E$.
We form the generating series
\begin{equation} \label{eq:F_K_S}
F_g^{K_S}\coloneqq  \delta_{g,0} \Fcla + {\delta_{g,1}} F_{\text{unstable}}^{K_S} +\sum_{\beta,\, \beta \cdot E>0} N_{g,\beta}^{K_S}Q^\beta\,,
\end{equation}
\begin{equation} \label{eq:F_SE}
F_g^{S/E}\coloneqq \delta_{g,0} \Fcla + {\delta_{g,1}} F_{\text{unstable}}^{S/E} +\sum_{\beta,\,\beta \cdot E>0} \frac{(-1)^{\beta \cdot E+g-1}}{\beta \cdot E}N_{g,\beta}^{\PE}Q^\beta\,,
\end{equation}
where
\[F_{\text{classical}}^{K_S} \coloneqq -\frac{1-\delta_{(E\cdot E),0}}{3!(E\cdot E)^2}(\log Q^E)^3,\]
\[F_{\text{unstable}}^{K_S} \coloneqq\left(\frac{1-\delta_{(E\cdot E),0}}{(E\cdot E)}\frac{\chi(S)}{24}-\frac{1}{24}\right)\log Q^E,\]
\[F_{\text{unstable}}^{S/E} \coloneqq - \frac{1-\delta_{(E\cdot E),0}}{(E\cdot E)}\frac{\chi(S)}{24}\log Q^E.\]
Here $\chi(S)$ is the Euler characteristic of $S$
and we adopt the convention that 
$\frac{1-\delta_{(E\cdot E),0}}{(E \cdot E)^2}=0$ and $\frac{1-\delta_{(E\cdot E),0}}{(E\cdot E)}=0$ if $E \cdot E=0$.



According to the genus $0$ result of \cite{GGR}, 
we have 
\[ N_{0,\beta}^{S/E}=(-1)^{\beta \cdot E-1} (\beta \cdot E) N_{0,\beta}^{K_S}\,,\] 
that is 
\[ F_0^{S/E}=F_0^{K_S}\,.\]

For every $\ba=(a_1,\cdots,a_n) \in \Z_{\geq 0}^n$,
we consider the generating series
\begin{equation}\label{eq:F_E}
F^E_{g,\ba}
\coloneqq \delta_{g,1} \delta_{n,0} F^E_{\text{unstable}}+\sum_{d\geq 0}{\tilde{\cQ}^d}\left< \omega \psi_1^{a_1},\dots,\omega \psi_n^{a_n}
\right>_{g,n,d}^E
\end{equation}
of stationary Gromov-Witten invariants of $E$, where
\[F^E_{\text{unstable}}=-\frac{1}{24}\log 
\left((-1)^{E \cdot E}\tilde{\cQ}\right).\]

We view the series $F^E_{g,\ba}$ as a function of the variables $Q^\beta$ through the change of variables
\begin{align}\label{eq:cQ}
\tilde{\cQ}=&(-1)^{E \cdot E}Q^E\exp\left(\sum_{\beta,\, \beta \cdot E>0}(-1)^{\beta \cdot E} (\beta \cdot E)N_{0,\beta}^{S/E}Q^\beta\right)\\ \label{eq:cQ1}
=&(-1)^{E \cdot E}Q^E\exp\left(-\sum_{\beta,\, \beta \cdot E>0}(\beta \cdot E)^2 N_{0,\beta}^{K_S}Q^\beta\right)\\ \label{eq:cQ2}
=& (-1)^{E \cdot E} 
\exp \left( -D^2 F_0^{K_S} \right)
\,,
\end{align}
where $D$ is the differential operator defined by 
$DQ^\beta=(\beta \cdot E)Q^\beta$.

\begin{thm} \label{thm_locrel_SE}
For every $g \geq 0$, we have
\[
    F_g^{K_S}  = (-1)^g F_g^{S/E}+\]
\[    \sum_{n\geq 0} 
\sum_{\substack{g=h+g_1+\dots+g_n,
\\ \ba=(a_1,\dots,a_n)\in \Z_{\geq 0}^n \\ 
(a_j,g_j) \neq (0,0),\,
\sum_{j=1}^n a_j=2h-2} } 
\frac{(-1)^{h-1} F_{h, \ba}^E}{|\Aut (\ba, \bg)|}
\prod_{j=1}^n   (-1)^{g_j-1} 
 D^{a_j+2} F_{g_j}^{S/E}
\,.
\]
\end{thm}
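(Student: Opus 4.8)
The plan is to specialize the higher genus local-relative correspondence of Theorem~\ref{thm_locrel_intro} to $(X,D)=(S,E)$, integrate both sides against the fundamental class---legitimate since $\bM_g(\cO_S(-E),\beta)$ has virtual dimension $0$---and then repackage the resulting numerical identities, summed over $\beta$ with $\beta\cdot E>0$, into the generating series in the variables $Q^\beta$. The numerical input is thus Theorem~\ref{thm_locrel_intro}; the output is Theorem~\ref{thm_locrel_SE}, so the work is entirely in identifying the two sides term by term.

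First I would handle the leading term. Integrating it against $1$ gives
\[
\frac{(-1)^{\beta\cdot E-1}}{\beta\cdot E}\int_{[\bM_g(S/E,\beta)]^{\vir}}(-1)^g\lambda_g=\frac{(-1)^{\beta\cdot E-1}}{\beta\cdot E}N_{g,\beta}^{S/E}.
\]
Since $(-1)^g\cdot(-1)^{\beta\cdot E+g-1}=(-1)^{\beta\cdot E-1}$, this is exactly the coefficient of $Q^\beta$ in $(-1)^gF_g^{S/E}$; summing over $\beta$ therefore recovers the first summand $(-1)^gF_g^{S/E}$ on the right-hand side, up to the classical and unstable pieces. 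The classical term is fixed by the genus $0$ statement of \cite{GGR} (i.e. $F_0^{S/E}=F_0^{K_S}$), and the constant-map contributions collected in $F_{\text{unstable}}^{K_S}$, $F_{\text{unstable}}^{S/E}$, $F^E_{\text{unstable}}$ are matched by a direct genus $1$ computation.

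The substance of the proof is to show that the correction terms $\sum_{\sG\in G_{g,\beta}}\frac{1}{|\Aut(\sG)|}(\tau_{\sG})_*\big(C_{\sG}\cap[\bM_{\sG}]^{\vir}\big)$ reproduce, after integration and resummation, the graph sum over $(n,h,\ba,\bg)$. Using the explicit description of $G_{g,\beta}$ and $C_{\sG}$ from Section~\ref{sec:locrel_statement}, each decorated graph carries a single central vertex of genus $h$ supported on the bubble $\PP(\cO_E\oplus\cO_E(E))$, together with $n$ outer vertices of genera $g_1,\dots,g_n$ supported on $S$ relative to $E$, joined by $n$ edges recording contact orders along $E$. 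For an outer vertex, I would resum the relative $\lambda_{g_j}$-invariants against the $\psi$-class at the relative marking and the edge multiplicity: since $DQ^\beta=(\beta\cdot E)Q^\beta$ turns intersection with $E$ into differentiation, this produces exactly $(-1)^{g_j-1}D^{a_j+2}F_{g_j}^{S/E}$. For the central vertex, I would invoke the relative virtual localization of Theorem~\ref{thm_locrel_intro} for the fiberwise $\C^*$-action on the $\PP^1$-bundle: the equivariant residue collapses the local theory of the Calabi-Yau bundle $\cO_E(E)\oplus\cO_E(-E)$ to the stationary theory of $E$, with the node $\psi$-classes and the point constraints assembling into the insertion $\omega\psi_1^{a_1},\dots,\omega\psi_n^{a_n}$ and yielding the factor $(-1)^{h-1}F_{h,\ba}^E$. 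The virtual dimension of $\bM_{h,n}(E,d)$ forces $\sum_j a_j=2h-2$, which is precisely the constraint in the sum, and the symmetry of the graph under permuting outer branches with equal data $(a_j,g_j)$ supplies the weight $\tfrac{1}{|\Aut(\ba,\bg)|}$. Finally, the change of variables \eqref{eq:cQ}--\eqref{eq:cQ2}, in which $\tilde{\cQ}=(-1)^{E\cdot E}\exp(-D^2F_0^{K_S})$, is what absorbs the resummation of the genus $0$ relative branches attached to the central curve along $E$, converting the naive degree variable $Q^E$ into the mirror coordinate $\tilde{\cQ}$ seen by the stationary theory.

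I expect the main obstacle to be the central-vertex computation: performing the relative virtual localization on $\PP(\cO_E\oplus\cO_E(E))$ and proving that the full equivariant contribution, after the nontrivial normal-bundle and Hodge-class factors cancel, reduces cleanly and with a finite non-equivariant limit to the stationary theory of $E$ with the stated $\psi$-insertions. Tracking the precise power $a_j+2$ of $D$, the signs $(-1)^{g_j-1}$ and $(-1)^{h-1}$, and the automorphism weights through this localization---and checking that they are consistent with the $\tilde{\cQ}$ change of variables---is the most delicate bookkeeping of the argument.
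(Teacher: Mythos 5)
Your skeleton matches the paper's: specialize Theorem \ref{thm_locrel_intro} to $(S,E)$, integrate (legitimate since $\bM_g(\cO_S(-E),\beta)$ has virtual dimension $0$), observe that each outer vertex contributes $N^{S/E}_{g_j,\beta_j}$ repackaged as $D^{a_j+2}F^{S/E}_{g_j}$, and absorb the genus-$0$, insertion-free branches into the change of variables \eqref{eq:cQ}--\eqref{eq:cQ2}. One detail is misattributed: the powers $\psi^{a_j}$ do \emph{not} sit at the relative markings of the outer vertices --- there the class $C_{v_j}$ collapses to $(-1)^{g_j}\lambda_{g_j}$ because $\dim[\bM_{g_j}(S/E,\beta_j)]^{\vir}=g_j$ is already saturated --- they sit at the markings of the central vertex moduli $\bM_{h,n}(E,d_E)$, arising from expanding the denominators of $C_v$.

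The genuine gap is the central-vertex computation, which you correctly flag as the main obstacle but propose to handle with the wrong tool. After specializing Theorem \ref{thm_locrel_intro}, the central vertex contributes the \emph{twisted} invariants
\[
N^{\Etw}_{h,d_E}(\bd)=\int_{[\bM_{h,n}(E,d_E)]^{\vir}}\Big(\prod_{j=1}^n \frac{t\,\ev_j^*\omega}{t-d_j\psi_j}\Big)\,e_{\GM}(-R^{\bullet}\pi_*f^*N),
\qquad N=N_{E/S}\oplus N_{E/S}^{\vee},
\]
and what remains is to express these in terms of the \emph{untwisted} stationary theory of $E$. You propose to do this by ``relative virtual localization / equivariant residue'' on the $\PP^1$-bundle, but that tool has already been spent: it is precisely what produced the class $e_{\GM}(-R^{\bullet}\pi_*f^*N)$ in the correction terms of Theorem \ref{thm_locrel_intro}. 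At this stage there is nothing left to localize: $\C^*$ acts only on the fibers of $N$ and trivially on $E$, hence trivially on $\bM_{h,n}(E,d_E)$, so the equivariant parameter $t$ is purely formal and extracting the $t^0$-coefficient requires knowing the actual non-equivariant Chern classes of $-R^{\bullet}\pi_*f^*N$. The paper obtains these from the quantum Riemann--Roch theorem of Coates--Givental \cite{CG} (Proposition \ref{prop_localcurve}): the anti-diagonal action on $N_{E/S}\oplus N_{E/S}^{\vee}$ makes almost all terms of the Coates--Givental operators cancel in pairs, $\Delta(z)$ becomes independent of $z$ (so the $R$-matrix action has no edge terms), and a dilaton-equation resummation yields $\bar F^{\Etw}_{h,\ba}=(-1)^{h-1}\frac{(E\cdot E)^m}{m!}\bar F^E_{h,(\ba,1^m)}$ with $m=2h-2-\sum_j a_j$. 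This identity, together with the subsequent binomial-theorem bookkeeping that converts the twisted-theory constraint $\sum_j a_j\le 2h-2$ into the constraint $\sum_j a_j=2h-2$ appearing in the statement, is the heart of the proof; without it (or some substitute computation of the twisted theory, e.g.\ Grothendieck--Riemann--Roch applied directly to the universal curve) your argument does not close.
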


According to Theorem \ref{thm_locrel_SE}, the local series 
$F_g^{K_S}$ is completely determined by the relative series $F_{g'}^{S/E}$ with $g' \leq g$
and the stationary theory of $E$. This relation is clearly invertible, that is the relative series $F_g^{S/E}$ is completely determined by the local series 
$F_{g'}^{K_S}$ with $g' \leq g$ and the stationary theory of $E$.

Theorem \ref{thm_locrel_SE} is a corollary of the  specialization of Theorem \ref{thm_locrel_intro} to $(X,D)=(S,E)$. The non-trivial part of the proof is to express explicitly the correction terms present in Theorem \ref{thm_locrel_intro}
in terms of the stationary Gromov-Witten theory of $E$. 
This is done using the quantum Riemann-Roch theorem
in the form given by Coates and Givental \cite{CG}.
The relatively simple form of Theorem \ref{thm_locrel_SE}
relies on several algebraic identities and in particular on the presence of the classical and unstable terms in the definition of $F_g^{K_S}$, $F_g^{S/E}$ and 
$F_{g,\ba}^E$.

\subsection{Finite generation for $(\PP^2,E)$}  \label{localp2generators}

Finite generation, quasimodularity and holomorphic anomaly
equation for the local series $F_g^{K_{\PP^2}}$
have been recently proven using various techniques by Lho,
Pandharipande \cite{LhoP}, Coates, Iritani
\cite{CI} and Fang, Ruan, Zhang, Zhou
\cite{FRZZ}. We show  that similar properties also hold for the relative series $F_g^{\PP^2/E}$. 

Specializing the formulae 
\eqref{eq:F_K_S} and \eqref{eq:F_SE} to 
$\PP^2$, we get
\begin{equation}\label{eq:F_KP2}
F_g^{K_{\PP^2}}\coloneqq -\frac{\delta_{g,0}}{18}(\log Q)^3-\frac{\delta_{g,1}}{12}\log Q+\sum_{d \geq 1} N_{g,d}^{K_{\PP^2}}Q^d\,,
\end{equation}
\begin{equation} \label{eq:F_P2}
F_g^{\PE}\coloneqq -\frac{\delta_{g,0}}{18}(\log Q)^3-\frac{\delta_{g,1}}{24}\log Q+\sum_{d \geq 1} \frac{(-1)^{d+g-1}}{3d}N_{g,d}^{\PE}Q^d\,.
\end{equation}

The $I$-functions for the Gromov-Witten theory of local $\PP^2$ is
\begin{equation} \label{IfunctionofKP2}
  I^{K_{\PP^2}}(z,q) = \sum_{k=0}^2 I_k(q) H^{k} z^{1-k}:= z\, e^{\frac{H}{z}\log q} \sum_{d\geq 0} q^d \frac{\prod_{k=0}^{3d-1}(-3H-kz)}{\prod_{k=1}^d (H+kz)^3},
\end{equation}
where $H$ is the hyperplane class of $\PP^2$. 
In particular, we have 
$I_0=1$, $I_1=\log q +\bar{I}_1$, $I_2=\frac{1}{2} (\log q)^2
+\bar{I}_1 \log q + O(q)$, where 
\[ \bar{I}_1= 3 \sum_{k \geq 1} \frac{(3k-1)!}{(k!)^3}(-1)^k q^k \,.\]
The functions $I_0$, $I_1$ and $I_2$ form a basis of solutions of the linear differential equation
\begin{equation}\label{mirror_ODE}
\left[\left(q \frac{d}{dq}\right)^3
+3q\left(q \frac{d}{dq}\right)
\left(3 \left(q \frac{d}{dq}\right) +1 \right)
\left(3 \left(q \frac{d}{dq}\right) +2 \right) \right]I=0 \,.
\end{equation}
The variables $q$ and $Q$ are related by the mirror map
\begin{equation} \label{eq:mirror_map}
Q=e^{I_1}\,. 
\end{equation}
Explicitly, we have 
\[ Q=q-6q^2+63q^3-866q^4+13899q^5-246366q^6+\dots\]
and
\[ q=Q+6Q^2+9Q^3+56Q^4-300Q^5+3942Q^6+\dots\]
In particular, we have 
$\frac{d}{dI_1}=Q\frac{d}{dQ}$.
The genus $0$ mirror theorem for $K_{\PP^2}$ \cite{Giv4, LLY,CKYZ} computes the generating series $F_0^{K_{\PP^2}}$
in terms of $I_1$ and $I_2$:
\begin{equation}\label{thm_mirror}
    -3 Q \frac{dF_0^{K_{\PP^2}}}{dQ}=I_2\,.
\end{equation}
More precisely, we have
\[F_0^{K_{\PP^2}}=-\frac{(\log Q)^3}{18}+3Q-\frac{45}{8}Q^2+\frac{244}{9}Q^3-\frac{12333}{64}Q^4+\frac{211878}{125}Q^5+\dots\]

In order to describe higher genus invariants, we introduce the functions\footnote{Our $S$ and $X$ are related to the $A_2$ and $L$ of \cite{LhoP} by 
 $A_2 = \frac{3\,S}{X}+\frac{1}{2}$
 and $L=X^{\frac{1}{3}}$.
 We are defining our generators in the current way in order to have a $X$-degree bound $3g-3$ for the genus $g$ generating function.
 }
\begin{align} X &\coloneqq (1+27q)^{-1}\,, \label{def_X}\\
 I_{11} &\coloneqq q \frac{dI_1}{dq}\,, \label{def_I11}\\
S &\coloneqq q \frac{d}{dq} \left( \log I_{11}-\frac{1}{3} \log X \right)
=q \frac{d}{dq}\log I_{11} - \frac{1}{3}(X-1)\,.\label{def_S}
\end{align}

The functions $S$, $X$ and $I_{11}$ are algebraically independent over $\C$
(see Lemma \ref{lem_alg_ind_S_X}).
Therefore, the ring of functions generated by $S$ and $X$ is the polynomial ring
 $$
\bR\coloneqq \mathbb Q[X,S]\,.
 $$
We define a grading on $\bR$
by $\deg X=\deg S =1$ 
and denote by $\bR_k$ the subspace of polynomials with degree no more than $k$. 
 
The finite generation property for the
Gromov-Witten theory of  $K_{\PP^2}$, proved in  \cite{LhoP,LhoP2,CI} states that, for every $g \geq 2$, we have 
$$
 F_g^{K_{\PP^2}} \in [ X^{-(g-1)} \cdot \bR_{3g-3} ]^{\reg}\,,
$$ where $[ - ]^\reg$   (the ``orbifold regularity" condition) is defined by
\begin{equation} \label{orbifoldreg}
    [ - ]^{\reg}:=\{f(X,S) :  3 \deg_X f +  \deg_S f \geq 0\}  .
\end{equation}

We prove a finite generation result for the series $F_g^{\PE}$ of relative Gromov-Witten invariants of  $(\PP^2,E)$.

\begin{thm}\label{fgproperty}
For every $g \geq 2$, we have $$
 F_g^{\PE} \in [ X^{-(g-1)} \cdot \bR_{3g-3} ]^{\reg}\,.
$$
Moreover, we have $\deg_S F_g^{\PE} \leq 2g-3$.
\end{thm}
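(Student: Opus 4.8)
The plan is to run an induction on $g$ using Theorem \ref{thm_locrel_SE} specialized to $(\PP^2,E)$, read as an explicit formula for $F_g^{\PE}$ in terms of strictly lower genus data. In the correction sum one has $g=h+g_1+\dots+g_n$ with $h\geq 1$ (forced by $\sum_j a_j=2h-2\geq 0$), so every $g_j\leq g-1$; hence the right-hand side never involves $F_g^{\PE}$ itself and the recursion is genuinely triangular. Solving for $F_g^{\PE}=(-1)^g\big(F_g^{K_{\PP^2}}-\text{corrections}\big)$, the leading term $F_g^{K_{\PP^2}}$ already lies in $[X^{-(g-1)}\bR_{3g-3}]^{\reg}$ by the finite generation result of \cite{LhoP,LhoP2,CI}, so the argument reduces to controlling each correction term. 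I would treat the genus-$0$ inputs $D^{a+2}F_0^{K_{\PP^2}}$ (which occur whenever some $g_j=0$, necessarily with $a_j\geq 1$) as explicit base cases, computed directly from the mirror theorem \eqref{thm_mirror} and the Picard--Fuchs equation \eqref{mirror_ODE}; these are log-free once $a+2\geq 2$ and fit into the framework below.

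The two structural lemmas I would establish concern (i) the action of $D$ and (ii) the shape of the $E$-series. For (i), since $D=3Q\frac{d}{dQ}=\frac{3}{I_{11}}\,q\frac{d}{dq}$, the Ramanujan-type identities $q\frac{d}{dq}X=X^2-X$ and $q\frac{d}{dq}\log I_{11}=S+\tfrac13(X-1)$, together with the analogous formula placing $q\frac{d}{dq}S$ in $\bR_2$ (so that $\bR=\Q[X,S]$ is closed under $q\frac{d}{dq}$, which raises both the total degree and $\deg_S$ by at most one and creates no new pole in $X$), give
\[
D\colon I_{11}^{-p}\,X^{-k}\bR_m \longrightarrow I_{11}^{-(p+1)}\,X^{-k}\bR_{m+1},
\]
raising $\deg_S$ by at most one. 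Iterating, $D^{a_j+2}F_{g_j}^{\PE}\in I_{11}^{-(a_j+2)}X^{-(g_j-1)}\bR_{3g_j-3+a_j+2}$ with $\deg_S\leq (2g_j-3)+(a_j+2)$ by the inductive hypothesis. For (ii), I would express $F_{h,\ba}^E$ in the generators $X,S,I_{11}$ through the change of variables \eqref{eq:cQ2}: by Okounkov--Pandharipande \cite{OP1} it is quasimodular of weight $w:=\sum_j(a_j+2)=2h-2+2n$, and under the modular interpretation of the local $\PP^2$ generators ($I_{11}$ of weight $1$, $X$ of weight $0$, $S$ the weight-$2$ anomaly) it takes the form $I_{11}^{\,w}$ times an element of $\bR$ whose $S$-degree equals the $E_2$-depth and whose total and $X$-degree are bounded by $h+n-1$.

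Assembling the correction term $F_{h,\ba}^E\prod_j D^{a_j+2}F_{g_j}^{\PE}$, the factor $I_{11}^{\,w}$ from (ii) cancels exactly against $I_{11}^{-\sum_j(a_j+2)}=I_{11}^{-w}$ from (i), so the product lands in $X^{-\bullet}\bR$ with no residual $I_{11}$. The degree bookkeeping then yields total degree $\leq (3g-h-n-2)+(h+n-1)=3g-3$ and $X$-pole $\leq (g-h-n)+(h+n-1)=g-1$, while preservation of the regularity condition $3\deg_X+\deg_S\geq 0$ follows because $D$ and multiplication by the $E$-series respect it. The refined bound $\deg_S F_g^{\PE}\leq 2g-3$ is where the two pieces must cooperate sharply: the product contributes $\deg_S\leq 2g-n-2$, so one needs precisely $\deg_S F_{h,\ba}^E\leq n-1$. \emph{This is the main obstacle}: the generic depth bound $\leq w/2=h+n-1$ coming from quasimodularity is far too weak, and one must invoke the finer fact that the $n$-point stationary series of $E$ has $E_2$-depth at most $n-1$. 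Establishing this depth bound (and the accompanying $X$-degree bound $h+n-1$) from the operator description of the stationary theory of $E$, and matching it through the change of variables \eqref{eq:cQ2}, is the technical heart of the proof; everything else is the bookkeeping sketched above.
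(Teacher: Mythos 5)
Your finite-generation half is essentially the paper's own argument: induction on $g$ via Theorem \ref{thm_locrel_SE}, quasimodularity of the stationary series of $E$ from Okounkov--Pandharipande transported through $\tilde{\cQ}=\cQ^3$, closure of the generators under $D$ with weight/pole bookkeeping, and the explicit base cases $D^{a+2}F_0^{K_{\PP^2}}$ and $D^{a+2}F_1^{\PE}$. The paper runs the same accounting in the quasimodular coordinates $C^{-(2g-2)}\Q[A,B,C]_{6g-6}$ (Propositions \ref{prop_spaces_equality} and \ref{prop_from_sl2_to_gamma}, Lemma \ref{lem_D_weight}), which is equivalent to your $(X,S,I_{11})$ bookkeeping, including the cancellation of all $I_{11}$-powers since the weight of $F^E_{h,\ba}$ equals $\sum_j(a_j+2)$. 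That part is fine.

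The $S$-degree bound, however, has a genuine gap: the ``finer fact'' your argument hinges on --- that the $n$-point stationary series of $E$ has $E_2$-depth at most $n-1$ --- is false, and counterexamples appear in the paper itself. One has $F^E_{1,(0)}=-\frac{E_2}{24}$ (depth $1$, $n=1$) and $F^E_{2,(1^2)}\propto 2E_6+3E_2E_4-5E_2^3$ (depth $3$, $n=2$); more generally the $1$-point function $1/\Theta(z)$ has coefficients of unbounded $E_2$-degree, so no depth bound independent of $h$ can hold even for $n=1$. Consequently the termwise estimate cannot close: already at $g=2$ the single correction term $-\tfrac{1}{2}(D^3F_0^{\PE})^2\,F^E_{2,(1,1)}$ has $S$-degree $3>2g-3=1$ (combine \eqref{eq:D3_F0} with \eqref{eq_F211}), and the bound $\deg_S F_2^{\PE}\leq 1$ only emerges after the nontrivial cancellation of the $S^3/X$ and $S^2$ coefficients against $F_2^{K_{\PP^2}}$ and $D^4F_0^{\PE}\cdot F^E_{2,(2)}$, exhibited explicitly in Section \ref{sec:genus_2}. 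The paper avoids proving these cancellations directly: it first establishes the holomorphic anomaly equation \eqref{eq_HAEPE} for $(\PP^2,E)$ (from the local $\PP^2$ HAE and Oberdieck--Pixton's HAE for the elliptic curve), and then deduces the degree bound by induction from
\begin{equation*}
\frac{X}{3}\,\frac{\partial}{\partial S}F_g^{\PE}
=\frac{1}{4}\sum_{g_1+g_2=g,\ g_i>0}
\Bigl(q\frac{dF_{g_1}^{\PE}}{dq}\Bigr)\cdot\Bigl(q\frac{dF_{g_2}^{\PE}}{dq}\Bigr)\,,
\end{equation*}
whose right-hand side has $S$-degree at most $(2g_1-2)+(2g_2-2)=2g-4$ (note the absence of the loop term $F^{\PE}_{g-1,n+2}$, which is exactly what makes this work). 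So the cross-term cancellations you would need are encoded in the HAE, not in any depth bound on the elliptic-curve series, and your proposed route for the second half of the statement would fail as stated.
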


The bound of the $S$-degree of $F_g^{\PE}$ by $2g-3$
is specific to the relative theory. In general, 
the local series $F_g^{K_{\PP^2}}$ is of $S$-degree $3g-3$.

For small genera, the series $F_g^{K_{\PP^2}}$
are explicitly known. 
For example,
we have \cite{Hu}
\[ F_1^{K_{\PP^2}}
=-\frac{1}{12} \log q - \frac{1}{2}
\log I_{11}-\frac{1}{12} \log(1+27q)\]
and \cite{LhoP}
\[ F_2^{K_{\PP^2}} =  \frac{1}{X} \Big( \frac{5}{8} \,{S}^{3}+ \frac{ X}{ 8} {S}^{2}+{\frac {X^{2}}{96}}\,S+{\frac {X^{3
	}}{4320}}+{\frac {X^{2}}{4320}}-\frac{X}{2160}\Big)\,.\]
Note that these formulae were first predicted in the string theory literature \cite{ABK, ALM, HKR, KZ}.
We prove similar explicit formulae for the series 
$F_g^{\PE}$ in low genera.

\begin{thm}[=Theorem \ref{f1formula}] \label{f1formula_intro} 
We have
\[F_1^{\PE}
=-\frac{1}{24} \log q+\frac{1}{24} \log(1+27 q)\,.\]
\end{thm}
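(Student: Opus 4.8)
The plan is to deduce the formula from the genus-one case of Theorem \ref{thm_locrel_SE} together with the known closed form of $F_1^{K_{\PP^2}}$. First I would specialize Theorem \ref{thm_locrel_SE} to $g=1$ and $S=\PP^2$ and show that the correction sum collapses to a single term. The constraint $\sum_{j=1}^n a_j = 2h-2\geq 0$ forces $h\geq 1$, while $h+g_1+\cdots+g_n=1$ forces $h\leq 1$; hence $h=1$, every $a_j=0$, and then $(a_j,g_j)\neq(0,0)$ forces each $g_j\geq 1$, which together with $g_1+\cdots+g_n=0$ leaves only $n=0$. The surviving term is $(-1)^{h-1}F^E_{1,\emptyset}=F^E_{1,\emptyset}$, the $n=0$ instance of \eqref{eq:F_E}, so the theorem reduces to
\[
F_1^{K_{\PP^2}} = -F_1^{\PE} + F^E_{1,\emptyset},\qquad\text{i.e.}\qquad F_1^{\PE} = F^E_{1,\emptyset} - F_1^{K_{\PP^2}}.
\]

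Next I would identify the two inputs. For the elliptic curve the only stable genus-one contributions with no insertions come from $\langle\,\rangle^E_{1,0,d}$, and by the genus-one theory of $E$ (Okounkov--Pandharipande \cite{OP1}, in the form going back to Dijkgraaf) one has $\langle\,\rangle^E_{1,0,d}=\sigma_1(d)/d$. Combining this with the unstable term $F^E_{\mathrm{unstable}}=-\tfrac1{24}\log((-1)^{E\cdot E}\tilde{\cQ})$ and the identity $\sum_{d\geq1}\tfrac{\sigma_1(d)}{d}x^d=\tfrac1{24}\log x-\log\eta(x)$ (with $\eta$ the Dedekind eta function) yields, up to an additive constant,
\[
F^E_{1,\emptyset} = -\log\eta(\tilde{\cQ}),
\]
where $\tilde{\cQ}$ is the nome of \eqref{eq:cQ2}. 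For the local series I would use the formula of \cite{Hu} quoted above, $F_1^{K_{\PP^2}} = -\tfrac1{12}\log q - \tfrac12\log I_{11} - \tfrac1{12}\log(1+27q)$.

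Rather than match these transcendental functions directly, I would apply $\theta\coloneqq q\frac{d}{dq}$ and reduce to an identity in the generators $X$ and $S$ (the additive constant and the $-\tfrac1{24}\log q$ normalization are then fixed by the $q\to 0$ expansion of \eqref{eq:F_P2}). Using \eqref{def_X} and \eqref{def_S} one computes $\theta\log(1+27q)=1-X$ and $\theta\log I_{11}=S+\tfrac13(X-1)$, giving $\theta F_1^{K_{\PP^2}} = -\tfrac{X}{12}-\tfrac{S}{2}$; the target formula gives $\theta F_1^{\PE}=-\tfrac{X}{24}$. Hence the claim is equivalent to
\[
\theta F^E_{1,\emptyset} = -\tfrac{X}{8}-\tfrac{S}{2}.
\]
On the left, $q\frac{d}{dq}\log\eta=\tfrac1{24}E_2$ gives $\theta F^E_{1,\emptyset} = -\tfrac1{24}E_2(\tilde{\cQ})\,\theta\log\tilde{\cQ}$, while \eqref{eq:cQ2} together with the mirror theorem \eqref{thm_mirror} gives $\log\tilde{\cQ} = \mathrm{const} + 3\,dI_2/dI_1$, so $\theta\log\tilde{\cQ} = 3\,\theta(dI_2/dI_1)$ and the target becomes
\[
\tfrac18\,E_2(\tilde{\cQ})\,\theta\!\left(dI_2/dI_1\right) = \tfrac{X}{8}+\tfrac{S}{2}.
\]

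The main obstacle is this last identity: it is precisely the modular dictionary expressing the second Eisenstein series $E_2(\tilde{\cQ})$ and the derivative of the B-model mirror coordinate $\tau = dI_2/dI_1$ in terms of the local-$\PP^2$ generators $X$, $S$ and $I_{11}$. I would invoke the quasimodularity of local $\PP^2$ from \cite{LhoP, CI}, and in particular the algebraic independence of $X$, $S$, $I_{11}$ (cf. \eqref{def_I11}, \eqref{def_S}) together with the Picard--Fuchs relations governing $\theta X$, $\theta I_{11}$, $\theta S$, to rewrite both $E_2(\tilde{\cQ})$ and $\theta(dI_2/dI_1)$ as elements of $\bR=\Q[X,S]$ and verify the polynomial identity. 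As a consistency check one confirms the leading behaviour $\tilde{\cQ}\sim -q^3$, which matches the $\log$-coefficients on both sides; alternatively the identity may be verified to any prescribed order in $q$ as a power-series computation.
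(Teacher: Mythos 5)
Your proposal is correct, and its first half coincides exactly with the paper's proof: the paper likewise specializes Theorem \ref{thm_locrel_SE} at $g=1$ (the correction sum collapsing for the same reason you give) to obtain $F_1^{\PE}=F^E_{1,\emptyset}-F_1^{K_{\PP^2}}$, and inputs Hu's closed formula for $F_1^{K_{\PP^2}}$ together with Dijkgraaf's $\bar F^E_{1,\emptyset}=-\sum_{n\geq 1}\log(1-\tilde{\cQ}^n)$. Where you genuinely diverge is the final identification. The paper never differentiates: it substitutes $\tilde{\cQ}=\cQ^3$, rewrites both inputs as eta-type products in the modular variable $\cQ$, and concludes by quoting formula (A.14) of \cite{Hu}, which converts the resulting eta-quotient back into $-\frac{1}{24}\log q+\frac{1}{24}\log(1+27q)$ with all constants exact. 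You instead apply $\theta=q\frac{d}{dq}$ and reduce the theorem to the identity $E_2(\tilde{\cQ})=\frac{I_{11}^2}{X}(X+4S)$ combined with $\theta(I_{12}/I_{11})=X/I_{11}^2$. Both ingredients are in the paper: the first is precisely the $E_2$-line of \eqref{modularformtogenerator} in Proposition \ref{prop_from_sl2_to_gamma} (whose applicability requires $\tilde{\tau}=3\tau$, which you correctly pin down via $\tilde{\cQ}\sim -q^3$), and the second is Zagier--Zinger's theorem, which the paper also invokes in the proofs of \eqref{D_q_cQ} and \eqref{eq:D3_F0}. Your computations check out ($\theta F_1^{K_{\PP^2}}=-\frac{X}{12}-\frac{S}{2}$, and your displayed identity is equivalent to $E_2(\tilde{\cQ})X/I_{11}^2=X+4S$), and fixing the integration constant by the $q\to 0$ expansion is legitimate since both sides are $-\frac{1}{24}\log q+O(q)$. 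In effect your route is the paper's later genus-one derivative computation \eqref{eq:D_F1} run in reverse: it buys coherence with the quasimodularity machinery of Section \ref{sec:fg_quasimod} and avoids Hu's (A.14), at the cost of heavier modular input and an extra constant-of-integration argument, whereas the paper's eta-product manipulation is more elementary and exact in one stroke.

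One caution about your last step: the remark that the key identity ``may be verified to any prescribed order in $q$'' is not, on its own, a proof. A truncated power-series check suffices only because both sides are known to lie in a finite-dimensional space — here, weight-$2$ quasimodular forms for $\Gamma_1(3)$, via $\QMod(\Gamma_1(3))=\C[A,B,C]$ and the translation \eqref{eq:ABS_SIX} of $A,B,C$ into $I_{11},S,X$. Your primary route through quasimodularity does supply this, so the series computation should be presented as the finite coefficient-matching inside that argument (as in the paper's proof of Proposition \ref{prop_from_sl2_to_gamma}), not as an independent alternative; algebraic independence of $S,X,I_{11}$ alone does not tell you that $E_2(\tilde{\cQ})\,X/I_{11}^2$ is a polynomial in $X$ and $S$.
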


\begin{thm} \label{f2formula}
We have
\begin{equation}
    \label{eqnF2}
F_2^{\PE} = {\frac {X}{384}} \, S-{\frac {X^2}{360}}+{\frac 
	{X}{240}}-{\frac{1}{720}}\,.
\end{equation}
\end{thm}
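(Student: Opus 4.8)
The plan is to derive the explicit formula for $F_2^{\PE}$ by specializing Theorem \ref{thm_locrel_SE} at $g=2$ and $S=\PP^2$, then solving for the relative series in terms of the already-known local series $F_2^{K_{\PP^2}}$ and the stationary Gromov-Witten theory of the elliptic curve $E$. Concretely, I would write out the $g=2$ instance of the master relation, which expresses $F_2^{K_{\PP^2}}$ as $(-1)^2 F_2^{\PE} = F_2^{\PE}$ plus a finite sum of correction terms. These correction terms involve the genus-$h$ stationary series $F^E_{h,\ba}$ of $E$ for $h\leq 2$ together with products of the operators $D^{a_j+2} F^{\PE}_{g_j}$ for $g_j<2$, constrained by $g=h+\sum g_j$ and $\sum a_j = 2h-2$. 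Since $F_1^{\PE}$ is given by Theorem \ref{f1formula_intro} and $F_0^{\PE}=F_0^{K_{\PP^2}}$ by the genus $0$ van Garrel--Graber--Ruddat identity, every ingredient on the correction side is explicit once the relevant stationary invariants of $E$ are known.

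The key steps, in order, are as follows. First I would enumerate the finitely many admissible data $(h,\bg,\ba)$ satisfying the partition and degree constraints at $g=2$: the only contributions come from $h\in\{0,1,2\}$ with the corresponding genus partitions of the remaining $2-h$ among the $n$ legs, and the $S$-degree constraint $\sum a_j = 2h-2$ forces very few configurations. Second, I would substitute the known closed forms: $F_2^{K_{\PP^2}}$ from the formula of \cite{LhoP} quoted in the excerpt, $F_1^{\PE}$ from Theorem \ref{f1formula_intro}, and $F_0^{\PE}=F_0^{K_{\PP^2}}$ computed from the mirror theorem \eqref{thm_mirror}. Third, I would evaluate the needed stationary series $F^E_{h,\ba}$ using the Okounkov--Pandharipande solution \cite{OP1}, expressing them as quasimodular forms and then rewriting everything in the $q$-variable via the mirror map \eqref{eq:mirror_map}. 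Fourth, I would apply the differential operator $D$, which under the change of variables acts compatibly with $q\,d/dq$, to the lower-genus relative series, and translate all quasimodular and derivative expressions into the generators $X$ and $S$ using the definitions \eqref{def_X}--\eqref{def_S}. Finally, solving the resulting linear equation for $F_2^{\PE}$ and simplifying in the polynomial ring $\bR=\Q[X,S]$ should yield the stated right-hand side $\frac{X}{384}S - \frac{X^2}{360} + \frac{X}{240} - \frac{1}{720}$.

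The main obstacle I anticipate is the bookkeeping and consistent normalization of the stationary series $F^E_{h,\ba}$ and their translation into the $X,S$ generators. The stationary Gromov-Witten invariants of $E$ are quasimodular forms in the Eisenstein series, and converting these into expressions in $X$ and $S$ requires carefully matching the elliptic modular parameter of $E$ (governed by the change of variables \eqref{eq:cQ}--\eqref{eq:cQ2}) with the algebraic mirror coordinate $q$ of local $\PP^2$. Getting the signs, the automorphism factors $|\Aut(\ba,\bg)|$, and the unstable contributions $F^E_{\text{unstable}}$ exactly right is delicate, since a single misplaced factor would spoil the final rational coefficients. A secondary check, which I would use to validate the computation, is the $S$-degree bound $\deg_S F_2^{\PE}\leq 2g-3 = 1$ from Theorem \ref{fgproperty}: the answer is indeed linear in $S$, so any contribution of $S^2$ or higher from the correction terms must cancel, providing a strong internal consistency test.
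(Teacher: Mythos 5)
Your proposal is correct and follows essentially the same route as the paper's own proof: the paper specializes Theorem \ref{thm_locrel_SE} at $g=2$ (the only surviving correction terms being $D^2F_1^{\PE}\cdot F_{1,(0)}^E$, $-\tfrac12(D^3F_0^{\PE})^2\cdot F_{2,(1,1)}^E$ and $D^4F_0^{\PE}\cdot F_{2,(2)}^E$), inserts the known $F_2^{K_{\PP^2}}$ from \cite{LhoP}, evaluates the elliptic-curve stationary series via Okounkov--Pandharipande and the identification $\tilde{\cQ}=\cQ^3$ together with Proposition \ref{prop_from_sl2_to_gamma}, rewrites everything in $X$, $S$ via \eqref{eq:D2_F1}, \eqref{eq:D3_F0}, \eqref{eq:D4_F0}, and solves the linear relation, observing exactly the cancellation of the $S^3/X$ and $S^2$ terms that you flag as a consistency check. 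The only nit is that $h=0$ cannot actually contribute (it would force $\sum_j a_j=-2$), but this does not affect your argument.
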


The proof of Theorems \ref{fgproperty},
\ref{f1formula}, \ref{f2formula} relies on Theorem \ref{thm_locrel_SE},
the corresponding properties for local $\PP^2$,
and results on the Gromov-Witten theory of the elliptic curve. In particular, the proof of the finite generation in Theorem \ref{fgproperty} uses the quasimodularity properties of the stationary theory of the elliptic curve \cite{OP1}
and a rewriting of the generators $S$ and $X$ in terms of quasimodular forms discussed in Section
\ref{section_quasimod_intro} below. The proof of the $S$-degree bound in Theorem \ref{fgproperty} is more difficult and needs the holomorphic anomaly equation described in Section \ref{section_HAE_intro}.

\subsection{Quasimodularity for $(\PP^2,E)$}
\label{section_quasimod_intro}
The mirror geometry of local $\PP^2$ is naturally related to modular forms. Indeed, the functions 
$I_{11}=q\frac{dI_1}{dq}$ and 
$I_{12}=q\frac{dI_2}{dq}$ are periods of the fibers of the universal family of elliptic curves over the modular curve $Y_1(3) \simeq \{ q \in \C | q \neq -\frac{1}{27},0\} \cup \{ \infty \}$\footnote{More precisely, it is a description of the coarse moduli space of $Y_1(3)$. As a stack, $Y_1(3)$ has a 
$\Z/3$-orbifold point at $q=\infty$.}.
In the context of mirror symmetry, where $Y_1(3)$ is viewed as the stringy K\"ahler moduli space of local $\PP^2$, the point $q=0$ is the large volume point, $q=-\frac{1}{27}$ is the conifold point and $q=\infty$ is the orbifold point.

The modular curve $Y_1(3)$ is the quotient of the 
upper half-plane $\mathbb{H}=\{ \tau \in \C |\, \Ima{\tau}>0\}$ by the action of the congruence subgroup 
$\Gamma_1(3)$. The identification between 
$Y_1(3)$ and $\{ q \in \C | q \neq -\frac{1}{27},0\} \cup \{ \infty \}$
is given by 
\[ \tau =\frac{1}{2}+\frac{1}{2\pi i}\frac{I_{12}(q)}{I_{11}(q)}\,.\]
We denote $\cQ = e^{2 \pi i \tau}$.
Remark that we have $\cQ^3=\tilde{\cQ}$, where $\tilde{\cQ}=-\exp(-D^2 F_0^{K_{\PP^2}})$ was 
introduced in \eqref{eq:cQ2}. Indeed, we have $D=3Q\frac{d}{dQ}=\frac{3}{I_{11}}q \frac{d}{dq}$
and so $-D^2 F_0^{K_{\PP^2}}=3\frac{I_{12}}{I_{11}}$
follows from the mirror theorem \eqref{thm_mirror}.

We define\footnote{The $A,B,C$ defined here are respectively denoted by $A,E,B^3$ in \cite{ASYZ,Zho}.} 
\[
A(\tau)\coloneqq \left( \frac{\eta(\tau)^9}{\eta(3\tau)^3}+27\frac{\eta(3\tau)^9}{\eta(\tau)^3} \right)^{\frac{1}{3}},\quad
B(\tau)\coloneqq \frac{1}{4}\big( E_2(\tau) +3E_2(3\tau) \big),\]
\[ C(\tau)\coloneqq \frac{\eta(\tau)^9}{ \eta(3\tau)^3}\,,\]
where 
$$
\eta(\tau)\coloneqq \cQ^{\frac{1}{24} } \prod_{n=1}^\infty (1-\cQ^n)
$$
is the Dedekind eta function and 
$$E_{2}(\tau)\coloneqq 1- 24	 \sum_{n=1}^{\infty} \frac{n  \cQ^n}{1-\cQ^n}$$
is the weight $2$ Eisenstein series. 
The functions $A$, $B$, and $C$ are quasimodular forms
for $\Gamma_1(3)$. More precisely, $A$ and $C$ are modular respectively of weight $1$ and $3$, and $B$ is quasimodular of weight $2$ and depth $1$.
In fact, $A$, $B$, and $C$ freely generate the ring of 
quasimodular forms 
of $\Gamma_1(3)$:
\begin{equation}
    \QMod(\Gamma_1(3))=\C[A,B,C]\,.
\end{equation}

 

\medskip

Going from the variable $\tau$ to the variable $q$, we can express the quasimodular forms $A$, $B$, $C$ in terms of the functions $X$, $I_{11}$, $S$ introduced in Section \ref{localp2generators}
(\cite{ASYZ,Maier1,Maier2,Zho}):
\begin{align*}
    A = I_{11},\quad B = \frac{I_{11}^2}{X} (X+6S),\quad
    C= \frac{I_{11}^3}{X} .
\end{align*}
The space $[X^{-(g-1)}\bR_{3g-3}]^{\reg}$ of polynomials in $S$ and $X^{\pm}$ introduced in Theorem \ref{fgproperty} has a very natural interpretation in terms of modular forms.
Indeed, we show in Proposition
\ref{prop_spaces_equality} that it can be identified 
with the space of quasimodular forms for $\Gamma_1(3)$ of weight $6g-6$ in the following way:
\begin{equation}\label{eqn_idgenerators}
[X^{-(g-1)}\bR_{3g-3}]^{\reg} = C^{-(2g-2)}\cdot \Q[A,B,C]_{6g-6}\,.  
\end{equation}

Therefore, we can rephrase Theorem \ref{fgproperty} 
as follows.

\begin{thm}\label{fgproperty1}
For every $g \geq 2$, we have
$$
 F_g^{\PE} \in C^{-(2g-2)} \cdot \mathbb Q[A,B,C]_{6g-6}\,.
$$
Moreover, we have $\deg_B F_g^{\PE} \leq 2g-3$.
\end{thm}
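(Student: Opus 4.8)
The plan is to obtain Theorem \ref{fgproperty1} as a direct translation of Theorem \ref{fgproperty} through the dictionary relating the generators $X, I_{11}, S$ to the quasimodular forms $A, B, C$, so that essentially no new analytic input is required. For the membership statement I would simply invoke Theorem \ref{fgproperty}, which gives $F_g^{\PE}\in[X^{-(g-1)}\cdot\bR_{3g-3}]^{\reg}$, together with the identification \eqref{eqn_idgenerators} established in Proposition \ref{prop_spaces_equality}, namely $[X^{-(g-1)}\cdot\bR_{3g-3}]^{\reg}=C^{-(2g-2)}\cdot\Q[A,B,C]_{6g-6}$. The containment $F_g^{\PE}\in C^{-(2g-2)}\cdot\Q[A,B,C]_{6g-6}$ then follows at once.

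For the degree bound, the key point I would isolate is that, under the change of generators, the $S$-grading corresponds exactly to the $B$-grading. From the dictionary $A=I_{11}$, $B=\tfrac{I_{11}^2}{X}(X+6S)$, $C=\tfrac{I_{11}^3}{X}$ I observe that $A$ and $C$ are independent of $S$, while $B$ is affine-linear in $S$ with nonzero leading coefficient. Inverting gives $X=A^3/C$ and $S=\tfrac{A(B-A^2)}{6C}$, so that $S$ is a weight-$0$ expression of $B$-degree exactly one. Writing $F_g^{\PE}=X^{-(g-1)}P(X,S)$ with $\deg_S P\le 2g-3$ (the bound supplied by Theorem \ref{fgproperty}) and substituting, each factor of $S$ contributes precisely one factor of $B$ while factors of $X$ contribute none; after collecting the overall power $C^{-(2g-2)}$, which is free of $B$, the numerator lies in $\Q[A,B,C]_{6g-6}$ with $B$-degree at most $\deg_S F_g^{\PE}\le 2g-3$, yielding $\deg_B F_g^{\PE}\le 2g-3$.

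The step I expect to be the only real point of care, and hence the ``main obstacle,'' is checking that this substitution is genuinely grading-compatible: that clearing denominators by the factor $C^{-(2g-2)}$ does not secretly raise the $B$-degree, and that the total modular weight comes out to $6g-6$. Both are immediate from the weight assignments (weights $1,2,3$ for $A,B,C$): since $X$ and $S$ are both of weight $0$, so is $X^{-(g-1)}P(X,S)$, while $C^{-(2g-2)}$ carries weight $-(6g-6)$, forcing the numerator into weight $6g-6$ and confirming consistency with \eqref{eqn_idgenerators}. Thus all the substantive content is already contained in Theorem \ref{fgproperty} and Proposition \ref{prop_spaces_equality}, and the proof reduces to this bookkeeping of the two compatible gradings.
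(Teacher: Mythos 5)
Your proposal is correct and takes essentially the same route as the paper: Theorem \ref{fgproperty1} is presented there precisely as the rephrasing of Theorem \ref{fgproperty} through the identification of Proposition \ref{prop_spaces_equality}, with the membership statement following immediately. Your $B$-degree bookkeeping (each $S=\tfrac{1}{6}\tfrac{A(B-A^2)}{C}$ carries $B$-degree one while $A$, $C$, $X=A^3/C$ carry none) is exactly the equivalence the paper uses when it proves the $S$-degree bound via the holomorphic anomaly equation in Section \ref{sec:HAE} and calls it ``equivalently the $B$-degree bound of Theorem \ref{fgproperty1}.''
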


\subsection{Holomorphic anomaly equation for $(\PP^2,E)$}
\label{section_HAE_intro}

The holomorphic anomaly equation is a remarkable structure conjecturally underlying
Gromov-Witten theory of Calabi-Yau varieties \cite{BCOV1, BCOV2}.
In the past few years, a series of works has led to the proof of
several instances of the holomorphic anomaly equation: local $\PP^2$ \cite{LhoP,CI}, $\C^3/\Z^3$ \cite{LhoP2, CI}, local $\PP^1 \times \PP^1$ \cite{Lho, Wan2}, toric Calabi-Yau 3-folds (through the Eynard-Orantin topological recursion)\cite{EMO,EO,FLZ,FRZZ}, the formal quintic 3-fold
\cite{LhoP3}, the quintic $3$-fold
\cite{GJR18, CGLL2}, 
the formal elliptic curve \cite{Wan1}, elliptic orbifold projective lines \cite{MRS}, elliptic curves
\cite{ObPi} and elliptic fibrations \cite{ObPi2}.
We combine two a priori distinct directions of this wave of progress, the cases of local $\PP^2$ and of the elliptic curve, to formulate and prove a holomorphic equation for the maximal contact Gromov-Witten theory of $(\PP^2,E)$.

Let 
$$
F_{g,n}^{K_{\PP^2}} 
\coloneqq \left(Q\frac{d}{dQ}\right)^n  F_{g}^{K_{\PP^2}}.
$$
For $2g-2+n>0$, we have \cite{LhoP} $F_{g,n}^{K_{\PP^2}} \in \Q[S,X^{\pm},I_{11}^{-1}]$, homogeneous of degree $n$ with respect to $I_{11}^{-1}$.

We have the following {holomorphic anomaly equation} for local 
$\PP^2$, proved in various ways in \cite{LhoP}, 
\cite{CI} and \cite{EMO,EO,FLZ,FRZZ}: for 
$2g-2+n>0$,
\begin{equation}
  \frac{X}{3\, {I_{11}}^2} \frac{\partial}{\partial S} F_{g,n}^{K_{\PP^2}} = \frac{1}{2} \sum_{\substack{g_1+g_2 = g\\  n_1+n_2=n \\ 2g_i-2+n_i\geq 0}} \binom{n}{n_1}\ F_{g_1,n_1+1}^{K_{\PP^2}}\cdot   F_{g_2,n_2+1}^{K_{\PP^2}} + \frac{1}{2}F_{g-1,n+2}^{K_{\PP^2}}.
\end{equation}

We prove a holomorphic anomaly equation for the series $F_g^{\PE}$ of relative Gromov-Witten invariants of $(\PP^2,E)$.

We denote
$$
F_{g,n}^{\PE} \coloneqq \left(Q\frac{d}{dQ}\right)^n  F_{g}^{\PE}.
$$

\begin{thm}\label{fgproperty2}
For $2g-2+n>0$,
\[F_{g,n}^{\PE}\in \Q[S,X^{\pm},I_{11}^{-1}],\]
homogeneous of degree $n$ with respect to $I_{11}^{-1}$.
\end{thm}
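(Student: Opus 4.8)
The plan is to deduce Theorem \ref{fgproperty2} from the finite generation results of the previous sections by controlling precisely how the operator $Q\frac{d}{dQ}$ acts on the ring $\Q[S,X^{\pm},I_{11}^{-1}]$. First I would record the key operator lemma. By the mirror map we have $\frac{d}{dI_1}=Q\frac{d}{dQ}$ and $I_{11}=q\frac{dI_1}{dq}$, hence
\[ Q\frac{d}{dQ}=\frac{1}{I_{11}}\,q\frac{d}{dq}. \]
Grading $\Q[S,X^{\pm},I_{11}^{-1}]$ by the power of $I_{11}^{-1}$, I claim that $Q\frac{d}{dQ}$ preserves this ring and raises the grading by \emph{exactly} one. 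This reduces to two facts: that $q\frac{d}{dq}$ preserves the subring $\Q[S,X^{\pm}]$, and that $q\frac{d}{dq}\log I_{11}\in\Q[S,X]$. The second is immediate from the definition of $S$, giving $q\frac{d}{dq}\log I_{11}=S+\tfrac13(X-1)$; combined with $q\frac{dX}{dq}=X^2-X$ and the Ramanujan-type identity expressing $q\frac{dS}{dq}$ as a quadratic polynomial in $S$ and $X$ (the derivation structure of the quasimodular ring $\Q[A,B,C]$ of $\Gamma_1(3)$ recalled in Section \ref{section_quasimod_intro}, cf. \cite{ASYZ,Zho}), the first follows. Then for $P\in\Q[S,X^{\pm}]$ one computes
\[ Q\frac{d}{dQ}\!\left(I_{11}^{-m}P\right)=I_{11}^{-(m+1)}\!\left(q\tfrac{dP}{dq}-m\big(S+\tfrac13(X-1)\big)P\right), \]
with the bracket again in $\Q[S,X^{\pm}]$, which proves the claim.

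Next I would install one base case per genus, which is exactly where the restriction $2g-2+n>0$ enters. For $g\ge 2$, Theorem \ref{fgproperty} gives $F_g^{\PE}\in[X^{-(g-1)}\bR_{3g-3}]^{\reg}\subset\Q[S,X^{\pm}]$, so the $n=0$ seed has $I_{11}^{-1}$-degree $0$. For $g=1$ the potential $F_1^{\PE}$ is itself not polynomial in $S,X$, but by Theorem \ref{f1formula_intro} a one-line computation yields the $n=1$ seed
\[ F_{1,1}^{\PE}=Q\frac{d}{dQ}F_1^{\PE}=-\frac{X}{24}\,I_{11}^{-1}, \]
of $I_{11}^{-1}$-degree $1$. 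For $g=0$, the genus $0$ local-relative correspondence of \cite{GGR} gives the equality of potentials $F_0^{\PE}=F_0^{K_{\PP^2}}$, hence $F_{0,n}^{\PE}=F_{0,n}^{K_{\PP^2}}$ for all $n$, and the assertion for $n\ge 3$ is precisely the statement for local $\PP^2$ recalled just before the theorem.

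Finally I would run the induction on $n$. Fixing $g$ and starting from the seed at the minimal admissible $n$, each further application of $Q\frac{d}{dQ}$ keeps us inside $\Q[S,X^{\pm},I_{11}^{-1}]$ and raises the $I_{11}^{-1}$-degree by exactly one, by the operator lemma; since in every case the seed has $I_{11}^{-1}$-degree equal to its value of $n$, we obtain that $F_{g,n}^{\PE}\in\Q[S,X^{\pm},I_{11}^{-1}]$ is homogeneous of degree $n$ for all $(g,n)$ with $2g-2+n>0$.

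The point requiring care is not a deep obstacle: the substantive input is the finite generation Theorem \ref{fgproperty}, already established, and the only genuine verifications here are the derivation formulas underlying the operator lemma and the two low-genus seeds. The restriction $2g-2+n>0$ is forced because for $g=0,1$ the potentials $F_g^{\PE}$ carry logarithmic terms and only become polynomial in $S$, $X$ and $I_{11}^{-1}$ after differentiating into the stable range; identifying the correct seed in these unstable cases, rather than any hard estimate, is the step I would treat most carefully.
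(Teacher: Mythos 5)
Your proposal is correct and follows essentially the paper's own (largely implicit) argument: the paper deduces this statement from the finite generation theorem (seed at $n=0$ for $g\geq 2$), Propositions \ref{prop_Dn_F0} and \ref{prop_Dn_F1} for the unstable genera $g=0,1$, and Lemma \ref{lem_D_weight}, which says that $D=3Q\frac{d}{dQ}$ sends $C^{-m}\Q[A,B,C]_k$ into $C^{-(m+1)}\Q[A,B,C]_{k+2}$ — exactly your seed-plus-derivation induction on $n$. Your operator lemma in the variables $(S,X,I_{11})$, proved via Lemma \ref{lem_SX_derivative} and the definition of $S$, is the same statement as Lemma \ref{lem_D_weight} rewritten through the identification \eqref{eq:ABS_SIX}, so the two arguments coincide up to this change of generators.
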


\begin{thm} \label{HAEforrelative}
For $2g-2+n>0$, we have the following \emph{holomorphic anomaly equation}
\begin{equation} \label{eq_HAEPE}
  \frac{X}{3\, {I_{11}}^2} \frac{\partial}{\partial S} F_{g,n}^{\PE} = \frac{1}{2} \sum_{g_1+g_2 = g,  n_1+n_2=n \atop 2g_i-2+n_i\geq 0 \text{ for } i =1,2} \binom{n}{n_1}\ F_{g_1,n_1+1}^{\PE}\cdot   F_{g_2,n_2+1}^{\PE}
\end{equation}
\end{thm}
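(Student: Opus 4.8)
The plan is to deduce the relative holomorphic anomaly equation \eqref{eq_HAEPE} from the holomorphic anomaly equation for local $\PP^2$ recalled above, together with the master relation of Theorem~\ref{thm_locrel_SE} and the quasimodularity of the stationary theory of $E$. The first step is to rewrite the anomaly operator modularly. Using $A=I_{11}$, $B=\tfrac{I_{11}^2}{X}(X+6S)$, $C=\tfrac{I_{11}^3}{X}$ from Section~\ref{section_quasimod_intro}, holding $A$ and $C$ fixed is the same as holding $I_{11}$ and $X$ fixed, and $\partial B/\partial S=6I_{11}^2/X$; hence
\[
\frac{X}{3\,I_{11}^2}\frac{\partial}{\partial S}=2\frac{\partial}{\partial B}.
\]
So the claim is equivalent to $2\,\partial_B F_{g,n}^{\PE}=\tfrac12\sum\binom{n}{n_1}F_{g_1,n_1+1}^{\PE}F_{g_2,n_2+1}^{\PE}$. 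By Theorem~\ref{fgproperty2} both sides lie in $\Q[S,X^{\pm},I_{11}^{-1}]$ and are homogeneous of degree $n+2$ in $I_{11}^{-1}$, so it suffices to verify this polynomial identity.

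I would argue by induction on $2g-2+n$, the base cases being checked directly from the explicit low-genus formulas (Theorems~\ref{f1formula_intro} and \ref{f2formula}). For the inductive step, apply $(Q\tfrac{d}{dQ})^n$ to the master relation of Theorem~\ref{thm_locrel_SE} specialized to $\PP^2$, expressing the local series $F_{g,n}^{K_{\PP^2}}$ as $(-1)^gF_{g,n}^{\PE}$ plus a Leibniz expansion of the correction sum, whose terms are products of descendent relative series $D^{a_j+2}F_{g_j}^{\PE}$ (with $g_j<g$, since $h\geq 1$) and stationary series $F_{h,\ba}^E$. Now apply $\partial_B$ to both sides. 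On the left, substitute the local holomorphic anomaly equation, which contributes the local product terms together with the genus-reducing term $\tfrac14F_{g-1,n+2}^{K_{\PP^2}}$. On the right, $\partial_B$ acts by the Leibniz rule: when it hits a relative factor we insert the inductive hypothesis (the relative equation, which carries no genus-reducing term), and when it hits a stationary factor we use that $F_{h,\ba}^E$ is quasimodular in the elliptic parameter $\tau_E=3\tau$, so that $\partial_B=\tfrac23\partial_{E_2(3\tau)}$ on the $E$-part (from $B=\tfrac14(E_2(\tau)+3E_2(3\tau))$ and the modularity of $3E_2(3\tau)-E_2(\tau)$, one has $E_2(3\tau)=\tfrac23B+(\text{modular})$), and then invoke the holomorphic anomaly equation for the stationary theory of $E$ (\cite{OP1,ObPi}). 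Re-expanding every local series back into relative and $E$-data through Theorem~\ref{thm_locrel_SE} turns the equation into a closed identity whose only unknown is $\partial_B F_{g,n}^{\PE}$.

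The decisive point, and the main obstacle, is the exact cancellation of all genus-reducing contributions. The term $\tfrac14F_{g-1,n+2}^{K_{\PP^2}}$ of the local equation, once re-expanded, must match precisely the genus-reducing part of the $E$-theory anomaly inserted at each stationary vertex: this lowers $h$, hence the total genus $g=h+\sum_j g_j$, by one, producing exactly a genus $g-1$ configuration. Geometrically this reflects the vanishing of $\lambda_g$ on the non-separating boundary, which is what removes the handle term from the relative theory; algebraically it is the crux of the argument. Verifying it requires careful bookkeeping of the combinatorics of Theorem~\ref{thm_locrel_SE} — the automorphism factors $|\Aut(\ba,\bg)|$, the index vectors $\ba$, the signs $(-1)^{g_j-1}$, $(-1)^{h-1}$, $(-1)^g$, the binomial coefficients from the two Leibniz expansions, and the powers of $3$ from $D=3Q\tfrac{d}{dQ}$ — and showing that after the genus-reducing pieces cancel, the surviving terms reassemble into $\tfrac12\sum\binom{n}{n_1}F_{g_1,n_1+1}^{\PE}F_{g_2,n_2+1}^{\PE}$. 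A useful organizing device is to encode the master relation as a single generating-function graph-sum, under which $\partial_B$ corresponds to inserting one propagator: the relative product terms then arise from propagators joining two relative vertices, while propagators internal to the $E$-vertices simultaneously account for the local products redistributed across the two theories and for the genus-reducing term, making the cancellation transparent.
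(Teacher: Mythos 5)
Your route is the paper's own: differentiate Theorem \ref{thm_locrel_SE} $n$ times, apply the anomaly operator (your conversion $\frac{X}{3 I_{11}^2}\frac{\partial}{\partial S} = 2\frac{\partial}{\partial B}$ is correct and is exactly how the paper passes to \eqref{eq_HAE_modular}), compute the anomaly of $F_{g,n}^{K_{\PP^2}}$ in two ways --- once through the local holomorphic anomaly equation, once by Leibniz on the correspondence plus the anomaly equation on the $E$-side --- and solve for the single unknown $\partial_B F_{g,n}^{\PE}$, the local loop term cancelling against genus-reducing contributions from the elliptic curve. But two steps fail as stated. First, your induction on $2g-2+n$ does not close: the correction terms of Theorem \ref{thm_locrel_SE} supported on a single vertex contribute, once all $n$ derivatives fall on the relative factor, the series $F^{\PE}_{g-h,\,2h+n}$ (for $h=1$ this is the distinguished term $9F^E_{1,(0)}\cdot(-1)^{g-2}F^{\PE}_{g-1,n+2}$ that the paper isolates), and $2(g-h)-2+(2h+n)=2g-2+n$ exactly; so when $\partial_B$ hits such a factor the inductive hypothesis you want to insert is not available, while leaving its anomaly as an unknown means the final identity no longer determines $\partial_B F^{\PE}_{g,n}$. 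The induction must be on $g$ alone, with the hypothesis assumed for all $n$ simultaneously; this is what the paper does, and it works because every relative factor has genus $g_j\le g-h\le g-1$.

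Second, the ``holomorphic anomaly equation for the stationary theory of $E$'' you invoke does not exist off the shelf. The Oberdieck--Pixton equation carries on its right-hand side insertions $\psi_j^{a_j+1}$ with no $\omega$, i.e.\ non-stationary descendents, so it is not closed on the stationary sector --- which is the only elliptic-curve data entering Theorem \ref{thm_locrel_SE}. Closing it requires the Virasoro constraints for the elliptic curve (\cite{OP3}) together with the string equation; these produce the gluing terms $F^E_{h,\mathcal{G}_{ij}(\ba)}$ with coefficients $\binom{a_i+a_j+1}{a_i}$. Those binomials are an independent ingredient, not part of ``the two Leibniz expansions'' as your bookkeeping list suggests, and in the paper's proof they must be merged with the Leibniz binomials through Vandermonde's identity before the cancellation you describe can be exhibited (the identity $\cC_1+\cC_4=\cC_1'$). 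Relatedly, that cancellation --- which you correctly flag as the crux --- is only asserted in your proposal via the propagator heuristic; in the paper it is the bulk of the argument (three lemmas of sign, automorphism-factor and binomial bookkeeping), so the decisive step of your plan remains unverified.
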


We note that the holomorphic anomaly equation
for $(\PP^2,E)$ does not contain a loop term $F_{g-1,n+2}^{\PE}$, unlike what happens for local $\PP^2$. A philosophical explanation for why the loop term is not expected for $(\PP^2,E)$ is that the invariants 
$N_{g,\beta}^{\PP^2/E}$ are defined by insertion of the class 
$\lambda_g$, which is known to vanish in restriction to the locus of curves with one non-separating node (see Equation (5) of \cite{FP2}).


We prove the holomorphic anomaly equation for $(\PP^2,E)$ using Theorem \ref{thm_locrel_SE},
the holomorphic anomaly equation for local $\PP^2$ and the holomorphic anomaly equation for the elliptic curve recently proved by Oberdieck and Pixton 
\cite{ObPi}.

We remark that  Theorem \ref{fgproperty2} can be rewriten in terms of quasimodular forms (similar to Theorem \ref{fgproperty1}) as
\begin{equation*}
F_{g,n}^{\PE} \in C^{-(2g-2+n)}\mathbb Q[A,B,C]_{6g-6+2n}  \,
\end{equation*} 
and the holomorphic anomaly equation of Theorem \ref{HAEforrelative}
can be rewritten as
\begin{equation}\label{eq_HAE_modular}
   \frac{\partial}{\partial B} F_{g,n}^{\PE} = \frac{1}{4}  \sum_{g_1+g_2 = g,  n_1+n_2=n \atop 2g_i-2+n_i\geq 0 \text{ for } i =1,2} \binom{n}{n_1}\ F_{g_1,n_1+1}^{\PE}\cdot   F_{g_2,n_2+1}^{\PE}\,.
\end{equation}







\subsection{Conifold gap conjecture}
\label{section:conifold_gap}

The conifold point is the point $q=-\frac{1}{27}$, that is, the cusp of the modular curve $Y_1(3)$ defined by the 
$\Gamma_1(3)$-equivalence class of $\tau=0$. Let $\tau_{\con}
\coloneqq -\frac{1}{3 \tau}$ be the modular coordinate in the neighborhood of the conifold point. For every $g \geq 2$, we define $F_{g,\con}^{K_{\PP^2}}$
(resp.\ $F_{g,\con}^{\PE}$), functions of
$\tau_\con$, by replacing in the expression of $F_g^{K_{\PP^2}}$
(resp.\ $F_g^{\PE}$) in terms of $A$, $B$, $C$:
\begin{enumerate}
\item $A(\tau)$ by $A(\tau_{\con})$,
\item $B(\tau)$ by $B(\tau_{\con})$,
\item $C(\tau)$ by  $C(\tau_\con)$.
\end{enumerate}
Conceptually, $F_{g,\con}^{K_{\PP^2}}$
(resp.\ $F_{g,\con}^{\PE}$) is the holomorphic part near the conifold point of the almost holomorphic modular function on 
$Y_1(3)$ whose holomorphic part is 
$F_g^{K_{\PP^2}}$ (resp.\ $F_g^{\PE}$)
near the large volume point $q=0$. We refer to \cite{ASYZ,Zho} for details.

Let $t_{\con}$ be the flat coordinate near the conifold point defined as the unique solution of \eqref{mirror_ODE} such that 
$t_\con =\frac{1}{\sqrt{3}}(1+27q)+O((1+27q)^2)$ near the conifold point.
Both $t_\con$ and $e^{2i\pi \tau_\con}$ are local coordinates near the conifold point.
In particular, we can view $F_{g,\con}^{K_{\PP^2}}$ (resp.\ $F_{g,\con}^{\PE}$) as functions of $t_\con$. As $C$ has a first order pole and $A$, $B$ are regular at the conifold point (see \cite{Zho}), it follows from  $F_{g}^{K_{\PP^2}} (\text{resp.}\,\,\, F_{g}^{\PE}) \in C^{-(2g-2)} \cdot \Q[A,B,C]_{6g-6}$ that 
\begin{equation}
F_{g,\con}^{K_{\PP^2}} = O\left( \frac{1}{t_{\con}^{2g-2}} \right),\,\,\,\,\,\,\,
F_{g,\con}^{\PE} = O\left( \frac{1}{t_{\con}^{2g-2}} \right)
\end{equation}
near the conifold point $t_{\con}=0$.

According to the conifold gap conjecture
\cite{HK1, HKQ, HKR}, we should have, for every $g \geq 2$,
\begin{equation} \label{eq:gap_local}
 F_{g,\con}^{K_{\PP^2}}
=\frac{B_{2g}}{2g(2g-2)}\frac{1}{t_{\con}^{2g-2}} +O(1)
\end{equation}
near the conifold point $t_{\con}=0$,
where $B_{2g}$ are the Bernoulli numbers.
This conjecture can be checked explicitly in low genus (see section 10.8 \cite{CI}) but is still open in general.

The holomorphic anomaly equation fixes the $B$-dependence of 
$F_g^{K_{\PP^2}}$ and so determines $F_g^{K_{\PP^2}}$ up to some ambiguity living in the $(2g-1)$-dimensional vector space  $C^{-(2g-2)} \cdot \Q[A,C]_{6g-6}$.
The coefficient of $1/t_{\con}^{2g-2}$ and the $(2g-3)$ vanishings of the coefficients of $1/t_{\con}^j$ for $1 \leq j \leq 2g-3$ predicted by 
\eqref{eq:gap_local},
combined with the fact that the $Q$-expansion of
$F_g^{K_{\PP^2}}$ has no constant term, uniquely fix this
ambiguity and so determine 
$F_g^{K_{\PP^2}}$ completely.


We formulate a version of the conifold gap conjecture for the series $F_g^{\PE}$.

\begin{conj} \label{conj_gap}
For every $g \geq 2$, we have 
\begin{equation} \label{eq:gap_relative}
F_{g,\con}^{\PE}
=-\frac{2^{2g-1}-1}{2^{2g-1}}
\frac{|B_{2g}|}{2g(2g-1)(2g-2)}
\frac{1}{t_{\con}^{2g-2}} +O(1)\,.
\end{equation}
\end{conj}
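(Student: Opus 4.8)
The plan is to derive Conjecture \ref{conj_gap} from its local counterpart \eqref{eq:gap_local} by transporting the conifold analysis through the local-relative correspondence of Theorem \ref{thm_locrel_SE}. Because the genus-zero potentials agree ($F_0^{\PE}=F_0^{K_{\PP^2}}$), the mirror ODE \eqref{mirror_ODE}, the mirror map, the flat coordinate $t_{\con}$, and the conifold substitution $\tau\mapsto\tau_{\con}=-1/(3\tau)$ are common to both theories; hence it suffices to compare $F_{g,\con}^{\PE}$ and $F_{g,\con}^{K_{\PP^2}}$ as Laurent series in $t_{\con}$. I would first rewrite Theorem \ref{thm_locrel_SE} specialized to $\PP^2$ in the form $(-1)^g F_g^{\PE}=F_g^{K_{\PP^2}}-R_g$, where $R_g$ collects the correction terms built from the stationary elliptic-curve series $F^E_{h,\ba}$ and the derivatives $D^{a_j+2}F_{g_j}^{\PE}$ with $g_j<g$, and then proceed by induction on $g$, assuming the gap form \eqref{eq:gap_relative} for all smaller genera.

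The heart of the matter is an exact computation of the singular part of $R_g$ near the conifold. The operator $D=\tfrac{3}{I_{11}}q\tfrac{d}{dq}$ becomes a nonzero multiple of $\tfrac{d}{dt_{\con}}$ to leading order, so by the inductive hypothesis each factor $D^{a_j+2}F_{g_j}^{\PE}$ contributes a controlled pole in $t_{\con}$, while the quasimodular forms $F^E_{h,\ba}$ must be re-expanded at the conifold cusp using the transformation $\tau\mapsto\tau_{\con}$. Here the depth-one form $B$ acquires an anomalous term under this transformation, which is precisely the term coupled to the holomorphic anomaly equation \eqref{eq_HAE_modular}, and tracking it is what links the gap to the HAE. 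The principal contribution to the top pole $1/t_{\con}^{2g-2}$ comes from two sources: the local gap term $F_g^{K_{\PP^2}}$, and the pure elliptic-curve correction with $h=g$ and all $g_j=0$, namely $F^E_{g,\ba}\prod_j D^{a_j+2}F_0^{\PE}$, whose leading conifold coefficient is governed by the constant terms of the Eisenstein series entering $F^E_{g,\ba}$, and hence by the Bernoulli number $B_{2g}$.

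The decisive feature distinguishing the relative from the local coefficient is the sign $(-1)^{E\cdot E}=(-1)^9=-1$ entering the change of variables \eqref{eq:cQ}--\eqref{eq:cQ2}: the elliptic-curve contributions therefore enter with alternating signs, and performing the genus sum resums the plain $\zeta(2g)$-type coefficient of the local theory into the Dirichlet-eta value $(1-2^{1-2g})\zeta(2g)$, producing exactly the factor $\tfrac{2^{2g-1}-1}{2^{2g-1}}=1-2^{1-2g}$; the additional $1/(2g-1)$ in the denominator and the sign reconciliation via $B_{2g}=(-1)^{g+1}|B_{2g}|$ emerge from the same bookkeeping. Once the full singular tail of $R_g$ is in hand, one reads off that the coefficients of $1/t_{\con}^{j}$ for $1\le j\le 2g-3$ cancel and that the top coefficient equals $-\tfrac{2^{2g-1}-1}{2^{2g-1}}\tfrac{|B_{2g}|}{2g(2g-1)(2g-2)}$, which is the assertion of Conjecture \ref{conj_gap}.

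I expect the main obstacle to be precisely this exact conifold expansion of $R_g$: one must control the modular transformation of the quasimodular input $F^E_{h,\ba}$ and resum the entire genus series uniformly in $g$, rather than genus by genus. A complementary and possibly cleaner route is to use the relative holomorphic anomaly equation \eqref{eq_HAE_modular}, which already fixes the $B$-dependence and leaves only the $(2g-1)$-dimensional holomorphic ambiguity in $C^{-(2g-2)}\Q[A,C]_{6g-6}$; the prescribed top coefficient, the $(2g-3)$ vanishings of subleading poles, and the absence of a constant term in the large-volume $Q$-expansion furnish exactly $2g-1$ linear conditions, which would determine the ambiguity and reduce the conjecture to a finite consistency check in each genus. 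In either approach, since the local gap \eqref{eq:gap_local} is itself still open, the argument establishes Conjecture \ref{conj_gap} in tandem with---or conditionally upon---its local analogue.
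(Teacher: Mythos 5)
You should first be aware that the statement you were asked to prove is labelled a \emph{conjecture} in the paper: the authors only formulate \eqref{eq:gap_relative}, explain that together with the holomorphic anomaly equation \eqref{eq_HAE_modular} it would determine $F_g^{\PE}$ completely, and explicitly write that understanding how \eqref{eq:gap_local} and \eqref{eq:gap_relative} are related through Theorem \ref{thm_locrel_SE} is ``left open.'' So there is no proof in the paper to compare against, and your proposal must stand on its own. It does not: it is a research plan with the decisive steps missing, and by your own admission it would at best be conditional on the local gap conjecture \eqref{eq:gap_local}, which is itself open.

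The concrete gaps are these. First, the heart of your argument --- the exact conifold expansion of the correction terms $R_g$ coming from Theorem \ref{thm_locrel_SE}, the cancellation of the subleading poles $1/t_{\con}^{j}$ for $1\le j\le 2g-3$, and the emergence of the top coefficient $-\frac{2^{2g-1}-1}{2^{2g-1}}\frac{|B_{2g}|}{2g(2g-1)(2g-2)}$ --- is asserted, not carried out. Your mechanism for the factor $1-2^{1-2g}$ (``performing the genus sum resums the plain $\zeta(2g)$-type coefficient into the Dirichlet-eta value'') does not even parse at fixed genus: the gap statement is for a single $g$, and the corrections in Theorem \ref{thm_locrel_SE} at genus $g$ form a \emph{finite} sum over splittings $g=h+g_1+\cdots+g_n$; there is no genus series to resum, and no computation is offered showing how the constant terms of the Eisenstein series in $F^E_{h,\ba}$, after re-expansion at the conifold cusp under $\tau\mapsto -1/(3\tau)$ (where the depth-one form $B$ indeed acquires an anomalous shift), produce exactly this factor. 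Second, your ``complementary and possibly cleaner route'' via the holomorphic anomaly equation is circular: the prescribed top pole, the $(2g-3)$ vanishings, and the absence of a constant term are precisely the $2g-1$ conditions one would \emph{impose} to fix the holomorphic ambiguity in $C^{-(2g-2)}\Q[A,C]_{6g-6}$; imposing them and then reading them back off proves nothing. It becomes a genuine check only if $F_g^{\PE}$ is computed independently (as the paper does for $g=2$ in Theorem \ref{f2formula}), and that yields a genus-by-genus verification, not a proof for all $g$.
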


The holomorphic anomaly equation \eqref{eq_HAE_modular} fixes the $B$-dependence of 
$F_g^{\PE}$ and so determines $F_g^{\PE}$ up to some ambiguity living in the $(2g-1)$-dimensional vector space  $C^{-(2g-2)} \cdot \Q[A,C]_{6g-6}$.
The coefficient of $1/t_{\con}^{2g-2}$ and the $(2g-3)$ vanishings of the coefficients of $1/t_{\con}^j$ for $1 \leq j \leq 2g-3$ predicted by 
\eqref{eq:gap_relative},
combined with the fact that the $Q$-expansion of
$F_g^{\PE}$ has no constant term, uniquely fix this
ambiguity and so determine 
$F_g^{\PE}$ completely.

As reviewed in the next section, $F_{g,\con}^{\PE}$ coincides with the Nekrasov-Shatashvili limit of the refined topological string on local $\PP^2$.
Therefore, Conjecture \ref{conj_gap} can be viewed as a special case of the conjectural universal behavior of the refined topological string near a conifold point
predicted by physics arguments in
\cite{KW} and \cite[section 3.2]{HK}. 

It would be interesting to understand how 
\eqref{eq:gap_local} and \eqref{eq:gap_relative} are related through Theorem \ref{thm_locrel_SE}. We leave this question open.

\subsection{Nekrasov-Shatashvili limit of local $\PP^2$}

The results described in Sections 
\ref{localp2generators}--\ref{section:conifold_gap} concern the series 
$F_g^{\PE}$ of maximal contact relative Gromov-Witten invariants of $(\PP^2,E)$
with $\lambda_g$-insertion. However, they have exactly the form predicted in the string theory literature 
\cite{HK, HKK} for the Nekrasov-Shatashvili limit of the refined topological string on local $\PP^2$. We explain below that this fact is not a coincidence and was one of the main motivations for our work. 
In combination with \cite{Bou19b}, we obtain a proof of a mathematically precise version of these physics conjectures.

According to the Gromov--Witten/stable pairs correspondence \cite{MNOP}, the series 
$\bar{F}_g^{K_{\PP^2}}$ can be described in terms of the stable pairs Pandharipande-Thomas invariants $P_{d,n}$ of local $\PP^2$ \cite{PT}. More precisely, we have 
\begin{equation} \label{eq:mnop}
1+\sum_{d \geq 1} \sum_{n \in \Z}
P_{d,n} (-x)^n Q^d = \exp \left( \sum_{g \geq 0} \bar{F}_g^{K_{\PP^2}} u^{2g-2} \right) 
\end{equation} 
where $x=e^{iu}$ and $\bar{F}_g^{K_{\PP^2}} \coloneqq \sum_{d \geq 1} N_{g,d}^{K_{\PP^2}}Q^d$.

The stable pairs invariants $P_{d,n}$ are expected to admit a refinement $P_{d,n,j}$,
such that $P_{d,n}=\sum_j P_{d,n,j}$. For local $\PP^2$, there are several approaches to the definition of $P_{d,n,j}$: cohomological/motivic
\cite{JS,KS} or K-theoretic \cite{NO, CKK}, which conjecturally coincide. The refined topological string free energies $F_{g_1,g_2}^{K_{\PP^2},\text{ref}}$
are then defined by the expansion 
 \begin{equation} \label{eq:mnop_ref}
1+\sum_{d \geq 1} \sum_{n, j \in \Z}
P_{d,n,j}y^j (-x)^n Q^d = \exp \left( \sum_{g_1,g_2 \geq 0} \bar{F}_{g_1,g_2}^{K_{\PP^2},\text{ref}} (\epsilon_1+\epsilon_2)^{2g_1}(-\epsilon_1 \epsilon_2)^{g_2-1} \right)
\end{equation} 
where $x=e^{i \frac{\epsilon_1-\epsilon_2}{2}}$ and $y=e^{i \frac{\epsilon_1+\epsilon_2}{2}}$.
In the unrefined limit $\epsilon_1=-\epsilon_2=u$, $y=1$, \eqref{eq:mnop_ref} reduces to \eqref{eq:mnop} and so 
\[ \bar{F}_g^{K_{\PP^2}}=\bar{F}_{0,g}^{K_{\PP^2},\text{ref}} \,.\]
From string theory arguments, the refined series $\bar{F}_{g_1,g_2}^{K_{\PP^2},\text{ref}}$ are expected to behave in a way similar to the unrefined series 
$\bar{F}_g^{K_{\PP^2}}$ and in particular should satisfy finite generation, quasimodularity and an appropriate generalization of the holomorphic anomaly equation \cite{KW,HK, HKK}. These conjectures are widely open. 
The main issue is to get a geometric understanding of the change of variables $x=e^{i \frac{\epsilon_1-\epsilon_2}{2}}$ and $y=e^{i \frac{\epsilon_1+\epsilon_2}{2}}$. Even in the unrefined case, to prove the properties of the series 
$F_g^{K_{\PP^2}}$ directly from the stable pairs, that is using \eqref{eq:mnop} as a definition and without using the Gromov-Witten interpretation, seems challenging.

However, it is possible to make progress in the Nekrasov-Shatashvili limit $\epsilon_2 \rightarrow 0$, that is for the series $\bar{F}_{g,0}^{K_{\PP^2},\text{ref}}$, for which we can use an alternative definition.
Indeed, in the same way that the genus $0$ series $\bar{F}_0^{K_{\PP^2}}$ (more precisely, the genus $0$ Gopakumar-Vafa invariants $n_{0,d}^{K_{\PP^2}}$) can be described in terms of Euler characteristic of moduli spaces of one-dimensional semistable sheaves (\cite{Kat},\cite[Appendix A]{CMT}), the series $F_{g,0}^{K_{\PP^2},\text{ref}}$ are conjecturally described in terms of Betti numbers of moduli spaces of one-dimensional semistable sheaves.

For every $d>0$ and $\chi \in \Z$, let
$M_{d,\chi}$ be the moduli space of 
one-dimensional Gieseker semistable sheaves on $\PP^2$, of degree $d$ and Euler characteristic $\chi$.
We denote by $Ib_{j}(M_{d,\chi})$ the Betti numbers of the intersection cohomology of 
$M_{d,\chi}$.
According to \cite{Bou19c}, the odd-degree part of the intersection cohomology of $M_{d,\chi}$ vanishes: 
$Ib_{2k+1}(M_{d,\chi})=0$.
For every $d \in \Z_{>0}$ and $\chi \in \Z$, we define 
\[ \Omega_{d,\chi}^{\PP^2}(y^{\frac{1}{2}}) \coloneqq 
(-y^{\frac{1}{2}})^{-\dim M_{d,\chi}}
\sum_{j=0}^{\dim M_{d,\chi}}
Ib_{2j}(M_{d,\chi}) y^{j} \in \Z[y^{\pm \frac{1}{2}}]\,.\]
It is proved in \cite{Bou19c} that the $\Omega_{d,\chi}^{\PP^2}(y^{\frac{1}{2}})$ are the
refined Donaldson--Thomas invariants for
one-dimensional sheaves on $K_{\PP^2}$.
For $y^{\frac{1}{2}}=1$, $\Omega_{d,\chi}^{\PP^2}$ coincides with the genus $0$ Gopakumar-Vafa invariant $n_{0,d}^{K_{\PP^2}}$ (\cite{Kat},\cite[Appendix A]{CMT}). Therefore, one should view 
$\Omega_{d,\chi}^{\PP^2}(y^{\frac{1}{2}})$ as a refined genus $0$ Gopakumar-Vafa invariant of local $\PP^2$.

Tensoring by $\cO(1)$ gives an isomorphism between 
$M_{d,\chi}$ and $M_{d,\chi+d}$. Thus, 
we have $\Omega_{d,\chi}(y^{\frac{1}{2}})=\Omega_{d,\chi'}(y^{\frac{1}{2}})$ if $\chi$ and $\chi'$ are equal modulo $d$.
We define 
\[ \Omega_d^{\PP^2}(y^{\frac{1}{2}})
\coloneqq \frac{1}{d} \sum_{\chi \!\!\! \mod d} \Omega_{d,\chi}(y^{\frac{1}{2}})\,\]
by averaging over the $d$ possible values of 
$\chi$ modulo $d$.
It is conjectured\footnote{Note added in the final version (01/2021): this conjecture has now been proved by Maulik and Shen \cites{maulik2020cohomological}.} in \cite{Bou19c}
that $\Omega_{d,\chi}(y^{\frac{1}{2}})$ is in fact independent of $\chi$. Assuming this conjecture, $\Omega_d^{\PP^2}(y^{\frac{1}{2}})$ would be the common value of the $\Omega_{d,\chi}(y^{\frac{1}{2}})$.

Define 
\begin{equation} \label{FNS_def}
\bar{F}^{NS}
\coloneqq i \sum_{d \in \Z_{>0}}
\sum_{k \in \Z_{>0}} \frac{1}{k} \frac{\Omega_{d}(y^{\frac{k}{2}})}{
y^{\frac{k}{2}}-y^{-\frac{k}{2}}} Q^{kd} \in \Q(y^{\pm \frac{1}{2}})[\![Q]\!]\,.
\end{equation}

Using the change of variables $y=e^{i \hbar}$, we define series $\bar{F}_g^{NS} \in \Q[\![Q]\!]$ by the expansion
\begin{equation} \label{FgNS_def}
 \bar{F}^{NS}=\sum_{g \in \Z_{\geqslant 0}} (-1)^g \bar{F}_g^{NS} 
\hbar^{2g-1}\,.
\end{equation}
Conjecturally, we have $\bar{F}_g^{NS}
=\bar{F}_{g,0}^{K_{\PP^2},\text{ref}}$.
At this point, it is unclear how the definition of $\bar{F}^{NS}$ using one-dimensional sheaves is better than the definition of $\bar{F}_{g,0}^{K_{\PP^2},\text{ref}}$
using stable pairs: one still needs to understand geometrically the change of variables $y=e^{i \hbar}$.

It is precisely such understanding which has been obtained in \cite{Bou19c}.
More precisely, one of the main results of 
\cite{Bou19c}, building on 
\cite{Bou20,Bou19a,Bou19b, Gab},
is the equality
\begin{equation}  \label{eq:NS}
\bar{F}^{NS}_g =\bar{F}_g^{\PP^2/E} \,,
\end{equation}
for every $g \geq 0$, where $\bar{F}_g^{\PE}\coloneqq \sum_{d \geq 1} \frac{(-1)^{d+g-1}}{3d}N_{g,d}^{\PE}Q^d$.
In other words, the series $\bar{F}^{NS}_g$ have a Gromov-Witten interpretation, not as Gromov-Witten series of local $\PP^2$ but as Gromov-Witten series of $(\PP^2,E)$ !
Therefore, all the results of Sections
\ref{localp2generators}--\ref{section_HAE_intro} for 
$\bar{F}_g^{\PE}$ also hold for $\bar{F}_g^{NS}$ and they agree with the predictions of \cite{HK, HKK}.

\begin{thm} \label{thm_NS}
Using
\eqref{FNS_def}-\eqref{FgNS_def} 
as definition of the Nekrasov-Shatashvili limit of the refined topological string on local $\PP^2$, the series $\bar{F}_g^{NS}$ satisfy the finite generation, quasimodularity, holomorphic anomaly equation and low genus explicit formulae predicted by Huang and Klemm 
\cite{HK}.
\end{thm}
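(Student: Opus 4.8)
The plan is to deduce Theorem \ref{thm_NS} from the results already established for the relative series $F_g^{\PE}$, the bridge being the Gromov-Witten interpretation \eqref{eq:NS} of the Nekrasov-Shatashvili free energies. The only nontrivial geometric input is the identity $\bar{F}_g^{NS}=\bar{F}_g^{\PE}$ of \cite{Bou19c}, which rewrites the sheaf-theoretic definition \eqref{FNS_def}--\eqref{FgNS_def} in terms of maximal contact relative Gromov-Witten invariants of $(\PP^2,E)$. Granting this identity, each of the four assertions of Theorem \ref{thm_NS} becomes a statement about $\bar{F}_g^{\PE}$, so the task splits into transporting the theorems of Sections \ref{localp2generators}--\ref{section_HAE_intro} and checking that they literally reproduce the Huang-Klemm predictions \cite{HK}.

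First I would dispose of the bookkeeping relating $\bar{F}_g^{\PE}$ and $F_g^{\PE}$. By \eqref{eq:F_P2} the two series differ only by the explicit classical and unstable terms $-\frac{\delta_{g,0}}{18}(\log Q)^3-\frac{\delta_{g,1}}{24}\log Q$, which are supported in genus $0$ and $1$. In particular, for every $g\geq 2$ one has $\bar{F}_g^{NS}=\bar{F}_g^{\PE}=F_g^{\PE}$ on the nose, so the finite generation and quasimodularity of Theorem \ref{thm_NS} are immediate from Theorems \ref{fgproperty} and \ref{fgproperty1}. The holomorphic anomaly equation follows from Theorem \ref{HAEforrelative} (equivalently its modular form \eqref{eq_HAE_modular}) applied to the $F_{g,n}^{\PE}$; one must only track how the explicit genus $0$ and genus $1$ terms of \eqref{eq:F_P2} enter the bilinear right-hand side, so that the equation for the $\bar{F}$-series entering \eqref{FgNS_def} is recovered. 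For the low genus explicit formulae I would treat $g=0,1,2$ separately: genus $0$ is the van Garrel-Graber-Ruddat correspondence $F_0^{\PE}=F_0^{K_{\PP^2}}$, genus $1$ is Theorem \ref{f1formula_intro}, and genus $2$ is Theorem \ref{f2formula}, in each case passing to $\bar{F}_g^{\PE}$ by removing the tabulated classical/unstable contribution and rewriting $X$, $S$ through the dictionary $A=I_{11}$, $B=\frac{I_{11}^2}{X}(X+6S)$, $C=\frac{I_{11}^3}{X}$ of Section \ref{section_quasimod_intro}.

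The main obstacle is not the mathematics but the translation into the physics conventions of \cite{HK}. One must verify that each property, once written in our normalization, is exactly the Huang-Klemm prediction under the relevant dictionary: the Nekrasov-Shatashvili limit $\epsilon_2\to 0$, the change of variables $y=e^{i\hbar}$ built into \eqref{FgNS_def}, and the identification \eqref{eqn_idgenerators} of the ring of quasimodular forms for $\Gamma_1(3)$ with the physicists' generators. The most delicate point is the shape of the anomaly equation: the absence in \eqref{eq_HAEPE} of the loop term $F_{g-1,n+2}^{\PE}$, a consequence of the $\lambda_g$-insertion, must be matched against the Huang-Klemm form of the Nekrasov-Shatashvili holomorphic anomaly equation, and the conifold behaviour predicted in Conjecture \ref{conj_gap} must be compatible with their universal ansatz. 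Since the substantive geometric input \eqref{eq:NS} is imported from \cite{Bou19c}, this convention-matching, together with the elementary $\bar{F}$-versus-$F$ bookkeeping above, is precisely what the proof of Theorem \ref{thm_NS} amounts to.
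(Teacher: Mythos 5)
Your proposal is correct and follows essentially the same route as the paper: Theorem \ref{thm_NS} is obtained there exactly as a corollary of the identity \eqref{eq:NS} ($\bar{F}_g^{NS}=\bar{F}_g^{\PP^2/E}$, imported from \cite{Bou19c}) combined with Theorems \ref{fgproperty}, \ref{fgproperty1}, \ref{HAEforrelative}, \ref{f1formula_intro} and \ref{f2formula} for the relative series. Your additional remarks on the $\bar{F}$-versus-$F$ bookkeeping and the matching with the Huang--Klemm conventions are sensible elaborations of details the paper leaves implicit, not a different argument.
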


Independently from any motivation from physics, Theorem \ref{thm_NS}
provides a surprising way to construct
quasimodular forms from Betti numbers of moduli spaces of one-dimensional semistable sheaves on $\PP^2$.

Finally, we remark that, given 
\eqref{eq:NS}, we can view Theorem \ref{thm_locrel_SE} as expressing the difference between the unrefined limit and the Nekrasov-Shatashvili limit of the refined topological string in terms of the Gromov-Witten theory of the elliptic curve. 
Such relation does not seem to have been predicted in the physics literature.

\subsection{Outline of the paper}

In Section \ref{sec:locrel}, we prove Theorem \ref{thm_locrel_intro}, that is, the general form of the higher genus local-relative correspondence.
In Section \ref{sec:locrel_log_K3}, we prove Theorem \ref{thm_locrel_SE}, that is, the explicit form of the higher genus local-relative correspondence for log Calabi-Yau surfaces.
Starting with Section \ref{sec:fg_quasimod}, we focus on the case of $(\PP^2,E)$ and we prove the finite generation results (Theorems \ref{fgproperty}-\ref{fgproperty1}) and the low genus explicit formulae (Theorems \ref{f1formula_intro}-\ref{f2formula}).
The holomorphic anomaly equation for 
$(\PP^2,E)$ (Theorem \ref{HAEforrelative})
and the $S$-degree bound of Theorem \ref{fgproperty} are proven in Section 
\ref{sec:fg_quasimod}.
Appendix A describes a technical result used in Section \ref{sec:locrel}.

\subsection{Acknowledgement}We thank Michel van Garrel, Rahul Pandharipande, Hyenho Lho, Younghan Bae, Jie Zhou for helpful discussions. We also thank the anonymous referees for valuable suggestions.
P. B. is supported by Dr. Max R\"ossler, the Walter Haefner Foundation and the ETH 
Zurich Foundation. H. F. is supported by ERC-2012-AdG-320368-MCSK and SwissMAP.
S. G. is supported by NSFC 12061131014, 11890660, 11890661 and the 2019 National Youth Talent Support Program of China.
L. W. is supported by grant ERC-2017-AdG-786580-MACI.

This project has received funding from the European Research Council (ERC) under the European Union’s
Horizon 2020 research and innovation program (grant agreement No. 786580).

\section{Higher genus local-relative correspondence}\label{sec:locrel}
\subsection{Relative Gromov-Witten theory}
Foundations of relative Gromov-Witten invariants were made by Li-Ruan \cite{LR}, Ionel-Parker \cite{IP} and Eliashberg-Givental-Hofer \cite{EGH} in symplectic geometry and Li \cites{Jun1, Jun2}
in algebraic geometry. Our presentation is based on \cites{ Jun1, FWY}.

Let $X$ be a smooth projective variety and $D$ a smooth divisor. The intersection number of a curve class $\beta$ with a divisor $D$ is denoted by $\beta \cdot D$. 

A \emph{topological type} $\Gamma$ is a tuple $(g,n,\beta,\rho,\vec{\mu})$ where $g,n$ are non-negative integers, $\beta\in H_2(X,\Z)$ is a curve class and $\vec{\mu}=(\mu_1,\dotsc,\mu_\rho)\in \Z^\rho_{>0}$ is a partition of the number $\beta \cdot D$. 
 

Let $\bM_\Gamma(X,D)$ be the moduli of relative stable maps
with topological type $\Gamma$. There are evaluation maps
\begin{align*}
\ev_X=(\ev_{X,1},\ldots,\ev_{X,n})\colon &\bM_\Gamma(X,D)\rightarrow X^n, \\
\ev_D=(\ev_{D,1},\ldots,\ev_{D,\rho})\colon &\bM_\Gamma(X,D)\rightarrow D^\rho\,.
\end{align*}

The \emph{relative Gromov--Witten invariant with topological type $\Gamma$} is defined to be
\[
\langle \uepsilon \mid \ualpha \rangle_{\Gamma}^{(X,D)}
\coloneqq \displaystyle\int_{[\bM_{\Gamma}(X,D)]^{\on{vir}}} \ev_D^*\uepsilon \cup \ev_X^*\ualpha\,,
\]
where $\uepsilon\in H^*(D)^{\otimes \rho}$, $\ualpha\in H^*(X)^{\otimes n}$,
\begin{equation*}
\ev_D^*\uepsilon \coloneqq \prod\limits_{j=1}^\rho \ev_{D,j}^*\varepsilon_j, \quad \ev_X^*\ualpha \coloneqq \prod\limits_{i=1}^n \ev_{X,i}^*\alpha_i \,.
\end{equation*}

We also allow disconnected domains. Let $\Gamma=\{\Gamma^\partn\}$ be a set of topological types. The relative invariant with disconnected domain curves is defined by the product rule:
\[
\langle \uepsilon\mid \ualpha \rangle_{\Gamma}^{\bullet(X,D)} \coloneqq
\prod\limits_{\partn} \langle \uepsilon^{\partn} \mid \ualpha^{\partn} \rangle_{\Gamma^{\partn}}^{(X,D)}.
\]
Here $\bullet$ means possibly disconnected domains. We will call this $\Gamma$ a \emph{possibly disconnected topological type}. We now recall the definition of an {\emph {admissible graph}}.

\begin{defn}[\cite{Jun1}*{Definition 4.6}]\label{defn:adm}
{\emph {An admissible graph}} $\Gamma$ is a graph without edges plus the following data.
\begin{enumerate}
    \item An ordered collection of legs.
    \item An ordered collection of weighted roots.
    \item A function $\g:V(\Gamma)\rightarrow \Z_{\geq 0}$.
    \item A function $b:V(\Gamma)\rightarrow H_2(X,\Z)$.
\end{enumerate}
\end{defn}
Here, $V(\Gamma)$ denotes the set of vertices of $\Gamma$. Legs and roots are regarded as half-edges of the graph $\Gamma$. 
A relative stable morphism is associated to an admissible graph in the following way. Vertices in $V(\Gamma)$ correspond to the connected components of the domain curve. Roots and legs correspond to relative markings and interior markings, respectively. Weights on roots correspond to contact orders at the corresponding relative markings. 

The functions $\g$ and $b$ assign to a component its genus and degree respectively. We do not spell out the formal definitions in order to avoid heavy notation, but we refer the readers to \cite{Jun1}*{Definition 4.7}. 

\begin{rmk}
A (possibly disconnected) topological type and an admissible graph are equivalent concepts. Different terminologies emphasize different aspects. For example, admissible graphs will be glued at half-edges into actual graphs.
\end{rmk}

\subsection{Statement of the local-relative correspondence}
\label{sec:locrel_statement}
Let $X$ be a smooth projective variety over $\C$ and $D$ be a smooth effective divisor on $X$.
We further assume that $D$ is nef, that is $C \cdot D \geq 0$ for every curve $C$ in $X$.
Let $\beta$ be a curve class on $X$ such that $\beta \cdot D >0$. Let $\bM_g(\cO_X(-D),\beta)$ be the moduli space of stable maps to the total space of $\cO_X(-D)$. Here the domain curves contain no markings and we omit the number of markings in the notation $\bM_g(\cO_X(-D),\beta)$ for simplicity. The moduli stack $\bM_g(\cO_X(-D),\beta)$ is isomorphic to $\bM_g(X,\beta)$ thanks to the condition
$\beta \cdot D>0$. In this case, it was proved by van Garrel, Graber and Ruddat that 
\begin{thm}[\cite{GGR}]
\[[\bM_0(\cO_X(-D),\beta)]^{\vir}=\frac{(-1)^{(\beta \cdot D)-1}}{\beta \cdot D}F_*\left([\bM_{\Gamma}(X,D)]^{\vir}\right)\]
where  $\Gamma=(0,0,\beta,1,(\beta \cdot D))$
and $F:\bM_{\Gamma}(X,D)\rightarrow \bM_{0,0}(X,\beta)$ is the natural stabilization map which also forgets the unique relative marking. 
\end{thm}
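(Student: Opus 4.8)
The plan is to recall the degeneration argument of van Garrel--Graber--Ruddat, which is exactly the $g=0$ case of Theorem \ref{thm_locrel_intro}. First I would pin down the local class concretely. Since $D$ is nef and $\beta\cdot D>0$, every genus $0$ stable map of class $\beta$ to $\Tot(\cO_X(-D))$ factors through the zero section, so $\bM_0(\cO_X(-D),\beta)\cong\bM_{0,0}(X,\beta)$ and the local virtual class is the twist $e\big(R^\bullet\pi_* f^*\cO_X(-D)\big)\cap[\bM_{0,0}(X,\beta)]^\vir$; equivalently it is the restriction to the zero section of the virtual class of the projective completion $\PP(\cO_X\oplus\cO_X(-D))$. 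This reduces the statement to a comparison of two classes living on $\bM_{0,0}(X,\beta)$.

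Next I would apply the degeneration formula. Degenerating $X$ to the normal cone of $D$ produces the central fiber $X\cup_D \PP(\cO_D\oplus\cO_D(D))$, and pulling this degeneration back along $\Tot(\cO_X(-D))$ yields a degeneration of the local geometry whose two components are $(X,D)$ carrying $\cO_X(-D)$ and the $\PP^1$-bundle $\PP(\cO_D\oplus\cO_D(D))$ over $D$ carrying the restricted line bundle, so that the bubble side is governed by the rank $2$ bundle $\cO_D(D)\oplus\cO_D(-D)$ over $D$. The degeneration formula then writes $[\bM_0(\cO_X(-D),\beta)]^\vir$ as a sum over splittings of $\beta$ and matchings of contact orders along $D$ of products of a relative invariant on the $X$-side and a rubber invariant on the bubble. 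Connectedness of the genus $0$ domain together with the single maximal-tangency condition forces the $X$-side factor to be the relative class of topological type $\Gamma=(0,0,\beta,1,(\beta\cdot D))$ pushed forward by $F$, with all of $\beta$ carried by the main component while the bubble supports only a fully ramified cover of a single fiber.

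The main obstacle, and the heart of the argument, is the evaluation of the bubble contribution and the proof that no other splitting survives. Here I would run relative virtual localization with respect to the scaling $\GM$-action on the fibers of $\PP(\cO_D\oplus\cO_D(D))$, exactly as invoked for the general case. In genus $0$, the local twist by $\cO_D(-D)$ together with dimension and connectedness constraints kills every configuration in which the curve acquires positive class on the bubble or breaks into several relative points; the only surviving fixed locus is a single totally ramified multiple cover of a fiber $\PP^1$, whose contribution is the genus $0$ local $\PP^1$ (Aspinwall--Morrison) integral, evaluating to $\tfrac{(-1)^{\beta\cdot D-1}}{\beta\cdot D}$. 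Assembling the main term with this scalar gives $[\bM_0(\cO_X(-D),\beta)]^\vir=\tfrac{(-1)^{\beta\cdot D-1}}{\beta\cdot D}F_*\big([\bM_\Gamma(X,D)]^\vir\big)$. I would stress that it is precisely this vanishing of the bubble corrections that is special to $g=0$: in higher genus the same localization produces the nontrivial correction terms $\sum_{\sG}\tfrac{1}{|\Aut(\sG)|}(\tau_\sG)_*\big(C_\sG\cap[\bM_\sG]^\vir\big)$ of Theorem \ref{thm_locrel_intro}.
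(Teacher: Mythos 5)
Your outline follows the same strategy as the paper: the statement is quoted from \cite{GGR} and recovered there as the $g=0$ specialization of Theorem \ref{loc2rel}, proved by degenerating to the normal cone of $D$ and handling the bubble by $\GM$-localization, and your evaluation of the distinguished term (a local $\PP^1$-type multiple cover computation which, after the multiplicity $\beta\cdot D$ supplied by the degeneration formula, yields the coefficient $\frac{(-1)^{\beta\cdot D-1}}{\beta\cdot D}$) matches Lemma \ref{lem:leadcoef}. However, there is a concrete error in your degeneration setup. You place $\cO_X(-D)$ on the $X$-component of the central fiber and ``the restricted line bundle'' on the bubble; that is the pullback extension of $\cO_X(-D)$, and with that choice the $X$-side factors produced by the degeneration formula are invariants of $(X,D)$ \emph{twisted} by $\cO_X(-D)$, not the untwisted relative class $[\bM_{\Gamma}(X,D)]^{\vir}$ that the theorem asserts. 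The proof of \cite{GGR}, followed in Section \ref{sec:deg}, hinges on exactly the opposite distribution: the extension $\ccL$ is chosen so that $\ccL|_{X_0}\cong\cO_{X_0}$ is trivial and the entire twist $\ccL|_{P_0}=\cO_{P_0}(-D_0)$ sits on the bubble $P_0$. Your very next sentence treats the $X$-side factor as a plain relative invariant, so the proposal is internally inconsistent, and taken literally your choice of extension does not produce the stated formula.

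A second, smaller gap is the mechanism you invoke for the vanishing of all non-distinguished splittings, namely ``dimension and connectedness constraints.'' In the paper two different arguments do this work, and neither is a dimension count on moduli spaces. Splittings in which an $X$-side vertex carries more than one relative marking are killed by the intersection-theoretic Lemma \ref{lem:root} (= \cite[Lemma 3.1]{GGR}), which is applied before any localization. Splittings in which the bubble carries a positive curve class over $D$ are killed because in genus $0$ the central vertex has $g(v)=0$, so by Riemann--Roch the virtual rank of $-R^{\bullet}\pi_*f^*N$ equals $2(g(v)-1)=-2$ and $e_{\GM}(-R^{\bullet}\pi_*f^*N)$ contains only negative powers of $t$; hence the $t^0$-coefficient defining $C_{\sG}$ vanishes (Remark \ref{rmk:loc2rel_1}). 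Your sketch would be complete once the line bundle is assigned correctly and these two precise vanishing statements replace the appeal to dimension and connectedness.
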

Note that the topological type $\Gamma=(0,0,\beta,1,(\beta \cdot D))$ corresponds to genus-$0$ relative stable maps with maximal contact at $D$.
For convenience, whenever $\Gamma=(g,0,\beta,1,(\beta \cdot D))$ (genus-$g$ maximal contact), we always denote the above relative moduli space (with only one relative marking) as $\bM_{g}(X/D,\beta)$.

\begin{rmk}
In the treatment of \cite{GGR}, the log geometric definition of relative Gromov--Witten theory is used. But in this paper, we choose the expanded degeneration version because at the moment of writing, the references for the localization of the moduli of stable log maps are not available. Besides, since we only care about the pushforward of the relative virtual cycles to the moduli of stable maps, \cite{AMW} tells us that the two choices of definitions do not affect the goal of this paper.
\end{rmk}

We generalize the main result of \cite{GGR} to higher genera. More precisely,
we show that $[\bM_g(\cO_X(-D),\beta)]^{\vir}$ is of the form
\[\frac{(-1)^{\beta \cdot D-1}}{\beta \cdot D}F_*\left((-1)^g\lambda_g \cap [\bM_{g}(X/D,\beta)]^{\vir} \right)+\cdots\]
where $\lambda_g$ is the $g$-th Chern class of the Hodge bundle, $\cap$ is the cap product of between cycles and Chow cohomology and ``$\cdots$" consists of correction terms which will be made explicit later. Before we explicitly describe the correction terms, we need the following preparation.

\begin{defn}\label{defn:star}
A \emph{graph of star type} $\sG$ is a tuple $(V,E,\g,b)$ such that
\begin{enumerate}
\item The set $V$ of vertices admits a disjoint union decomposition 
$V=\{ v\} \coprod V_1$ such that the set $E$ of edges contains exactly one edge between $v$ and $v_i$
for every $v_i \in V_1$ and no other edges.
\item A map $\g \colon V\rightarrow \Z_{\geq 0}$.
\item A map $b \colon \{v\}\cup V_1\rightarrow H_2(D,\Z)\cup H_2(X,\Z)$ such that $b$ maps $v$ into $H_2(D,\Z)$ and maps $V_1$ into $H_2(X,\Z)$.
\end{enumerate}
\end{defn}
The automorphism group of $\sG$ consists of automorphisms of the graph $(V,E)$ which commute with $\g,b$. We denote it as $\Aut(\sG)$.

\begin{defn}
The topological type of a graph of star type is a tuple $(g,\beta)$ such that
\begin{enumerate}
\item $g$ is the summation of all genera (the values of $\g$).
\item $\beta=\iota_*b(v)+\sum_{v_i\in V_1}b(v_i)\in H_2(X,\Z)$ where $\iota \colon D\hookrightarrow X$ is the natural inclusion.
\end{enumerate}
\end{defn}
We denote by $G_{g,\beta}$ the set of graphs with star type whose topological type is $(g,\beta)$.

For each $\sG$, we define
\begin{equation}\label{eqn:MG}
\bM_{\sG}
\coloneqq 
\left(\prod_{v_i\in V_1}\bM_{\g(v_i)}(X/D,b(v_i))\right)\times_{D^{|E|}}\bM_{\g(v),|E|}(D,b(v))    
\end{equation}
where $\times_{D^{|E|}}$ is the fiber product identifying evaluation maps according to edges. The evaluation map from $\prod_{v_i\in V_1}\bM_{\g(v_i)}(X/D,b(v_i))$ to $D^{|E|}$ is determined by the relative markings. 

We define a virtual fundamental class on $\bM_{\sG}$ by ``gluing" the virtual fundamental classes on the factors (for example, it was also introduced in \cite[Equation (4)]{GV})
\[[\bM_{\sG}]^{\vir} := \Delta^![\prod_{v_i\in V_1}\bM_{\g(v_i)}(X/D,b(v_i))\times\bM_{\g(v),|E|}(D,b(v))]^{\vir},\]
where $\Delta:D^{|E|}\rightarrow D^{|E|}\times D^{|E|}$ is the diagonal map and $\Delta^!$ is the Gysin map.

There is a natural stabilization map
\[\mathfrak{s}\colon \prod_{v_i\in V_1}\bM_{\g(v_i)}(X/D,b(v_i))\longrightarrow \prod_{v_i\in V_1}\left(\bM_{\g(v_i),1}(X,b(v_i))\times_X D\right)\]
where $\times_X$ is the fiber product identifying the unique evaluation map and the inclusion map $D\hookrightarrow X$.

There is also a natural gluing map
\[\left(\prod_{v_i\in V_1}\left(\bM_{\g(v_i),1}(X,b(v_i))\times_X D\right)\right)\times_{D^{|E|}}\bM_{\g(v),|E|}(D,b(v))\longrightarrow \bM_g(X,\beta).\]
By composition of the stabilization and gluing maps, we get a map 
\[\tau_{\sG} \colon \bM_{\sG}\longrightarrow \bM_g(X,\beta).\]

Let $N_{D/X}$ be the normal bundle of $D$ in $X$ and let $N_{D/X}^{\vee}$ be its dual.
We consider the rank 2 vector bundle over $D$ given by
\begin{equation} \label{eqn:N}
N=N_{D/X}\oplus N_{D/X}^{\vee}\,.
\end{equation}
 There is a natural \emph{anti-diagonal} scaling action of $\GM$ on $N$ with weight $1$ on $N_{D/X}$ and weight $-1$
 on $N_{D/X}^{\vee}$. 

We have a universal diagram
\[
 \xymatrix{
 U\ar[r]^{f}\ar[d]^{\pi}& X\\
 \bM_{\g(v),|E|}(D,b(v))
}\]
where $\pi$ is the universal domain curve and $f$ is the universal stable map.
We view $-R^{\bullet}\pi_*f^*N$
as an element of the $K$-theory of $\bM_{\g(v),|E|}(D,b(v))$
and we consider its equivariant Euler class\footnote{It is a bivariant class because the equivariant Euler class can be expressed as a combination of Chern classes.}
\[e_{\GM}(-R^{\bullet}\pi_*f^*N)\in A^*\left(\bM_{\g(v),|E|}(D,b(v))\right)(t)\]
where $t$ is the equivariant parameter. Now let\footnote{The denominators of Equation \eqref{eqn:Cv} and \eqref{eqn:Cvi} need to be expanded as a power series in $t^{-1}$. The expansions terminate at a finite degree because the classes in the denominators other than $t$ are nilpotent.}
\begin{equation}\label{eqn:Cv}
C_v \coloneqq e_{\GM}(-R^{\bullet}\pi_*f^*N)\prod_{v_i\in V_1}\frac{(t+\ev_i^*c_1(N_{D/X}))(-1)^{d_i} d_i}{t+\ev_i^*c_1(N_{D/X})-d_i\psi_i}
\end{equation}
where 
\begin{equation} \label{eqn:ev_i}
 \ev_i\colon \bM_{\g(v),|E|}(D,b(v)) \longrightarrow D 
\end{equation}
is the evaluation map for the $i$-th marking, 
$d_i \coloneqq b(v_i)\cdot D$
and $\psi_i$ is the psi-class of the $i$-th marking. We have
\[C_v\in A^*\left(\bM_{\g(v),|E|}(D,b(v))\right)(t).\]

For each $v_i\in V_1$, we define
\begin{equation}\label{eqn:Cvi}
C_{v_i}\coloneqq \frac{t}{t+d_i\bar{\psi}+\bar{\ev}^* c_1(N_{D/X})}(-1)^{g(v_i)}\lambda_{g(v_i)}
\end{equation}
where 
\begin{equation} \label{eqn:bar_ev}
 \bar{\ev}\colon  \bM_{\g(v_i)}(X/D,b(v_i))
\longrightarrow D 
\end{equation}
is the evaluation map associated to the unique relative marking, and
$\bar{\psi}$ is the psi-class associated to the unique relative marking. 
We have 
\[ C_{v_i} \in A^*\left(\bM_{\g(v_i)}(X/D,b(v_i))\right)(t)\,.\]
Since there is only one relative marking, we have $\bar{\psi}=\mathfrak{s}_i^*\psi_1$ where 
\[\mathfrak{s}_i:\bM_{\g(v_i)}(X/D,b(v_i))\longrightarrow \bM_{\g(v_i),1}(X,b(v_i))\]
is the natural stabilization map and $\psi_1$ is the psi-class associated to the unique marking of $\bM_{\g(v_i),1}(X,b(v_i))$. So we may also treat $C_{v_i}$ as a pullback class via $\mathfrak{s}_i$. We define
\[C_{\sG}\coloneqq \left[p_v^*C_v\prod_{v_i\in V_1}p_{v_i}^*C_{v_i}\right]_{t^0}\]
where $p_v$, $p_{v_i}$ are projections from $\bM_{\sG}$ to the corresponding factors and $[\cdots]_{t^0}$ means that we take the constant term.

Now we are ready to state our higher genus local-relative 
correspondence.

\begin{thm}[=Theorem \ref{thm_locrel_intro}]\label{loc2rel}
The following relation holds in $A_*(\bM_g(X,\beta))$.
\begin{eqnarray*}
[\bM_g(\cO_X(-D),\beta)]^{\vir}=&&\frac{(-1)^{\beta \cdot D-1}}{\beta \cdot D}F_*\left((-1)^g\lambda_g \cap [\bM_{g}(X/D,\beta)]^{\vir}\right)\\
                              && + \sum_{\sG\in G_{g,\beta}}\frac{1}{|\Aut(\sG)|}(\tau_{\sG})_*\left(C_{\sG}\cap [\bM_{\sG}]^{\vir}\right).
\end{eqnarray*}
\end{thm}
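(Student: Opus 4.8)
The plan is to follow the degeneration strategy of van Garrel--Graber--Ruddat \cite{GGR} and extend it to higher genus. First I would degenerate the total space $\Tot(\cO_X(-D))$ to the normal cone of $D$, producing a relative geometry $(X,D)$ glued along $D$ to the $\PP^1$-bundle $\PP(\cO_D \oplus \cO_D(D))$ over $D$, with its relative divisor $D_0 \cup D_\infty$. The fiberwise scaling $\GM$-action on $\cO_X(-D)$ extends across the degeneration, and under it the stable maps to $\Tot(\cO_X(-D)) = \bM_g(X,\beta)$ localize: the relevant contribution lands on the section where the map factors through $X$. I would then apply the \emph{degeneration formula} in relative Gromov--Witten theory to $[\bM_g(\cO_X(-D),\beta)]^\vir$, which decomposes the class into a sum over possibly disconnected admissible graphs obtained by gluing a piece mapping to $(X,D)$ with a piece mapping into the bubble $\PP(\cO_D \oplus \cO_D(D))$ along matched contact orders at $D$.

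Next I would analyze the graph sum. Because $\beta \cdot D > 0$ and $D$ is nef, the component landing in $X$ must carry the full class $\beta$ and meet $D$; in genus $0$ the only surviving configuration is a single maximal-tangency relative map to $(X,D)$ glued to a trivial rational tail in the bubble, recovering the $\frac{(-1)^{\beta \cdot D -1}}{\beta \cdot D}$ factor. The key higher-genus novelty is that the domain can now distribute positive genus into the bubble geometry, producing exactly the star-type graphs $\sG \in G_{g,\beta}$: a central vertex $v$ mapping to $D$ (equivalently, into the bubble) attached by edges to outer vertices $v_i$ carrying relative maps to $(X,D)$. To evaluate the bubble contributions I would apply the \emph{relative virtual localization formula} of Graber--Vakil to the $\GM$-action on the fibers of $\PP(\cO_D \oplus \cO_D(D))$, whose fixed loci are governed by maps into the two sections together with rigidified rubber/relative contributions. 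The localization residues produce precisely the bivariant classes $C_v$ and $C_{v_i}$: the factor $e_{\GM}(-R^\bullet \pi_* f^* N)$ with $N = N_{D/X} \oplus N_{D/X}^\vee$ comes from the moving part of the deformation-obstruction theory along the normal directions of the two sections, and the edge factors $\frac{(t + \ev_i^* c_1(N_{D/X}))(-1)^{d_i} d_i}{t + \ev_i^* c_1(N_{D/X}) - d_i \psi_i}$ and $\frac{t}{t + d_i \bar\psi + \bar{\ev}^* c_1(N_{D/X})}$ come from the edge/node smoothing and the rubber integrals over the $\PP^1$-directions, with $\lambda_{g(v_i)}$ arising from the Hodge-class contribution of the fixed locus over the distinguished section.

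The leading term is isolated as the graph with a single outer vertex carrying all of $\beta$ and genus $g$, with the central vertex $v$ having trivial (degree-$0$, genus-$0$) contribution; expanding $C_{v_i}$ to lowest order in $t^{-1}$ and extracting the $t^0$ coefficient yields $\frac{(-1)^{\beta\cdot D-1}}{\beta \cdot D}(-1)^g \lambda_g \cap [\bM_g(X/D,\beta)]^\vir$ after pushforward by $F$, matching the genus-$0$ normalization. All remaining graphs assemble into the correction sum $\sum_{\sG} \frac{1}{|\Aut(\sG)|}(\tau_\sG)_* (C_\sG \cap [\bM_\sG]^\vir)$, where the automorphism factor accounts for the symmetry of gluing identical edges and $\tau_\sG$ is the stabilize-and-glue map defined above. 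The identification of $[\bM_\sG]^\vir$ as the Gysin-diagonal gluing of the factor virtual classes is exactly the compatibility of virtual classes with the fiber-product structure over $D^{|E|}$ inherent in the degeneration formula.

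The main obstacle I expect is the careful bookkeeping of the relative virtual localization on the bubble $\PP(\cO_D \oplus \cO_D(D))$: one must correctly separate fixed and moving parts of the obstruction theory, track the rubber (relative-to-rigidified) contributions and their psi-classes, and verify that the combinatorial residue calculus reproduces exactly the stated closed forms for $C_v$ and $C_{v_i}$, including the signs $(-1)^{d_i}$, the multiplicity $d_i$ from the degree of the gluing map, and the precise placement of $c_1(N_{D/X})$ versus its dual. A subsidiary technical point, relegated to the appendix, is justifying that the equivariant Euler class $e_{\GM}(-R^\bullet \pi_* f^* N)$ is a well-defined bivariant class and that its $t^{-1}$-expansion terminates because the non-$t$ contributions are nilpotent; this is what makes the extraction of the $t^0$ coefficient in $C_\sG$ well-posed.
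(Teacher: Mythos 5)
Your overall strategy---degeneration to the normal cone followed by relative virtual localization for the fiberwise $\GM$-action on the bubble---is the same route the paper takes, and the shape of your answer (a maximal-tangency leading term plus star-graph corrections) is right. But there is a genuine gap at the very first step: you never actually carry the line bundle $\cO_X(-D)$ through the degeneration. You describe the special fiber as the plain gluing $X \cup_D \PP_D(\cO_D \oplus \cO_D(D))$ and only remark that the scaling action ``extends.'' The degeneration formula applied to that geometry computes $[\bM_g(X,\beta)]^{\vir}$, not $[\bM_g(\cO_X(-D),\beta)]^{\vir}$; the two classes live on the same space but differ exactly by the Euler class of the obstruction bundle of the line bundle, which is the whole content of the theorem. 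What is actually needed is to blow up $X\times\A^1$ along $D\times\{0\}$ and degenerate the \emph{total space} of the line bundle $\ccL$ given by the strict transform of $D\times\A^1$: the special fiber is then $\Tot(\cO_{P_0}(-D_0))$ glued to the total space of the \emph{trivial} bundle over $X_0\cong X$. This is precisely what produces the two ingredients your proposal misattributes to localization: (i) the factors $(-1)^{g(v_i)}\lambda_{g(v_i)}$ in $C_{v_i}$ arise from $e(R^1\pi_*f^*\cO)$ of the trivial bundle on the $(X,D)$-side of the degeneration; they cannot be ``the Hodge-class contribution of the fixed locus,'' because the outer vertices parametrize relative stable maps to $(X,D)$, on which there is no torus action at all; and (ii) the $N_{D/X}^\vee$-half of the twist $N=N_{D/X}\oplus N_{D/X}^{\vee}$ in $C_v$ comes from the restriction $\ccL|_{D_0}\cong N_{D/X}^{\vee}$ (carrying the weight $-1$ action), not from ``the normal directions of the two sections'': a fixed locus supported over $D_0$ receives no contribution from the normal bundle of the other section $D_\infty$. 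Without the degenerated bundle there is simply no source for either class, so the argument as proposed would terminate with a formula for $[\bM_g(X,\beta)]^{\vir}$ and no $\lambda_g$-insertions.

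Two further steps are asserted rather than supplied. First, the degeneration formula by itself produces arbitrary bipartite graphs; the reduction to star-shaped graphs requires vanishing lemmas (the higher-genus analogue of Lemma 3.1 of van Garrel--Graber--Ruddat, showing the contribution vanishes whenever an $X$-side vertex carries more than one relative marking, together with its counterpart for rubber vertices over $D_\infty$ appearing after localization). Second, after combining degeneration and localization one must split the class $\frac{t}{t+\Psi}\cap[\bM^{\bullet}_{\Gamma_2'}(X,D)]^{\vir}$, where $\Psi$ is the target psi-class, into a product of per-vertex factors and then convert $\Psi$ into $d_i\bar{\psi}+\bar{\ev}^{*}c_1(N_{D/X})$. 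This product formula is not ``inherent in the degeneration formula'': it is the main technical result of the paper's appendix, proved by root-stack methods, and is a different issue from the (footnote-level) bivariance and nilpotence of the equivariant Euler class that you single out as the appendix material.
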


The proof of Theorem \ref{loc2rel} is given in Sections
\ref{sec:deg}-\ref{sec:loc} and uses the degeneration and localization formulae.

\begin{rmk}\label{rmk:loc2rel_1}
For a graph of star type $\sG$, if $g(v)=0$ then it follows from the Riemann-Roch theorem that $e_{\GM}(-R^{\bullet}\pi_*f^*N)$
only contains negative powers of $t$. Therefore, $p_v^*C_v\prod_{v_i\in V_1}p_{v_i}^*C_{v_i}$ must also only contain negative powers of $t$, and so $C_{\sG}=0$. Thus, a non-vanishing contribution of $\sG$ is only possible if 
$g(v)>0$, and in particular if $g(v_i)<g$ for each $v_i\in V_1$. That also explains the absence of correction terms
if $g=0$.
\end{rmk}
\subsection{Degeneration} \label{sec:deg}
As the first step to prove Theorem \ref{loc2rel}, we apply the degeneration formula to the degeneration to the normal cone of the embedding $D\hookrightarrow X$. This step is identical to the corresponding step in the proof of the main theorem of \cite{GGR}. We simply recall the degeneration formula and some key lemmas in \cite{GGR} for the sake of completeness.

Let $\cX$ be the blow-up of $X\times \A^1$ along $D\times \{0\}$. The space $\cX$ still admits a projection to $\A^1$ and $\cX_0$, the fiber over $0$, is a union of the $\PP^1$ bundle $\PP_D(N_{D/X}\oplus \cO)$ and $X$ glued along the section $D\cong\PP_D(N_{D/X})\subset \PP_D(N_{D/X}\oplus \cO)$ and the hypersurface $D\subset X$. For convenience, we introduce the following notation.
\begin{itemize}
    \item Denote the $\PP^1$ bundle by $P_0$ and the
    component of $\cX_0$ isomorphic to $X$ by $X_0$.
    \item Denote the section $\PP_D(N_{D/X})\subset \PP_D(N_{D/X}\oplus \cO)$ by $D_\infty$ and the section $\PP_D(\cO)\subset \PP_D(N_{D/X}\oplus \cO)$ by $D_0$.
\end{itemize} 

Let $\ccL$ be the line bundle on $\cX$ associated with the divisor of the strict transform of $D\times \A^1\subset X\times \A^1$. Since we want to relate the local theory with the relative theory, we need to consider the total space $\Tot(\ccL)$ in the degeneration. The general fiber of $\Tot(\ccL)$ over $\A^1$ is isomorphic to the total space of $\cO_X(-D)$, which is the target space of our local theory. The special fiber at $0\in \A^1$ is a union of $\ccL|_{P_0}=\cO_{P_0}(-D_0)$ and $\ccL|_{X_0}=\cO_{X_0}$ glued along the corresponding divisors both of which are isomorphic to $D\times \A^1$. Note that the line bundle $\ccL|_{X_0}$ is isomorphic to the trivial bundle on $X$.

The degeneration formula expresses the virtual cycle of $\bM_g(\cO_X(-D),\beta)$ in terms of the ones of $\bM^\bullet_{\Gamma_1}(\ccL|_{P_0}, \ccL|_{D_\infty})$ and $\bM^\bullet_{\Gamma_2}(X\times \A^1, D\times \A^1)$ summing over all splittings $\mathfrak i = (\Gamma_1,\Gamma_2)$ of the genus-$g$ degree-$\beta$ curve type. The splitting forms a bipartite graph and must have matching contact orders at the corresponding relative markings. We follow the \cite[Definition 4.11]{Jun2} for treating the splitting of topological types, but we note that the more recent and general interpretation of the splitting is in terms of tropical curves and can be found in \cite{KLR}, for example. Since the moduli stack of stable maps to the stack of expanded degeneration admits a morphism to $\bM_g(X,\beta)$ induced by the projection of the target $\cX\rightarrow X$, we have the following version of degeneration formula,
\begin{equation}\label{eqn:deg}
\begin{split}
&[\bM_g(\cO_X(-D),\beta)]^{\vir} \\
= &\sum_{\mathfrak i=(\Gamma_1,\Gamma_2)} \dfrac{\prod_{i=1}^{m(\mathfrak i)} d_i}{\Aut(\mathfrak i)} \tau_*\Delta^! [\bM^\bullet_{\Gamma_1}(\ccL|_{P_0}, \ccL|_{D_\infty}) \times \bM^\bullet_{\Gamma_2}(X\times \A^1, D\times \A^1)]^{
\vir}\,,
\end{split}
\end{equation}
where there are $m(\mathfrak i)$ roots (relative markings) on each $\Gamma_1$ and $\Gamma_2$, $d_i$ are the weights (contact order) of the corresponding roots, $\Delta^!$ is the Gysin pullback induced by the diagonal $\Delta: (D\times \A^1)^{m(\mathfrak i)} \rightarrow (D\times\A^1\times D\times \A^1)^{m(\mathfrak i)}$
and $\tau$ is the forgetful map from the fiber product $\bM_{\mathfrak i}$ to $\bM_g(X,\beta)$.

First of all, there is a distinguished term in the degeneration formula where $\Gamma_1$ consists of one vertex of genus $0$, curve class $\beta\cdot D$ times of fiber class with one root of weight $\beta\cdot D$, and $\Gamma_2$ consists of one vertex of genus $g$, curve class $\beta$ and a weight-$(\beta\cdot D)$ root. Thanks to \cite{AMW}, \cite[Proposition 2.4]{GGR} can be stated in our setting in order to understand this term.

\begin{lem}{\cite[Proposition 2.4]{GGR}}\label{lem:leadcoef}
Let $\Gamma_1$ be a topological type of genus $0$, curve class $\beta\cdot D$ times the fiber class, and with one root of weight $\beta\cdot D$. Then
\[
\ev_*[\bM^\bullet_{\Gamma_1}(\ccL|_{P_0},\ccL|_{D_\infty})]^{\vir} = \dfrac{(-1)^{\beta\cdot D-1}}{(\beta\cdot D)^2} [D]\,,
\]
where $\ev$ is the evaluation map of the (unique) relative marking.
\end{lem}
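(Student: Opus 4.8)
The plan is to reduce the statement to a single fiberwise multiple-cover computation and then read off the universal coefficient. Since the prescribed curve class is $(\beta\cdot D)[\text{fiber}]$ and the fibers of $P_0=\PP_D(N_{D/X}\oplus\cO)$ over distinct points of $D$ are disjoint, every connected genus-$0$ stable map of this class is contracted by the projection $P_0\to D$ and therefore maps into a single fiber $\PP^1$ over some point $p\in D$. Consequently $\bM^\bullet_{\Gamma_1}(\ccL|_{P_0},\ccL|_{D_\infty})$ is fibered over $D$ via $p$, with fiber the moduli space of genus-$0$, degree-$d$ (where $d\coloneqq\beta\cdot D$) relative stable maps to the local model $(\Tot(\cO_{\PP^1}(-1)),\,\text{fiber over }\infty)$ carrying one relative marking of maximal contact $d$; here I use that $\ccL|_{P_0}$ restricts to $\cO_{\PP^1}(-1)$ on each fiber and that $D_\infty$ meets the fiber at $\infty$. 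The relative evaluation map $\ev$ sends such a map to the point $p\in D_\infty\cong D$, so $\ev$ is, under this identification, the projection to $D$.

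Next I would verify that the total virtual dimension equals $\dim D$. Under the base-to-fiber decomposition this splits as $\dim D$ (the base direction) plus the virtual dimension of the fiberwise local model, which must therefore vanish. Since $\ev$ is the projection to $D$, the pushforward $\ev_*[\bM^\bullet_{\Gamma_1}]^{\vir}$ is forced to be a multiple of the fundamental class $[D]$, with no lower-dimensional strata allowed and with coefficient the degree of the $0$-dimensional fiberwise invariant. Using the standard identification of local Gromov-Witten theory with a twisted theory of the base, this fiberwise invariant is $c_d\coloneqq\int_{[\bM_{0,1}(\PP^1/\infty,d)]^{\vir}} e\big(R^1\pi_*f^*\cO_{\PP^1}(-1)\big)$, the full-tangency genus-$0$ relative invariant of $(\PP^1,\infty)$ twisted by the rank-$(d-1)$ obstruction bundle $R^1\pi_*f^*\cO(-1)$, whose rank exactly matches the $(d-1)$-dimensional virtual class of $\bM_{0,1}(\PP^1/\infty,d)$, so that $c_d$ is indeed a number. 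The variation of $N_{D/X}$ over $D$ twists this bundle, but those corrections carry positive-codimension classes on $D$ and drop out by the dimension count, leaving the universal constant $c_d$.

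It then remains to evaluate $c_d=\frac{(-1)^{d-1}}{d^2}$. I would do this by $\GM$-localization with respect to the scaling action on $\PP^1$ fixing $0$ and $\infty$: the maximal-tangency condition at $\infty$ forces the surviving fixed locus to consist of degree-$d$ covers totally ramified over $\infty$, and the localization contribution collapses to an elementary residue / Hodge integral producing the factor $\frac{(-1)^{d-1}}{d^2}$. This recovers exactly the computation of \cite[Proposition 2.4]{GGR}, carried out there in the logarithmic framework; to import it into the present expanded-degeneration setting I would invoke \cite{AMW}, which guarantees that the two versions of the relative theory have the same pushforward of virtual classes, so the identity for $\ev_*[\bM^\bullet_{\Gamma_1}]^{\vir}$ is unchanged. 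I expect the main obstacle to be the fiberwise multiple-cover integral itself: making the reduction virtually rigorous, namely that the virtual class is genuinely $c_d$ times $[D]$ including the bookkeeping of the $N_{D/X}$-twist and the automorphism factors $1/d$, together with evaluating the localization residue cleanly, are the delicate points, whereas the \cite{AMW} comparison is essentially formal.
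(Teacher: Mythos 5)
Your sketch is mathematically sound, but you should know that the paper itself contains no proof of this statement: Lemma \ref{lem:leadcoef} is imported wholesale from \cite[Proposition 2.4]{GGR}, and the only ingredient the paper adds is the observation that, by \cite{AMW}, the log-geometric relative invariants used in \cite{GGR} and the expanded-degeneration invariants used here have the same pushforwards to moduli of stable maps, so the formula transfers verbatim. What you have written out is, in substance, the argument that lives inside \cite{GGR}: fiber-class curves are forced into a single fiber of $P_0\rightarrow D$ (this is automatic even for disconnected types, since every connected component has positive intersection with $D_\infty$ and hence must carry a root, while $\Gamma_1$ has only one); the virtual dimension equals $\dim D$, so the pushforward is a rational multiple of $[D]$; and, writing $d=\beta\cdot D$, the coefficient is the fiberwise maximal-tangency multiple-cover integral $\int_{[\bM_{0,1}(\PP^1/\infty,d)]^{\vir}}e\bigl(R^1\pi_*f^*\cO_{\PP^1}(-1)\bigr)=\frac{(-1)^{d-1}}{d^2}$, evaluated by $\C^*$-localization. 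The trade-off between the two routes is this: the paper's citation is logically complete and outsources precisely the two points you yourself flag as delicate (making the fibration over $D$ compatible with virtual classes, and the residue computation), since both are established in \cite{GGR}; your version makes the mechanism visible — notably it runs on the same fiberwise scaling action that the paper exploits in Section \ref{sec:loc} to handle the correction terms — but as a standalone proof it would still require those two steps in detail, which is why your closing appeal to \cite{GGR} together with \cite{AMW}, which is exactly the paper's entire proof, is what actually closes the argument rather than the fiberwise analysis itself.
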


The lemma implies that this distinguished term equals
\[\dfrac{(-1)^{\beta\cdot D-1}}{\beta \cdot D}F_*\left((-1)^g\lambda_g \cap [\bM_{g}(X/D,\beta)]^{\vir}\right)\]
in Theorem \ref{loc2rel}.

To understand the rest of the terms, we need \cite[Lemma 3.1]{GGR}. The idea is that in the relative theory of $(X\times \A^1, D\times \A^1)$, we rule out topological types $\Gamma_2$ with multiple relative contacts.
\begin{lem}{\cite[Lemma 3.1]{GGR}}\label{lem:root}
Let $\mathfrak i=(\Gamma_1,\Gamma_2)$ be a splitting such that there exists a vertex $v$ in $\Gamma_2$ having more than one root (roots correspond to relative markings). Then 
\[\Delta^![\bM^\bullet_{\Gamma_1}(\ccL|_{P_0}, \ccL|_{D_\infty}) \times \bM^\bullet_{\Gamma_2}(X\times \A^1, D\times \A^1)]^{
\vir} = 0\,.\]
\end{lem}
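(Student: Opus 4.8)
The plan is to prove the vanishing entirely on the $\Gamma_2$-side, where the relative target $(X\times\A^1,D\times\A^1)$ carries an extra $\A^1$-factor, and then to feed the resulting structure into the Gysin map $\Delta^!$. Since the statement is insensitive to the genus, I would work with an arbitrary connected component $v$ of $\Gamma_2$ (together with its rubber) and first establish a product description of the corresponding moduli space before touching the diagonal.

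The key structural input I would extract is the following. As the domain of a relative stable map is proper and $\A^1$ admits no nonconstant map from a proper curve, the projection $X\times\A^1\to\A^1$ (which extends over the expanded target, since the rubber bubbles over $D\times\A^1$ also project to $\A^1$) is constant on each connected component. Hence every connected component contributing to $\bM^\bullet_{\Gamma_2}(X\times\A^1,D\times\A^1)$ maps into a single fiber $X\times\{a\}$, yielding a product description in which the free coordinate $a$ is recorded by an $\A^1$-factor. On this factor the $\A^1$-direction contributes deformations $H^0(\cO)$ (the coordinate $a$) and obstructions $H^1(\cO)$, the latter capping the cycle with $(-1)^{g(v)}\lambda_{g(v)}$; this is exactly the source of the Hodge insertion in Theorem \ref{loc2rel}. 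For the present lemma the crucial point is that \emph{all} relative markings on $v$ share the same coordinate $a$, so the product of relative evaluations at the $r\ge 2$ roots of $v$ factors through the small diagonal $\A^1\hookrightarrow(\A^1)^{r}$, and $\ev_*[\bM_{v}]^{\vir}$ is supported on $D^{r}\times\A^1_{\mathrm{diag}}$.

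With this in hand I would analyze $\Delta^!$. The diagonal matches the $r$ roots of $v$ with $r$ roots on the $P_0$-side, whose relative markings land in $\ccL|_{D_\infty}\cong D_\infty\times\A^1$ with coordinates $b_1,\dots,b_r$. Matching forces $b_1=\dots=b_r=a$; one equation is absorbed by translating the free coordinate $a$, leaving the $r-1$ genuine coincidence conditions $b_2=\dots=b_r=b_1$ imposed on the $P_0$-side relative cycle. Because the $P_0$-side target $\cO_{P_0}(-D_0)$ is a nontrivial line bundle it carries no $\A^1$-translation symmetry, and I would exploit the global $\GM$-scaling of the $\ccL$-fibers (under which the whole degeneration, the diagonal, and all the coordinates $a,b_j$ are equivariant of weight one) together with the small-diagonal factorization above to show that the refined Gysin class is an excess class that vanishes.

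The hard part is precisely this last step. A naive dimension count does not conclude: as a fiber product $\bM_{\mathfrak i}$ has at least the expected dimension, so the vanishing cannot come from support alone and must instead be read off from the explicit form of $\ev_*[\bM_{v}]^{\vir}$ through the small diagonal, combined with the $\GM$-equivariant structure of the $P_0$-side relative theory (equivalently, of the rubber contributions analyzed by localization later in this section). This is the content of \cite[Lemma 3.1]{GGR}, and I would verify both that the argument is unaffected by the genus and that, by \cite{AMW}, the choice between the log and expanded definitions of the relative theory does not alter the conclusion.
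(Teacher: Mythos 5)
Your proposal takes essentially the same route as the paper: the paper's own proof of this lemma consists precisely of invoking \cite[Lemma 3.1]{GGR} and observing that its intersection-theoretic argument on the target spaces carries over word-by-word, independently of the genus (the moduli geometry, including the $\lambda_g$-factor from $H^1(\cO)$ that you describe, playing only a minor role) and of the expanded-versus-log formulation, which is handled by \cite{AMW}. Your structural scaffolding — the constancy of maps from proper curves to $\A^1$, the resulting product decomposition of $\bM^\bullet_{\Gamma_2}(X\times\A^1,D\times\A^1)$, and the small-diagonal factorization of its relative evaluations — is correct and is exactly the input to GGR's argument, so deferring the final excess-intersection vanishing to that citation matches the paper's treatment.
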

Although we use the expanded degeneration definition of relative Gromov--Witten theory, the proof of \cite[Lemma 3.1]{GGR} still applies here word-by-word because the gist of the proof is an intersection theoretic computation on the target spaces and the geometry of moduli spaces plays a minor role.


A splitting $\mathfrak i=(\Gamma_1,\Gamma_2)$ gives rise to a bipartite graph by gluing the corresponding roots. Lemma \ref{lem:root} tells us that every vertex of $\Gamma_2$ consists of only one root. This suggests that the bipartite graphs are comb-shaped and they match the underlying graphs of Definition \ref{defn:star}. Thus the degeneration formula is already giving us a general form that looks like Theorem \ref{loc2rel} except that the pushforward of virtual cycles of $\bM^\bullet_{\Gamma_1}(\ccL|_{P_0},\ccL|_{D_\infty})$ are not understood in general.

\subsection{Localization}\label{sec:loc}
We use the relative virtual localization formula \cite{GV} to understand the pushforward of $[\bM^\bullet_{\Gamma_1}(\ccL|_{P_0},\ccL|_{D_\infty})]^{\vir}$. First recall that $P_0$ has two sections: $D_0$ and $D_\infty$. The normal bundle of $D_0$ is $\cO_D(D)$ and the one of $D_\infty$ is $\cO_D(-D)$. The line bundle $\ccL|_{P_0}$ is naturally isomorphic to $\cO_{P_0}(-D_0)$. Let $\C^*$ act on $\ccL|_{P_0}$ as follows:
\begin{itemize}
    \item $\C^*$ acts fiberwise on $P_0$ so that the weight of fibers of the normal bundle of $D_0$ is $1$.
    \item $D_0$ becomes an invariant divisor under the above  $\C^*$-action on $P_0$ and let $\C^*$ act on $\ccL|_{P_0}$ in such a way that $\ccL|_{P_0}\cong \cO_{P_0}(-D_0)$ as equivariant line bundles.
\end{itemize}
Under this construction, the $\C^*$ acts fiberwise on $\ccL|_{D_0}$ with weight $-1$ and it acts on $\ccL|_{D_\infty}$ trivially. The fixed loci of $\ccL|_{P_0}$ consists of $D_0$ and the whole total space $\ccL|_{D_\infty}$. The $\C^*$-action induces an action on $\bM^\bullet_{\Gamma_1}(\ccL|_{P_0},\ccL|_{D_\infty})$. Invariant curves decompose into components mapped into $D_0$, components mapped into rubbers over $D_\infty$ and components\footnote{We call those $\C^*$-invariant components mapped to fibers of $P_0$ edge components.} mapped to fibers of $P_0$. The localization formula can be summarized as follows:
\begin{align}\label{eqn:virloc}
\begin{split}
&[\bM^\bullet_{\Gamma_1}(\ccL|_{P_0},\ccL|_{D_\infty})]^{\mathrm{eq},\vir} \\
=& \dfrac{[\bM^\bullet_{\Gamma_1}(\ccL|_{P_0},\ccL|_{D_\infty})^{\mathrm{simple}}]^{\vir}}{e_{\C^*}(N^{\vir})} + \sum\limits_{\eta=(\Gamma_1^{(0)},\Gamma_1^{(\infty)})} \dfrac{[\bM_\eta]^{\vir}}{e_{\C^*}(N^{\vir}_{\eta})} \,,
\end{split}
\end{align}
with the notation explained as follows. The first term corresponds to the fixed locus where the target does not degenerate (simple fixed locus according to \cite{GV}) and the second summand corresponds to the rest of fixed loci. 
We denote by $N^\vir$ and $N^{\vir}_{\eta}$ the corresponding virtual normal bundles. The index of the summation $\eta$ is a splitting of the topological type $\Gamma_1$ into $\Gamma_1^{(0)}$ and $\Gamma_1^{(\infty)}$ such that:
\begin{itemize}
    \item $\Gamma_1^{(0)}$ consists of a disjoint union of vertices with $k_\eta$ ordered half-edges (corresponding to markings) and the decoration of genus and curve class attached to each vertex. We denote by $\bM^\bullet_{\Gamma_1^{(0)}}(D)$ the corresponding moduli of stable maps to $D$ with possibly disconnected domain;
    \item $\Gamma_1^{(\infty)}$ is a possibly disconnected rubber graph (a possibly disconnected graph with assignments of genera, curve classes, inner markings, relative markings to both boundary divisors and their contact orders as detailed in \cite[Definition 2.4]{FWY}) with $k_\eta$ ordered $0$-roots (corresponding to relative markings over $D_0$).
    The corresponding moduli space is $\bM^{\bullet\sim}_{\Gamma_1^{(\infty)}}(\ccL|_{D_{\infty}} )$, i.e., the moduli space of relative stable maps to the rubber target over $\ccL|_{D_{\infty}}\cong D\times \A^1$ (a description can be found in, for example, \cite{GV}*{Section 2.4}).
\end{itemize}
Gluing $\Gamma_1^{(0)}$ and $\Gamma_1^{(\infty)}$ along these ordered $k_\eta$ half-edges forms a bipartite graph, 
and $\bM_\eta$ is the fixed locus corresponding to $\eta$. 
It is standard to describe an \'etale cover of $\bM_\eta$ as a fiber product of vertex moduli and edge moduli: \[\bM^{\bullet}_{\Gamma_1^{(0)}}({D_0})\times_{D^{k_\eta}} D^{k_\eta} \times_{(D\times\A^1)^{k_\eta}} \bM^{\bullet\sim}_{\Gamma_1^{(\infty)}}(\ccL|_{D_{\infty}} ) \rightarrow \bM_\eta\,,
\]
where the fiber products are over evaluation maps and the inclusion of $D^{k_\eta} \rightarrow (D\times \A^1)^{k_\eta}$ sends $D^{k_\eta}$ to $D^{k_\eta}\times\{0\}$. 



The combinatorics of graph splittings and the precise formulae of $e_{\C^*}(N^{\vir}_{\eta})$ are a priori very complicated. But similar to Lemma \ref{lem:root}, we have the following result to cut down the number of graph types on $\Gamma_1^{(\infty)}$.

\begin{lem}
If any vertex of $\Gamma_1^{(\infty)}$ has more than one $0$-roots,
then we have
\[[\bM_\eta]^\vir=0\,.\]
\end{lem}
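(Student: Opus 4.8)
The plan is to transcribe the proof of Lemma \ref{lem:root} (that is, \cite[Lemma 3.1]{GGR}) to the rubber geometry, the essential point being a trivial $\A^1$-factor in the rubber target. First I would make the product structure of the rubber target explicit. By construction the relative divisor $\ccL|_{D_\infty}$ is isomorphic to $D \times \A^1$, and its normal bundle in $\Tot(\ccL|_{P_0})$ is the pullback to $D \times \A^1$ of the line bundle $\cO_D(-D)$ on $D$ (the $P_0$-fibre normal direction of $D_\infty$). Consequently the rubber target over $\ccL|_{D_\infty}$ is a chain of copies of
\[ \PP_D\big(\cO_D(-D) \oplus \cO_D\big) \times \A^1 = R \times \A^1, \qquad R \coloneqq \PP_D\big(\cO_D(-D) \oplus \cO_D\big), \]
in which the $\A^1$-factor, inherited from the base $\ccL|_{D_\infty} \cong D \times \A^1$, is a genuine trivial product direction; in particular the two boundary sections of the rubber (carrying respectively the $0$-roots and the $\infty$-roots) are of the form $(\text{section of }R) \times \A^1$.

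Next, fix a connected rubber component carrying $k \geq 2$ of the ordered $0$-roots, as in the hypothesis. Composing its stable map with the projection $\pr_{\A^1} \colon R \times \A^1 \to \A^1$ yields a morphism from a proper connected curve to the affine line, which is therefore constant; this is unaffected by the rubber $\GM$-action, which is trivial on the $\A^1$-factor. Hence all $k$ of these $0$-roots share one and the same $\A^1$-coordinate. On the other hand, in the \'etale cover of $\bM_\eta$ recalled above, the virtual class $[\bM_\eta]^\vir$ is defined through the refined Gysin pullback $\Delta^!$ associated with the inclusion $D^{k_\eta} = D^{k_\eta} \times \{0\} \hookrightarrow (D \times \A^1)^{k_\eta}$, which matches the $0$-root evaluations of the rubber against the evaluations on the $D_0$-side.

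Finally, I would extract the vanishing from an excess-intersection count in the $\A^1$-directions. Restricting to the $k$ coordinates attached to our fixed rubber component, the evaluation map to $(\A^1)^k$ factors, by the constancy just established, through the small diagonal $\A^1 \hookrightarrow (\A^1)^k$. Pulling back the point-class $\{0\}^k \subset (\A^1)^k$ along this small diagonal therefore produces the Euler class of the excess bundle $\cO^{\oplus k}/\cO \cong \cO^{\oplus (k-1)}$, a trivial bundle of positive rank since $k \geq 2$, whose Euler class vanishes; hence $[\bM_\eta]^\vir = 0$. I expect the main obstacle to be essentially bookkeeping: arranging the refined Gysin pullback together with the rubber virtual class so that the constancy constraint genuinely isolates a trivial excess summand of rank $k-1$ at the level of cycles (and not merely of points), exactly as in the proof of \cite[Lemma 3.1]{GGR}, which then applies verbatim once the product structure $R \times \A^1$ has been identified.
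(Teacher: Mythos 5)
Your proposal is correct and takes essentially the same approach as the paper, which observes that $[\bM_\eta]^\vir$ is a multiple of the pushforward of the fiber-product class defined via the Gysin pullback against $D^{k_\eta}\times\{0\}\hookrightarrow(D\times\A^1)^{k_\eta}$ and then invokes the argument of Lemma \ref{lem:root} (i.e.\ \cite[Lemma 3.1]{GGR}), omitting the details. The details you supply --- the product structure $R\times\A^1$ of the rubber target, constancy of the $\A^1$-coordinate on proper connected components, and the vanishing of the Euler class of the trivial rank-$(k-1)$ excess bundle --- are precisely the intended word-by-word adaptation of that argument.
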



Note that $[\bM_\eta]^\vir$ is a multiple of the pushforward of \[[\bM^{\bullet}_{\Gamma_1^{(0)}}({D_0})\times_{D^{k_\eta}} D^{k_\eta} \times_{(D\times\A^1)^{k_\eta}} \bM^{\bullet\sim}_{\Gamma_1^{(\infty)}}(\ccL|_{D_{\infty}} )]^\vir.\] The argument is the same as Lemma \ref{lem:root} and we omit the details. The conclusion of this lemma is that we can assume that each $\Gamma_1^{(0)}$ in the summation only consists of one single vertex. Thus, there is a unique edge between the vertex in $\Gamma_1^{(0)}$ and a vertex in $\Gamma_1^{(\infty)}$. Comparing with Definition \ref{defn:star} and Equation \eqref{eqn:MG}, we see that this shares the shape of graphs of star type, except that the vertices in $V_1$ represent relative stable maps to $X$ instead of relative stable maps to rubbers over $D$ which appears in our localization formula \eqref{eqn:virloc}. But combining the virtual localization formula \eqref{eqn:virloc} and the degeneration formula \eqref{eqn:deg} altogether, we will be able to match the graph sum with Theorem \ref{loc2rel}.

Let us recap the whole process. Starting with the topological type $\Gamma$, we split it into three parts $\Gamma_1^{(0)}, \Gamma_1^{(\infty)}, \Gamma_2$ and sum over cycles
\begin{align*}
[\bM^{\bullet}_{\Gamma_1^{(0)}}({D_0})\times_{D^{k_\eta}} D^{k_\eta} &\times_{(D\times \A^1)^{k_\eta}} \bM^{\bullet\sim}_{\Gamma_1^{(\infty)}}(\ccL|_{D_{\infty}} ) \\
&\times_{(D\times\A^1)^{m(\mathfrak i)}} \bM^\bullet_{\Gamma_2}(X\times\A^1, D\times\A^1)]^{\vir}
\end{align*}
capped with different cohomology classes. To simplify the situation, notice that the sum over $\Gamma_1^{(\infty)}$ and $\Gamma_2$ can be combined into a divisor in 
\[ \bM^{\bullet}_{\Gamma_2'}(X\times\A^1,D\times\A^1)=\bM^{\bullet}_{\Gamma_2'}(X,D)\times (\A^1)^{|V(\Gamma_2')|}\] corresponding to the locus where the target degenerates at least once (denoted by $\delta$). Here $\Gamma_2'$ is the gluing of $\Gamma_1^{(\infty)}$ and $\Gamma_2$ along the corresponding roots and $V(\Gamma_2')$ is the set of vertices of $\Gamma_2'$. In fact, $\delta$ can be written as another divisor that commonly appears in the virtual localization formula as follows.

According to \cite{GV}*{Section 2.5}, $\bM^{\bullet}_{\Gamma_2'}(X,D)$ admits a map to $\mathcal T$(or \cite{Jun1}*{Chapter 4} as $\mathfrak X^{rel}$), the Artin stack of expanded degenerations of $(X,D)$. Also, by \cite{GV}*{Section 2.5}, $\mathcal T$ is isomorphic to an open substack of $\mathfrak M_{0,3}$ (the Artin stack of prestable $3$-pointed rational curve) consisting of chains of curves that separate $\infty$ from $0,1$ (in \cite{GV}'s notation). There is a divisor corresponding to cotangent lines at $\infty$, and we denote by $\Psi$ its pullback to $\bM^\bullet_{\Gamma_2'}(X,D)$. 
\begin{lem}\label{lem:psi}
$\Psi$ is linearly equivalent to $\delta$. 
\end{lem}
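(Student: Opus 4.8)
The plan is to show that the two divisor classes $\Psi$ and $\delta$ on $\bM^\bullet_{\Gamma_2'}(X,D)$ agree by working on the Artin stack $\mathcal T$ of expanded degenerations and pulling back. Recall from \cite{GV}*{Section 2.5} that $\mathcal T$ is identified with an open substack of $\mathfrak M_{0,3}$ parametrizing chains of rational curves separating the marked point $\infty$ from the pair $\{0,1\}$. On $\mathfrak M_{0,3}$ the universal curve over the generic (stable) point is a single $\PP^1$ carrying the three marked points, and a boundary stratum is reached exactly when the chain degenerates, i.e.\ when a new $\PP^1$-bubble sprouts off separating $\infty$. The two classes I must compare are: $\delta$, the locus where the target has degenerated at least once (the boundary divisor of $\mathcal T$ where the chain has length $\geq 1$), and $\Psi$, the cotangent-line divisor at the marked point $\infty$.

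First I would reduce everything to the universal base $\mathcal T$, since both $\delta$ and $\Psi$ are pullbacks along the structure map $\bM^\bullet_{\Gamma_2'}(X,D)\to\mathcal T$; it therefore suffices to prove the linear equivalence of the corresponding classes on $\mathcal T$ itself, and then pull back. On the open substack of $\mathfrak M_{0,3}$ in question the cotangent line at $\infty$, call it $\psi_\infty$, is the standard $\psi$-class at the third marked point, while $\delta$ is the boundary. The key computation is the well-known comparison on $\overline{\mathcal M}_{0,3}$-type spaces: the $\psi$-class at a marked point equals the sum of boundary divisors on which that point lies on a bubble component, restricted to the locus we care about. Concretely, on a chain of $\PP^1$'s separating $\infty$ from $\{0,1\}$, the cotangent line at $\infty$ becomes nontrivial precisely along the boundary where $\infty$ sits on a bubbled-off component, and the comparison formula $\psi_\infty = [\text{boundary}]$ holds because the point $\infty$ only carries a cotangent line when the target has degenerated. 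Thus on $\mathcal T$ one gets $\Psi = \delta$ as divisor classes, and pulling back along the structure map gives the stated linear equivalence on $\bM^\bullet_{\Gamma_2'}(X,D)$.

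The main obstacle I anticipate is bookkeeping the precise dictionary between the expanded-degeneration geometry and the chains-of-curves model of $\mathcal T$: one must make sure that ``the target degenerates at least once'' corresponds exactly to ``$\infty$ lies on a bubble'' and not off by a stratum, and that the identification with an open substack of $\mathfrak M_{0,3}$ respects the cotangent line at $\infty$ with the correct normalization (no extra factors from the weights $d_i$ on the roots). I would carry this out by tracing through the stack-theoretic description of $\mathcal T$ in \cite{GV}*{Section 2.5}, verifying that the universal target over the generic point is the trivial (non-degenerate) expansion and that crossing $\delta$ corresponds precisely to the first bubbling, which is also the locus where $\psi_\infty$ becomes effective. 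Once this dictionary is pinned down, the linear equivalence $\Psi\sim\delta$ follows from the standard boundary expression for a $\psi$-class on a genus-$0$ chain, and the lemma is immediate.
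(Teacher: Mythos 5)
Your proposal is correct and takes essentially the same route as the paper: both arguments reduce to the universal relation on $\mathcal T$ viewed as an open substack of $\mathfrak M_{0,3}$, where the cotangent-line class at the marked point equals the boundary divisor separating it from the other two (the identity $\psi_1 = D_{1|23}$ cited in the paper), and then pull back along the structure map $\bM^\bullet_{\Gamma_2'}(X,D)\to\mathcal T$. The paper's proof is exactly this one-line pullback argument, so your more detailed tracing of the dictionary between expanded degenerations and chains of curves is simply an expanded version of the same idea.
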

\begin{proof}
When identifying $\mathcal T$ as an open substack of $\mathfrak M_{0,3}$, the relation $\Psi=\delta$ is simply the pullback of the relation $\psi_1=D_{1|23}$ on $A^2(\mathfrak M_{0,3})$.
\end{proof}

Now that $\Psi$ represents the decomposition of $\Gamma_2'$ into different $\Gamma_1^{(\infty)}$ and $\Gamma_2$, we can simplify the localization formula combined with the degeneration formula into the following form:
\begin{align}\label{eqn:combine}
\begin{split}
&[\bM_g(\cO_X(-D),\beta)]^{\vir} \\
= & \Bigg[\sum\limits_{\mathfrak i=(\Gamma_1',\Gamma_2')} \dfrac{1}{\Aut(\mathfrak i)} (\tau_{\mathfrak i})_*\bigg(
  p_1^*\left( \dfrac{e_{\C^*}(-R^{\bullet}(\pi_1)_*f_1^*N)}{\prod_{e_i\in \mathrm{HE}(\Gamma_1')} \left(\frac{t+\ev_i^*c_1(N_{D/X})}{d_i}-\psi_i\right)} \right) \\& p_2^*\left( \left( 1+\dfrac{\Psi}{-t-\Psi}\right) \cdot e(R^{1}(\pi_2)_*f_2^*\cO) \right)\text{Edge}(\mathfrak i)\cap
[\bM_{\Gamma_1'}(D_0)\times_{D^{k_{\mathfrak i}}}\bM^{\bullet}_{\Gamma_2'}(X,D)]^{\vir}   \bigg) \Bigg]_{t^0},
\end{split}
\end{align}
where 
\begin{itemize}
    \item $\mathfrak i$ is a splitting such that each vertex of $\Gamma_2'$ has one unique root (corresponding to relative marking); $\Gamma'_1$ consists of one single vertex which is allowed to be a degree-zero genus-zero unstable vertex;
    \item $\tau_{\mathfrak i}$ is the natural gluing  map
    \[
    \tau_{\mathfrak i}:\bM_{\Gamma_1'}(D_0)\times_{D^{k_{\mathfrak i}}}\bM^{\bullet}_{\Gamma_2'}(X,D) \rightarrow \bM_{g}(X,\beta)=\bM_{g}(\cO_X(-D),\beta);
    \]
    \item $\text{Edge}(\mathfrak i)$ is a certain bivariant class depending on edges in the bipartite graph splitting $\mathfrak i$ (corresponding to the edge contribution in the localization);
    \item $p_1$ and $p_2$ are the projections from $\bM_{\Gamma_1'}(D_0)\times_{D^{k_{\mathfrak i}}}\bM^{\bullet}_{\Gamma_2'}(X,D)$ to the first and second factors  respectively;
    \item $\mathrm{HE}(\Gamma_1')$ refers to the set of half-edges of $\Gamma_1'$, $k_{\mathfrak i}$ is the number of half-edges of $\Gamma_1'$ which is the same as the number of roots (or vertices) of $\Gamma_2'$;
    \item $N$ and $\ev_i$ are defined by \eqref{eqn:N} and 
    \eqref{eqn:ev_i} in Section \ref{sec:locrel_statement}, and $d_i$ is the weight\footnote{The weight of a root corresponds to the contact order of a relative marking.} of the root glued to the half-edge $e_i$. 
    \item $\pi_1, \pi_2$ and $f_1, f_2$ are respectively the universal curves and universal maps for  $\bM_{\Gamma_1'}(D_0)$ and $\bM^{\bullet}_{\Gamma_2'}(X,D)$ (similar to $\pi$ and $f$ used in Section \ref{sec:locrel_statement}).
\end{itemize}
We make the convention that $\bM_{\Gamma'_1}(D_0)=D$ when $\Gamma'_1$ consists of an unstable vertex. In this case, the whole $p_1^*(\dots)$ factor degenerates into $1$. 
We highlight a few key points regarding how to deduce this formula:

\begin{enumerate}
    \item Combining the degeneration formula and the localization formula, we have three levels $\Gamma^{(0)}_1, \Gamma_1^{(\infty)}$ and $\Gamma_2$ on the graph. To deduce this formula, we turn $\Gamma^{(0)}_1$ into $\Gamma_1'$ and combine $\Gamma_1^{(\infty)},\Gamma_2$ into $\Gamma_2'$. An illustration of an example can be the following\footnote{Each integer indicate the genus attached to the vertex. The red and blue vertices show clearly how the genera change after we combine $\Gamma_1^{(\infty)}$, $\Gamma_2$ into $\Gamma_2'$.}:
    
  \includegraphics[scale = 0.32]{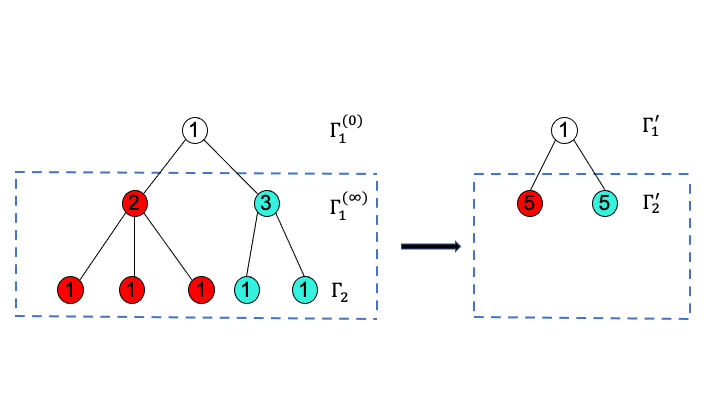}
   
    By Lemma \ref{lem:psi}, the sum over different decomposition of $\Gamma_2'$  corresponds to the divisor $\Psi$ written on the numerator in the factor \[p_2^*\left( \left( 1+\dfrac{\Psi}{-t-\Psi}\right) \cdot e(R^{1}(\pi_2)_*f_2^*\cO) \right).
    \]
    Once a decomposition of $\Gamma_2'$ is given, the $\Psi$ in the denominator becomes the corresponding target psi-class on the rubber moduli. Note that the extra summand ``$1$" corresponds to the simple fixed locus in the localization formula. 
    
    \item Recall that $N=N_{D/X}\oplus N^{\vee}_{D/X}$ under the anti-diagonal scaling action. The factor $e_{\C^*}(-R^{\bullet}(\pi_1)_*f_1^*N)$ comes from the moving part computation on vertices over $D_0$. $N_{D/X}$ comes from the log-tangent bundle of $P_0$, and $N^{\vee}_{D/X}$ comes from the restriction of $\ccL$ to $D_0$.
    \item The denominators
    \[
    \prod_{e_i\in \mathrm{HE}(\Gamma_1')} \left(\frac{t+\ev_i^*c_1(N_{D/X})}{d_i}-\psi_i\right),\quad -t-\Psi
    \]
    correspond to the smoothing of nodes and the smoothing of the target, respectively. 
    
    \item In the formula, the edges should contribute an automorphism factor $1/\prod_{i=1}^{k_\eta}d_i$. But the gluing of edge components with the relative markings on rubber components also contribute $\prod_{i=1}^{k_\eta}d_i$. These two factors cancel each other. Note that in the case of the simple fixed locus, there are no rubber components, but there is a factor $\prod_{i=1}^{k_\eta}d_i$ coming from the degeneration formula \eqref{eqn:deg}.
\end{enumerate}

One notices that the splitting $\mathfrak i$ already resembles the graphs of star type defined in Definition \ref{defn:star}. We are left to identify the formulae. It boils down to two things: matching suitable factors with the $C_v$ term defined in Equation \eqref{eqn:Cv}, and matching the rest with the $C_{v_i}$ terms defined in Equation \eqref{eqn:Cvi}.

\subsubsection{Matching with $C_v$}
By explicitly computing $\mathrm{Edge}(\mathfrak i)$, one can combine it with 
\begin{equation*}
p_1^*\left( \dfrac{e_{\C^*}(-R^{\bullet}(\pi_1)_*f_1^*N)}{\prod_{e_i\in \mathrm{HE}(\Gamma_1')} \left(\frac{t+\ev_i^*c_1(N_{D/X})}{d_i}-\psi_i\right)} \right)
\end{equation*}
to obtain $C_v$. Some details about computing $\mathrm{Edge}(\mathfrak i)$ are presented in the following.

$\mathrm{Edge}(\mathfrak i)$ encodes the deformation and obstruction contributions of edge components\footnote{It can be evaluated using the method of \cite[{Section 4}]{GP}. See also \cite{W}*{Appendix B}.}. Recall that edge components are those $\C^*$-invariant components which are mapped to fibers of $P_0$. For each half-edge $e_i$, we can construct an edge moduli space $\bM_{e_i}$ which consists of $\C^*$-invariant maps from a rational curve to a fiber of $P_0$ with multiplicity $d_i$. Obviously, $\bM_{e_i}$ can be parameterized by $D$. A priori, we should write 
$\bM_{\Gamma_1'}(D_0)\times_{D^{k_{\mathfrak i}}}\bM^{\bullet}_{\Gamma_2'}(X,D)$
as $\bM_{\Gamma_1'}(D_0)\times_{D^{k_{\mathfrak i}}}\prod_{e_i}\bM_{e_i}\times_{D^{k_{\mathfrak i}}}\bM^{\bullet}_{\Gamma_2'}(X,D)$. Then $\mathrm{Edge}(\mathfrak i)$ can be written as a product of contributions pulling back from different $\bM_{e_i}$(using the projections $p_{e_i}$).
Next, let us determine each individual contribution.

Let $\pi_{e_i}$ be the projection from the universal curves of $\bM_{e_i}$ and $f_{e_i}$ be the universal map. First, suppose that the unique vertex of $\Gamma_1'$ is stable. In this case, the contribution of $\bM_{e_i}$ to $\mathrm{Edge}(\mathfrak i)$ can be written as a product of three factors. 
The equivariant Euler class of the moving part\footnote{For a more precise definition, one should check \cite{GP}*{Section 1} for the definition of the virtual normal bundle, and look for the term ``moving part" after the proof of \cite{GP}*{Proposition 1}.} of $R^{\bullet}(\pi_{e_i})_*f_{e_i}^*TP_0(-\mathrm{log}\, D_\infty)$ gives the first factor
\begin{equation}\label{eqn:logfactor}
\dfrac{d_i^{d_i}}{(d_i)!(t+\ev_i^*c_1(N_{D/X}))^{d_i}}.
\end{equation}
The equivariant Euler class of the moving part of $R^{\bullet}(\pi_{e_i})_*f_{e_i}^*\left(\ccL|_{P_0}\right)$ contributes another factor:
\begin{equation}\label{eqn:midfac}
\dfrac{(d_i-1)!(-t-\ev_i^*c_1(N_{D/X}))^{d_i-1}}{d_i^{d_i-1}}.
\end{equation}
Finally, there is a factor $-(t+\ev_i^*c_1(N_{D/X}))^2$ coming from the gluing of the edge component and the component over $D_0$. Putting them together, we know that the contribution of $\bM_{e_i}$ to $\mathrm{Edge}(\mathfrak i)$ is $(-1)^{d_i}(t+\ev_i^*c_1(N_{D/X}))$. So we obtain
\[
\mathrm{Edge}(\mathfrak i)= \prod_{e_i\in \mathrm{HE}(\Gamma_1')}(-1)^{d_i}(t+\ev_i^*c_1(N_{D/X}))\,.
\]

Next, suppose that the vertex over $D_0$ is unstable (for example, the leading term in Theorem \ref{loc2rel}).
The valence-$2$ case (the unstable vertex has two half-edges) will contribute $0$ to the right-hand side of \eqref{eqn:combine} by the dimensional reason\footnote{In this case, one can check that the dimension of \[p_2^*\left( e(R^{1}(\pi_2)_*f_2^*\cO) \right)\cap [\bM_{\Gamma_1'}(D_0)\times_{D^{k_{\mathfrak i}}}\bM^{\bullet}_{\Gamma_2'}(X,D)]^{\vir} \]
is smaller than the dimension of $[\bM_g(\cO_X(-D),\beta)]^{\vir}$.}. When the unstable vertex has valence $1$, there is only one half-edge $e_1$ in $\mathrm{HE}(\Gamma_1')$. In this case, the gluing factor $-(t+ev_1^*c_1(N_{D/X}))^2$ disappears because the component over $D_0$ degenerates.
But there is an extra factor $(t+\ev^*_1c_1(N_{D/X}))/d_1$ coming from the moving part of the space of infinitesimal automorphisms of domain curves. Combining with \eqref{eqn:logfactor},\eqref{eqn:midfac}(setting $i=1$ there), we may conclude that in this case
\[
\mathrm{Edge}(\mathfrak i) = \frac{(-1)^{d_1-1}}{d_1} \,.
\]
This gives another explanation of the coefficient $\dfrac{(-1)^{\beta\cdot D-1}}{\beta\cdot D}$ appearing in the leading term of Theorem \ref{loc2rel} (comparing with Lemma \ref{lem:leadcoef}).

\subsubsection{Matching with $C_{v_i}$} First, each factor $(-1)^{g(v)}\lambda_{g(v)}$ of $C_{v_i}$ in \eqref{eqn:Cvi} comes from $e(R^{1}(\pi_2)_*f_2^*\cO)$.
To match the other factor in $C_{v_i}$ requires slightly more effort. 
For a vertex $v\in V(\Gamma)$, denote by $\gamma_v$ the graph consisting of a single vertex $v$ plus all the decorations on $v$. The goal now is to rewrite the pushforward of 
\[\bigg(  1+\dfrac{\Psi}{-t-\Psi} \bigg) \cap [\bM^\bullet_{\Gamma_2'}(X,D)]^\vir\]
in terms of pushforward classes from the product $\prod_{v\in V(\Gamma_2')}\bM_{\gamma_v}(X,D)$ and match it with the remaining factor of $C_{v_i}$. More precisely, we need the following lemma.

\begin{lem}\label{lem_split2}
We have the following identity:
\begin{align*}
&\tau_*\bigg(\dfrac{t}{t+\Psi} \cap [\bM^\bullet_{\Gamma_2'}(X,D)]^\vir\bigg) \\
=& \tau'_*\bigg(\prod_{v\in V(\Gamma_2')} p_v^*\bigg( \dfrac{t}{t+\Psi} \bigg) \cap \big[\prod_{v\in V(\Gamma_2')}\bM_{\gamma_v}(X,D)\big]^\vir\bigg),
\end{align*}
where $p_v$ is the projection to the factor corresponding to $v$, and the two $\Psi$ on both sides are the target psi-classes on their corresponding moduli spaces, and $\tau, \tau'$ are the corresponding stabilization maps to $\prod_{v\in V(\Gamma_2')}\bM_{g(v),1}(X,b(v))\times_{X} D$.
\end{lem}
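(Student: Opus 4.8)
The plan is to factor both sides through the common target $M \coloneqq \prod_{v} \bM_{g(v),1}(X,b(v)) \times_X D$ and thereby reduce the statement to a single pushforward identity. Concretely, I would first construct the comparison morphism
\[ c \colon \bM^\bullet_{\Gamma_2'}(X,D) \longrightarrow \prod_{v \in V(\Gamma_2')} \bM_{\gamma_v}(X,D) \]
that remembers each connected component together with its own minimal expanded target, obtained by contracting those bubbles of the shared expanded target not needed by that component. Since both stabilization maps record only the stabilized maps to $X$ and the images of the relative markings in $D$, one has $\tau = \tau' \circ c$. Hence it suffices to prove the refined identity
\[ c_*\!\left( \frac{t}{t+\Psi} \cap [\bM^\bullet_{\Gamma_2'}(X,D)]^{\vir} \right) = \prod_{v} p_v^* \frac{t}{t+\Psi} \cap \Big[\textstyle\prod_{v} \bM_{\gamma_v}(X,D)\Big]^{\vir}, \]
after which the lemma follows by applying $\tau'_*$ to both sides.

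Next I would analyze $c$ through the Artin stack $\mathcal{T}$ of expanded degenerations of $(X,D)$. On the product side each factor carries its own copy of $\mathcal{T}$ and its own degeneration divisor, whereas on the source a single chain is shared by all components; the content of the lemma is precisely that capping with the shared class $\frac{t}{t+\Psi}$ converts the synchronized target degenerations into independent ones. I would invoke Lemma \ref{lem:psi} to replace $\Psi$ by the boundary divisor $\delta$ and expand $\frac{t}{t+\Psi} = \sum_{k \geq 0} (-1)^k \Psi^{k} t^{-k}$, so that each power $\Psi^{k}$ is interpreted, via the self-intersection of $\delta$, as a $k$-fold iterated degeneration of the shared target into rubber levels over $D$.

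The heart of the argument is then a level-by-level factorization of these iterated degenerations: on each rubber level the disconnected map splits as a disjoint union over the components meeting that level, and the recursive chain structure lets one reorganize the sum over shared chains as a product, over $v$, of sums over the individual chains of $\bM_{\gamma_v}(X,D)$. This is the rubber-calculus manipulation relating connected and disconnected target psi-classes, and I would carry it out by induction on the number of degeneration levels, the base case being the simple (non-degenerate) locus corresponding to the summand $1$ in $1 + \frac{\Psi}{-t-\Psi}$. The compatibility of the virtual classes under $c$ is a routine consequence of the splitting axiom for relative and rubber virtual cycles, so it is only the psi-class bookkeeping that requires care.

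I expect the main obstacle to be exactly this factorization step: tracking how the cotangent line at $\infty$ of the shared chain compares with the individual ones under the contraction $c$, and verifying that the combinatorial weights produced by the self-intersections of $\delta$ assemble precisely into $\prod_v \frac{t}{t+\Psi}$ with no residual cross-terms between distinct components. The specific numerator $t$ — equivalently, the presence of the simple-locus term $1$ — is what makes the generating series factor, and confirming that this is the unique bookkeeping making the induction close is where the real work lies.
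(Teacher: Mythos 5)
Your proposal leaves open exactly the two points where the content of this lemma lies, and what you dismiss as routine is precisely what the paper's proof (Lemma \ref{lem_split1} in Appendix \ref{sec:appx}) has to work for. Your first step reduces the lemma to the refined identity $c_*\big(\tfrac{t}{t+\Psi}\cap[\bM^\bullet_{\Gamma_2'}(X,D)]^{\vir}\big)=\prod_v p_v^*\tfrac{t}{t+\Psi}\cap\big[\prod_v\bM_{\gamma_v}(X,D)\big]^{\vir}$, which is an unjustified strengthening of the statement. Even for the class without psi-insertions, the product rule for disconnected relative stable maps is only asserted \emph{after} pushing forward to the common base $\prod_v \bM_{g(v),1}(X,b(v))\times_X D$ (Equation \eqref{lem_split}), and its proof is not a ``splitting axiom'': it passes to stable log maps via \cite{AMW} and the analysis of \cite{KLR}*{Section 9.2, 9.3}. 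Your contraction $c$ has fibers recording how the individual expanded targets interleave into one shared chain (together with the contracted trivial-cylinder parts), and there is no a priori comparison of $c_*[\bM^\bullet_{\Gamma_2'}(X,D)]^{\vir}$ with the product virtual class before stabilization; asserting ``compatibility of the virtual classes under $c$'' as routine assumes the very thing to be proved.

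The decisive gap is the ``level-by-level factorization.'' After Lemma \ref{lem:psi} converts $\Psi$ into the degeneration divisor $\delta$, the terms produced are supported on $\bM^{\bullet\sim}_{(\Gamma_2')_1}(D)\times_{D^{k}}\bM^{\bullet}_{(\Gamma_2')_2}(X,D)$ with the rubber factor carrying its \emph{own} target psi-class $\Psi_0$ in the denominator (Equation \eqref{eqn:productrule}); so the induction you envisage requires, at each stage, a product formula for \emph{disconnected rubber} moduli capped with $1/(-t-\Psi_0)$. That is Theorem \ref{thm_appd}, and it is not proved by induction on degeneration levels: the paper deduces it from \cite{FWY2}*{Theorem 4.1}, which trades the rubber theory for the orbifold Gromov--Witten theory of the root stack $\cD_0$, where the disconnected theory factors manifestly (the moduli space of a disconnected twisted stable map is a product of connected ones, and the class $-R^*\pi_*\mathcal L_r$ splits as a sum over components). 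Your proposal has no comparable engine, and the ``cross-term cancellation'' you defer to the end is exactly the statement being proved, so the argument is circular at its core step. Incidentally, the full geometric-series expansion $\sum_k(-1)^k\Psi^k t^{-k}$ with iterated self-intersections of $\delta$ is also unnecessary: writing $\tfrac{t}{t+\Psi}=1+\tfrac{\delta}{-t-\Psi}$, a single expansion of $\delta$ suffices, the leftover denominator becoming $\Psi_0$ on the rubber, which is how the paper keeps the combinatorics to one splitting.
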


Lemma \ref{lem_split2} is a special case of Lemma \ref{lem_split1}. Each factor on the right-hand side of Lemma \ref{lem_split2} matches with the $\dfrac{t}{t+d_i\bar\psi+\bar\ev^*c_1(N_{D/X})}$ part of $C_{v_i}$ because of the following two lemmas.

\begin{lem}
Let $e$ be a root with multiplicity $d$.
Then, we have the following identity on the moduli of relative stable maps $\bM_{\Gamma}(X,D)$:
\[
\Psi=d\psi_e+\ev_e^*c_1(N_{D/X}),
\]
where $\psi_e$ is the psi-class of the relative marking $e$ defined using the universal curve over $\bM_{\Gamma}(X,D)$.
\end{lem}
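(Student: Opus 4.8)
The plan is to prove the identity by a purely local comparison of cotangent lines at the relative marking $e$, carried out on the universal objects over $\bM_\Gamma(X,D)$. Write $\pi\colon \cC \to \bM_\Gamma(X,D)$ for the universal source curve, $\sigma_e$ for the section cut out by the relative marking $e$, and $F\colon \cC \to \cX$ for the universal stable map to the universal expanded target $\cX\to \bM_\Gamma(X,D)$, with $\cD_\infty\subset \cX$ the universal relative divisor. By definition $\psi_e = c_1(\sigma_e^*\omega_\pi)$ is the source cotangent line at $e$, and $\ev_e = F\circ \sigma_e$ factors through $\cD_\infty$, so that $\ev_e^*c_1(N_{D/X})$ is genuinely pulled back from $D$. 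The first step is to reinterpret $\Psi$: being the pullback of the cotangent line at $\infty$ on the universal chain over $\mathcal T\subset \mathfrak M_{0,3}$, it is the first Chern class of the line bundle $\mathbb L_\infty$ measuring the smoothing/expansion parameter of the last bubble, i.e.\ the fibre (normal) direction of the last component of $\cX$ along $\cD_\infty$, so $c_1(\mathbb L_\infty)=\Psi$.

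The key geometric input is predeformability. Along $\sigma_e$ the map $F$ has contact order $d$ with $\cD_\infty$, so in a local trivialisation it takes the normal form $F^*w = s^d\,u$, where $s$ is a fibre coordinate of $\pi$ at $\sigma_e$, $w$ is a coordinate transverse to $\cD_\infty$, and $u$ is a unit. I would extract from this the $d$-th order normal jet of $F$ along $\sigma_e$: the leading coefficient $u|_{\sigma_e}$ defines a canonical, nowhere-vanishing section, hence an isomorphism of line bundles on $\bM_\Gamma(X,D)$ of the shape (up to the convention-dependent dualisations discussed below)
\[
\mathbb L_\infty \;\cong\; (\sigma_e^*\omega_\pi)^{\otimes d}\otimes \ev_e^* N_{D/X}\,.
\]
Taking first Chern classes then yields $\Psi = d\,\psi_e + \ev_e^*c_1(N_{D/X})$.

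The main obstacle is not the existence of such a comparison but pinning down its precise form: one must match the abstractly defined $\Psi$ (a pullback from $\mathfrak M_{0,3}$) with the concrete normal-jet line bundle $\mathbb L_\infty$, and then fix all dualisations and signs. In particular one must disentangle the normal bundle of $D_\infty$ inside the last bubble ($\cong \cO_D(-D)$) from the normal bundle $N_{D/X}=\cO_D(D)$ appearing in the statement, and check that the $\C^*$-scaling of the bubble contributes no extra twist. The subtle conceptual point is that $u$ is only a \emph{local} unit; its global failure to be constant is exactly what $\mathbb L_\infty$ records, which is precisely why $\Psi$, rather than merely $\ev_e^*c_1(N_{D/X})$, appears.

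As an independent consistency check I would verify the identity after restricting to the boundary where the last bubble first appears: there it should reduce, via the relation $\psi_\infty = D_{\infty|01}$ used in Lemma~\ref{lem:psi}, to the statement that a single weight-$d$ contact contributes the expected ramification factor, matching the denominator $\frac{t+\ev_i^*c_1(N_{D/X})}{d_i}-\psi_i$ that occurs in \eqref{eqn:combine}. This cross-check also explains why the final formula must carry \emph{both} a $d\,\psi_e$ term, coming from the $d$-fold ramification of the contact, and the normal-bundle correction $\ev_e^*c_1(N_{D/X})$.
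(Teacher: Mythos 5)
Your overall architecture (a line-bundle comparison at the relative marking, with the tangency order supplying the factor $d$) is the right one, and for what it is worth the paper itself gives no argument for this lemma at all: it defers to the external reference [Ka, Theorem 5.13.1]. Your second step is essentially correct and is the standard move: since the contact order along $\sigma_e$ is \emph{exactly} $d$, the leading coefficient of the local normal form $F^*w=s^d\,u$ is a nowhere-vanishing section of $\ev_e^*N_{\cD/\cX}\otimes(\sigma_e^*\omega_\pi)^{\otimes d}$, i.e.\ it gives a canonical isomorphism $\ev_e^*N_{\cD/\cX}^{\vee}\cong(\sigma_e^*\omega_\pi)^{\otimes d}$, where $\cD\subset\cX$ is the universal relative divisor and $N_{\cD/\cX}$ its fiberwise normal bundle. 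But note what this argument sees: only $N_{\cD/\cX}$. By itself it can produce neither $\Psi$ nor the correction $\ev_e^*c_1(N_{D/X})$.

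The genuine gap is your first step, which is false as stated: the cotangent line of the chain at $\infty$ (whose first Chern class is $\Psi$) is \emph{not} the conormal direction of $\cX$ along $\cD$; the two differ exactly by $N_{D/X}$. The statement you actually need, and never prove, is the comparison on the universal expanded target
\[
N_{\cD/\cX}\;\cong\;\ev^*N_{D/X}\otimes\mathbb{L}_{\infty}^{\vee},
\qquad\text{i.e.}\qquad
c_1(N_{\cD/\cX})=\ev^*c_1(N_{D/X})-\Psi .
\]
This is a statement about the universal family over the stack $\mathcal T$ of expanded degenerations; on the basic chart (deformation to the normal cone, local equation $x$ for $D$, smoothing parameter $t$) the relative divisor is $\{u=0\}$ with $u=x/t$, so its fiberwise normal bundle is spanned by $\partial_u=t\,\partial_x$, i.e.\ equals $N_{D/X}$ twisted by $\cO(-\delta)$, and $\cO(-\delta)$ is the tangent line of the chain at $\infty$ by the same identification used in Lemma \ref{lem:psi}; the general case follows from the inductive blow-up construction of the universal target. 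Combining this with your jet isomorphism gives $\Psi=d\psi_e+\ev_e^*c_1(N_{D/X})$, whereas taking your step 1 literally would give $\Psi=d\psi_e$ with no correction term. Relatedly, your guiding remark that the relative divisor has normal bundle $\cO_D(-D)$ inside the last bubble is wrong: the section with normal bundle $\cO_D(-D)$ is the one along which the bubble is attached to the previous component, while the relative divisor is the opposite section, with normal bundle $\cO_D(D)$. Getting this straight is not a pedantic point — it is exactly what determines that the lemma reads $+\ev_e^*c_1(N_{D/X})$ rather than $-\ev_e^*c_1(N_{D/X})$, so following your remark would land you on the wrong sign.
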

For a proof, see \cite{Ka}*{Theorem 5.13.1}.

\begin{lem}
If $\Gamma$ is a connected admissible graph with only one root. Let $\psi$ be the psi-class of the relative marking and recall that $\bar\psi$ is the pullback of the psi-class from the moduli of stable maps to $X$. Then, we have $\psi=\bar\psi$.
\end{lem}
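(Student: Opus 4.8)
The plan is to compare the two line bundles directly through the stabilization map $\mathfrak{s}\colon \bM_{\Gamma}(X,D)\to \bM_{g,1}(X,\beta)$, so that $\bar\psi=\mathfrak{s}^*\psi_1$ while $\psi$ is the cotangent line of the relative marking on the universal domain curve. First I would dispose of the easy locus: over the open substack where the target $(X,D)$ does not expand, the domain curve together with its relative marking is literally unchanged by $\mathfrak{s}$, the component carrying the relative marking meets $D$ with positive contact and hence has positive degree in $X$, so it is not contracted, and the two cotangent lines agree. Consequently the class $\psi-\bar\psi$ is supported on the boundary divisor $\delta$ along which the target expands (equivalently, where $\Psi\neq 0$, cf. Lemma \ref{lem:psi}).

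The heart of the argument is then a computation on $\delta$. Let $\sigma$ be the relative marking section and let $c\colon \mathcal{C}\to \mathfrak{s}^*\mathcal{C}_{g,1}$ be the contraction of universal curves induced by $\mathfrak{s}$, so that $\psi-\bar\psi = c_1\!\left(\sigma^*\omega_{\mathcal{C}/\mathfrak{s}^*\mathcal{C}_{g,1}}\right)$. Over a generic point of $\delta$ the relative marking lies on a rational bubble component $C_0$ mapped to a fiber of the $\PP^1$-bundle $\PP_D(N_{D/X}\oplus\cO)$, totally ramified of order $d=\beta\cdot D$ over the relative divisor, and carrying only the marking $p$ and the attaching node $q$ as special points; thus $C_0$ is contracted by $\mathfrak{s}$ and the marking is pushed to the image of $q$. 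I would compute $\psi=\psi_p$ from the ramification relation $T_pC_0^{\otimes d}\cong \ev^*N_{D/X}$, which gives $d\,\psi = -\,\ev^*c_1(N_{D/X})$ along $\delta$, and I would compute $\bar\psi=\psi_q$ by applying the previously established relation $\Psi = d\,\psi + \ev^*c_1(N_{D/X})$ to the main component at the node $q$, where the inner target psi-class vanishes, giving the same value. Since the two restrictions coincide, the coefficient of each boundary component in $\psi-\bar\psi$ is zero, and hence $\psi=\bar\psi$.

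The main obstacle I anticipate is the bookkeeping on $\delta$: one must correctly identify the contracted bubble components, keep track of the rubber and expansion conventions and of the normal bundles $N_{D/X}$ and $N_{D/X}^\vee$ entering the fiberwise tangent spaces, and handle towers of several bubbles, where the whole chain is contracted and the marking is pushed all the way down to the level-$0$ component. The single-root hypothesis is exactly what keeps this clean: with one relative marking the bubble component $C_0$ has precisely two special points, so the contraction is a simple tail contraction and the comparison reduces to the ramification computation above together with the previous Lemma.
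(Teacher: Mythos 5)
You have identified the right two objects to compare, but the heart of your argument rests on a configuration that does not exist in this moduli space. The picture you place at the generic point of $\delta$ --- the relative marking on a rational bubble $C_0$ that is a totally ramified degree-$d$ multiple cover of a fiber of $\PP_D(N_{D/X}\oplus\cO)$ with only two special points (a ``trivial cylinder''), which then gets contracted --- is precisely what the stability condition on relative stable maps forbids. A level of the expanded target whose curve consists entirely of trivial cylinders carries a $\C^*$ of automorphisms and is excluded. Here is where the two hypotheses you never use come in: because $D$ is nef and there is a single root, the part of the domain curve lying in the top rubber level is connected and carries the marking (any other connected piece would have zero contact with the relative divisor, hence, by nefness, zero contact with the divisor below it, hence would be a connected component of the whole domain, contradicting connectedness of $\Gamma$). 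So if the marking's component were a two-pointed trivial cylinder, it would constitute the entire top level, which is unstable. This is exactly the paper's proof: the stabilization never contracts the component carrying the relative marking, anywhere on the moduli space, so $\psi$ and $\bar\psi$ are \emph{literally} the same class --- not two classes differing by boundary corrections that happen to cancel.

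Beyond the factual error, the shape of your argument would not deliver the conclusion even on its own terms. If your generic boundary picture were correct, the standard comparison of cotangent lines under a tail contraction gives $\psi=\bar\psi+(\text{effective boundary})$, i.e.\ the natural map $\bar\psi\to\psi$ of line bundles vanishes along the locus where the marking's component is contracted; one would then get $\psi\neq\bar\psi$, not equality. Moreover, your method for killing the coefficients --- restricting both classes to (generic points of) $\delta$ and observing agreement --- is not valid: if $\psi-\bar\psi=\sum_i a_i\delta_i$, the restriction to $\delta_j$ involves the self-intersection $\delta_j|_{\delta_j}$, so agreement of restrictions does not force $a_j=0$. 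Finally, note that nefness is genuinely needed: without it the top rubber level can contain other, non-trivial pieces, a marking-carrying trivial cylinder is then allowed and \emph{is} contracted, and the statement fails. So the single-root hypothesis does not ``keep $C_0$ at two special points''; together with nefness it rules such a $C_0$ out entirely.
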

\begin{proof}
Because $\Gamma$ has only one root and $D$ is nef, when the target expands, there can only be one component in the rubber. Due to the stability condition on the rubber, the relative marking cannot lie on an unstable component (a multiple cover of a fiber of a rubber target with only $2$ relative markings). Thus, the stabilization does not contract components containing the relative marking. As a result, the psi-class and the pullback psi-class are the same.
\end{proof}




\section{The case of log Calabi-Yau surfaces} 
\label{sec:locrel_log_K3}

Let $S$ be a smooth projective surface over 
$\C$ and $E$ a smooth effective anticanonical divisor on $S$ which is nef. By the adjunction formula, $E$ is a genus $1$ curve. In this section, we prove Theorem 
\ref{thm_locrel_SE} expressing the local series 
$F_g^{K_S}$ in terms of the relative series 
$F_g^{S/E}$ and the stationary series 
$F_{g,\ba}^E$ of the elliptic curve $E$.
We use systematically the notation introduced in 
Section \ref{sec:log_K3_intro}.

\subsection{Specializing Theorem \ref{loc2rel} to the pair $(S,E)$}
Using Theorem \ref{loc2rel}, we obtain a relation between the local invariants $N_{g,\beta}^{K_S}$
and the relative invariants $N_{g,\beta}^{S/E}$.

Let $N_{E/S}$ be the normal bundle to $E$ in $S$. 
We consider the rank $2$ vector bundle
$N \coloneqq N_{E/S}\oplus N_{E/S}^{\vee}$ over $E$
and the anti-diagonal scaling action of $\GM$ on $N$ with weight $1$ on $N_{E/S}$ and weight $-1$ on $N_{E/S}^{\vee}$. 
We denote by $t$ the corresponding equivariant parameter.
For every $d_E \geq 0$ and 
$\bd=(d_1,\cdots,d_n)$, we set
\[N_{h,d_E}^{\Etw}(\bd)
\coloneqq \int_{[\bM_{h,n}(E,d_E)]^{\vir}} \left( \prod_{j=1}^n \frac{t \ev_j^*\omega}{t-d_j \psi_j} \right) e_{\GM}(-R^{\bullet}\pi_*f^{*}N)\]
where $\omega$ is a point class for $E$. Note that the equivariant classes
\[\prod_{j=1}^n \frac{t \ev_j^*\omega}{t-d_j \psi_j}
\quad \text{and}\quad e_{\GM}(-R^{\bullet}\pi_*f^{*}N)\]
have equivariant degrees $n$ and $2h-2$\footnote{The equivariant degree of $e_{\GM}(-R^{\bullet}\pi_*f^{*}N)$ can be computed by the Riemann-Roch formula.} respectively. The summation of their degrees equals to the virtual dimension of $[\bM_{h,n}(E,d_E)]^{\vir}$. So the degree of $N_{h,d_E}^\Etw(\bd)\in \Q(t)$ must be zero. Note that the equivariant parameter $t$ has degree $1$. It further implies that $N_{h,d_E}^\Etw(\bd)\in \Q$, that is, does not depend on $t$.


\begin{prop}
For every $\beta \in H_2(S,\Z)$ such that 
$\beta \cdot E>0$ we have
\[N_{g,\beta}^{K_{S}}
= \frac{(-1)^{\beta \cdot E-1}}{\beta
\cdot E} N_{g,\beta}^{S/E}+
 \]
\[
\sum_{n\geq 0}\sum_{\substack{g=h+g_1+\dots+g_n\\
\beta=d_E[E]+\beta_1+\dots +\beta_n \\
d_E\geq 0,\,\beta_j\cdot E>0}}  
\frac{N_{h,d_E}^{\Etw}(\boldsymbol{\beta} \cdot E)}{|\Aut (\boldsymbol{\beta},\bg)|}
\prod_{j=1}^n \left((-1)^{\beta_j \cdot E}
(\beta_j \cdot E)
N_{g_j,\beta_j}^{S/E}\right)
\]
where 
\[ \boldsymbol{\beta} \cdot E =(\beta_1\cdot E,\dots,\beta_n \cdot E) \,,\]
\[ \frac{1}{|\Aut (\boldsymbol{\beta},\bg)|}\coloneqq \frac{1}{|\Aut ((\beta_1,g_1),\cdots,(\beta_n,g_n))|}\,.\]
\end{prop}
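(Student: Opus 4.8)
The plan is to derive the Proposition by taking degrees in the identity of Theorem~\ref{loc2rel} specialized to $(X,D)=(S,E)$, and then evaluating each term explicitly. Since $E$ is nef and $\beta\cdot E>0$, the moduli space $\bM_g(\cO_S(-E),\beta)$ has virtual dimension $0$, so integrating the left-hand side against $1$ produces $N_{g,\beta}^{K_S}$. The leading term on the right is handled immediately: $F$ is proper, so $\int_{\bM_g(S,\beta)}F_*\big((-1)^g\lambda_g\cap[\bM_g(S/E,\beta)]^{\vir}\big)=\int_{[\bM_g(S/E,\beta)]^{\vir}}(-1)^g\lambda_g=N_{g,\beta}^{S/E}$, contributing $\frac{(-1)^{\beta\cdot E-1}}{\beta\cdot E}N_{g,\beta}^{S/E}$. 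It then remains to compute the degree of each correction term $\frac{1}{|\Aut(\sG)|}(\tau_\sG)_*\big(C_\sG\cap[\bM_\sG]^{\vir}\big)$ attached to a star graph $\sG\in G_{g,\beta}$ with central vertex $v$ of genus $h$ and class $d_E[E]$, and peripheral vertices $v_i$ of genus $g_i$ and class $\beta_i$ (writing $d_i\coloneqq\beta_i\cdot E$).

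First I would factor the integral $\int_{\bM_\sG}C_\sG$ through the Cartesian square expressing $\bM_\sG$ as the fiber product over $E^{n}$ of $\prod_i\bM_{g_i}(S/E,\beta_i)$ with $\bM_{h,n}(E,d_E)$. Because $C_\sG=[p_v^*C_v\prod_ip_{v_i}^*C_{v_i}]_{t^0}$ is a product of classes pulled back from the two sides, compatibility of the Gysin map $\Delta^!$ with proper pushforward lets me first push the peripheral factors forward along $\bar\ev_i$ to $E$, then pull back along the central evaluation maps $\ev_i$ and integrate over $\bM_{h,n}(E,d_E)$. This is where the $\lambda_g$-insertion pays off: since $\bM_{g_i}(S/E,\beta_i)$ has virtual dimension $g_i$ and $\lambda_{g_i}$ has degree $g_i$, the class $(-1)^{g_i}\lambda_{g_i}\cap[\bM_{g_i}(S/E,\beta_i)]^{\vir}$ is a $0$-cycle, so in the expansion of $\frac{t}{t+d_i\bar\psi+\bar\ev^*c_1(N_{E/S})}$ only the constant term survives for dimension reasons. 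Hence $(\bar\ev_i)_*\big(C_{v_i}\cap[\bM_{g_i}(S/E,\beta_i)]^{\vir}\big)=N_{g_i,\beta_i}^{S/E}\,\omega$, which inserts a point class $\ev_i^*\omega$ at the $i$-th marking of the central curve and factors out the scalar $N_{g_i,\beta_i}^{S/E}$.

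The key simplification, and the step I would emphasize, is that once the classes $\ev_i^*\omega$ are inserted, the factors of $C_v$ collapse onto the integrand defining $N_{h,d_E}^{\Etw}$. The point is the vanishing $\ev_i^*c_1(N_{E/S})\cdot\ev_i^*\omega=\ev_i^*\big(c_1(N_{E/S})\cdot\omega\big)=0$, valid because $E$ is a curve so $H^{\geq 4}(E)=0$. This kills every product of $\ev_i^*c_1(N_{E/S})$ against $\ev_i^*\omega$, turning $\frac{(t+\ev_i^*c_1(N_{E/S}))(-1)^{d_i}d_i}{t+\ev_i^*c_1(N_{E/S})-d_i\psi_i}\,\ev_i^*\omega$ into $(-1)^{d_i}d_i\,\frac{t\,\ev_i^*\omega}{t-d_i\psi_i}$. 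Therefore the central integral becomes $\prod_i\big((-1)^{d_i}d_i\big)\int_{[\bM_{h,n}(E,d_E)]^{\vir}}e_{\GM}(-R^\bullet\pi_*f^*N)\prod_i\frac{t\,\ev_i^*\omega}{t-d_i\psi_i}=\prod_i\big((-1)^{d_i}d_i\big)\,N_{h,d_E}^{\Etw}(\boldsymbol\beta\cdot E)$ by the very definition of $N_{h,d_E}^{\Etw}$. Since that invariant is $t$-independent (its equivariant degree is $0$, as noted just before the Proposition), the operation $[\,\cdot\,]_{t^0}$ is now vacuous. Assembling, the degree of the $\sG$-term is $\frac{1}{|\Aut(\sG)|}N_{h,d_E}^{\Etw}(\boldsymbol\beta\cdot E)\prod_i(-1)^{\beta_i\cdot E}(\beta_i\cdot E)N_{g_i,\beta_i}^{S/E}$.

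Finally I would reorganize the sum over isomorphism classes of star graphs into the sum over decompositions $(g=h+g_1+\dots+g_n,\ \beta=d_E[E]+\beta_1+\dots+\beta_n)$ in the statement, matching the weight $\frac{1}{|\Aut(\sG)|}$ with $\frac{1}{|\Aut(\boldsymbol\beta,\bg)|}$ using that $\Aut(\sG)$ is exactly the group permuting peripheral vertices with identical data $(g_i,\beta_i)$. The main obstacle I anticipate is not any single computation but setting up the factorization of the first paragraph rigorously: one must verify that extracting the $t^0$-coefficient, applying $\Delta^!$, and pushing forward all commute as claimed, and that the dimension bound genuinely forces the collapse of the $\bar\psi$- and $c_1$-dependence on the relative factors. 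Once that is in place, the vanishing $H^{\geq 4}(E)=0$ does the real work of matching the surviving integral with $N_{h,d_E}^{\Etw}$.
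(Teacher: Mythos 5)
Your proposal is correct and follows essentially the same route as the paper: specialize Theorem \ref{loc2rel} to $(S,E)$, use the fact that $(-1)^{g_j}\lambda_{g_j}$ exhausts the virtual dimension $g_j$ of $\bM_{g_j}(S/E,\beta_j)$ to collapse $C_{v_j}$ to its $t$-constant term and reduce the diagonal/fiber-product bookkeeping to an insertion of $\ev_j^*\omega$ on the central factor, and then invoke $c_1(N_{E/S})\cup\omega=0$ to identify the central integral with $N_{h,d_E}^{\Etw}(\boldsymbol\beta\cdot E)\prod_j(-1)^{d_j}d_j$. The only cosmetic difference is that you push the $0$-cycles forward along $\bar\ev_j$ and use the projection formula, whereas the paper phrases the same step as picking out the $1\otimes\omega$ component of the diagonal class of $E\times E$.
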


\begin{proof}
We apply Theorem \ref{loc2rel}.
Let $\sG\in G_{g,\beta}$ with $|V_1|=n$,
$g=h+\sum_{j=1}^n g_j$, $\beta=d_E[E]
+\sum_{j=1}^n \beta_j$. We denote $d_j
=\beta_j \cdot E$.
The contribution of $\sG$
is 
\[(\tau_{\sG})_* \left[p_v^*C_v\prod_{v_j\in V_1}p_{v_j}^*C_{v_j}\right]\]
where $\tau_{\sG}$ is the gluing map
\[\left(\prod_{v_j\in V_1}
\bM_{g_j}(S/E,\beta_j)
\right)\times_{E^n}\bM_{h,n}(E,d_E)\longrightarrow \bM_g(S,\beta).\]
According to \eqref{eqn:Cvi}, we have
\begin{equation*}
C_{v_j}=\frac{t}{t+d_j\bar{\psi}+\bar{\ev}^* c_1(N_{E/S})}(-1)^{g_j}\lambda_{g_j}\,.
\end{equation*}
The key point is that 
$\dim [\bM_{g_j}(S/E,\beta_j)]^{\vir}=g_j$.
Therefore the insertion of $(-1)^{g_j} \lambda_{g_j}$ already eats up all the dimension of $[\bM_{g_j}(S/E,\beta_j)]^{\vir}$ and so
$C_{v_j}$ reduces to $(-1)^{g_j} \lambda_{g_j}$ and the only possibly non-vanishing contribution of the class of the diagonal $E \hookrightarrow E \times E$ defining the gluing 
$\tau_{\sG}$ is $p_{v_j}^* 1 \times p_v^* \omega$ on 
$\bM_{g_j}(S/E,\beta_j)
\times \bM_{h,n}(E,d_E)$, where 
$1 \in H^0(E)$ and $\omega \in H^2(E)$.
Thus, the contribution of $\bM_{g_j}(S/E,\beta_j)$ is 
\[ \int_{[\bM_{g_j}(S/E,\beta_j)]^{\vir}}
(-1)^{g_j} \lambda_j =N_{g_j,\beta_j}^{S/E}\]
and the contribution of 
$\bM_{h,n}(E,d_E)$
is 
\[ \int_{[\bM_{h,n}(E,d_E)]^{\vir}}C_v \prod_{j=1}^n \ev_j^{*} \omega \,.\]
According to \eqref{eqn:Cv}, we have
\begin{equation*}
C_v=e_{\GM}(-R^{\bullet}\pi_*f^*N)\prod_{j=1}^n\frac{(t+\ev_j^*c_1(N_{E/S}))(-1)^{d_j} d_j}{t+\ev_j^*c_1(N_{E/S})-d_j\psi_j}\,.
\end{equation*}
As $c_1(N_{E/S})\cup \omega=0$ in $A^{\bullet}(E)$, 
we have
\[ C_v \prod_{j=1}^n \ev_j^{*} \omega 
=e_{\GM}(-R^{\bullet}\pi_*f^*N)\prod_{j=1}^n 
\frac{t (-1)^{d_j} d_j}{t-d_j\psi_j} \ev_j^{*}\omega\,,
\]
and so the contribution of $\bM_{h,n}(E,d_E)$ is indeed 
$N_{h,d_E}^{\Etw}(\bd)\prod_{j=1}^n (-1)^{d_j} d_j$.
\end{proof}

Expanding the denominator\footnote{The first few terms of the expansion are
\[\prod_{j=1}^n \frac{t}{t-d_j \psi_j}= 1+\frac{1}{t}\sum_{j=1}^n d_j\psi_j+\frac{1}{t^2}\left(\sum_{j=1}^n d_j^2\psi_j^2+\sum_{1\leq i< j\leq n}d_id_j\psi_i\psi_j\right)+\dots\]} in the formula for $N_{h,d_E}^\Etw(\bd)$,
and using that 
$\dim [\bM_{h,n}(E,d_E)]^{\vir}=2h-2+n$, 
we get 
\[ N_{h,d_E}^\Etw(\bd)=\sum_{\substack{\ba=(a_1,\dots,a_n)\\ a_j \geq 0\,, \sum_j a_j \leq 2h-2 }}
N_{h,d_E,\ba}^\Etw 
\prod_{j=1}^n
d_j^{a_j}\,,\]
where, for every 
$\ba=(a_1,\dots,a_n)$,
\[ N_{h,d_E,\ba}^\Etw \coloneqq \frac{1}{t^{\sum_{j=1}^n a_j}}
\int_{[\bM_{h,n}(E,d_E)]^{\vir}} \left( \prod_{j=1}^n  \ev_j^*(\omega) \psi_j^{a_j} \right) e_{\GM}(-R^{\bullet}\pi_*f^{*}N)\,.\]

Therefore, we have 
\[N_{g,\beta}^{K_{S}}
= \frac{(-1)^{\beta \cdot E-1}}{\beta
\cdot E} N_{g,\beta}^{S/E}+
 \]
\[
\sum_{n\geq 0} \!\!\!   \sum_{\substack{g=h+g_1+\dots+g_n\\
\beta=d_E[E]+\beta_1+\dots +\beta_n \\
d_E\geq 0,\,\beta_j\cdot E>0}}  
\sum_{\substack{\ba=(a_1,\dots,a_n)\\ a_j \geq 0\,, \sum_j a_j \leq 2h-2 }}
\frac{N_{h,d_E,\ba}^{\Etw}}{|\Aut (\boldsymbol{\beta},\bg)|} 
\prod_{j=1}^n \left((-1)^{\beta_j \cdot E}
(\beta_j \cdot E)^{a_j+1}
N_{g_j,\beta_j}^{S/E}\right)\,.
\]

Using generating series, we package the above recursive formula as follows. Let
\begin{equation}\label{eq:F_bar_K_S}
\bar {F}_g^{K_{S}}\coloneqq \sum_{\substack{\beta \\ \beta \cdot E >0}}N_{g,\beta}^{K_{S}}Q^{\beta}\,,
\end{equation}
\begin{equation}\label{eq:F_bar_SE}
\bar{F}_g^{S/E}\coloneqq \sum_{\substack{\beta \\ \beta \cdot E >0}}\frac{(-1)^{\beta \cdot E+g-1}}{\beta \cdot E}N_{g,\beta}^{S/E}Q^{\beta}\,,
\end{equation}
\begin{equation}\label{eq:F_E_tw}
F_{h,\ba}^\Etw \coloneqq -\frac{\delta_{h,1}\delta_{n,0}}{24}\log ((-1)^{E \cdot E}\tilde{\cQ}) + \sum_{d_E \geq 0}
N_{h,d_E,\ba}^{\Etw} ((-1)^{E \cdot E}\tilde{\cQ})^{d_E} \,,
\end{equation}
where the variable $\tilde{\cQ}$ in $F_{h, \ba}^\Etw$ is related to the variable $Q$ in $\bar{F}_{g}^{S/E}$ and $\bar{F}_g^{K_S}$ by the formula \eqref{eq:cQ}.

\begin{prop} \label{prop_gen_series_bar}
\[\bar{F}_g^{K_S}  = (-1)^g \bar{F}_g^{S/E}
+\frac{\delta_{g,1}}{24} \log Q^{E}
+\]
\[ 
\sum_{n\geq 0} 
\sum_{\substack{g=h+g_1+\dots+g_n,
\\ \ba=(a_1,\dots,a_n)\in \Z_{\geq 0}^n \\ 
(a_j,g_j) \neq (0,0),\,
\sum_{j=1}^n a_j \leq 2h-2} } 
\frac{F_{h, \ba}^\Etw}{|\Aut (\ba, \bg)|}
\prod_{j=1}^n   (-1)^{g_j-1} D^{a_j+2} \bar{F}_{g_j}^{S/E} \,.\]
\end{prop}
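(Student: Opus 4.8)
The plan is to obtain Proposition \ref{prop_gen_series_bar} as an essentially formal repackaging of the invariant-level recursion for $N_{g,\beta}^{K_S}$ displayed just above its statement, the only genuinely geometric inputs being the divisor equation for the twisted stationary theory of $E$ and the evaluation of one degree-zero integral. First I would multiply that recursion by $Q^\beta$ and sum over all $\beta$ with $\beta\cdot E>0$. The leading term $\frac{(-1)^{\beta\cdot E-1}}{\beta\cdot E}N_{g,\beta}^{S/E}$ sums directly to $(-1)^g\bar F_g^{S/E}$ by \eqref{eq:F_bar_SE} (since $(-1)^{-g}=(-1)^g$). For the correction terms, the central per-factor identity is
\[
\sum_{\beta_j,\,\beta_j\cdot E>0}(-1)^{\beta_j\cdot E}(\beta_j\cdot E)^{a_j+1}N_{g_j,\beta_j}^{S/E}Q^{\beta_j}=(-1)^{g_j-1}D^{a_j+2}\bar F_{g_j}^{S/E},
\]
which follows from $DQ^{\beta}=(\beta\cdot E)Q^{\beta}$ and \eqref{eq:F_bar_SE}; absorbing $\beta_j$ into a generating series turns each product in the recursion into the product on the right-hand side of the statement.

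Next I would handle the symmetrization. Writing $\frac{1}{|\Aut(\bbeta,\bg)|}\sum_{\text{multisets}}=\frac{1}{n!}\sum_{\text{ordered}}$ and summing each $\beta_j$ independently, I would split the $n$ factors according to whether $(a_j,g_j)=(0,0)$ or not. If $m$ are of type $(0,0)$ and $n'=n-m$ are not, then $\frac{1}{n!}\binom{n}{m}=\frac{1}{m!\,(n-m)!}$ separates the sum into an $S_m$-symmetrized block of $(0,0)$-factors and an $S_{n'}$-symmetrized block whose symmetrization reassembles $\frac{1}{|\Aut(\ba,\bg)|}$ over the non-$(0,0)$ data indexing the statement. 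The mechanism making the two blocks interact correctly is the divisor equation on $E$: since $\omega\in H^2(E)$ satisfies $\omega^2=0$, all correction terms vanish and adding an $a=0$ point-marking multiplies the twisted invariant by the degree, $N_{h,d_E,(0^m,\ba')}^\Etw=d_E^{\,m}N_{h,d_E,\ba'}^\Etw$ for $d_E\geq 1$. Hence the $m$ $(0,0)$-factors resum as $\sum_m\frac{1}{m!}d_E^{\,m}(-D^2\bar F_0^{S/E})^m=\exp\!\big(d_E(-D^2\bar F_0^{S/E})\big)$, and combined with $(Q^E)^{d_E}$ this is exactly $((-1)^{E\cdot E}\tilde{\cQ})^{d_E}$ by the definition \eqref{eq:cQ} of $\tilde{\cQ}$. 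This is precisely what converts the naive variable $Q^E$ into $\tilde{\cQ}$ inside $F_{h,\ba}^\Etw$.

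The hard part, and the only place where geometry beyond bookkeeping enters, is the treatment of the degree-zero and unstable $E$-vertices, which is where the explicit term $\frac{\delta_{g,1}}{24}\log Q^E$ and the unstable summand of $F_{h,\ba}^\Etw$ get pinned down. For a non-empty block of non-$(0,0)$ factors ($n'\geq 1$) the relevant forgetful maps are defined and the $d_E=0$ contributions match $F_{h,\ba}^\Etw$ termwise. The delicate buckets are those with $n'=0$, where the $E$-vertex carries all the genus, $h=g$. For $g\geq 2$ I expect the constant-map integral to vanish, $N_{g,0,\emptyset}^\Etw=\int_{\bM_g\times E}(-1)^g\lambda_g\,e_{\GM}(-R^\bullet\pi_*f^*N)=0$, because the integrand is pulled back from $\bM_g$ and carries no class along $E$ to absorb the fibre integration; this matches the vanishing of the corresponding $d_E=0$ term produced by the recursion. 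For $g=1$ the target $\bM_{1,0}(E,0)$ of the forgetful map is unstable, so the divisor equation fails at $d_E=0$; instead the $m=1$ term survives with $N_{1,0,(0)}^\Etw=-\int_{\bM_{1,1}}\lambda_1=-\tfrac1{24}$, producing $\tfrac1{24}D^2\bar F_0^{S/E}$, which — together with $\log Q^E-\log((-1)^{E\cdot E}\tilde{\cQ})=D^2\bar F_0^{S/E}$ from \eqref{eq:cQ} — reconciles the explicit $\tfrac{\delta_{g,1}}{24}\log Q^E$ with the unstable summand $-\tfrac1{24}\log((-1)^{E\cdot E}\tilde{\cQ})$ of $F_{1,\emptyset}^\Etw$.

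I expect this degree-zero/unstable matching to be the main obstacle, since it requires both the explicit localization evaluation of $N_{1,0,(0)}^\Etw$ and a careful verification that the divisor equation is compatible with the equivariant twist $e_{\GM}(-R^\bullet\pi_*f^*N)$ (namely, that this class is pulled back along the marking-forgetting map) and breaks down exactly at the unstable locus. Everything else — the leading term, the per-factor generating functions, and the exponential-formula symmetrization — is routine once these inputs are in place.
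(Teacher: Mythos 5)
Your proposal is correct and follows essentially the same route as the paper's proof: sum the invariant-level recursion against $Q^\beta$, split the factors into $(0,0)$-type and non-$(0,0)$-type, apply the divisor equation for the twisted stationary theory (with the single unstable exception $N^{\Etw}_{1,0,(0)}=-\tfrac{1}{24}$ at $h=1$, $d_E=0$), and exponentiate the $(0,0)$-factors into the change of variables $Q^{d_E E}\mapsto((-1)^{E\cdot E}\tilde{\cQ})^{d_E}$, which is exactly how the paper reconciles the $\tfrac{\delta_{g,1}}{24}\log Q^E$ term with the unstable summand of $F^{\Etw}_{1,\emptyset}$. Your additional checks (vanishing of the constant term $N^{\Etw}_{g,0,\emptyset}$ for $g\geq 2$, and compatibility of the divisor equation with the equivariant twist) are points the paper leaves implicit, but they are refinements of the same argument rather than a different approach.
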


\begin{proof}
Given $\bg=(g_1,\dots,g_n)$ and $\ba=(a_1,\dots,a_n)$,
we have a disjoint sum decomposition
\[ \{1,...,n\}=I_{\bg,\ba} \coprod J_{\bg,\ba} \]
where $I_{\bg,\ba}$ is the subset of $j$ such 
$(g_j,a_j) \neq (0,0)$ and 
$J_{\bg,\ba}$ is the subset of $j$ such that 
$(g_j,a_j)=0$.
Denote $\ba' =(a_j)_{j \in I_{\bg,\ba}}$. 
If $a_j=0$, then there is no insertion of $\psi_j$
in $N^{\Etw}_{h,d_E,\ba}$, we can remove
$\ev_j^{*}(\omega)$ from the integral using the divisor equation and so 
we have $N^{\Etw}_{h,d_E,\ba}=(\prod_{j \in J_{\bg,\ba}} 
d_E) N^{\Etw}_{h,d_E,\ba'}$. There is one exception: we cannot apply the divisor equation if $d_E=0$, $n=1$, $h=0$, $g_1=0$ and $a_1=0$, in which case 
\[ N_{1,0,(0)}^{\Etw}=\int_{[\bM_{1,1}(E,0)]^{\vir}} \ev^{*}(\omega)=-\frac{1}{24}\,.\]
It follows that the correct general relation is 
\begin{equation}
\label{eq_relation}
    N^{\Etw}_{h,d_E,\ba}=-\frac{\delta_{h,1}\delta_{n,1}\delta_{\ba,(0)}}{24}+ N^{\Etw}_{h,d_E,\ba'} \prod_{j \in J_{\bg,\ba}} 
d_E \,.
\end{equation}
It follows that we can replace the sum over $\ba$
by a sum over
$\ba'$.

After summing over $\beta$ to form generating series, the factors indexed by $j \in J_{g,\ba}$ are absorbed in 
$F_{h,\ba}^{E,\tw}$ via the change of variables 
$Q \mapsto \cQ$. Indeed, according to 
formula \ref{eq:cQ2}, we have 
\[ \tilde{\cQ}=(-1)^{E\cdot E} \exp(-D^2 F_0^{S/E})=(-1)^{E\cdot E} Q^E \exp(-D^2 \bar{F}_0^{S/E})\]
and so
\[ ((-1)^{E \cdot E} \tilde{\cQ})^{d_E}
= Q^{d_E E} \sum_{l \geq 0} 
\frac{1}{l!}(-1)^l (d_E D^2 \bar{F}^{S/E}_0)^l \,. \]
\end{proof}

Recall from
\eqref{eq:F_K_S} and \eqref{eq:F_SE}
that

\[F_g^{K_S}=
-\frac{1-\delta_{(E\cdot E),0}}{3!(E\cdot E)^2}(\log Q^E)^3 \delta_{g,0}
+
\left(\frac{1-\delta_{(E\cdot E),0}}{(E\cdot E)}\frac{\chi(S)}{24}-\frac{1}{24}\right)(\log Q^E) \delta_{g,1} +\bar{F}_g^{K_S} \]
and
\[F_g^{S/E}=
-\frac{1-\delta_{(E\cdot E),0}}{3!(E\cdot E)^2}(\log Q^E)^3 \delta_{g,0}
-
\frac{1-\delta_{(E\cdot E),0}}{(E\cdot E)}\frac{\chi(S)}{24}(\log Q^E) \delta_{g,1} +\bar{F}_g^{S/E} \,.\]

\begin{prop} \label{prop_gen_series}
\[
    F_g^{K_S}  = (-1)^g F_g^{S/E}+\]
\[    \sum_{n\geq 0}  \!\!\! \!\!\! \!\!\! 
\sum_{\substack{g=h+g_1+\dots+g_n,
\\ \ba=(a_1,\dots,a_n)\in \Z_{\geq 0}^n \\ 
(a_j,g_j) \neq (0,0),\,
\sum_{j=1}^n a_j \leq 2h-2} } \!\!\! \!\!\! \!\!\! 
\frac{F_{h, \ba}^\Etw}{|\Aut (\ba, \bg)|}
\prod_{j=1}^n   (-1)^{g_j-1} 
\left( D^{a_j+2} F_{g_j}^{S/E}
+ (E \cdot E) \delta_{g_j,0} \delta_{a_j,1}\right)\,.
\]
\end{prop}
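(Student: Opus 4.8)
The plan is to deduce Proposition~\ref{prop_gen_series} from Proposition~\ref{prop_gen_series_bar} by inserting the classical and unstable terms that relate the barred series $\bar{F}_g^{K_S},\bar{F}_g^{S/E}$ to the full series $F_g^{K_S},F_g^{S/E}$. Everything reduces to the action of $D$ on these purely combinatorial terms. Since $DQ^\beta=(\beta\cdot E)Q^\beta$, the operator $D$ is a derivation sending $\log Q^E$ to the constant $E\cdot E$; hence $D(\log Q^E)=E\cdot E$, $D^k(\log Q^E)=0$ for $k\geq 2$, and for the cube $D^3\big((\log Q^E)^3\big)=6(E\cdot E)^3$, $D^2\big((\log Q^E)^3\big)=6(E\cdot E)^2\log Q^E$, while $D^k\big((\log Q^E)^3\big)=0$ for $k\geq 4$.

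First I would match the product factors. Writing $C_{g_j}^{S/E}:=F_{g_j}^{S/E}-\bar{F}_{g_j}^{S/E}$ for the classical-plus-unstable term read off from the displayed identity preceding the statement, the computations above give, for every index occurring in the sum (where $(a_j,g_j)\neq(0,0)$),
\[
D^{a_j+2}C_{g_j}^{S/E}=-(E\cdot E)\,\delta_{g_j,0}\,\delta_{a_j,1}.
\]
The only nonzero case is $(g_j,a_j)=(0,1)$: there $D^3$ hits the cubic term $-\tfrac{1-\delta_{(E\cdot E),0}}{3!(E\cdot E)^2}(\log Q^E)^3$ and produces $-(1-\delta_{(E\cdot E),0})(E\cdot E)=-(E\cdot E)$; the potentially troublesome case $(g_j,a_j)=(0,0)$, where $D^2$ of the cubic does \emph{not} vanish, is ruled out by the summation constraint $(a_j,g_j)\neq(0,0)$. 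Since $\bar{F}_{g_j}^{S/E}=F_{g_j}^{S/E}-C_{g_j}^{S/E}$, this yields the factorwise identity $D^{a_j+2}\bar{F}_{g_j}^{S/E}=D^{a_j+2}F_{g_j}^{S/E}+(E\cdot E)\delta_{g_j,0}\delta_{a_j,1}$, so the whole correction sum of Proposition~\ref{prop_gen_series_bar} equals, term by term, the correction sum of Proposition~\ref{prop_gen_series}.

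Next I would substitute $\bar{F}_g^{K_S}=F_g^{K_S}-C_g^{K_S}$ and $\bar{F}_g^{S/E}=F_g^{S/E}-C_g^{S/E}$ into Proposition~\ref{prop_gen_series_bar}, with $C_g^{K_S},C_g^{S/E}$ the classical-plus-unstable terms from the two displays above the statement. After cancelling the matched correction sums, the claim reduces to the scalar identity
\[
C_g^{K_S}-(-1)^g C_g^{S/E}+\frac{\delta_{g,1}}{24}\log Q^E=0,
\]
which I would check by cases: for $g=0$ the two cubic terms coincide and cancel; for $g\geq 2$ all three terms vanish; and for $g=1$ the $\chi(S)$-contributions of $C_1^{K_S}$ and $C_1^{S/E}$ cancel, while the leftover $-\tfrac{1}{24}\log Q^E$ inside $C_1^{K_S}$ is exactly compensated by the extra $\tfrac{1}{24}\log Q^E$ appearing in Proposition~\ref{prop_gen_series_bar}.

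I do not expect a genuine obstacle here: the argument is pure bookkeeping. The only points that require care are the conventions at $E\cdot E=0$ (one uses $(1-\delta_{(E\cdot E),0})(E\cdot E)=E\cdot E$ and $(1-\delta_{(E\cdot E),0})/(E\cdot E)^2=0$ when $E\cdot E=0$) and the exclusion of the index $(g_j,a_j)=(0,0)$, which is precisely what makes the factorwise matching of the product terms go through cleanly.
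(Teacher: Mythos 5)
Your proposal is correct and follows essentially the same route as the paper's proof: rewrite Proposition \ref{prop_gen_series_bar} in terms of the unbarred series, using that $D^{a+2}\bar{F}_0^{S/E}=D^{a+2}F_0^{S/E}+(E\cdot E)\delta_{a,1}$ for $a\geq 1$ (your factorwise identity) and that the genus $1$ unstable terms satisfy $F_1^{K_S}-\bar{F}_1^{K_S}=-(F_1^{S/E}-\bar{F}_1^{S/E})-\frac{1}{24}\log Q^E$ (your $g=1$ case of the scalar identity). Your write-up simply makes explicit the bookkeeping that the paper compresses into two sentences, including the correct handling of the $E\cdot E=0$ convention and the exclusion of $(g_j,a_j)=(0,0)$.
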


\begin{proof}
We rewrite Proposition
\ref{prop_gen_series_bar}
in terms of the series 
$F_g^{K_S}$ and $F_g^{S/E}$.
One needs to use that 
$F_1^{K_S}-\bar{F}_1^{K_S}
=-(F_1^{S/E}-\bar{F}_1^{S/E})
-\frac{1}{24}\log Q^E$
and $D^{a+2} \bar{F}_0^{S/E}
=D^{a+2} F_0^{S/E}
+(E \cdot E) \delta_{a,1}$ for 
$a \geq 1$. 
\end{proof}

\subsection{Twisted Gromov-Witten theory of the elliptic curve}
In this section, we compute the twisted Gromov-Witten series $F_{g,\ba}^{\Etw}$ of the elliptic curve in terms of the untwisted Gromov-Witten series $F_{g,\ba}^E$. 
Recall from \eqref{eq:F_E_tw}
that
\[ F_{g,\ba}^\Etw = -\frac{\delta_{g,1}\delta_{n,0}}{24}\log ((-1)^{E \cdot E}\tilde{\cQ}) + \bar{F}_{g,\ba}^{\Etw} \]
where
\[\bar{F}_{g, \ba}^\Etw \coloneqq \sum_{d_E \geq 0} 
\frac{ ((-1)^{E \cdot E} \tilde{\cQ})^{d_E}}{t^{\sum_{j=1}^n a_j}}
\int_{[\bM_{g,n}(E,d_E)]^{\vir}}
\left( \prod_{j=1}^n
\ev_j^*(\omega) \psi_j^{a_j}
\right)
e_{\GM}(-R^{\bullet}\pi_*f^{*}N)\,,\]
and from \eqref{eq:F_E} that 
\[ F_{g,\ba}^E = -\frac{\delta_{g,1}\delta_{n,0}}{24}\log ((-1)^{E \cdot E}\tilde{\cQ}) + \bar{F}_{g,\ba}^E \]
where 
\[\bar{F}_{g, \ba}^E \coloneqq \sum_{d_E \geq 0} 
\tilde{\cQ}^{d_E}
\int_{[\bM_{g,n}(E,d_E)]^{\vir}}
\prod_{j=1}^n
\ev_j^*(\omega) \psi_j^{a_j}
\,.\]

\begin{prop}
\label{prop_localcurve}
For every $\ba=(a_1,\cdots,a_n)
\in \Z_{\geq 0}^n$ such that $\sum_{j=1}^n a_j \leq 2g-2$,  
the twisted Gromov-Witten theory of the elliptic curve is related with the untwisted one via
$$
\bar{F}_{g, \ba}^\Etw = (-1)^{g-1} \frac{(E\cdot E)^m}{m!} 
\bar{F}_{g,( \ba,1^m)}^E
$$
where $m \coloneqq 2g-2-\sum_{j=1}^n a_j$ and
$(\ba, 1^m)=(a_1,\dots,a_n,\underbrace{1,\cdots, 1}_m)$.
\end{prop}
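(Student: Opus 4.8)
The plan is to compute the twisted series $\bar F^{\Etw}_{g,\ba}$ by reducing the equivariant Euler class $e_{\GM}(-R^{\bullet}\pi_*f^*N)$ to ordinary stationary descendant insertions, using the quantum Riemann--Roch theorem of Coates--Givental \cite{CG}. First I would use the splitting $N=N_{E/S}\oplus N_{E/S}^\vee$ and multiplicativity of the Euler class to write $e_{\GM}(-R^{\bullet}\pi_*f^*N)=e_{\GM}(-R^{\bullet}\pi_*f^*N_{E/S})\cdot e_{\GM}(-R^{\bullet}\pi_*f^*N_{E/S}^\vee)$, where the two factors carry the $\GM$-weights $+1$ and $-1$ of the anti-diagonal action. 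As already observed in the text, the integrand is homogeneous, so $\int_{[\bM_{g,n}(E,d_E)]^{\vir}}\prod_j\ev_j^*(\omega)\psi_j^{a_j}\,e_{\GM}(-R^{\bullet}\pi_*f^*N)$ is a pure power $t^{\sum_j a_j}$; dividing by $t^{\sum_j a_j}$ amounts to extracting the part of $e_{\GM}(-R^{\bullet}\pi_*f^*N)$ of cohomological degree $m=2g-2-\sum_j a_j$. The whole problem is thus to identify this degree-$m$ part with the class obtained by adjoining $m$ extra descendant insertions $\ev^*(\omega)\psi$.

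Next I would compute each factor by Grothendieck--Riemann--Roch in the Mumford form, $\ch\big(R^{\bullet}\pi_*f^*N_{E/S}\big)=\pi_*\big(\ch(f^*N_{E/S})\,\td^\vee(\omega_\pi)\big)$ together with the usual boundary-node corrections. The geometry of $E$ forces drastic simplifications: since $c_1(TE)=0$ the target-independent Todd and $\kappa$-contributions are governed purely by Hodge classes, and since $c_1(N_{E/S})=(E\cdot E)\,\omega$ with $\omega^2=0$ one has $\ch(f^*N_{E/S})=1+(E\cdot E)f^*\omega$, so only terms linear in $f^*\omega$ survive in the target-dependent part. Each such term is a pushforward $\pi_*\big((E\cdot E)f^*\omega\cdot\psi^{\,\bullet}\big)$, which by the definition of descendants via the forgetful morphism is exactly the effect of inserting one extra marking carrying $(E\cdot E)\,\ev^*(\omega)$ and a power of $\psi$ and then forgetting it.

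I would then recombine the two weight-$\pm1$ factors. The virtual rank of $-R^{\bullet}\pi_*f^*N_{E/S}^\vee$ is $g-1+(E\cdot E)d_E$, so the leading (top $t$-power) coefficient of the weight-$(-1)$ factor is $(-1)^{g-1+(E\cdot E)d_E}$; this single sign already produces both the global $(-1)^{g-1}$ and the sign $(-1)^{(E\cdot E)d_E}$ that converts $\big((-1)^{E\cdot E}\tilde{\cQ}\big)^{d_E}$ in $\bar F^{\Etw}_{g,\ba}$ into $\tilde{\cQ}^{d_E}$ in $\bar F^{E}_{g,(\ba,1^m)}$. Taking logarithms, $\log e_{\GM}(-R^{\bullet}\pi_*f^*N_{E/S}^{\pm1})=\sum_k s_k^{\pm}\,\ch_k$ with $s_k^{\pm}$ the Bernoulli-type coefficients of $\log(\pm t+\cdot)$; the anti-diagonal matching of $+t$ and $-t$ cancels the target-independent Hodge contributions of the two factors and selects, among the target-linear terms, exactly the descendant $\ev^*(\omega)\psi$. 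Exponentiating, the cohomological-degree-$m$ part is the $m$-th symmetric power of this single insertion, which produces the $m$ identical markings recorded by $(\ba,1^m)$, the weight $(E\cdot E)^m$, and the symmetry factor $1/m!$, precisely the claimed formula.

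The main obstacle is the bookkeeping inside the Mumford/Coates--Givental computation: showing that the coefficients $s_k^{\pm}$ together with the node and $\kappa$ corrections conspire so that, after the anti-diagonal cancellation, the only surviving target-dependent insertion is $\ev^*(\omega)\psi^1$ (rather than higher descendants or $\lambda$-twisted descendants) with unit coefficient, and that all remaining Hodge classes either cancel between the two factors or reorganize into the untwisted series. Making this reorganization rigorous, rather than term-by-term, is where I expect to invoke the full Coates--Givental quantization, realizing the passage from $\bar F^{\Etw}_{g,\ba}$ to $\bar F^{E}_{g,(\ba,1^m)}$ as the action of the explicit quantized operator attached to the twist, and then matching degrees and signs.
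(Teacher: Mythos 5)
Your core strategy coincides with the paper's: both reduce the anti-diagonally twisted theory of $E$ to the untwisted stationary theory via the Coates--Givental quantum Riemann--Roch theorem, and both rest on the observation that under $(\lambda_i,\rho_i)\mapsto(-\lambda_i,-\rho_i)$ the Bernoulli/Hodge-type terms of the twisting operators cancel pairwise. The paper, however, executes this entirely inside Givental's quantization formalism rather than by direct Mumford-type degree bookkeeping: after the cancellation, the operator $\Psi(z)$ is exactly the change of variables $\tilde{\cQ}\mapsto(-1)^{E\cdot E}\tilde{\cQ}$ (this, rather than the virtual rank of $-R^{\bullet}\pi_*f^*N_{E/S}^{\vee}$, is where the sign $(-1)^{(E\cdot E)d_E}$ is accounted for), and $\Delta(z)=\sqrt{-1}\,(t+(E\cdot E)\omega)$ is $z$-independent, so in the $R$-matrix graph sum all edge contributions vanish and only single-vertex graphs survive.

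Where your sketch has a genuine gap is the claim that the anti-diagonal symmetry leaves, with unit coefficient, ``only'' the target-linear insertion $\ev^*(\omega)\psi$. The quantized action of the $z$-independent but nontrivial $\Delta$ also rescales every existing leg by $R^{-1}(\psi_j)$, contributing $(\sqrt{-1}\,t)^{-n}$, and through the dilaton shift creates insertions $T(z)=z\bigl(1-\tfrac{1}{\sqrt{-1}\,t}\bigr)+\tfrac{(E\cdot E)}{\sqrt{-1}\,t^2}\,z\,\omega$, whose target-independent pure-$\psi$ part does \emph{not} cancel. The paper must resum precisely these dilaton legs (dilaton equation plus the generalized binomial series), producing the factor $(\sqrt{-1}\,t)^{2g-2+n+m}$; combined with the leg factors and the $m$ insertions of $\tfrac{(E\cdot E)}{\sqrt{-1}\,t^2}\psi\,\omega$ forced by the dimension constraint, this is what yields the global sign $(-1)^{g-1}=(\sqrt{-1})^{2g-2}$, the weight $(E\cdot E)^m/m!$, and the power $t^{\sum_j a_j}$ cancelling the denominator in $\bar{F}^{\Etw}_{g,\ba}$. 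So your fallback of invoking the full Coates--Givental quantization is indeed the proof, but the dilaton/leg bookkeeping is not an ignorable technicality --- it is the source of the sign and $t$-powers in the statement --- and your alternative attribution of the sign to the leading virtual-rank coefficient would require a separate argument that all subleading GRR corrections carry no additional signs, which your sketch does not supply.
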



The proof of Proposition \ref{prop_localcurve} takes the remaining of this section.

Let $W$ be a rank $r$ vector bundle over a projective variety $X$. Let 
$\rho_i$ be the Chern roots of $W$. 
We fix a fiberwise action of $\C^{*}$ on 
$W$ and we denote by $\lambda_i$ the corresponding equivariant parameters.

Coates and Givental \cite{CG} have expressed in
terms of the Givental's quantization formalism the computation of the $W$-twisted Gromov-Witten theory in terms of the untwisted one obtained by applying the
Grothendieck-Riemann-Roch theorem to the 
universal curve over moduli spaces of stable maps \cite{Mum, FP}.

More precisely, according to the main result\footnote{Theorem 1 
of \cite{CG} computes the theory 
twisted by an arbitrary multiplicative characteristic class $\exp(\sum s_k \ch_k)$ and Corollary 1 of \cite{CG} describes
the twist by $e_{\GM}
(R^{\bullet} \pi_{*}f^{*}W)$.
In order to get the twist by
$e_{\GM}
(-R^{\bullet} \pi_{*}f^{*}W)=1/e_{\GM}
(R^{\bullet} \pi_{*}f^{*}W)$, we have to flip the sign of the coefficients $s_k$.} of \cite{CG},
(the stable part of) the generating function of the $W$-twisted
Gromov-Witten invariants
$$
  \int_{[\bM_{g,n}(X,d)]^{\vir}}
  \left(\prod_{j=1}^n \ev_j^*(\tau_j) \psi_j^{a_j}\right) e_{\GM}(-R^{\bullet}\pi_*f^{*}W) 
$$
can be computed via the quantization of the symplectic operator
$$
\Delta(z)\coloneqq \prod_{i=1}^r \sqrt{\lambda_i+\rho_i} \, \exp \Big( -  \sum_{ m>0} \frac{B_{2m}}{2m(2m-1)} \frac{z^{2m-1}}{(\lambda_i+\rho_i)^{2m-1}} \Big)
$$
together with the  quantization of the symplectic operator 
$$
\Psi(z)\coloneqq \prod_{i=1}^r \exp \Big( -\frac{\rho_i \ln \lambda_i}{z} + \frac{1}{z} \sum_{k>0} \frac{(-1)^{k-1} \rho_i^{k+1}}{k(k+1) \lambda_i^k} \Big)
$$
acting on the untwisted Gromov-Witten potential. 


For the case relevant to 
Proposition \ref{prop_localcurve}, 
$W$ is the rank $2$ vector bundle
$N= N_{E/S}\oplus N_{E/S}^{\vee}$
and we have
$$
\lambda_1=t,\quad \rho_1= (E \cdot E)\, \omega,\quad
\lambda_2=-t,\quad \rho_2= -(E \cdot E)\, \omega \,.
$$
In this case, the expressions for the symplectic operators become much simpler. Indeed, most of the terms are odd under 
$\rho_i \mapsto -\rho_i$, $\lambda_i \mapsto -\lambda_i$ and so most of the terms with $i=1$ cancel pairwise with the terms with $i=2$.
We have
$$
\Psi(z) = \exp\left( \frac{\log(-1)(E \cdot E) \,\omega }{z}  \right)
$$
and $\Delta(z)$ does not depend on $z$:
$$
\Delta(z) =\sqrt{-1}(t+(E \cdot E)\, \omega )\,.
$$

Using the divisor equation, we see that the  operator 
$\Psi$
acts like the change of variables (c.f. Remarks under Theorem $1'$ in \cite{CG})
$$
\tilde{\cQ} \rightarrow \tilde{\cQ} \, 
e^{\log (-1) (E \cdot E) \int_E \omega}  =  (-1)^{(E\cdot E)}\tilde{\cQ}\,.
$$

The action of $\Delta(z)$ can be viewed 
as an $R$-matrix action on the CohFT defined via the Gromov-Witten theory of elliptic curve. 



We recall the definition of the $R$-matrix (quantization) action (c.f. \cite{Giv5}, \cite{PPZ}, see also Sec.~2 of \cite{CGLL2}). Suppose that we have two
symplectic vector spaces $V, V'$ of the same dimension,
with units $1$, $1'$, and a symplectic transformation $R(z)\in \Hom(V,V')[[z]]$. Here by the symplectic condition $R(z)R^*(-z)=I$, we see that $R(z)$ is invertible and
$$
R^{-1}(z) = R^*(-z)\,.
$$
For each $2g-2+n>0$, let $F_{g,n}(-)=\langle -  \rangle_{g,n}$ be the (given) correlation functions with insertions in $V[[z]]$. We define $\hat R F_{g,n}$  by the following graph sum
$$
\hat R F_{g,n}(\tau_1 \psi_1^{k_1},\cdots,\tau_n \psi_n^{k_n}) :=
 \sum_{\Gamma \in G_{g,n}}\frac{1}{\Aut(\Gamma)} \Cont_{\Gamma}
$$
where $G_{g,n}$ is the set of stable graphs with genus $g$, $n$ legs, and the contributions $\Cont_\Gamma$ are defined via the following construction:
\begin{itemize}
    \item at each leg $l$ of $\Gamma$, we place an insertion
    $$
    R^{-1}(\psi_l)\tau_l \psi_l^{k_l} ;
    $$    
    \item at each edge $e=(v_1,v_2)$ of $\Gamma$, we place a bi-vector as a two-direction insertion
    $$ \qquad
    \frac{\sum_\alpha  e_\alpha \otimes e^\alpha -\sum_\alpha  R^{-1}(\psi_{(e,v_1)})e'_\alpha \otimes R^{-1}(\psi_{(e,v_2)}) e'{}^{\alpha} }{\psi_{(e,v_1)}+\psi_{(e,v_2)}}
    $$
    where $\{e_\alpha\}, \{e'_\alpha\} $ are arbitrary  bases of $V,V'$, with the dual basis $\{e^\alpha\}$ and $\{e'{}^{\alpha}\}$, respectively;
    \item at each vertex $v$ of $\Gamma$, we place the map
    \begin{multline*} \qquad \qquad
    \tau'_1 \psi_1^{l_1}\otimes\cdots \otimes\tau'_{n_v} \psi_{n_v}^{l_{n_v}}  \mapsto \\
    \sum_{k \geq 0} \frac{1}{k!} F_{g_v,n_v+k}\big( \tau'_1 \psi_1^{l_1},\cdots,\tau'_{n_v} \psi_{n_v}^{l_{n_v}}, T(\psi_{n_v+1}) ,\cdots, T(\psi_{n_v+k}) \big) ,
    \end{multline*}
    where $(\tau'_i,l_i)\in \{(\tau_1,k_1),\cdots,(\tau_n,k_n)\}$, and $T(z):=z \mathbf 1 -z\,R^{-1}(z)\mathbf 1'$.
\end{itemize}
In our case, $V=H^\bullet(E)(t)$
with pairing $(\alpha,\beta)=\int_E \alpha \wedge \beta$,   $
 e_{\mathbb C^*}(N) = (\lambda_1+\rho_1) (\lambda_2+\rho_2) = -(t+(E \cdot E)\omega)^2
$, and
$V'=H^\bullet(E)(t)$ with pairing 
$(\alpha,\beta)'=\int_E  {\alpha} \wedge 
 {\beta} \wedge e^{-1}_{\mathbb C^*}(N)=\int_E \frac{\alpha}{\sqrt{-1}(t+(E \cdot E)\omega)} \wedge 
\frac{\beta}{\sqrt{-1}(t+(E \cdot E)\omega)}$. The units $\mathbf 1$, $\mathbf 1'$  are both equal to $1\in H^0(E)$. 
The $R$-matrix
$R(z)=\Delta$
is independent of $z$  so the edge bi-vector is simply zero. Hence the contribution is non-vanishing only if the graph is a single vertex. 
Namely
\begin{multline*}
 \left< \tau_1 \psi_1^{a_1},\cdots,\tau_n \psi_n^{a_n} \right>^{E,\tw}_{g,n} = 
 \\
\sum_{k\geq 0} \frac{1}{k!} \left<R^{-1}(\psi_1)\tau_1 \psi_1^{a_1},\cdots,R^{-1}(\psi_n)\tau_n \psi_n^{a_n} , T(\psi_{n+1}),\cdots, T(\psi_{n+k}) \right>^E_{g,n+k}|_{\tilde{\cQ}\mapsto (-1)^{(E\cdot E)}\tilde{\cQ}}
\end{multline*}
where $\tau_j\in H^{\bullet}(E)$,
\[\left< \tau_1 \psi_1^{a_1},\cdots,\tau_n \psi_n^{a_n} \right>^{E,\tw}_{g,n}
\coloneqq
\sum_{d_E\geq 0}\tilde{\cQ}^{d_E}\int_{[\bM_{g,n}(E,d_E)]^{\vir}}
\left( 
\prod_{j=1}^n\psi_j^{a_j}\ev_j^*(\tau_j)
\right)
e_{\GM}(-R^{\bullet}\pi_*f^{*}N) ,\]
\[\left< \tau_1 \psi_1^{a_1},\cdots,\tau_n \psi_n^{a_n} \right>^{E}_{g,n}
\coloneqq
\sum_{d_E \geq 0}\tilde{\cQ}^{d_E}\int_{[\bM_{g,n}(E,d)]^{\vir}}
\prod_{j=1}^n\psi_j^{a_j}\ev_j^*(\tau_j)\,.\]
We have 
\[ R(z)=\Delta(z)=\sqrt{-1}(t+(E \cdot E)\omega) \,,\]
\[ R^{-1}(z)= \frac{1}{\sqrt{-1}t}-\frac{(E \cdot E) \omega}{\sqrt{-1} t^2}\,,\]
\[ T(z) =
z \left(1-\frac{1}{\sqrt{-1}t}
+\frac{(E \cdot E) \omega}{\sqrt{-1}t^2} \right)\,.
\]
We focus on the case where all the insertions 
$\tau_i$ are point classes $\omega$. We have
\begin{multline*}
 \left< \omega \psi_1^{a_1},\cdots,\omega \psi_n^{a_n} \right>^{E,\tw}_{g,n} = 
 \\
\sum_{k\geq 0} \frac{1}{k!} \left<R^{-1}(\psi_1)\omega \psi_1^{a_1},\cdots,R^{-1}(\psi_n)\omega \psi_n^{a_n} , T(\psi_{n+1}),\cdots, T(\psi_{n+k}) \right>^E_{g,n+k}|_{\tilde{\cQ}\mapsto (-1)^{(E\cdot E)}\tilde{\cQ}}\,.
\end{multline*}
For every $j$, we have $R^{-1}(\psi_j)\omega=\frac{1}{\sqrt{-1}t}\omega$
and so 
\begin{multline*}
 \left< \omega \psi_1^{a_1},\cdots,\omega \psi_n^{a_n} \right>^{E,\tw}_{g,n} = 
 \\
 \frac{1}{(\sqrt{-1}t)^n}
\sum_{k\geq 0} \frac{1}{k!} \left<\omega \psi_1^{a_1},\cdots,\omega \psi_n^{a_n} , T(\psi_{n+1}),\cdots, T(\psi_{n+k}) \right>^E_{g,n+k}|_{\tilde{\cQ}
\mapsto (-1)^{(E\cdot E)}\tilde{\cQ}}\,.
\end{multline*}
Writing 
\[ T(\psi_j) =
\left(1-\frac{1}{\sqrt{-1}t}\right)\psi_j
+ \frac{(E \cdot E)}{\sqrt{-1}t^2}\psi_j \omega
\]
and expanding, we obtain 
\begin{multline*}
 \left< \omega \psi_1^{a_1},\cdots,\omega \psi_n^{a_n} \right>^{E,\tw}_{g,n} = 
 \\
 \frac{1}{(\sqrt{-1}t)^n}
\sum_{m,l\geq 0} \frac{1}{m!l!} \left(\frac{(E \cdot E)}{\sqrt{-1}t^2}\right)^{m}
\left( 1-\frac{1}{\sqrt{-1}t}\right)^l \times\\
\left<\omega \psi_1^{a_1},\cdots,\omega \psi_n^{a_n},
\omega\psi_{n+1},\cdots,\omega \psi_{n+m},
\psi_{n+m+1},\cdots, \psi_{n+m+l} \right>^E_{g,n+m+l}|_{\tilde{\cQ}\mapsto (-1)^{(E\cdot E)}\tilde{\cQ}}\,.
\end{multline*}

The sum over $l$ can be evaluated using the dilaton equation
\begin{align*}
& \  \sum_{l\geq 0} \frac{1}{l!} 
\left( 1-\frac{1}{\sqrt{-1}t}\right)^l
\left< -,\psi_{n+m+1} ,\cdots,\psi_{m+m+l}  \right>^E_{g,n+m+l}\\
&=   \ 
\sum_{l \geq 0} \binom{-(2g-2+n+m)}{l} \left(\frac{1}{\sqrt{-1}t}-1\right)^l \left< -   \right>^E_{g,n+m} \\
&= \
\left( \frac{1}{\sqrt{-1}t}\right)^{ - (2g-2+n+m) } \left< -   \right>^E_{g,n+m}\,.
\end{align*}
Notice that in the second equality we have used the following generalized Newton's binomial theorem:  
$$
 \sum_{k>0}  \binom{\alpha}{k} x^k =(1+x)^{\alpha}.
$$
Hence, collecting the powers of $\sqrt{-1}$
and $t$, we obtain
\begin{multline*} 
     \left<  \omega \psi_1^{a_1},\cdots,\omega \psi_n^{a_n} \right>^{E,\tw}_{g,n} = 
     (\sqrt{-1} )^{2g-2} \times
    \\
    \sum_{m \geq 0} \frac{(E \cdot E)^m}{m!}
    t^{2g-2-m} \left< \omega \psi_1^{a_1},\cdots,\omega \psi_n^{a_n},
    \omega\psi_{n+1},\cdots,
    \omega \psi_{n+m}\right>^E_{g,n+m}|_{\tilde{\cQ}\mapsto (-1)^{(E\cdot E)}\tilde{\cQ}}.  
\end{multline*}
By dimension constraint, the correlator in the sum is non-vanishing only if $m=2g-2-\sum_{j=1}^n a_j$. Therefore, we have
\begin{multline*} 
     \left<  \omega \psi_1^{a_1},\cdots,\omega \psi_n^{a_n} \right>^{E,\tw}_{g,n} = 
     (-1)^{g-1}  t^{\sum_{j=1}^n a_j}
      \frac{(E \cdot E)^m}{m!} 
      \times
    \\
    \left< \omega \psi_1^{a_1},\cdots,\omega \psi_n^{a_n},
    \omega\psi_{n+1},\cdots,
    \omega \psi_{n+m}\right>^E_{g,n+m}|_{\tilde{\cQ}\mapsto (-1)^{(E\cdot E)}\tilde{\cQ}}  
\end{multline*}
where  $m=2g-2-\sum_{j=1}^n a_j$. Note that the term $t^{\sum_{j=1}^n a_j}$ cancels with the same term in the denominator of $\bar{F}_{g, \ba}^\Etw$.
This concludes the proof of 
Proposition \ref{prop_localcurve}.

\subsection{Conclusion of the proof of Theorem \ref{thm_locrel_SE}}



By Proposition \ref{prop_gen_series},
we have \[
    F_g^{K_S}  = (-1)^g F_g^{S/E}+\]
\[    \sum_{n\geq 0}  \!\!\!\! \!\!\!\! \!\!\!\!
\sum_{\substack{g=h+g_1+\dots+g_n,
\\ \ba=(a_1,\dots,a_n)\in \Z_{\geq 0}^n \\ 
(a_j,g_j) \neq (0,0),\,
\sum_{j=1}^n a_j \leq 2h-2} }  \!\!\!\! \!\!\!\! \!\!\!\!
\frac{1}{|\Aut (\ba, \bg)|}
F_{h, \ba}^\Etw 
\prod_{j=1}^n   (-1)^{g_j-1} 
\left( D^{a_j+2} \bar{F}_{g_j}^{S/E}
+ (E \cdot E) \delta_{g_j,0} \delta_{a_j,1}\right)\,.
\]

For every $\bg=(g_j)_j$
and  $\ba=(a_j)_j$ such that 
$(a_j,g_j) \neq (0,0)$ for every $j$
and
$\sum_j a_j \leq 2h-2$, we denote
$m =2h-2-\sum_{j=1}^n a_j$ and we define
\[ \tilde{\bg} \coloneqq (g_1,\cdots,g_n, \underbrace{0,\cdots,0}_m) \]
and 
\[ \tilde{\ba} \coloneqq (a_1,\cdots, a_n, 
\underbrace{1,\cdots, 1}_m ) \,.\]
We have $(\tilde{g}_j,\tilde{a}_j) \neq (0,0)$ for every 
$j$ and $\sum_j \tilde{a}_j
=2h-2$.

According to Proposition \ref{prop_localcurve}, we have 
\[ F^{\Etw}_{h,\ba}=(-1)^{h-1}\frac{(E \cdot E)^m}{m!}
F^E_{h,\tilde{a}}\,.\]

Therefore, we can rewrite the sum over $\ba=(a_j)_j$
with $\sum_j a_j \leq 2h-2$ as a sum over $\ba=(\bar{a}_j)_j$
with $\sum_j \bar{a}_j =2h-2$. More precisely, let 
$\bar{g}=(\bar{g}_j)_j$ and 
$\bar{a}=(\bar{a}_j)$ be of the form 
\[ \bar{\bg} = (\bar{g}_1,\cdots,\bar{g}_k, \underbrace{0,\cdots,0}_l) \]
\[ \bar{\ba}=(\bar{a}_1,\dots,\bar{a}_k,\underbrace{1,\cdots, 1}_l)\]
with $(\bar{a}_j,\bar{g}_j) \neq (0,0), (1,0)$ for $1 \leq j \leq k$, 
$g=h+\sum_{j=1}^k\bar{g}_j$ and $\sum_{j=1}^{k+l} \bar{a}_j=2h-2$. The total contribution of
$(\bar{\ba},\bar{\bg})$ in the rewritten formula is the sum of contributions of $(\ba,\bg)$ with 
$(\tilde{\ba},\tilde{\bg})=(\bar{\ba},\bar{\bg})$.
Such $(\ba,\bg)$ is of the form
\[ \bg = (\bar{g}_1,\cdots,\bar{g}_k, \underbrace{0,\cdots,0}_{l-m}) \]
\[ \ba=(\bar{a}_1,\dots,\bar{a}_k,\underbrace{1,\cdots, 1}_{l-m})\]
with $0 \leq m \leq l$. 
Using that 
\[ |\Aut(\ba,\bg)|=|\Aut(\{(\bar{a}_1,\bar{g}_1),\cdots,(\bar{a}_k,\bar{g}_k)\})|(l-m)!\,,\] 
we get that the total contribution of $(\bar{\ba},\bar{\bg})$ is given by
\begin{multline*}
\frac{(-1)^{h-1} F_{h,\bar{a}}^E}{|\Aut(\{(\bar{a}_1,\bar{g}_1),\cdots,(\bar{a}_k,\bar{g}_k)\})|}\left(\prod_{j=1}^k (-1)^{\bar{g}_j-1} D^{\bar{a}_j+1}F_{\bar{g}_j}^{S/E}\right) \times \\
\sum_{m=0}^l (-1)^{l-m}\frac{(E \cdot E)^m(D^3F_0^{S/E}+(E \cdot E))^{l-m}}{m!(l-m)!}.
\end{multline*}
Using the binomial theorem, this can be rewritten as 
\begin{multline*}
\frac{(-1)^{h-1} F_{h,\bar{a}}^E}{|\Aut(\{(\bar{a}_1,\bar{g}_1),\cdots,(\bar{a}_k,\bar{g}_k)\})|}\left(\prod_{j=1}^k (-1)^{\bar{g}_j-1} D^{\bar{a}_j+1}F_{\bar{g}_j}^{S/E}\right)
 (-1)^{l}\frac{(D^3F_0^{S/E})^{l}}{l!}\,.
\end{multline*}
As $|\Aut(\bar{\ba},\bar{\bg})|=|\Aut(\{(\bar{a}_1,\bar{g}_1),\cdots,(\bar{a}_k,\bar{g}_k)\})|l!$, we finally obtain
\[  \frac{(-1)^{h-1} F_{h,\bar{a}}^E}{|\Aut(\bar{\ba},\bar{\g})|}
\left(\prod_{j=1}^{k+l} (-1)^{\bar{g}_j-1} D^{\bar{a}_j+1}F_{\bar{g}_j}^{S/E}\right)\,.\]
This ends the proof of Theorem \ref{thm_locrel_SE}.



\subsection{Stationary Gromov-Witten theory of the elliptic curve}
\label{sec:gw_elliptic_curve}

According to Theorem \ref{thm_locrel_SE}, the local series $F_g^{K_S}$
and the relative series $F_g^{S/E}$
determine each other through 
the stationary series $F_{g,\ba}^E$ of the elliptic curve. In this section, we review the computation by Okounkov and Pandharipande \cite[\S 5]{OP1} of the stationary Gromov-Witten theory of the elliptic curve.

Recall that, for every $(a_1,\dots,a_n) 
\in \Z_{\geq 0}^n$, we denote by
\[\bar{F}_{g, \ba}^E \coloneqq \sum_{d_E \geq 0} 
\tilde{\cQ}^{d_E}
\int_{[\bM_{g,n}(E,d_E)]^{\vir}}
\prod_{j=1}^n
\ev_j^*(\omega) \psi_j^{a_j}
\]
the generating series of stationary Gromov-Witten invariants of the elliptic curve $E$.

For every $k\geq 1$, we consider the Eisenstein series
$$
E_{2k}(\tilde{\tau})=
1-\frac{4k}{B_{2k}}	 \sum_{n=1}^{\infty} \frac{n^{2k-1} \tilde{\cQ}^n}{1-\tilde{\cQ}^n} .
$$
where $\tilde{\cQ}=e^{2i\pi \tilde{\tau}}$
and the Bernoulli numbers $B_{2k}$ are
defined by $\frac{t}{e^t-1}=\sum_{n \geq 0} B_n \frac{t^n}{n!}$.
As functions on the upper half-plane $\{ \tilde{\tau} \in \C| \Ima \tilde{\tau}>0 \}$, $E_{2k}$ is modular of weight $2k$ for $SL(2,\Z)$ for every $k \geq 2$, and $E_2$ is quasimodular of weight $2$ for $SL(2,\Z)$. The ring $\C[E_2,E_4,E_6]$, graded by the weight, is exactly the graded ring
$\QMod(SL(2,\Z))$ of quasimodular forms for $SL(2,\Z)$ \cite{Zagier}.
For every $k \geq 0$, we denote by 
$\C[E_2,E_4,E_6]_k$ the weight $k$ subspace of $\C[E_2,E_4,E_6]$.

\begin{thm}[Okounkov-Pandharipande \cite{OP1}]\label{thm_quasimod_eliptic_curve}
For every $g \geq 0$, $n \geq 1$ and $\ba=(a_1,\dots,a_n) \in \Z_{\geq 0}^n$, we have 
\[ \bar{F}_{g,\ba}^E \in 
\Q[E_2,E_4,E_6]_{\sum_{j=1}^n (a_j+2)} \,.\]
\end{thm}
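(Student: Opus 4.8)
The plan is to follow the approach of Okounkov and Pandharipande \cite{OP1} and deduce quasimodularity from the Bloch--Okounkov theorem on $q$-brackets of shifted symmetric functions. I would first record the dimension constraint: since $\dim[\bM_{g,n}(E,d_E)]^{\vir}=2g-2+n$ and the integrand $\prod_{j}\psi_j^{a_j}\ev_j^*(\omega)$ has degree $\sum_j a_j+n$, the correlator vanishes unless $\sum_j a_j=2g-2$. Hence for fixed $\ba$ the genus $g$ is determined, each $\bar F_{g,\ba}^E$ is a single power series in $\tilde{\cQ}=q$, and the predicted weight $\sum_j(a_j+2)=2g-2+2n$ is the natural candidate.

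The substantive step is to rewrite the stationary descendent series via the Gromov--Witten/Hurwitz correspondence and the infinite wedge formalism. Under this correspondence the descendent insertion $\psi^{a}\ev^*(\omega)$ is matched with a \emph{completed $(a+1)$-cycle}, which acts on the infinite wedge as the diagonal operator associated to a shifted symmetric function $\overline{p}_{a+1}$ of a partition $\lambda$. Computing the degree-$d$ invariants as matrix elements of the vertex operators $\mathcal{E}_r(z)$ and summing over $d$ with weight $q^{d}=q^{|\lambda|}$, I would express the connected series as a connected $q$-bracket
\[
\bar F_{g,\ba}^E=\Big\langle\,\prod_{j=1}^{n}\overline{p}_{a_j+1}\,\Big\rangle_q^{\circ},
\]
where $\langle f\rangle_q=\big(\textstyle\sum_\lambda q^{|\lambda|}\big)^{-1}\sum_\lambda q^{|\lambda|}f(\lambda)$ and $\circ$ denotes the connected part extracted by Möbius inversion over set partitions of $\{1,\dots,n\}$.

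Quasimodularity then follows formally. The completed cycle $\overline{p}_{a+1}$ is a weighted-homogeneous element of the algebra of shifted symmetric functions: it equals the shifted power sum $p_{a+1}$ of weight $a+2$ up to lower-order corrections, all of the same weight $a+2$ (the completion coefficients being explicit constants involving Bernoulli numbers). By the Bloch--Okounkov theorem, the disconnected $q$-bracket $\langle p_{k_1}\cdots p_{k_m}\rangle_q$ is a quasimodular form for $SL(2,\Z)$, lying in $\Q[E_2,E_4,E_6]$ and homogeneous of weight $\sum_i(k_i+1)$. Since every term produced by the connected/disconnected inclusion--exclusion carries the same total weight, the connected bracket is quasimodular of weight $\sum_j(a_j+2)$, which is exactly the claim $\bar F_{g,\ba}^E\in\Q[E_2,E_4,E_6]_{\sum_j(a_j+2)}$.

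The main obstacle is the first identity, namely establishing the precise Gromov--Witten/Hurwitz dictionary with completed cycles and its translation into an infinite-wedge trace; this is the technical heart of \cite{OP1} and depends on the commutation relations and principal specialization of the operators $\mathcal{E}_r(z)$, together with the normalization matching the partition-function factor $\prod_{n\geq 1}(1-q^n)^{-1}$. Once this dictionary is in place, the quasimodularity and the weight grading are purely formal consequences of Bloch--Okounkov and the homogeneity of completed cycles, the only remaining bookkeeping being the connected/disconnected passage and the verification that the completion corrections preserve the weight.
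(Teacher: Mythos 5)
Your outline is, in substance, the original Okounkov--Pandharipande argument for this theorem; the paper itself does not reprove it, but cites \cite{OP1} and instead records the explicit $n$-point function formula (Theorem \ref{thm_ellipticGW}) together with the expansion $\Theta(z)=z\exp\bigl(\sum_{k\geq 1}\tfrac{B_{2k}}{2k(2k)!}E_{2k}(\tilde\tau)z^{2k}\bigr)$. From that formula both quasimodularity and the precise weight are manifest: assigning $z_j$ degree $-1$ and $E_{2k}$ degree $2k$ makes $\Theta$ homogeneous of degree $-1$, hence the determinant expression for $F^\bullet(z_1,\dots,z_n)$ homogeneous of degree $n$, so the coefficient of $\prod_j z_j^{a_j+1}$ lies in the single graded piece of weight $\sum_j(a_j+2)$, and M\"obius inversion over set partitions transfers this to the connected series. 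That is what the paper's presentation buys: the weight homogeneity comes for free from the closed formula, with no need to discuss the infinite wedge.

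Your route through the GW/Hurwitz correspondence and Bloch--Okounkov is correct in outline, but one step is stated imprecisely in a way that matters for the graded (not just filtered) conclusion. The completed cycle $\overline{(a+1)}$ is \emph{not} homogeneous ``up to corrections of the same weight'': in the basis of ordinary conjugacy classes its correction terms have strictly lower weight. What is true, and what your argument needs, is that under the central-character identification the completed cycle corresponds \emph{exactly} (up to an explicit scalar) to the shifted power sum $p_{a+1}$ with its $\zeta$-regularized constant, which is homogeneous of weight $a+2$ in the graded algebra of shifted symmetric functions; the inhomogeneity is entirely an artifact of the conjugacy-class basis. Relatedly, you must invoke the homogeneous form of the Bloch--Okounkov theorem (the $q$-bracket of a homogeneous element of weight $k$ is quasimodular of pure weight $k$, as in Zagier's formulation), not merely ``quasimodular of weight $\leq k$'': the statement being proved is membership in the graded piece $\Q[E_2,E_4,E_6]_{\sum_j(a_j+2)}$, and a filtration-level bound would not exclude lower-weight contributions (note that it is precisely the regularization constants, e.g.\ $p_1(\lambda)=|\lambda|-\tfrac{1}{24}$ giving $\langle p_1\rangle_q=-\tfrac{E_2}{24}$ rather than $\tfrac{1-E_2}{24}$, that make purity hold). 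With these two points repaired, your argument is complete: the connected/disconnected inclusion--exclusion preserves the grading because each block contributes its own pure weight.
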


In fact, Okounkov and Pandharipande give an explicit formula computing $\bar{F}_{g,\ba}^E$ as a polynomial in $E_2$, $E_4$, $E_6$.

For every $n \geq 1$, let 
$$
F(z_1,\cdots,z_n) := \delta_{1,n} z_1^{-1}+ \sum_{\substack{\ba=(a_1,\cdots,a_n)\in \Z_{\geq 0}^n,\\ 2g-2 =\sum_{j=1}^n a_j }}  \bar{F}_{g,\ba}^E \cdot z_1^{a_1+1}\cdots z_n^{a_n+1}
$$
and let $F^\bullet(z_1,\cdots,z_n)$ be the disconnected generating function defined by
\footnote{The condition $I_j \neq \emptyset$ excludes genus-$1$ unmarked connected components and so only series $F(z_1,\dots, z_n)$ with $n \geq 1$ enter the formula. For $n=0$, the series 
$-\frac{1}{24}+\bar{F}_{1,\emptyset}^E
=-\log \eta$ is not quasimodular.}
$$
F^\bullet(z_1,\cdots,z_n) :=  \sum_{k>0} \sum_{\substack{I_1\sqcup \cdots \sqcup I_k = \{1,\cdots,n\}\\
I_j \neq \emptyset} }   \frac{1}{|\Aut (I_1,\cdots,I_k)|} \prod_{j=1}^k  F (z_{I_j}) \,.
$$
This relation can be inverted to compute the connected series $F(z_1,\dots,z_n)$
in terms of the disconnected ones.

We introduce the odd theta function
$$
\vartheta(\tilde{\tau},z):= \vartheta_{\frac{1}{2},\frac{1}{2}}(\tilde{\tau},z)=\sum_{k=-\infty}^{\infty} (-1)^k e^{(k+\frac{1}{2})z} e^{\pi i \tilde{\tau} (k+\frac{1}{2})^2 } 
$$ 
and following Bloch and Okounkov \cite{Ok} we denote 
\begin{align*}
 \Theta(z) &
 \coloneqq \eta(\tilde{\tau})^{-3} \vartheta(\tilde{\tau},z) \\ 
&=z \exp  
\left(  \sum_{k=1}^{\infty} \frac{B_{2k}}{2k (2k)!} E_{2k}(\tilde{\tau}) z^{2k} \right)
\end{align*}

\begin{thm}[Okounkov-Pandharipande \cite{OP1}]
\label{thm_ellipticGW}
For every $n \geq 1$, we have
$$
 {F^\bullet(z_1,\cdots,z_n)}  =  \sum_{\text{$\sigma$ permutation} \atop \text{of $z_1\cdots,z_n$}} 
 \frac{\det \Big[\frac{\Theta^{(j-i+1)}(z_{\sigma(1)}+\cdots+z_{\sigma(n-j)})}{(j-i+1)!}\Big]_{i,j=1}^n}{\Theta(z_{\sigma(1)})\Theta(z_{\sigma(1)}+z_{\sigma(2)})\cdots \Theta(z_{\sigma(1)}+\cdots+z_{\sigma(n)})}  
$$
where $\Theta^{(k)}$ is the $k$-th derivative of $\Theta$ with respect to $z$ and $\frac{\Theta^{(j-i+1)}}{(j-i+1)!}$ is interpreted as $0$ if $j-i+1<0$.
\end{thm}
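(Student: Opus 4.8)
The plan is to reduce the computation to the operator formalism on the infinite wedge representation and then invoke the Bloch--Okounkov character formula, following the strategy of \cite{OP1, Ok}. The first step is to pass from the geometric descendent invariants to the Gromov--Witten/Hurwitz correspondence: after substitution into completed cycles, the stationary descendent insertions $\omega\psi^{a}$ on the elliptic curve are governed by explicit generating operators $\cE_r(z)$ acting on the semi-infinite wedge $\bigwedge^{\infty/2} V$, whose matrix coefficients in the charged fermion basis are controlled by the commutation relations $[\cE_a(z),\cE_b(w)]$, together with the energy operator $\cH$ grading by the degree $d_E$. The dictionary sends the descendent $\omega\psi^{a}$ to the coefficient of $z^{a+1}$ in a single operator $\cE(z)$, so that the disconnected generating series becomes a normalized trace
\[
F^\bullet(z_1,\dots,z_n) = \frac{\Tr\left( \tilde{\cQ}^{\cH}\, \cE(z_1)\cdots \cE(z_n)\right)}{\Tr\left( \tilde{\cQ}^{\cH}\right)}\,,
\]
where the weighting by $\tilde{\cQ}^{\cH}$ produces the dependence on $\tilde{\cQ}$ through the degree.

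The second step is to evaluate this trace. The operators $\cE(z)$ are, up to the shift producing the energy, exponentials of fermion bilinears, so their traces against $\tilde{\cQ}^{\cH}$ can be computed by Wick's theorem: the $n$-point trace collapses to a determinant (equivalently a sum over pairings) whose entries are the two-point propagators $\Tr(\tilde{\cQ}^{\cH}\,\cE(z)\cE(w))/\Tr(\tilde{\cQ}^{\cH})$. I would compute this building block explicitly by summing the geometric series over the fermion modes; the result is a ratio of Jacobi theta values, which after normalization by $\eta^{-3}$ is exactly the Bloch--Okounkov function $\Theta$ and its derivatives $\Theta^{(k)}$. The nested arguments $z_{\sigma(1)}+\cdots+z_{\sigma(n-j)}$ and the sum over permutations $\sigma$ in the statement arise from the time-ordering of the operators in the trace combined with the cyclicity of $\Tr$: reordering the $\cE(z_i)$ past one another picks up propagator contributions in precisely the staircase pattern recorded by the matrix $[\Theta^{(j-i+1)}/(j-i+1)!]$.

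The third step is bookkeeping: one checks that the determinant assembled from the Wick contractions matches the claimed formula, in particular that the sub- and superdiagonal entries reproduce $\Theta^{(0)},\Theta^{(1)}$ and so on, and that the denominator is the product $\Theta(z_{\sigma(1)})\cdots\Theta(z_{\sigma(1)}+\cdots+z_{\sigma(n)})$. Quasimodularity (Theorem \ref{thm_quasimod_eliptic_curve}) then follows because the Taylor coefficients of $\Theta$ are the Eisenstein series $E_{2k}$, by the expansion recalled just above the statement.

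I expect the main obstacle to be twofold. First, establishing the dictionary between stationary descendents and the $\cE$-operators rigorously is itself the substance of the completed-cycles correspondence and requires the full degeneration and localization analysis of \cite{OP1}, rather than a formal manipulation. Second, in the trace evaluation the commutators of the $\cE(z)$ carry a central (anomaly) term, and it is exactly this term that injects the quasimodular generator $E_2$ and fixes the normalization of $\Theta$; keeping track of it correctly, and verifying that the resulting propagator is the \emph{odd} theta function $\vartheta_{1/2,1/2}$ with half-integer characteristics rather than some other theta constant, is where the delicate constant-tracking lies.
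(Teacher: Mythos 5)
The first thing to note is that the paper contains no proof of this statement: Theorem \ref{thm_ellipticGW} is imported verbatim from Okounkov--Pandharipande \cite{OP1}, so the only meaningful benchmark for your proposal is the argument in \cite{OP1}, which in turn rests on the Bloch--Okounkov correlation function formula \cite{Ok}. Your overall architecture does match that argument: the stationary theory of $E$ is converted, via the GW/Hurwitz correspondence through completed cycles, into normalized traces $\Tr\bigl(\tilde{\cQ}^{\cH}\,\cE(z_1)\cdots\cE(z_n)\bigr)/\Tr\bigl(\tilde{\cQ}^{\cH}\bigr)$ of operators on the infinite wedge, and the claimed identity is then a statement about that trace. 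Your caveat that the dictionary itself is the substance of \cite{OP1}, not a formal manipulation, is also accurate.

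The genuine gap is in your second step. A thermal Wick expansion of a trace of fermion bilinears produces a sum over pairings whose building blocks depend on \emph{pairs} of variables $(z_i,z_j)$; it cannot by itself produce the determinant in the statement, whose $(i,j)$ entry is $\Theta^{(j-i+1)}$ evaluated at the nested partial sums $z_{\sigma(1)}+\cdots+z_{\sigma(n-j)}$. That staircase structure is not a Gram determinant of two-point propagators, and ``time-ordering plus cyclicity of the trace'' does not convert one into the other. What Bloch and Okounkov actually do is prove the formula by induction on $n$: cyclicity of the trace together with the commutation relations of the $\cE$-operators (including the central term you correctly flag) yields a quasi-periodicity relation for the $n$-point trace under $z_i \mapsto z_i + 2\pi i\tilde{\tau}$; one then shows that both sides of the identity are elliptic-type functions with the same difference equation, the same poles, and residues given by $(n-1)$-point functions, and an elliptic uniqueness argument closes the induction. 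So the two-point computation you describe is essentially the correct base case, but for $n\geq 3$ the step you attribute to Wick's theorem fails and must be replaced by this recursion.
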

For example, using that $\Theta'(0)=1$ and 
$\Theta''(0)=0$, we get
\begin{align*}
F(z) = &\  F^\bullet(z) =  \ \frac{1}{\Theta(z)} = {z}^{-1}-\frac{E_{{2}}}{24} \, z+ \left( {\frac {E_{{4}}}{2880}}+{\frac {{E_{{2
}}}^{2}}{1152}} \right) {z}^{3} \nonumber\\
 & \ \qquad \qquad \qquad -  \left( {\frac {E_{{6}}}{181440}}+{
\frac {E_{{2}}E_{{4}}}{69120}}+{\frac {{E_{{2}}}^{3}}{82944}} \right) 
{z}^{5}+\cdots\\
F(z_1,z_2) =& \  F^\bullet(z_1,z_2) -F^\bullet(z_1) F^\bullet(z_2) \\
=&  \frac{1}{\Theta(z_1+z_2)}\left( \frac{\Theta'(z_1)}{\Theta(z_1)}+\frac{\Theta'(z_2)}{\Theta(z_2)}\right) -\frac{1}{\Theta(z_1)\Theta(z_2)} \\
= & \ -\frac{ (E_2^2-E_4) \, z_1 z_2}{288} +{\frac {\left( 5\,{E_{{2}}}^{
3}-3\,E_{{2}}E_{{4}}-2\,E_{{6}} \right)  z_1^2 z_2^2}{25920}}\\
& \ +   {\frac { \left( 5\,{E_{{2}}}^{3}-E_{{2}}E_{{4}}-4\,E_{{6}
} \right) (z_{{1}}z_2^3+z_1^3 z_2)}{34560}}+\cdots
\end{align*}
Hence, we obtain
\begin{align}
F_{1,(0)}^E=&-\frac{E_2}{24} \,, \label{eq_F10}\\
F_{1,(0^2)}=&-\frac{1}{12 \cdot 24}(E_2^2-E_4) \,, \label{eq_F100} \\
     F_{2,(1^2)}^E= &  -\frac{1}{9^2 \cdot 320} (2E_6+3E_2E_4-5E_2^3)\,, \label{eq_F211} \\
     F_{2,(2)}^E \    =&  \frac{1}{9 \cdot 640} (2E_4+5E_2^2)\,. \label{eq_F22}
\end{align}

\section{Finite generation and quasimodularity for $(\PP^2,E)$}

\label{sec:fg_quasimod}

\subsection{Quasimodular forms for $\Gamma_1(3)$}
\label{section:quasimod}

We refer to \cite{DS,Zagier} for basics on modular and quasimodular forms.
For every $\Gamma$ congruence subgroup of $SL(2,\Z)$, we denote by $\Mod(\Gamma)=\bigoplus_{k \geq 0} \Mod(\Gamma)_k$
the ring of modular forms for $\Gamma$, graded by the weight, and by $\QMod(\Gamma)=\bigoplus_{k \geq 0} \QMod(\Gamma)_k$ the ring of quasimodular forms for $\Gamma$, also graded
by the weight.

We focus on the congruence subgroups $\Gamma_1(3)$
and $\Gamma_0(3)$ defined by





\[ \Gamma_1(3)=
\Big\{ \begin{pmatrix}
 a & b \\
 c & d
\end{pmatrix}
\in SL(2,\Z)|
\begin{pmatrix}
 a & b \\
 c & d
\end{pmatrix}
=
\begin{pmatrix}
 1& * \\
 0 & 1
\end{pmatrix}
\mod 3
\Big\}\,,\]

\[ \Gamma_0(3)=
\Big\{ \begin{pmatrix}
 a & b \\
 c & d
\end{pmatrix}
\in SL(2,\Z)|
\begin{pmatrix}
 a & b \\
 c & d
\end{pmatrix}
=
\begin{pmatrix}
 *& * \\
 0 & *
\end{pmatrix}
\mod 3
\Big\}\,.\]
We have $\Gamma_1(3) \subset \Gamma_0(3)$, subgroup 
of index $2$, with $-I \in \Gamma_0(3)$ and $-I \notin \Gamma_1(3)$. It follows that the modular (resp. quasimodular)
forms for $\Gamma_0(3)$ are exactly the modular 
(resp. quasimodular) forms for $\Gamma_1(3)$ of even weight.






Let 
\begin{align*}
A(\tau)&\coloneqq \left( \frac{\eta(\tau)^9}{\eta(3\tau)^3}+27\frac{\eta(3\tau)^9}{\eta(\tau)^3} \right)^{\frac{1}{3}}=1 + 6 \cQ + 6 \cQ^3 + 6 \cQ^4 + 12 \cQ^7 + 6 \cQ^9 + \dots \\
C(\tau)&\coloneqq \frac{\eta(\tau)^9}{ \eta(3\tau)^3}
=1 - 9 \cQ + 27 \cQ^2 - 9 \cQ^3 - 117 \cQ^4 + 216 \cQ^5 + 27 \cQ^6 - 450 \cQ^7 + \dots\,
\end{align*}
where 
$\eta(\tau)\coloneqq \cQ^{\frac{1}{24} } \prod_{n=1}^\infty (1-\cQ^n)$
is the Dedekind eta function and $\cQ=e^{2i\pi \tau}$.
It follows from the known modular properties of the 
eta function that $A$ and $C$ are modular forms for $\Gamma_1(3)$ respectively of weight $1$ and $3$. We refer to section 3 of \cite{Maier2} for details\footnote{In terms of the notation $\cA_3$ and $\cB_3$ of \cite{Maier2}, we have $A=\cA_3$ and $C=\cB_3^3$.}.
The combination 
\[ \frac{A^3 - C}{27}=\frac{\eta(3\tau)^9}{\eta(\tau)^3}
=\cQ + 3 \cQ^2 + 9 \cQ^3 + 13 \cQ^4 + 24 \cQ^5 + 27 \cQ^6 + 50 \cQ^7 +\dots\]
 is a cusp form of weight $3$ for $\Gamma_1(3)$.

\begin{lem}\label{lem_alg_ind_A_C}
The functions $A$ and $C$ are algebraically independent over $\C$.
\end{lem}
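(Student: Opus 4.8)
The plan is to argue by contradiction: a hypothetical polynomial relation between $A$ and $C$ will be reduced, using the weight grading, to a single polynomial identity satisfied by the weight-zero modular function $C/A^3$, and this will then be excluded by a leading-term computation. So suppose $P(A,C)=0$ for some nonzero $P\in\C[x,y]$. Assign $x$ weight $1$ and $y$ weight $3$, so that the monomial $A^iC^j$ is a modular form of weight $i+3j$ for $\Gamma_1(3)$. Decomposing $P=\sum_w P_w$ into its weighted-homogeneous components, each $P_w(A,C)$ is a modular form of pure weight $w$ and $\sum_w P_w(A,C)=0$. I would then invoke the linear independence of modular forms of distinct weights to conclude $P_w(A,C)=0$ for every $w$; since $P\neq0$, at least one $P_w$ is a nonzero weighted-homogeneous polynomial with $P_w(A,C)=0$.

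To justify that linear independence on $\Gamma_1(3)$ --- which does not contain $-I$, so that odd weights genuinely occur --- I would use $\gamma=\begin{pmatrix}1&0\\3&1\end{pmatrix}\in\Gamma_1(3)$. Applying the modular transformation law under $\gamma^n$ to a finite relation $\sum_w f_w=0$, with $f_w$ of weight $w$, gives $\sum_w(3n\tau+1)^wf_w(\tau)=0$ for all $n\in\Z$. For fixed $\tau$ in the upper half-plane the quantities $3n\tau+1$ are pairwise distinct, so a nonzero polynomial in one variable would vanish at infinitely many points; hence $f_w(\tau)=0$ for all $w$ and all $\tau$, i.e.\ each $f_w\equiv0$.

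Next I would pass to the function $u\coloneqq C/A^3$. Starting from a nonzero weighted-homogeneous relation $\sum_{i+3j=w}c_{ij}A^iC^j=0$ and dividing by $A^w$ on the dense open set where $A\neq0$, the constraint $i+3j=w$ gives $A^iC^j/A^w=u^j$, so the relation becomes a nonzero one-variable polynomial identity $Q(u)=0$. It remains to show $u$ is nonconstant. Here I would use the identity $A^3-C=27\,\eta(3\tau)^9/\eta(\tau)^3$ recorded above: the right-hand side is a nonzero cusp form of weight $3$, so it has vanishing constant term, while $A^3$ has constant term $1$ because $A=1+6\cQ+\cdots$. If $u$ equalled a constant $c$ then $C=cA^3$ and $A^3-C=(1-c)A^3$ would have constant term $1-c$, forcing $c=1$ and hence $C=A^3$, in contradiction with $A^3-C\neq0$. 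Thus $u$ is nonconstant and no nonzero $Q$ can satisfy $Q(u)=0$, completing the contradiction.

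The only genuinely analytic input is the last step, the verification that $A$ and $C$ are not proportional after taking the cube; everything preceding it is formal manipulation with the weight grading. I expect this to be the crux but not a real obstacle: the needed facts --- that $A,C$ are modular forms for $\Gamma_1(3)$ of weights $1,3$, their $\cQ$-expansions, and that $A^3-C$ is a cusp form --- are all already available above, and alternatively one simply reads off $u=1-27\cQ+O(\cQ^2)$ directly from the given expansions of $A$ and $C$ to see nonconstancy.
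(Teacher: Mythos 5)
Your proof is correct and follows essentially the same route as the paper's: reduce a hypothetical relation to a weighted-homogeneous one, divide by a power of $A$ to obtain a polynomial equation for $C/A^3$, and contradict this by observing $C/A^3=1-27\cQ+O(\cQ^2)$ is nonconstant. The only difference is that you explicitly justify the separation into homogeneous weight components (via the $\gamma^n$ transformation argument), a step the paper simply asserts, which is a harmless and welcome addition.
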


\begin{proof}
If there exists a non-trivial polynomial relation between 
$A$ and $C$, then there exists a non-trivial polynomial relation between $A$ and $C$ which is homogeneous for the weight and so of the form 
$\sum_{n+3m=k} a_{nm}A^n C^m$ for some $k$. If such non-trivial relation existed, then, dividing by $A^k$, we would get that $C/A^3$ is solution of a non-trivial polynomial equation with complex coefficients and so $C/A^3$
would be constant, which is not the case as $C/A^3=1-27\cQ+O(\cQ^2)$.
\end{proof}

According to \cite[Figure 3.4]{DS}\footnote{In \cite{DS}, the formula is a priori valid only for $k \geq 2$. We get that 
$\dim \Mod(\Gamma_1(3))_1=1$ because it is $>0$ by the existence of $A$, and $\leq 1$ as $\dim \Mod(\Gamma_1(3))_2=1$.}, the dimension of $\Mod(\Gamma_1(3))_k$ is $\lfloor \frac{k}{3}\rfloor+1$. Therefore, Lemma \ref{lem_alg_ind_A_C}
implies\footnote{Therefore, $A$ is the unique weight $1$ modular form for $\Gamma_1(3)$ with constant term $1$, and so can be described as the weight $1$ Eisentein series $E_1^{\psi,1}
=1+6 \sum_{n \geq 1} \sum_{d|n} \psi(d) \cQ^n$, where $\psi$ is the non-trivial character of $(\Z/3)^{*} \simeq \{\pm 1\}$
(extended by $\psi(d)=0$ if $3|d$), or as the theta series of the hexagonal lattice $\Z[\sqrt{-3}]$. In particular, the coefficient of $\cQ^n$ in $A$ is the number of $(x,y)\in \Z^2$
such that $|x+e^{\frac{2i\pi}{3}}y|^2=x^2-xy+y^2=n$
(see \cite[Exercise 4.11.5]{DS}).} that 
\[ \Mod(\Gamma_1(3))=\C[A,C]\,.\]

By Proposition 20 of \cite{Zagier}, the Eisenstein series $E_2(\tau)$ for $SL(2,\Z)$ is algebraically independent of 
$\Mod(\Gamma_1(3))$ over $\C$ and 
\[ \QMod(\Gamma_1(3))=\Mod(\Gamma_1(3))[E_2]\,.\]
The difference $3E_2(3\tau)-E_2(\tau)$ is modular for $\Gamma_1(3)$. Using that the space of weight $2$ modular forms for $\Gamma_1(3)$ is of dimension $1$, we find that $3E_2(3\tau)-E_2(\tau)=2A^2$. Therefore,  we can use 
\[ B(\tau)\coloneqq \frac{1}{4}\big( E_2(\tau) +3E_2(3\tau))
=1-6\cQ-18\cQ^2-42\cQ^3-42\cQ^2+\cdots\]
instead of $E_2$ as depth 1 weight 2 generator and we have 
\[ \QMod(\Gamma_1(3))=\C[A,B,C]\,.\]

The ring $\Q[A,B,C]$ is closed under the differential operator 
$$\partial_\tau:= \textstyle  \frac{1}{2\pi i} \frac{d}{d\tau}=\cQ \frac{d}{d \cQ}.$$ 
Indeed, $\partial_\tau$ maps
weight $k$ quasimodular forms to weight $k +2$ quasimodular forms by a straightforward 
calculation from the definition of quasimodularity, and in particular for
$\QMod(\Gamma_1(3))=\C[A,B,C]$
this allows to show easily
the Ramanujan-type identities 
\begin{equation}\label{eqn_Ramid}
\begin{split}
\partial_\tau A = \frac{1}{6} A&(B+A^2) -\frac{C}{3},\\
\partial_\tau B = \frac{1}{6} (B^2-A^4),&\qquad
\partial_\tau C = \frac{1}{2} C(B-A^2)\,,
\end{split}
\end{equation}
because the quasi-modular properties of both sides are
known and so it is enough to identify finitely many terms of the $\cQ$-expansions.

\subsection{Mirror of local $\PP^2$ and quasimodular forms}
We review the relation between the mirror family of local $\PP^2$ and modular forms, following \cite{ASYZ, Zho, CI}. 

Let $\mathbb{H}=\{ \tau \in \C | \Ima \tau >0\}$ be the upper half-plane. We consider the modular curve $Y_1(3)=[\mathbb{H}/\Gamma_1(3)]$. It is a smooth orbifold, whose coarse moduli space can be identified with $\{ q \in \C | q \neq -\frac{1}{27},0\} \cup \{ \infty \}$, and with a single $\Z/3$-orbifold point at $q=\infty$. The modular curve $Y_1(3)$ has two cusps, given by $q=0$ and $q=-\frac{1}{27}$
(corresponding respectively to the $\Gamma_1(3)$-equivalence classes of 
$\tau=i\infty$ and $\tau=0$). In the context of mirror symmetry, where $Y_1(3)$ is viewed as the stringy K\"ahler moduli space of local $\PP^2$, the point $q=0$ is the large volume point, $q=-\frac{1}{27}$ is the conifold point and $q=\infty$ is the orbifold point.

The coordinate $q$ on $Y_1(3)$ is expressed
in terms of $\tau$ \cite{Maier1}\footnote{The description of the Hauptmodul of the genus $0$ modular curve $Y_1(3)$ in terms of eta functions goes back to Klein and Fricke at the end of the 19th century.} by 
\begin{equation}\label{eq:hauptmodul}
\frac{1}{1+27q}=1+27 \frac{\eta(3 \tau)^{12}}{\eta(\tau)^{12}}\,,
\end{equation}
that is, denoting $X \coloneqq (1+27q)^{-1}$, by
\begin{equation}
X=\frac{A^3}{C} \,.
\end{equation}
The periods of the universal family of elliptic curves with $\Gamma_1(3)$-level structure are solution of the Picard-Fuchs equation
\cite{Maier2} 
\begin{equation}\label{PF_ODE}
\left[\left(q \frac{d}{dq}\right)^2
-3q
\left(3 \left(q \frac{d}{dq}\right) +1 \right)
\left(3 \left(q \frac{d}{dq}\right) +2 \right) \right]\Pi=0 \,.
\end{equation}

In Section \ref{localp2generators}, we defined the functions $S$, $I_{11}=q\frac{dI_1}{dq}$, $I_{12}=q\frac{dI_2}{dq}$
in terms of solutions $I_1$ and $I_2$ of the differential equation \eqref{mirror_ODE} describing genus $0$ mirror symmetry for local $\PP^2$. As the
differential equation \eqref{mirror_ODE} is obtained from \eqref{PF_ODE} by applying 
$q \frac{d}{dq}$ and flipping the sign of $q$, we deduce that $\tau$, viewed as a multivalued function of $q$, is given by 
\begin{equation} 
\tau =\frac{1}{2}+\frac{1}{2\pi i}\frac{I_{12}(q)}{I_{11}(q)}\,,
\end{equation}
that is,
\begin{equation} \label{eq:cQ_formula}
\cQ = e^{2 \pi i \tau}=-\exp \left(\frac{I_{12}(q)}{I_{11}(q)} \right) \,.
\end{equation}
One should not confuse the variables $q$, $\cQ$, $Q$:
\begin{itemize}
\item $q$ is such that $(1+27q)^{-1}$ is a globally defined coordinate on $Y_1(3)$\,,
\item $\cQ=e^{2i\pi \tau}$ is the flat modular coordinate for the family of elliptic curves parametrized by $Y_1(3)$\,,
\item $Q$ is the flat coordinate determined 
by the mirror of local $\PP^2$.
\end{itemize}

The variables $q$ and $\cQ$ are related by 
\eqref{eq:hauptmodul}-\eqref{eq:cQ_formula}.
The variables $q$ and $Q$ are related by the mirror transformation \eqref{eq:mirror_map}.

According to
\cite{ASYZ,Maier1,Maier2,Zho}, the functions $X$, $I_{11}$, $S$ and the quasimodular forms $A$, $B$, $C$ determine each other through the identities
\begin{equation}\label{eq:ABS_SIX}
    A = I_{11},\quad B = \frac{I_{11}^2}{X} (X+6S),\quad
    C= \frac{I_{11}^3}{X} \,,
\end{equation}
of inverse
\begin{equation}\label{eq:SIX_ABC}
X=\frac{A^3}{C},\quad I_{11}=A,\quad 
S=\frac{1}{6}\frac{AB-A^3}{C}\,.
\end{equation}
In particular, viewed as functions of $\tau$, $X$ is a modular function of weight $0$ for $\Gamma_1(3)$, $I_{11}$ is a modular form of weight $1$ for $\Gamma_1(3)$ and $S$ is a quasimodular function of weight $0$ for $\Gamma_1(3)$.

\begin{lem}\footnote{In \cite{LhoP}, the algebraic independence of $S$, $X$, $I_{11}$ is not considered as known, and explicit lifts of functions to the ring $\Q[S,X]$ are constructed. This extra work is not necessary by Lemma \ref{lem_alg_ind_S_X}.} \label{lem_alg_ind_S_X}
The functions $S$, $X$, $I_{11}$
are algebraically independent over $\C$.
\end{lem}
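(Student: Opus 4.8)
The plan is to deduce the algebraic independence of $S$, $X$, $I_{11}$ directly from that of the quasimodular generators $A$, $B$, $C$, using the explicit dictionary between the two triples recorded in \eqref{eq:ABS_SIX} and \eqref{eq:SIX_ABC}. First I would note that $A$, $B$, $C$ are algebraically independent over $\C$: this is precisely the freeness of the generators in $\QMod(\Gamma_1(3))=\C[A,B,C]$, established above from Lemma \ref{lem_alg_ind_A_C} together with the algebraic independence of $E_2$ over $\Mod(\Gamma_1(3))=\C[A,C]$ (recall $E_2=2B-A^2$). Consequently the field $\C(A,B,C)$ has transcendence degree $3$ over $\C$.

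The key step is then to observe that $\C(S,X,I_{11})=\C(A,B,C)$ as subfields of the field of meromorphic functions on $\mathbb{H}$. Indeed, \eqref{eq:SIX_ABC} writes $I_{11}=A$, $X=A^3/C$ and $S=\tfrac{1}{6}(AB-A^3)/C$ as rational functions of $A,B,C$, giving $\C(S,X,I_{11})\subseteq\C(A,B,C)$; conversely \eqref{eq:ABS_SIX} writes $A=I_{11}$, $C=I_{11}^3/X$ and $B=I_{11}^2(X+6S)/X$ as rational functions of $S,X,I_{11}$, giving the reverse inclusion. Hence $\C(S,X,I_{11})$ also has transcendence degree $3$ over $\C$. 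Since a field extension of transcendence degree $3$ that is generated by three elements forces those three generators to be algebraically independent (any generating set contains a transcendence basis, which here must consist of all three elements), it follows that $S$, $X$, $I_{11}$ are algebraically independent over $\C$.

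I do not expect a genuine obstacle here; the only point requiring care is that substituting \eqref{eq:SIX_ABC} into a hypothetical nontrivial relation $P(S,X,I_{11})=0$ introduces denominators (powers of $C$), so a direct argument would have to clear $C^N$ and then verify that the resulting element of $\C[A,B,C]$ is genuinely nonzero. Phrasing the proof at the level of fraction fields and transcendence degree, as above, sidesteps this bookkeeping entirely and makes the conclusion immediate.
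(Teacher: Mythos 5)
Your proof is correct and follows the same route as the paper, which simply declares the lemma a ``direct corollary'' of the algebraic independence of $A$, $B$, $C$ together with the dictionary \eqref{eq:ABS_SIX}--\eqref{eq:SIX_ABC}. Your transcendence-degree argument (that $\C(S,X,I_{11})=\C(A,B,C)$ has transcendence degree $3$, so three generators must be algebraically independent) is exactly the standard bookkeeping the paper leaves implicit.
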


\begin{proof}
It is a direct corollary of the algebraic independence of the quasimodular forms $A$, $B$, $C$ reviewed in Section \ref{section:quasimod}.
\end{proof}

By Lemma \ref{lem_alg_ind_S_X}, the ring of functions generated by $S$ and $X$ is the polynomial ring
 $$
\bR\coloneqq \mathbb Q[X,S]\,.
 $$
We define a grading on $\bR$
by $\deg X=\deg S =1$ 
and denote by $\bR_{\leq k}$ the subspace of polynomials with degree no more than $k$. 
 
We consider the vector space 
$[ X^{-(g-1)} \cdot \bR_{\leq 3g-3} ]^{\reg}\,,
$ where $[ - ]^\reg$   (the ``orbifold regularity" condition) is defined by
\begin{equation}
    [ - ]^{\reg}:=\{f(X,S) :  3 \deg_X f +  \deg_S f \geq 0\}  .
\end{equation}

\begin{prop} \label{prop_spaces_equality}
The expression of $S$ and $X$ in terms of the quasimodular forms $A$, $B$, $C$
induces an identification 
\[ [ X^{-(g-1)} \cdot \bR_{\leq 3g-3} ]^{\reg}
=C^{-(2g-2)} \cdot \Q[A,B,C]_{6g-6}\]
for every $g \geq 2$.
\end{prop}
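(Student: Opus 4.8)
The plan is to prove the equality of the two function spaces by establishing the two inclusions separately, using the explicit substitution formulas \eqref{eq:ABS_SIX} and \eqref{eq:SIX_ABC} together with the algebraic independence of $S,X,I_{11}$ (Lemma \ref{lem_alg_ind_S_X}) and the freeness $\QMod(\Gamma_1(3))=\C[A,B,C]$. First I would unwind both sides into explicit monomial bases. Reading the regularity operation $[-]^{\reg}$ monomial-wise (which is the only interpretation making it a subspace), $[X^{-(g-1)}\bR_{\leq 3g-3}]^{\reg}$ is the $\Q$-span of the monomials $X^{c}S^{b}$ with $b\geq 0$, $c+b\leq 2g-2$ and $3c+b\geq 0$; the bound $c\geq 1-g$ coming from $X^{-(g-1)}\bR_{\leq 3g-3}$ is in fact automatic, since for $c<0$ the inequalities $b\geq -3c$ and $b\leq 2g-2-c$ force $-2c\leq 2g-2$. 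On the other side, $C^{-(2g-2)}\Q[A,B,C]_{6g-6}$ is spanned by $A^{i}B^{j}C^{k-(2g-2)}$ with $i,j,k\geq 0$ and $i+2j+3k=6g-6$.

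For the inclusion $C^{-(2g-2)}\Q[A,B,C]_{6g-6}\subseteq[X^{-(g-1)}\bR_{\leq 3g-3}]^{\reg}$, I would substitute $A=I_{11}$, $B=I_{11}^{2}(X+6S)/X$, $C=I_{11}^{3}/X$ into a basis monomial. The weight condition $i+2j+3k=6g-6$ is exactly what makes the powers of $I_{11}$ cancel against $C^{2g-2}$, leaving
\[ \frac{A^{i}B^{j}C^{k}}{C^{2g-2}} = (X+6S)^{j}\,X^{2g-2-j-k}. \]
Expanding $(X+6S)^{j}$ by the binomial theorem yields monomials $X^{c}S^{b}$ with $c=2g-2-k-j+l$ and $b=j-l$ for $0\leq l\leq j$, and one checks directly that $b=j-l\geq 0$, that $c+b=2g-2-k\leq 2g-2$, and that $3c+b=i+2l\geq 0$, so each such monomial lies in the left-hand space.

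For the reverse inclusion I would instead use the inverse relations \eqref{eq:SIX_ABC}, $X=A^{3}/C$ and $S=\tfrac{1}{6}(AB-A^{3})/C$, to compute
\[ C^{2g-2}X^{c}S^{b} = \frac{1}{6^{b}}\,A^{3c+b}\,(B-A^{2})^{b}\,C^{2g-2-c-b}. \]
The three exponents $3c+b$, $b$ and $2g-2-c-b$ are precisely non-negative under the defining constraints of the left-hand space, and a one-line weight count gives total weight $(3c+b)+2b+3(2g-2-c-b)=6g-6$; hence $X^{c}S^{b}\in C^{-(2g-2)}\Q[A,B,C]_{6g-6}$. The two inclusions together give the asserted identity.

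I expect the only genuine obstacle to be bookkeeping rather than conceptual: correctly fixing the monomial-wise meaning of $[-]^{\reg}$, matching the three inequalities on each side, and keeping track of the fact that the $I_{11}$-powers cancel exactly when the weight equals $6g-6$. The algebraic independence results guarantee that the listed monomials are genuine $\Q$-bases, so no hidden linear relations intervene; as a byproduct the two sides share the common dimension $\#\{(i,j,k)\in\Z_{\geq 0}^{3} : i+2j+3k=6g-6\}$, which agrees with the direct lattice-point count of the constrained monomials $X^{c}S^{b}$ on the left.
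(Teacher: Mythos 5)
Your proof is correct and follows essentially the same route as the paper's: both inclusions are established monomial-by-monomial using the substitution formulas \eqref{eq:ABS_SIX} and \eqref{eq:SIX_ABC}, with the weight condition $i+2j+3k=6g-6$ forcing the cancellation of $I_{11}$-powers in one direction and the three inequalities $3c+b\geq 0$, $b\geq 0$, $c+b\leq 2g-2$ matching the non-negativity of the $A$-, $B$-, $C$-exponents in the other. The only (harmless) additions beyond the paper's argument are your observation that the constraint $c\geq -(g-1)$ is automatic and the concluding dimension count.
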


\begin{proof}
We first prove that 
$[ X^{-(g-1)} \cdot \bR_{\leq 3g-3} ]^{\reg} \subset C^{-(2g-2)} \cdot \Q[A,B,C]_{6g-6},.$ Let $X^{-(g-1)} \cdot X^j S^k \in [ X^{-(g-1)} \cdot \bR_{\leq 3g-3} ]^{\reg}$.
According to \eqref{eq:SIX_ABC}, the $C$-degree of $X^{-(g-1)} \cdot X^j S^k$ is $-j-k+(g-1)\geq -(2g-2)$, using that $j+k \leq 3g-3$ by definition of $\bR_{\leq 3g-3}$. The $B$-degree is $k \geq 0$. The $A$-degree of each term is $\geq -(3g-3)+3j+k$, which is $\geq 0$ by definition of $[-]^{\reg}$. Therefore, $X^{-(g-1)} \cdot X^j S^k = C^{-(2g-2)}f(A,B,C)$ for some 
$f \in \Q[A,B,C]$. As $X^{-(g-1)} \cdot X^j S^k$ is of weight $0$ and $C$ of weight $3$, $f$ is of weight $6g-6$.

We prove that conversely $ C^{-(2g-2)} \cdot \Q[A,B,C]_{6g-6} \subset [ X^{-(g-1)} \cdot \bR_{\leq 3g-3} ]^{\reg}$. 
Let $C^{-(2g-2)} \cdot A^m B^n C^l \in
C^{-(2g-2)} \cdot \Q[A,B,C]_{6g-6}$.
According to \eqref{eq:ABS_SIX}, the power of $I_{11}$ is equal to the weight.
As $C^{-(2g-2)} \cdot A^m B^n C^l$ is of weight $0$, $C^{-(2g-2)} \cdot A^m B^n C^l$ is independent of $I_{11}$ and is only a function of $S$ and $X$.
The $S$-degree of  $C^{-(2g-2)} \cdot A^m B^n C^l$ is $n \geq 0$. 
The $X$-degree of each term in $C^{-(2g-2)} \cdot A^m B^n C^l$ is $\geq (2g-2)-n-l
$ and $\leq (2g-2)-l$. Therefore, $\deg_X + \deg_S \leq (2g-2)-l \leq 2g-2$.
On the other hand, $3 \deg_X + \deg_S
\geq (6g-6)-3n-3l+n=(6g-6)-2n-3l$, which is 
$\geq 0$ as  $A^m B^n C^l$ is of weight $6g-6$, $B$ is of weight $2$ and $C$ is of weight $3$.
\end{proof}

\begin{rmk}
It follows from the proof of Proposition 
\ref{prop_spaces_equality} that the constraint defined by $[-]^{\reg}$ is equivalent to the absence of negative powers of $A$. According to \cite{Maier2}, $A$ has a zero at the orbifold point $q=\infty$. Therefore, $[-]^{\reg}$ is indeed the condition imposed by the regularity at the orbifold point.
\end{rmk}

We consider the differential operator 
\[ D=3Q\frac{d}{dQ} \,.\]

\begin{lem}
\begin{align}
D & = 3I_{11}^{-1}\cdot  q \frac{d}{dq} \label{D_Q_q}\\
&= 3 C^{-1}\cdot  \cQ \frac{d}{d\cQ} \,.\label{D_q_cQ}
\end{align}
\end{lem}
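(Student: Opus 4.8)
The plan is to prove the two equalities by the chain rule, passing among the three coordinates $q$, $Q=e^{I_1}$ and $\cQ=e^{2\pi i\tau}$, and to reduce the only nontrivial point to a Wronskian computation for the Picard--Fuchs equation.

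For \eqref{D_Q_q}, I would start from the mirror map \eqref{eq:mirror_map}, which gives $\log Q=I_1$, so that $Q\frac{d}{dQ}=\frac{d}{d\log Q}=\frac{d}{dI_1}$. Since $I_{11}=q\frac{dI_1}{dq}$ by \eqref{def_I11}, we have $\frac{dI_1}{dq}=I_{11}/q$, hence $\frac{d}{dI_1}=\frac{q}{I_{11}}\frac{d}{dq}=I_{11}^{-1}\,q\frac{d}{dq}$ by the chain rule. Multiplying by $3$ yields $D=3Q\frac{d}{dQ}=3I_{11}^{-1}q\frac{d}{dq}$, which is \eqref{D_Q_q}. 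This is the computation already indicated in Section \ref{section_quasimod_intro}.

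For \eqref{D_q_cQ}, in view of \eqref{D_Q_q} it suffices to prove $I_{11}^{-1}q\frac{d}{dq}=C^{-1}\cQ\frac{d}{d\cQ}$, i.e. $q\frac{d}{dq}=\frac{I_{11}}{C}\,\cQ\frac{d}{d\cQ}$. Writing $\cQ\frac{d}{d\cQ}=\frac{d}{d\log\cQ}$ and using the chain rule, this is equivalent to the scalar identity $q\frac{d}{dq}\log\cQ=\frac{I_{11}}{C}$. Using $C=I_{11}^3/X$ from \eqref{eq:ABS_SIX}, the target reads $q\frac{d}{dq}\log\cQ=\frac{X}{I_{11}^2}$, where $X=(1+27q)^{-1}$ by \eqref{def_X}. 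I would then feed in \eqref{eq:cQ_formula}, namely $\log\cQ=i\pi+\frac{I_{12}}{I_{11}}$, which gives $q\frac{d}{dq}\log\cQ=\frac{qW}{I_{11}^2}$, where $W\coloneqq I_{11}\frac{dI_{12}}{dq}-\frac{dI_{11}}{dq}I_{12}$ is the Wronskian of the pair $(I_{11},I_{12})$. Thus everything reduces to proving $qW=X=(1+27q)^{-1}$.

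To evaluate $W$, I would use that $I_{11}=q\frac{dI_1}{dq}$ and $I_{12}=q\frac{dI_2}{dq}$ are two solutions of a common second-order equation: since $\theta=q\frac{d}{dq}$ commutes with the polynomials $(3\theta+1),(3\theta+2)$ in $\theta$, factoring $\theta$ on the right of the operator in \eqref{mirror_ODE} shows that $I_{11},I_{12}$ satisfy \eqref{PF_ODE} with the sign of the $q$-term reversed, i.e. $[\theta^2+3q(3\theta+1)(3\theta+2)]\Pi=0$. Rewriting this in the variable $q$ as $\Pi''+\frac{1+54q}{q(1+27q)}\Pi'+\frac{6}{q(1+27q)}\Pi=0$ and applying Abel's formula, $W$ satisfies $\frac{W'}{W}=-\frac{1+54q}{q(1+27q)}=-\frac1q-\frac{27}{1+27q}$, so $W=\frac{c}{q(1+27q)}$ for a constant $c$. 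I would fix $c$ from the large volume point $q\to0$: from $I_{11}=1+O(q)$ and $I_{12}=\log q+O(q\log q)$, read off the expansions after \eqref{IfunctionofKP2}, one gets $W\sim 1/q$, forcing $c=1$. Hence $qW=\frac{1}{1+27q}=X$, which gives $q\frac{d}{dq}\log\cQ=X/I_{11}^2=I_{11}/C$ and completes the proof.

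The main obstacle is the bookkeeping in the Wronskian step: one must correctly execute the sign reversal $q\mapsto-q$ when passing from \eqref{mirror_ODE} to the second-order equation for the periods, track the resulting partial-fraction expansion of the Abel exponent, and pin down the normalization constant $c$ from the leading $q$-expansion. Everything else is a mechanical application of the chain rule among the variables $q$, $Q$ and $\cQ$.
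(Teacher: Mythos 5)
Your proof is correct, and it differs from the paper's in the key step. Both arguments handle \eqref{D_Q_q} identically (the chain rule through the mirror map $Q=e^{I_1}$), and both reduce \eqref{D_q_cQ} via \eqref{eq:cQ_formula} and $C=I_{11}^3/X$ to the single scalar identity $I_{22}\coloneqq q\frac{d}{dq}\bigl(\frac{I_{12}}{I_{11}}\bigr)=\frac{X}{I_{11}^2}$. At that point the paper simply cites Theorem 2 of Zagier--Zinger \cite{ZaZi08}, whereas you prove the identity from scratch: you observe that $\theta=q\frac{d}{dq}$ can be factored out on the right of the operator in \eqref{mirror_ODE} (legitimate, since $\theta$ commutes with $(3\theta+1)(3\theta+2)$), so that $I_{11}$ and $I_{12}$ solve the second-order equation $[\theta^2+3q(3\theta+1)(3\theta+2)]\Pi=0$, i.e.\ \eqref{PF_ODE} with $q\mapsto -q$; you then compute $I_{22}=qW/I_{11}^2$ with $W$ the Wronskian, and Abel's formula applied to the standard form $\Pi''+\frac{1+54q}{q(1+27q)}\Pi'+\frac{6}{q(1+27q)}\Pi=0$ gives $W=\frac{c}{q(1+27q)}$, with $c=1$ fixed by the expansions $I_{11}=1+O(q)$, $I_{12}=\log q+O(q\log q)$, so $qW=(1+27q)^{-1}=X$. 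I checked the operator factorization, the reduction to standard form, the partial fractions, and the normalization; all are correct. The trade-off: the paper's citation is shorter and ties the statement to a more general result on hypergeometric periods, while your argument is self-contained and elementary (Abel's formula plus leading asymptotics), and it makes structurally transparent why the discriminant factor $X=(1+27q)^{-1}$ of \eqref{def_X} appears---it is exactly the exponential of the integrated Abel coefficient.
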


\begin{proof}
The equality \eqref{D_Q_q} is clear as $Q=e^{I_{1}(q)}$ by \eqref{eq:mirror_map} and \eqref{def_I11}.

In order to prove \eqref{D_q_cQ}, we use 
\eqref{eq:cQ_formula}:
\[ \cQ \frac{d}{d\cQ}=I_{22}^{-1} q \frac{d}{dq}\]
where 
\[ I_{22} \coloneqq q \frac{d}{dq}
\left( \frac{I_{12}(q)}{I_{11}(q)} \right)\,.\]
Theorem 2 in \cite{ZaZi08} shows that
$
I_{22}=\frac{X}{I_{11}^2}$.
We conclude using that
$C=\frac{I_{11}^3}{X}$ according to \eqref{eq:ABS_SIX}.
\end{proof}

\begin{lem} \label{lem_D_weight}
For every $n,k \geq 0$, we have 
\[ D( C^{-n} \cdot \Q[A,B,C]_k )
\subset C^{-(n+1)} \cdot \Q[A,B,C]_{k+2} \,.\]
\end{lem}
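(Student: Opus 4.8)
The plan is to reduce everything to the operator identity $D = 3\,C^{-1}\partial_\tau$ established in \eqref{D_q_cQ} (recall $\partial_\tau = \cQ\frac{d}{d\cQ}$), together with the fact recorded in Section~\ref{section:quasimod} that $\partial_\tau$ raises the weight of a quasimodular form by $2$, so that $\partial_\tau\big(\Q[A,B,C]_k\big) \subset \Q[A,B,C]_{k+2}$. Since $D$ and multiplication by $C^{-n}$ are both $\C$-linear, it suffices to treat a single homogeneous element $g \in \Q[A,B,C]_k$ and to show that $D(C^{-n} g)$ lies in $C^{-(n+1)}\cdot \Q[A,B,C]_{k+2}$.

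The key step is to expand via the Leibniz rule:
\[
D(C^{-n}g) = 3\,C^{-1}\partial_\tau(C^{-n}g)
= 3\,C^{-1}\Big( -n\,C^{-n-1}(\partial_\tau C)\,g + C^{-n}\,\partial_\tau g\Big).
\]
The second summand equals $3\,C^{-(n+1)}\partial_\tau g$, and since $\partial_\tau g \in \Q[A,B,C]_{k+2}$ it already lies in $C^{-(n+1)}\cdot\Q[A,B,C]_{k+2}$. For the first summand the crucial observation is that $\partial_\tau C$ is divisible by $C$: the Ramanujan identity \eqref{eqn_Ramid} gives $\partial_\tau C = \tfrac12 C(B-A^2)$, whence
\[
3\,C^{-1}\cdot\big(-n\,C^{-n-1}(\partial_\tau C)\,g\big)
= -\tfrac{3n}{2}\,C^{-(n+1)}(B-A^2)\,g,
\]
and $(B-A^2)g \in \Q[A,B,C]_{k+2}$ because $B-A^2$ has weight $2$. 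Hence both summands lie in $C^{-(n+1)}\cdot\Q[A,B,C]_{k+2}$, which is exactly the claim.

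I expect essentially no obstacle here: the entire content is the single cancellation making $\partial_\tau C$ divisible by $C$. Without it, differentiating $C^{-n}$ and then multiplying by the $C^{-1}$ coming from $D$ would naively produce a pole of order $n+2$ in $C$; the divisibility brings the order back down to $n+1$, matching the target space. Everything else is bookkeeping of weights: one factor of $C^{-1}$ from the definition of $D$ and the weight shift $+2$ from $\partial_\tau$. The only point requiring a moment's care is to confirm membership in $\Q[A,B,C]_{k+2}$ rather than merely in $\QMod(\Gamma_1(3))_{k+2}$, but this is immediate from the closure of $\Q[A,B,C]$ under $\partial_\tau$ via \eqref{eqn_Ramid} and from $B-A^2 \in \Q[A,B,C]_2$.
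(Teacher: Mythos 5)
Your proof is correct and follows exactly the paper's argument: the paper's own (very terse) proof cites \eqref{D_q_cQ} and \eqref{eqn_Ramid} and notes precisely your key point, namely that $\partial_\tau C$ is divisible by $C$, so the Leibniz expansion produces only a pole of order $n+1$ rather than $n+2$. You have simply written out the bookkeeping that the paper leaves implicit.
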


\begin{proof}
This follows from \eqref{D_q_cQ} and 
\eqref{eqn_Ramid}. The important point is that $\partial_\tau C$ is divisible by $C$.
\end{proof}

\begin{lem}\label{lem_SX_derivative}
We have
\begin{equation} \label{eq:SX_derivative}\textstyle
    q\frac{d}{dq} S = -S^2+\frac{X-1}{3}\,S-\frac{X(X-1)}{9}\,,\quad 
q\frac{d}{dq} X = X(X-1)\,.
\end{equation}
\end{lem}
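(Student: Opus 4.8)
The plan is to prove the two formulas separately, establishing the one for $X$ first and then feeding it, together with the differential equation \eqref{mirror_ODE}, into the computation of $q\frac{d}{dq}S$. Throughout I write $\theta\coloneqq q\frac{d}{dq}$. For $X$ I would argue directly from the definition $X=(1+27q)^{-1}$ in \eqref{def_X}: differentiating gives $\theta X=q\frac{dX}{dq}=-27q\,X^2$, and since $27q=X^{-1}-1=\frac{1-X}{X}$ this becomes $\theta X=-\frac{1-X}{X}X^2=X(X-1)$, which is the second identity.

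For $S$ the first step is to reduce the third-order equation \eqref{mirror_ODE} satisfied by $I_1$ to a second-order equation for $I_{11}=\theta I_1$ (recall \eqref{def_I11}). The key point is that $\theta$, $3\theta+1$ and $3\theta+2$ are all polynomials in the single operator $\theta$ and hence commute, so $\theta(3\theta+1)(3\theta+2)I_1=(3\theta+1)(3\theta+2)(\theta I_1)=(3\theta+1)(3\theta+2)I_{11}$ and $\theta^3 I_1=\theta^2 I_{11}$. Thus \eqref{mirror_ODE} becomes $\theta^2 I_{11}+3q(3\theta+1)(3\theta+2)I_{11}=0$. Expanding $(3\theta+1)(3\theta+2)=9\theta^2+9\theta+2$ and collecting terms yields $(1+27q)\theta^2 I_{11}+27q\,\theta I_{11}+6q\,I_{11}=0$; substituting $1+27q=X^{-1}$, $27q=\frac{1-X}{X}$, $6q=\frac{2(1-X)}{9X}$ and multiplying by $X$ gives the clean form $\theta^2 I_{11}+(1-X)\theta I_{11}+\frac{2(1-X)}{9}I_{11}=0$.

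The second step converts this linear equation into a Riccati equation for the logarithmic derivative. Dividing by $I_{11}$ and setting $u\coloneqq\theta\log I_{11}$, the identity $\frac{\theta^2 I_{11}}{I_{11}}=\theta u+u^2$ turns the ODE into $\theta u=-u^2-(1-X)u-\frac{2(1-X)}{9}$. By the definition \eqref{def_S} of $S$ we have $u=S+\frac13(X-1)$, while the $X$-identity gives $\theta S=\theta u-\frac13\theta X=\theta u-\frac13 X(X-1)$. Substituting $u=S+\frac13(X-1)$ into the Riccati equation and simplifying, using $\frac29(X-1)^2+\frac29(X-1)=\frac29 X(X-1)$, produces $\theta u=-S^2+\frac{X-1}{3}S+\frac29 X(X-1)$; subtracting $\frac13 X(X-1)$ then gives exactly $\theta S=-S^2+\frac{X-1}{3}S-\frac{X(X-1)}{9}$.

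The computation is purely mechanical once the second-order equation for $I_{11}$ is in hand, so the only genuine content is the order reduction of \eqref{mirror_ODE}; the single point requiring care is recognizing that the operators are polynomials in $\theta$ and therefore commute, which lets $\theta$ pass cleanly through and drop the order by one. As a cross-check, one could instead express $S$ and $X$ through the quasimodular generators via \eqref{eq:SIX_ABC} and differentiate using the Ramanujan identities \eqref{eqn_Ramid} together with $\theta=\tfrac{A}{C}\partial_\tau$ coming from \eqref{D_Q_q}--\eqref{D_q_cQ}; this reproduces the same two formulas but at the cost of noticeably more algebra.
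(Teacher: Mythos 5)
Your proof is correct and takes essentially the same route as the paper: the $X$ identity by direct differentiation of $X=(1+27q)^{-1}$, and the $S$ identity deduced from the mirror differential equation \eqref{mirror_ODE}, which is precisely the paper's (unexpanded) argument "the formula for $S$ follows from the differential equation \eqref{mirror_ODE}." Your order reduction to a second-order equation for $I_{11}$ followed by the Riccati substitution $u=\theta\log I_{11}$ simply supplies the details the paper omits, and your suggested cross-check via \eqref{eq:SIX_ABC} and \eqref{eqn_Ramid} is exactly the alternative proof the paper mentions.
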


\begin{proof}
The formula for $X$ is clear as $X=(1+27q)^{-1}$ by definition.

The formula for $S$ follows from the differential equation \eqref{mirror_ODE}.
Alternatively, one can use the expression 
\eqref{eq:SIX_ABC} in terms of quasimodular forms and \eqref{eqn_Ramid}-\eqref{D_q_cQ}.
\end{proof}



\subsection{Quasimodular forms: from $SL(2,\Z)$ to $\Gamma_1(3)$}

As reviewed in Section \ref{sec:gw_elliptic_curve}, the ring of quasimodular forms for $SL(2,\Z)$ is generated by the Eisenstein series
$E_2(\tilde{\tau})$, $E_4(\tilde{\tau})$,
$E_6(\tilde{\tau})$.
On the other hand, we have seen in Section \ref{section:quasimod} that the ring of quasimodular forms for $\Gamma_1(3)$ is generated by the functions $A(\tau)$, 
$B(\tau)$, $C(\tau)$.

The following result shows that after the change of variables $\tilde{\tau}=3\tau$ the ring of quasimodular forms for $SL(2,\Z)$ embeds in the ring of quasimodular form for $\Gamma_1(3)$.

\begin{prop} \label{prop_from_sl2_to_gamma}
We have the embedding of graded rings
\[ \Q[E_2(3 \tau),E_4(3\tau),E_6(3 \tau)]
\subset \Q[A(\tau),B(\tau),C(\tau)]\,.\]
Explicitly, we have the identities
\begin{align}
  3 \,  E_2(3\tau) \ =& \ \  {2B(\tau)+A(\tau)^2}   \  \   =   \ \frac{3\, {I_{11}}^2}{X} (X+4S),\quad \nonumber \\
   9\,  E_4(3\tau) \  =& \ \    {A(\tau)^4+8\,A(\tau)C(\tau)}  \   = \  \frac{{I_{11}}^4}{X} (X+8),\quad \label{modularformtogenerator} \\ \quad  
   27\,  E_6(3\tau) \  = & \ \ {-A(\tau)^6+20\,A(\tau)^3C(\tau)+8\, C(\tau)^2}  \ = \  \frac{- {I_{11}}^6}{ X^2} (X^2-20 X-8) .  \qquad \nonumber 
\end{align}
\end{prop}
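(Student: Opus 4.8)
The plan is to prove the three displayed identities directly, after which the asserted graded embedding follows formally. The base case is the weight-$2$ identity, and for this no new computation is needed: combining the definition $B=\frac14\bigl(E_2(\tau)+3E_2(3\tau)\bigr)$ with the relation $3E_2(3\tau)-E_2(\tau)=2A^2$ already established in Section~\ref{section:quasimod}, I would eliminate $E_2(\tau)$ to get $4B=6E_2(3\tau)-2A^2$, that is $3E_2(3\tau)=2B+A^2$.

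Next I would bootstrap the identities for $E_4(3\tau)$ and $E_6(3\tau)$ from this base case by differentiation, rather than by a dimension/coefficient argument. Applying $\partial_\tau=\frac{1}{2\pi i}\frac{d}{d\tau}$ to $3E_2(3\tau)=2B+A^2$, and using the chain rule $\partial_\tau[f(3\tau)]=3\,(\partial_{\tilde\tau}f)(3\tau)$ together with the classical Ramanujan identity $\partial_{\tilde\tau}E_2=\frac{1}{12}(E_2^2-E_4)$ on the left-hand side and the Ramanujan-type identities~\eqref{eqn_Ramid} for $A,B,C$ on the right-hand side, I can solve algebraically for $E_4(3\tau)$; the $B^2$ and $A^2B$ contributions cancel against those coming from $E_2(3\tau)^2$, leaving $9E_4(3\tau)=A^4+8AC$. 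Differentiating this identity once more, now invoking $\partial_{\tilde\tau}E_4=\frac13(E_2E_4-E_6)$ and substituting the already-proven expressions for $E_2(3\tau)$ and $E_4(3\tau)$, I would solve for $E_6(3\tau)$ and obtain $27E_6(3\tau)=-A^6+20A^3C+8C^2$. Each step is an exact identity of holomorphic functions on $\mathbb{H}$, so differentiation loses no information, and each differentiated equation lives in the weight-$4$ (resp.\ weight-$6$) part of $\Q[A,B,C]$, so only a finite polynomial computation is involved. The final equalities in each displayed line, expressing everything through $X$, $S$, $I_{11}$, are then obtained by the direct substitution of $A=I_{11}$, $B=\frac{I_{11}^2}{X}(X+6S)$, $C=\frac{I_{11}^3}{X}$ from~\eqref{eq:ABS_SIX}.

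With the three identities in hand, the graded embedding is immediate and self-contained: they exhibit $E_2(3\tau),E_4(3\tau),E_6(3\tau)$ as elements of $\Q[A,B,C]=\QMod(\Gamma_1(3))$, so the ring map $\Q[E_2(3\tau),E_4(3\tau),E_6(3\tau)]\to\Q[A,B,C]$ is well defined and weight-preserving, hence graded. Injectivity is automatic: if $P\bigl(E_2(3\tau),E_4(3\tau),E_6(3\tau)\bigr)\equiv 0$ in $\tau$, then setting $\tilde\tau=3\tau$ (which still ranges over all of $\mathbb{H}$) gives $P(E_2,E_4,E_6)\equiv 0$, forcing $P=0$ by the algebraic independence of $E_2,E_4,E_6$ recalled in Section~\ref{sec:gw_elliptic_curve}.

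The step I expect to require the most care is the bookkeeping of cancellations in the two differentiations, in particular confirming that the $B$-dependence drops out of $E_4(3\tau)$ and $E_6(3\tau)$ entirely. This is forced conceptually, since $E_4$ and $E_6$ are honestly modular of weight $4$ and $6$ and so $E_4(3\tau),E_6(3\tau)\in\Mod(\Gamma_1(3))=\Q[A,C]$, but it must be checked explicitly in the algebra. As a fully independent cross-check I would verify each identity by matching the first few $\cQ$-expansion coefficients: since $\dim\Mod(\Gamma_1(3))_4=2$ and $\dim\Mod(\Gamma_1(3))_6=3$, with monomial bases $\{A^4,AC\}$ and $\{A^6,A^3C,C^2\}$ whose leading coefficient vectors in $\cQ$ are linearly independent, matching the first two (resp.\ three) coefficients already forces equality.
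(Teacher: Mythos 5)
Your proposal is correct, and I verified the algebra: eliminating $E_2(\tau)$ between $B=\frac14\bigl(E_2(\tau)+3E_2(3\tau)\bigr)$ and $3E_2(3\tau)-E_2(\tau)=2A^2$ gives $3E_2(3\tau)=2B+A^2$; differentiating with $\partial_\tau f(3\tau)=3(\partial_{\tilde\tau}f)(3\tau)$, the classical Ramanujan identities $\partial_{\tilde\tau}E_2=\frac{1}{12}(E_2^2-E_4)$, $\partial_{\tilde\tau}E_4=\frac13(E_2E_4-E_6)$, and \eqref{eqn_Ramid} does produce exactly $9E_4(3\tau)=A^4+8AC$ and $27E_6(3\tau)=-A^6+20A^3C+8C^2$, with the $B$-dependence cancelling as you predicted. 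This is, however, a genuinely different route from the paper's. The paper's proof is a two-line argument: it observes that $f(3\tau)$ is (quasi)modular for $\Gamma_0(3)$ whenever $f$ is (quasi)modular for $SL(2,\Z)$, so both sides of each identity in \eqref{modularformtogenerator} lie in a finite-dimensional graded piece of $\QMod(\Gamma_1(3))$, and equality follows by matching finitely many $\cQ$-expansion coefficients (this is what you relegate to a cross-check). What the paper's approach buys is brevity and uniformity — one principle disposes of all three identities at once. What your approach buys is that, granting the weight-$2$ identity (which the paper had already established in the guise of $3E_2(3\tau)-E_2(\tau)=2A^2$) and the Ramanujan-type identities, the higher-weight identities follow by exact closed-form algebra with no expansion checking, and the derivation explains structurally why no new generators beyond $A,C$ can appear in weights $4$ and $6$. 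One caveat worth acknowledging: your proof is not more "expansion-free" at its foundations than the paper's, since both $3E_2(3\tau)-E_2(\tau)=2A^2$ and the identities \eqref{eqn_Ramid} were themselves proved in the paper by the same dimension-plus-coefficient-matching device; you have shifted where that device is invoked, not removed it. Your injectivity argument for the graded embedding (pulling back a polynomial relation along $\tilde\tau=3\tau$ and invoking the algebraic independence of $E_2,E_4,E_6$) is fine, though strictly speaking the paper only asserts the inclusion of subrings of functions on $\mathbb{H}$, for which the three identities alone suffice.
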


\begin{proof}
For every $n \geq 1$, if $f(\tau)$ is a modular (resp.\ quasimodular) form for $SL(2,\Z)$ of weight $k$ then $f(n \tau)$ is a modular 
(resp.\ quasimodular) form for $\Gamma_0(n)$
of weight $k$.

Once we know the modularity properties of each side, the identities 
\eqref{modularformtogenerator} are easy to prove: it is enough to match finitely many terms of the $\cQ$-expansions. Expressions in terms of $X$, $I_{11}$ and $S$ follow from 
\eqref{eq:ABS_SIX}.
\end{proof}





\subsection{Genus $0$ invariants of $(\PP^2,E)$}
Applying Theorem \ref{thm_locrel_SE}
for $g=0$ (which reduces in this case to the genus $0$ local-relative 
correspondence of \cite{GGR}), we get
\[ F_0^{\PP^2/E}=F_0^{K_{\PP^2}} \,.\]

\begin{lem}
\begin{align}
&DF_0^{K_{\PP^2}}=-I_2\, \label{eq:D_F0}\\ 
&D^2 F_0^{K_{\PP^2}} = -3 \frac{I_{12}}{I_{11}} \, \label{eq:D2_F0}\\
&D^3 F_0^{K_{\PP^2}} = - 9C^{-1}=- \frac{9X}{I_{11}^3} \label{eq:D3_F0}\,.
\end{align}
\end{lem}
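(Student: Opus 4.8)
The plan is to derive the three identities by applying the operator $D$ successively, switching at each stage to whichever of the coordinate representations of $D$ is most transparent. The first identity \eqref{eq:D_F0} is immediate: since $D=3Q\frac{d}{dQ}$ by definition, the genus $0$ mirror theorem \eqref{thm_mirror}, which reads $-3Q\frac{dF_0^{K_{\PP^2}}}{dQ}=I_2$, gives at once $DF_0^{K_{\PP^2}}=-I_2$.

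For \eqref{eq:D2_F0} I would apply $D$ to the identity just obtained, this time using the representation $D=3I_{11}^{-1}\cdot q\frac{d}{dq}$ from \eqref{D_Q_q}. Since $I_{12}=q\frac{dI_2}{dq}$ by the definition \eqref{def_I11} of $I_{11}$ (applied to $I_2$), this yields $D I_2=3I_{11}^{-1}\,q\frac{dI_2}{dq}=3\frac{I_{12}}{I_{11}}$, so that $D^2F_0^{K_{\PP^2}}=-3\frac{I_{12}}{I_{11}}$, in agreement with the value $-D^2F_0^{K_{\PP^2}}=3\frac{I_{12}}{I_{11}}$ already recorded in the introduction. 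For \eqref{eq:D3_F0} I would apply $D$ once more, now in the form $D=3C^{-1}\cdot\cQ\frac{d}{d\cQ}$ from \eqref{D_q_cQ}. The key input is \eqref{eq:cQ_formula}: from $\cQ=-\exp\!\left(\frac{I_{12}}{I_{11}}\right)$ one has $\frac{I_{12}}{I_{11}}=\log\cQ+\mathrm{const}$, hence $\cQ\frac{d}{d\cQ}\!\left(\frac{I_{12}}{I_{11}}\right)=1$. Therefore $D\!\left(\frac{I_{12}}{I_{11}}\right)=3C^{-1}$, giving $D^3F_0^{K_{\PP^2}}=-3\cdot 3C^{-1}=-9C^{-1}$, and the last equality $-9C^{-1}=-\frac{9X}{I_{11}^3}$ is just the relation $C=\frac{I_{11}^3}{X}$ from \eqref{eq:ABS_SIX}.

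I do not expect a genuine obstacle here; the entire computation is routine once the three expressions for $D$ (in the variables $Q$, $q$, and $\cQ$) are available from the preceding lemmas. The only point deserving emphasis is the observation $\cQ\frac{d}{d\cQ}\!\left(\frac{I_{12}}{I_{11}}\right)=1$, which is precisely the statement that $\frac{I_{12}}{I_{11}}$ is, up to an additive constant, the logarithm of the modular coordinate $\cQ$ --- i.e.\ a reformulation of \eqref{eq:cQ_formula} --- and is what collapses the third derivative to the clean value $-9C^{-1}$. The remaining care is purely bookkeeping: selecting, at each differentiation, the coordinate in which $D$ acts by a single clean rescaled derivative.
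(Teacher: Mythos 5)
Your proof is correct, and for \eqref{eq:D_F0} and \eqref{eq:D2_F0} it coincides with the paper's: the first identity is exactly the genus $0$ mirror theorem \eqref{thm_mirror}, and the second follows by applying $D$ in the form \eqref{D_Q_q} together with $I_{12}=q\frac{dI_2}{dq}$. For \eqref{eq:D3_F0} your route differs in presentation. You apply $D$ in the $\cQ$-coordinate form \eqref{D_q_cQ} and observe that \eqref{eq:cQ_formula} makes $\frac{I_{12}}{I_{11}}$ equal to $\log\cQ$ up to an additive constant, so that $\cQ\frac{d}{d\cQ}\bigl(\frac{I_{12}}{I_{11}}\bigr)=1$ and the third derivative collapses to $-9C^{-1}$. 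The paper instead stays in the $q$-coordinate: differentiating \eqref{eq:D2_F0} gives $D^3F_0^{K_{\PP^2}}=-9I_{22}/I_{11}$ with $I_{22}=q\frac{d}{dq}\bigl(\frac{I_{12}}{I_{11}}\bigr)$, and it then invokes Theorem 2 of \cite{ZaZi08}, namely $I_{22}=X/I_{11}^2$, plus $C=I_{11}^3/X$ from \eqref{eq:ABS_SIX}. These two arguments are mathematically the same with the nontrivial input relocated: the paper's own proof of \eqref{D_q_cQ} is precisely the combination of \eqref{eq:cQ_formula} with the Zagier--Zinger identity $I_{22}=X/I_{11}^2$, so your appeal to \eqref{D_q_cQ} absorbs that input rather than citing it a second time. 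Since \eqref{D_q_cQ} is established before this lemma and its proof does not depend on \eqref{eq:D3_F0}, there is no circularity; your version has the mild advantage of reusing the earlier lemma, while the paper's version makes explicit that the only genuinely nontrivial fact underlying the clean identity $\cQ\frac{d}{d\cQ}\bigl(\frac{I_{12}}{I_{11}}\bigr)=1$ being useful here is the Zagier--Zinger comparison between the $q$- and $\cQ$-derivatives.
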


\begin{proof}
The formula \eqref{eq:D_F0} is
the genus $0$ mirror theorem for $K_{\PP^2}$ \cite{Giv4, LLY,CKYZ}.
Formula \eqref{eq:D2_F0} follows directly from \eqref{eq:D_F0} and \eqref{D_Q_q}.

Taking the derivative of \eqref{eq:D2_F0}, we obtain
$D^3 F_0^{K_{\PP^2}}=-\frac{9I_{22}}{I_{11}}$
where 
\[ I_{22}= q \frac{d}{dq}
\left( \frac{I_{12}(q)}{I_{11}(q)} \right)\,.\]
Theorem 2 in \cite{ZaZi08} shows that
$
I_{22}=\frac{X}{I_{11}^2}$.
We end the proof of \eqref{eq:D3_F0} using that
$C=\frac{I_{11}^3}{X}$ according to \eqref{eq:ABS_SIX}.
\end{proof}

\begin{prop} \label{prop_Dn_F0}
For every $n \geq 1$, we have 
\[ D^{n+2} F_0^{K_{\PP^2}} \in C^{-n} 
\Q[A,B,C]_{2n-2} \,.\]
\end{prop}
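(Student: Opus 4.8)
The plan is to argue by induction on $n$, using the single computed value $D^3 F_0^{K_{\PP^2}} = -9 C^{-1}$ from \eqref{eq:D3_F0} as the base case and Lemma \ref{lem_D_weight} to propagate the statement upward. The entire content of the proposition is already packaged into those two earlier results, so the argument should be short.

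For the base case $n=1$, formula \eqref{eq:D3_F0} gives $D^3 F_0^{K_{\PP^2}} = -9 C^{-1}$. Since $\Q[A,B,C]_0 = \Q$ and $2n-2 = 0$ when $n=1$, this says exactly that $D^{n+2}F_0^{K_{\PP^2}} = D^3 F_0^{K_{\PP^2}} \in C^{-1}\cdot \Q[A,B,C]_0 = C^{-n}\cdot \Q[A,B,C]_{2n-2}$. For the inductive step, I would assume $D^{n+2}F_0^{K_{\PP^2}} \in C^{-n}\,\Q[A,B,C]_{2n-2}$ for some $n\ge 1$, apply $D$, and invoke Lemma \ref{lem_D_weight} with $k = 2n-2$ to get
\[ D^{n+3}F_0^{K_{\PP^2}} = D\bigl(D^{n+2}F_0^{K_{\PP^2}}\bigr) \in D\bigl(C^{-n}\,\Q[A,B,C]_{2n-2}\bigr) \subset C^{-(n+1)}\,\Q[A,B,C]_{2n}. \]
Since $2n = 2(n+1)-2$, this is precisely the assertion for $n+1$, which closes the induction.

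The only genuine inputs are therefore already in place: the value $D^3 F_0^{K_{\PP^2}} = -9 C^{-1}$, which rests on the genus $0$ mirror theorem for $K_{\PP^2}$ together with the identity $I_{22} = X/I_{11}^2$ of \cite{ZaZi08}; and Lemma \ref{lem_D_weight}, whose substance is the Ramanujan identity $\partial_\tau C = \tfrac12 C(B-A^2)$ of \eqref{eqn_Ramid}, guaranteeing that each application of $D = 3C^{-1}\cQ\frac{d}{d\cQ}$ raises the pole order along $C$ by at most one while raising the weight by exactly $2$. Consequently no step is difficult; the one thing to keep track of is the simultaneous bookkeeping of the $C$-pole order and the weight, and the point is that the two constraints move in lockstep, $-n \mapsto -(n+1)$ alongside $2n-2 \mapsto 2n = 2(n+1)-2$, so that the shape $C^{-n}\,\Q[A,B,C]_{2n-2}$ is preserved under $D$.
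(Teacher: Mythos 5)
Your proof is correct and is exactly the paper's argument: the base case $n=1$ from \eqref{eq:D3_F0} and induction on $n$ via Lemma \ref{lem_D_weight}, with the same lockstep bookkeeping of pole order and weight. No differences to report.
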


\begin{proof}
The case $n=1$ is clear by \eqref{eq:D3_F0}. The general case follows by induction on $n$
from Lemma \ref{lem_D_weight}.
\end{proof}

For example, using \eqref{D_Q_q}-\eqref{eq:SX_derivative} or \eqref{D_q_cQ}-\eqref{eqn_Ramid}, we get
\begin{equation} \label{eq:D4_F0}
D^4 F_0^{K_{\PP_2}} = \frac{81 SX}{I_{11}^4}=\frac{27}{2} C^{-2}(B-A^2)\,.
\end{equation}

\subsection{Genus $1$ invariants of $(\PP^2,E)$}

\begin{thm}[=Theorem \ref{f1formula_intro}]\label{f1formula}
We have 
\begin{align*}
    F_1^{\PE}
&=-\frac{1}{24} \log (-\cQ) 
+\frac{1}{2}\sum_{n \geq 1} \log(1-\cQ^n)
-\frac{1}{2} \sum_{n \geq 1} \log(1-\cQ^{3n})\\
&= -\frac{1}{24} \log q + \frac{1}{24}
\log(1+27q)\,.
\end{align*}
\end{thm}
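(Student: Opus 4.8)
The plan is to specialize Theorem \ref{thm_locrel_SE} to $g=1$ and $S=\PP^2$, where $E\cdot E=9$ and $\tilde{\cQ}=\cQ^3$, and to observe that a single correction term survives. Indeed, in the sum over $g=h+g_1+\dots+g_n$ with $\sum_j a_j=2h-2$ and $(a_j,g_j)\neq(0,0)$, the condition $2h-2\geq 0$ forces $h\geq 1$ while $h+\sum_j g_j=1$ forces $h\leq 1$; hence $h=1$ and $\sum_j a_j=0$, so every $a_j=0$ and then $g_j\geq 1$, which contradicts $\sum_j g_j=0$ unless $n=0$. Thus the only contribution is the $(h,n)=(1,0)$ term $F^{E}_{1,()}$, and Theorem \ref{thm_locrel_SE} collapses to
\[F_1^{\PE} = F^{E}_{1,()} - F_1^{K_{\PP^2}}.\]

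First I would compute $F^{E}_{1,()}$. By \eqref{eq:F_E} it is the sum of the unstable contribution $-\tfrac1{24}\log\big((-1)^{E\cdot E}\tilde{\cQ}\big)=-\tfrac1{24}\log(-\cQ^3)$ and of $\bar F^{E}_{1,()}=\sum_{d\geq 0}\tilde{\cQ}^{d}\int_{[\bM_{1,0}(E,d)]^{\vir}}1$. To evaluate the latter I would apply the divisor equation $\bar F^{E}_{1,(0)}=\tilde{\cQ}\,\tfrac{d}{d\tilde{\cQ}}\bar F^{E}_{1,()}$ together with the Okounkov--Pandharipande value $F^{E}_{1,(0)}=-E_2(\tilde{\tau})/24$ from \eqref{eq_F10} (noting $F^{E}_{1,(0)}=\bar F^{E}_{1,(0)}$, since there is no unstable term for $n=1$). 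Combining this with the classical identity $\tilde{\cQ}\,\tfrac{d}{d\tilde{\cQ}}\log\eta(\tilde{\tau})=\tfrac1{24}E_2(\tilde{\tau})$ and matching the (vanishing) constant term yields $\bar F^{E}_{1,()}=-\sum_{n\geq 1}\log(1-\tilde{\cQ}^{n})=-\sum_{n\geq 1}\log(1-\cQ^{3n})$, hence an eta-quotient expression for $F^{E}_{1,()}$.

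Next I would insert the known local value $F_1^{K_{\PP^2}}=-\tfrac1{12}\log q-\tfrac12\log I_{11}-\tfrac1{12}\log(1+27q)$ of \cite{Hu}. Using $I_{11}=A$, $A^3=XC$, $X=(1+27q)^{-1}$ and $C=\eta(\tau)^9/\eta(3\tau)^3$ from \eqref{eq:ABS_SIX} and Section \ref{section:quasimod}, I would rewrite $\log I_{11}=\tfrac13\log X+\tfrac13\log C$ in terms of $\log(1+27q)$ and the eta functions $\eta(\tau),\eta(3\tau)$, so that $F_1^{K_{\PP^2}}$ becomes a combination of $\log\eta(\tau)$ and $\log\eta(3\tau)$. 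Subtracting, the dependence on $\log(1+27q)$ cancels and $F_1^{\PE}=F^{E}_{1,()}-F_1^{K_{\PP^2}}$ reduces to a multiple of $\log\big(\eta(\tau)/\eta(3\tau)\big)$, which is exactly the $\cQ$-series $-\tfrac1{24}\log(-\cQ)+\tfrac12\sum_{n\geq 1}\log(1-\cQ^{n})-\tfrac12\sum_{n\geq 1}\log(1-\cQ^{3n})$ of the first line.

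Finally, to reach the algebraic form I would use the Hauptmodul identity \eqref{eq:hauptmodul} in the shape $\eta(3\tau)^{12}/\eta(\tau)^{12}=-qX=-q/(1+27q)$, which converts $\tfrac12\log(\eta(\tau)/\eta(3\tau))=\tfrac1{24}\log\big(\eta(\tau)^{12}/\eta(3\tau)^{12}\big)$ into $-\tfrac1{24}\log q+\tfrac1{24}\log(1+27q)$. The one point requiring genuine care throughout is the bookkeeping of the coordinate changes $\tilde{\cQ}=\cQ^3$ and of the mirror map $q=-\cQ+O(\cQ^2)$, together with the constant ambiguities arising from the $(-1)^{E\cdot E}$ sign and the several ``$\log$ of the formal variable'' terms; these are pinned down by requiring the prescribed leading logarithmic behaviour and the absence of a spurious constant term in the $\cQ$- and $q$-expansions. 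This matching of conventions, rather than any further geometric input, is the main (essentially bookkeeping) obstacle.
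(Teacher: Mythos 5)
Your overall route is exactly the paper's own proof: you specialize Theorem \ref{thm_locrel_SE} at $g=1$ (and your analysis showing that only the $(h,n)=(1,0)$ term survives is precisely the paper's reduction), obtaining $F_1^{\PE}=F_{1,\emptyset}^E-F_1^{K_{\PP^2}}$, then insert Hu's formula for $F_1^{K_{\PP^2}}$ and convert between the $\cQ$- and $q$-expansions. The only differences are in sourcing: the paper quotes Dijkgraaf for $\bar F^E_{1,\emptyset}=-\sum_{n\geq1}\log(1-\tilde{\cQ}^n)$ and Hu's formula (A.14) for the final coordinate change, whereas you propose to re-derive the former from \eqref{eq_F10} and to do the latter by hand via \eqref{eq:hauptmodul} and \eqref{eq:ABS_SIX}; the latter substitution is fine and equivalent.

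However, your re-derivation of $\bar F^E_{1,\emptyset}$ has a concrete error as written. The identity $\bar F^{E}_{1,(0)}=\tilde{\cQ}\,\tfrac{d}{d\tilde{\cQ}}\bar F^{E}_{1,\emptyset}$ is false: the divisor equation cannot be applied to the degree-$0$ term, since $\bM_{1,0}(E,0)$ is unstable while $\langle\omega\rangle^E_{1,1,0}=\int_{[\bM_{1,1}(E,0)]^{\vir}}\ev^*\omega=-\tfrac1{24}\neq 0$. The correct relation is $\bar F^{E}_{1,(0)}=-\tfrac1{24}+\tilde{\cQ}\,\tfrac{d}{d\tilde{\cQ}}\bar F^{E}_{1,\emptyset}$, which is exactly the exceptional term the paper records in \eqref{eq_relation}. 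This is not a harmless constant that can be fixed by matching constant terms: taken literally, your two identities give $\tilde{\cQ}\tfrac{d}{d\tilde{\cQ}}\bar F^{E}_{1,\emptyset}=-\tfrac1{24}E_2(\tilde{\tau})=-\tilde{\cQ}\tfrac{d}{d\tilde{\cQ}}\log\eta(\tilde{\tau})$, whose integral $-\log\eta(\tilde{\tau})+c$ contains $-\tfrac1{24}\log\tilde{\cQ}$, a term that no integration constant removes and that contradicts $\bar F^{E}_{1,\emptyset}$ being a power series in $\tilde{\cQ}$; carried through, it would change the coefficient of $\log(-\cQ)$ in $F^E_{1,\emptyset}$ from $-\tfrac18$ to $-\tfrac14$ and spoil the final formula. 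Once the $-\tfrac1{24}$ is reinstated, the integration gives $\bar F^E_{1,\emptyset}=-\sum_{n\geq1}\log(1-\tilde{\cQ}^n)$ and the rest of your argument goes through as in the paper.
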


\begin{proof}
According to Theorem \ref{thm_locrel_SE}, we have 
\[ F_1^{K_{\PP^2}}=-F_1^{\PP^2/E}+F_{1,\emptyset}^E\,.\]
By formulae (A.3) and (A.15) of
\cite{Hu}, we have 
\begin{align*}
   F_1^{K_{\PP^2}}
&=-\frac{1}{12} \log q - \frac{1}{2}
\log I_{11}-\frac{1}{12} \log(1+27q)\\
&=-\frac{1}{12}\log (-\cQ) -\frac{1}{2}
\sum_{n \geq 1} \log(1-\cQ^{3n})
-\frac{1}{2} \sum_{n\geq 1} \log(1-\cQ^n)\,.
\end{align*}
On the other hand, by \eqref{eq:F_E}
we have 
\[ F_{1,\emptyset}^E=-\frac{1}{24}(-\tilde{\cQ})+\bar{F}_{1,\emptyset}^E\]
and it is well-known \cite{Dij} that 
\[ \bar{F}_{1,\emptyset}^E=
-\sum_{n \geq 1} \log(1-\tilde{\cQ}^n)\,.\]
As $\tilde{\cQ}=\cQ^3$, we get 
\[ F_{1,\emptyset}^E=-\frac{1}{8} \log (-\cQ) 
-\sum_{n \geq 1} \log(1-\cQ^{3n})\,,\]
and so 
\begin{align*}
F_1^{\PP^2/E}=&F_{1,\emptyset}^E- F_1^{K_{\PP^2}}\\
=&-\frac{1}{8} \log (-\cQ) 
-\sum_{n \geq 1} \log(1-\cQ^{3n})\\
&+\frac{1}{12}\log (-\cQ) +\frac{1}{2}
\sum_{n \geq 1} \log(1-\cQ^{3n})
+\frac{1}{2} \sum_{n\geq 1} \log(1-\cQ^n)\\
=&-\frac{1}{24} \log (-\cQ) 
+\frac{1}{2}\sum_{n \geq 1} \log(1-\cQ^n)
-\frac{1}{2} \sum_{n \geq 1} \log(1-\cQ^{3n})\,,
\end{align*}
which equals 
\[ -\frac{1}{24} \log q + \frac{1}{24}
\log(1+27q)\]
by formula (A.14) of \cite{Hu}.
\end{proof}

\begin{rmk}
Expanding the right-hand side of Theorem
\ref{f1formula}, we get
\[F_1^{\PE}=-\frac{1}{24}\log Q+\frac{7Q}{8}-\frac{129Q^2}{16}+\frac{589Q^3}{6}-\frac{43009Q^4}{32}+
\frac{392691Q^5}{20}+\dots\]
\end{rmk}

\begin{lem}
\begin{equation} \label{eq:D_F1}
DF_1^{\PE}=-\frac{1}{8} \frac{X}{I_{11}}=-\frac{1}{8} \frac{A^2}{C}\,.
\end{equation}
\end{lem}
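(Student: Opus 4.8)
The plan is to compute $DF_1^{\PE}$ directly from the closed form for $F_1^{\PE}$ just established in Theorem \ref{f1formula}, namely
\[
F_1^{\PE} = -\frac{1}{24}\log q + \frac{1}{24}\log(1+27q),
\]
and then translate the answer into the quasimodular generators $A$, $C$. The whole argument is a one-line differentiation together with the change-of-variables dictionaries already recorded in the excerpt, so I do not expect any genuine obstacle; the only points requiring care are the correct passage between $D=3Q\frac{d}{dQ}$ and $q\frac{d}{dq}$, and the substitution identities relating $(X,I_{11})$ to $(A,C)$.

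First I would rewrite the operator $D$ in terms of $q\frac{d}{dq}$. By \eqref{D_Q_q} we have $D = 3\,I_{11}^{-1}\, q\frac{d}{dq}$, so it suffices to apply $q\frac{d}{dq}$ to the explicit expression for $F_1^{\PE}$ and multiply by $3\,I_{11}^{-1}$. Carrying out the elementary differentiation term by term gives $q\frac{d}{dq}\log q = 1$ and $q\frac{d}{dq}\log(1+27q) = \frac{27q}{1+27q}$. Using the definition $X=(1+27q)^{-1}$ from \eqref{def_X}, the latter equals $1-X$, so that
\[
q\frac{d}{dq} F_1^{\PE}
= -\frac{1}{24} + \frac{1}{24}\,(1-X)
= -\frac{X}{24}.
\]

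Multiplying by $3\,I_{11}^{-1}$ then yields $DF_1^{\PE} = -\frac{X}{8\,I_{11}}$, which is the first claimed equality. Finally I would convert to the generators $A$, $C$: by \eqref{eq:SIX_ABC} we have $I_{11}=A$ and $X=A^3/C$, hence $X/I_{11}=A^2/C$, giving $DF_1^{\PE} = -\frac{1}{8}\frac{A^2}{C}$, as required. As noted, the computation is entirely elementary and the hardest part is merely bookkeeping of the two dictionaries \eqref{D_Q_q} and \eqref{eq:SIX_ABC}.
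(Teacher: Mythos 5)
Your proof is correct and matches the paper's own argument: both differentiate the explicit formula for $F_1^{\PE}$ from Theorem \ref{f1formula} using the relation between $D$ and $q\frac{d}{dq}$, simplify $1-\frac{27q}{1+27q}$ to $X$, and convert to $A$, $C$ via \eqref{eq:SIX_ABC}. The only cosmetic difference is that the paper computes $Q\frac{dF_1^{\PE}}{dQ}$ first and multiplies by $3$ at the end, whereas you carry the factor $3I_{11}^{-1}$ throughout.
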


\begin{proof}
Using Theorem \ref{f1formula}, we obtain
\begin{align*}
 Q \frac{dF_1^{\PE}}{dQ}&=-\frac{1}{24}\frac{1}{I_{11}}
q\frac{d}{dq}(\log q - \log (1+27q))
=
-\frac{1}{24}\frac{1}{I_{11}}\left( 1-\frac{27q}{1+27q} \right)\\
&=-\frac{1}{24}\frac{(1+27q)^{-1}}{I_{11}}
=-\frac{1}{24}\frac{X}{I_{11}}\,.
\end{align*}
We get the expression in terms of quasimodular forms using \eqref{eq:SIX_ABC}.
\end{proof}

\begin{prop} \label{prop_Dn_F1}
For every $n \geq 1$, we have 
\[ D^{n} F_1^{\PE} \in C^{-n} 
\Q[A,B,C]_{2n} \,.\]
\end{prop}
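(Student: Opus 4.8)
The plan is to prove Proposition \ref{prop_Dn_F1} by induction on $n$, in exact parallel with the proof of Proposition \ref{prop_Dn_F0} for the genus $0$ local series. All the real work has already been done: the base case is supplied by the explicit formula \eqref{eq:D_F1}, and the inductive step is supplied by the closure statement of Lemma \ref{lem_D_weight}.

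For the base case $n=1$, I would simply invoke \eqref{eq:D_F1}, which gives
\[ D F_1^{\PE} = -\frac{1}{8}\frac{A^2}{C} = C^{-1}\cdot\left(-\frac{1}{8}A^2\right). \]
Since $A$ is modular of weight $1$, the polynomial $-\frac{1}{8}A^2$ is homogeneous of weight $2$, so $-\frac{1}{8}A^2 \in \Q[A,B,C]_2$. Hence $D F_1^{\PE} \in C^{-1}\Q[A,B,C]_2$, which is the claim for $n=1$.

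For the inductive step, I would assume $D^n F_1^{\PE} \in C^{-n}\Q[A,B,C]_{2n}$ and apply $D$ once more. Lemma \ref{lem_D_weight}, applied with the value $k = 2n$, yields
\[ D^{n+1} F_1^{\PE} = D\bigl(D^n F_1^{\PE}\bigr) \in D\bigl(C^{-n}\Q[A,B,C]_{2n}\bigr) \subset C^{-(n+1)}\Q[A,B,C]_{2n+2}, \]
and since $2n+2 = 2(n+1)$, this is exactly the statement for $n+1$. This completes the induction.

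There is essentially no obstacle in this argument; the only substantive input is that each application of $D$ raises the weight by exactly $2$ and the power of $C$ in the denominator by exactly $1$, which is precisely the content of Lemma \ref{lem_D_weight}. That lemma in turn depends on the fact, extracted from \eqref{D_q_cQ} and the Ramanujan-type identities \eqref{eqn_Ramid}, that $\partial_\tau C$ is divisible by $C$, so that differentiation never introduces a new negative power of $C$ beyond the single expected one. Given \eqref{eq:D_F1} and Lemma \ref{lem_D_weight}, the proposition is immediate.
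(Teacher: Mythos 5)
Your proof is correct and coincides with the paper's own argument: the base case $n=1$ follows from the explicit formula \eqref{eq:D_F1}, and the inductive step is exactly the closure statement of Lemma \ref{lem_D_weight}. Nothing further is needed.
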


\begin{proof}
The case $n=1$ is clear by \eqref{eq:D_F1}. The general case follows by induction on $n$
from Lemma \ref{lem_D_weight}.
\end{proof}

For example, using \eqref{D_Q_q}-\eqref{eq:SX_derivative} or \eqref{eqn_Ramid}-\eqref{D_q_cQ}, we get
\begin{equation} \label{eq:D2_F1}
D^2 F_1^{\PE} =
\frac{3X}{8I_{11}^2}
\left(S-\frac{2}{3}(X-1)\right)
=\frac{A}{16C^2}(-5A^3+AB+4C)\,.
\end{equation}

\begin{rmk}
We have $\frac{\partial}{\partial S}
(DF_1^{\PE})=0$, as predicted by the holomorphic anomaly equation 
\eqref{eq_HAEPE}.
\end{rmk}

\subsection{Proof of finite generation and quasimodularity}
We prove the finite generation statements of Theorems \ref{fgproperty} and \ref{fgproperty1}.
By Proposition \ref{prop_spaces_equality}, it is enough to prove the finite generation
part of Theorem
\ref{fgproperty}.

The finite generation property for local $\PP^2$ is known by \cite{LhoP, CI}: we have 
\begin{equation}\label{eq:fg_local_P2}
F_g^{K_{\PP^2}}\in C^{-(2g-2)} \cdot  \Q[A,B,C]_{6g-6}
\end{equation}
for every $g \geq 2$.
More precisely, 
the result proved in \cite{LhoP} is slightly weaker, using the generator $L=X^{1/3}$ instead of $X$, not getting the optimal degree bound on $X$, and not mentioning the ``orbifold regularity". However, using the $R$-matrix techniques used in \cite{LhoP2} and its appendix, it is possible to prove that $F_g^{K_{\PP^2}} \in [ X^{-(g-1)} \cdot \bR_{\leq 3g-3} ]^{\reg}$ for every $g \geq 2$. Such refinement of the $R$-matrix is described in \cite{GJR18} for the proof of a ``graded finite generation" for the quintic $3$-fold. Once we know that $F_g^{K_{\PP^2}} \in [ X^{-(g-1)} \cdot \bR_{\leq 3g-3} ]^{\reg}$, we get that $F_g^{K_{\PP^2}} \in C^{-(2g-2)} \cdot  \Q[A,B,C]_{6g-6}$ by Proposition \ref{prop_spaces_equality}.
Alternatively, one can use \cite{CI} which proves directly the result in terms of quasimodular forms.

We show by induction on $g$ that, for every 
$g \geq 2$, we have 
\[
 F_g^{\PE} \in C^{-(2g-2)} \cdot \mathbb Q[A,B,C]_{6g-6}\,.
\]

Let $g \geq 2$. According to Theorem \ref{thm_locrel_SE}, we have
\[(-1)^{g-1} F_g^{\PE}= -F_g^{K_{\PP^2}}
+\]
\[    \sum_{n\geq 0} 
\!\!\!\!\!  \sum_{\substack{g=h+g_1+\dots+g_n,
\\ \ba=(a_1,\dots,a_n)\in \Z_{\geq 0}^n \\ 
(a_j,g_j) \neq (0,0),\,
\sum_{j=1}^n a_j=2h-2} }  \!\!\! \!\! 
\frac{(-1)^{h-1} F_{h, \ba}^E}{|\Aut (\ba, \bg)|}
\prod_{j=1}^n   (-1)^{g_j-1} 
 D^{a_j+2} F_{g_j}^{\PE}
\,,
\]
where the variable $\tilde{\cQ}$ in the definition \eqref{eq:F_E} of 
$F_{h,\ba}^E$ is expressed in terms of the variable $Q$ in the definitions 
\eqref{eq:F_KP2} and \eqref{eq:F_P2}
of $F_g^{K_{\PP^2}}$ and $F_g^{\PE}$ by 
$\eqref{eq:cQ2}$:
\[ \tilde{\cQ}=\exp \left( -D^2 F_0^{K_{\PP^2}} \right)\,.\] 

By \eqref{eq:fg_local_P2}, we know that 
$F_g^{K_{\PP^2}} \in C^{-(2g-2)} \cdot \mathbb Q[A,B,C]_{6g-6}$.
Therefore, it remains to show that each summand \begin{equation}
\frac{(-1)^{h-1}F_{h, \ba}^E}{|\Aut (\ba, \bg)|}\prod_{j=1}^n (-1)^{g_j-1}  D^{a_j+2} F_{g_j}^{\PE}
\end{equation}
belongs to $C^{-(2g-2)} \cdot \mathbb Q[A,B,C]_{6g-6}$.

Terms with $n=0$ only arises for 
$h=1$ and so $g=1$. Thus, for $g \geq 2$, only the terms with $n\geq 1$ contribute and by Theorem \ref{thm_quasimod_eliptic_curve} the 
series $F_{h,\ba}^E$ are quasimodular as functions of $\tilde{\tau}$, where $\tilde{\cQ}=e^{2i\pi \tilde{\tau}}$. More precisely, we have 
\[ F_{h,\ba}^E \in \Q[E_2(\tilde{\tau}),E_4(\tilde{\tau}),
E_6(\tilde{\tau})]_{\sum_{j=1}^n(a_j+2)} \,.\]

Our aim is to show quasimodularity as functions of $\tau$, that is given by \eqref{eq:cQ_formula} as
\[ \cQ=e^{2i\pi \tau}=-\exp \left( \frac{I_{12}}{I_{11}} \right) \,.\]
By \eqref{eq:D2_F0}, we have 
$D^2 F_0^{K_{\PP^2}} = -3 \frac{I_{12}}{I_{11}}$, so $\tilde{\tau}=3 \tau$ and
$\tilde{\cQ}=\cQ^3$.  Using 
Proposition \ref{prop_from_sl2_to_gamma}, we deduce that 
\[ F_{h,\ba}^E \in \Q[A,B,C]_{\sum_{j=1}^n
(a_j+2)}\,.\]
By induction on the genus, we know that for every $g_j \geq 2$
\[ F_{g_j}^{\PE} \in C^{-(2g_j-2)} \cdot \mathbb Q[A,B,C]_{6g_j-6}\,,\] and so
\[ D^{a_j+2} F_{g_j}^{\PE} \in C^{-(2g_j-2+a_j+2)} \cdot \mathbb Q[A,B,C]_{6g_j-6+2a_j+4}\] using
Lemma \ref{lem_D_weight}. By Propositions 
\ref{prop_Dn_F0} and \ref{prop_Dn_F1}, this result also holds for $g_j=0$ (in this case, $a_j \geq 1$ so $a_j+2 \geq 3)$ and 
$g_j=1$.

Therefore,
\begin{equation*}
\frac{(-1)^{h-1}F_{h, \ba}^E}{|\Aut (\ba, \bg)|}\prod_{j=1}^n (-1)^{g_j-1}  D^{a_j+2} F_{g_j}^{\PE}
\in C^{-(2g-2)} \cdot \mathbb Q[A,B,C]_{6g-6}
\end{equation*} 
follows from 
\[h+\sum_{j=1}^n g_j=g,\quad \sum_{j=1}^n a_j=2h-2.\]


\subsection{Genus $2$ invariants of $(\PP^2,E)$}
\label{sec:genus_2}
We prove Theorem \ref{f2formula}.

By Theorem \ref{thm_locrel_SE}, we have
\begin{align*}
F_2^{K_{\PP^2}}  =  & \ 
F_2^{\PE} + D^2 F_1^{\PE} \cdot  F_{1,(0)}^E
-\frac{1}{2} \big( D^3 F_0^{\PE}\big)^2  \cdot   F_{2,(1,1)}^E  
+ D^4 F_0^{\PE} \cdot   F_{2,(2)}^E \,.
\end{align*}
By \cite{LhoP}, we have 
\[F_2^{K_{\PP^2}}
=\frac{5}{8}\frac{S^3}{X}
+\frac{1}{8}S^2
+\frac{1}{96}SX
+\frac{X^2}{4320}
+\frac{X}{4320}
-\frac{1}{2160}\,.\]
Using \eqref{eq:D2_F1}-\eqref{eq_F10}-\eqref{modularformtogenerator}, we get
\[ D^2 F_1^{\PE} \cdot F_{1,(0)}^E=-\frac{S^2}{16}+\frac{5SX}{192}
-\frac{S}{24}+\frac{X^2}{96}-\frac{X}{96}\,.\]
Using \eqref{eq:D3_F0}-\eqref{eq_F211}-\eqref{modularformtogenerator}, we obtain
\[ -\frac{1}{2} (D^3 F_0^{\PE})^2 \cdot F_{2,(1,1)}^E 
=-\frac{S^3}{2X}-\frac{3S^2}{8}-\frac{11 SX}{120}+\frac{S}{60}
-\frac{X^2}{135}+\frac{7X}{1080}+\frac{1}{1080}\,.\]
Using \eqref{eq:D4_F0}-\eqref{eq_F22}-\eqref{modularformtogenerator}, we have
\[D^4F_0^{\PE} \cdot F_{2,(2)}^E
=\frac{9S^3}{8X}+\frac{9S^2}{16}+\frac{47SX}{640}+\frac{S}{40}\,.\]
Therefore, we find that in $F_2^{\PE}$ the coefficient of $\frac{S^3}{X}$
is 
\[ \frac{5}{8}-\left( -\frac{1}{2}+\frac{9}{8}\right)=0\,,\]
the coefficient of $S^2$ is 
\[ \frac{1}{8}-\left( -\frac{1}{16}-\frac{3}{8}+\frac{9}{16}\right)=0\,,\]
the coefficient of $SX$
is 
\[ \frac{1}{96}-\left(\frac{5}{192}-\frac{11}{120}+\frac{47}{640} \right)=\frac{1}{384}\,,\]
the coefficient of $X^2$ is 
\[\frac{1}{4320}-\left(\frac{1}{96}-\frac{1}{135} \right)=-\frac{1}{360}\,,\]
the coefficient of $X$ is 
\[\frac{1}{4320}-\left(-\frac{1}{96}+\frac{7}{1080} \right)=\frac{1}{240}\,,\]
and the constant term is 
\[ -\frac{1}{2160}-\frac{1}{1080}=-\frac{1}{720}\,.\]
This concludes the proof of Theorem 
\ref{f2formula}.

\begin{rmk}
Expanding the right-hand side of Theorem 
\ref{f2formula}, we get
\[ F_2^{\PE}=\frac{29Q}{640}-\frac{207Q^2}{64}+\frac{18447Q^3}{160}-\frac{526859Q^4}{160}+\frac{5385429Q^5}{64}+\dots\]
Using \eqref{eq:SIX_ABC}, we can rewrite Theorem \ref{f2formula} as
\begin{equation} 
F_2^{\PP^2/E}=\frac{1}{11520 C^2}
(-37A^6+5A^4 B+48 A^3 C-16C^2)\,.
\end{equation}
Taking the $S$-derivative of Theorem \ref{f2formula}, we obtain
\[ \frac{\partial}{\partial S}F_2^{\PE}
=\frac{X}{384} \,,\]
and so, using \eqref{eq:D_F1}, 
\[ \frac{3X}{I_{11}^2} 
\frac{\partial}{\partial S}F_2^{\PE}
=\frac{1}{2}(DF_1^{\PE})^2 \,,\]
as predicted by the holomorphic anomaly equation \eqref{eq_HAEPE}.
\end{rmk}

\section{Holomorphic anomaly equation for $(\PP^2,E)$}

\label{sec:HAE}

In this section, we prove Theorem \ref{HAEforrelative}, that is the holomorphic anomaly equation for the series 
$F_{g,n}^{\PE}$. 

We will use the following definitions.
\begin{defn}
A partition $\ba$ of length $n$ is an ordered set $(a_1,\cdots,a_n)$ such that
$$
a_1\geq a_2\geq \cdots \geq a_n\geq 0
$$
\end{defn}
Note we allow the entries $a_i$ to be zero, which is different from the ordinary definition of a partition.
\begin{defn} \label{defncuppartition}
For any two partitions $\ba$ and $\mathbf b$ of length $n_1$ and $n_2$ respectively. We define
$$
\ba \cup \mathbf b
$$
to be the partition of length $n_1+n_2$ with entries which are exactly the entries of $\ba$ and $\mathbf b$ with decreasing ordering.
\end{defn}

\subsection{Holomorphic anomaly equation for the elliptic curve}
We first review some known results for the elliptic curve. Recall that we denote 
\[\left< \tau_1 \psi_1^{k_1},\cdots,\tau_n \psi_n^{k_n} \right>^{E}_{g,n}
\coloneqq \sum_{d\geq 0}\tilde{\cQ}^{d}\int_{[\overline{M}_{g,n}(E,d)]^{\vir}}\prod_{i=1}^n\psi_i^{k_i}\ev_i^*\tau_i\]
the generating series of Gromov-Witten invariants of the elliptic curve $E$, with $\tau_i \in H^{\bullet}(E)$. Recall also that we denote by $\omega \in H^2(E)$ the (Poincar\'e dual) class of a point
and 
\begin{equation*}
F^E_{g,\ba}
\coloneqq 
-\frac{\delta_{g,1} \delta_{n,0} }{24}\log 
\left((-1)^{E \cdot E}\tilde{\cQ}\right)
+\sum_{d\geq 0}{\tilde{\cQ}^d}\left< \omega \psi_1^{a_1},\dots,\omega \psi_n^{a_n}
\right>_{g,n,d}^E\,,
\end{equation*}
where the stationary invariants 
$\left< \omega \psi_1^{a_1},\dots,\omega \psi_n^{a_n}
\right>_{g,n,d}^E$ 
have been introduced in \eqref{eq:gw_E}.

Using the
polynomiality of the double ramification cycle in the parts of the ramification
profiles, Oberdieck and Pixton \cite{ObPi}
proved the following holomorphic anomaly equation for the Gromov-Witten theory of the elliptic curve: for 
$2g-2+n>0$, we have 
\begin{multline*}
  -24 \,  {\textstyle  \frac{ \partial}{\partial{E_2}}} \left< \omega \psi_1^{a_1},\cdots, \omega \psi_n^{a_n} \right>^E_{g,n} = \\
  \sum_{g_1+g_2 = g, \atop \ba' \cup \ba'' = \ba}    \big\langle \omega \psi_1^{a'_1},\cdots, \omega \psi_s^{a'_s} ,1 \big\rangle^E_{g_1,s+1} \big\langle  \omega \psi_1^{a''_1},\cdots, \omega \psi_{n-s}^{a''_{n-s}},1 \big\rangle^E_{g_2,n-s+1} \qquad \quad \\
+  \big\langle \omega \psi_1^{a_1},\cdots, \omega \psi_n^{a_n} ,1,1 \big\rangle^E_{g-1,n+2} 
-2 \sum_{j=1}^n \big\langle \omega \psi_1^{a_1},\cdots, \psi_j^{a_j+1}, \cdots,\omega \psi_n^{a_n}\big\rangle^E_{g,n} .
\end{multline*}

By the Virasoro constraints proved by Okounkov and Pandharipande \cite{OP3}, we have
\begin{multline*}
 \sum_{j=1}^n \big\langle \omega \psi_1^{a_1},\cdots, \psi_j^{a_j+1}, \cdots,\omega \psi_n^{a_n}\big\rangle^E_{g,n} \\
 = \sum_{1\leq i \neq j\leq n} \binom{a_i+a_j+1}{a_i} \big\langle \omega \psi_1^{a_1},\cdots, \widehat{\omega\psi_i^{a_i}},\cdots, \widehat{\omega\psi_j^{a_j}}, \cdots,\omega \psi_{n}^{a_n}, \omega \psi_{i}^{a_i+a_j}\big\rangle^E_{g,n-1} \,,
 \end{multline*}
where as usual the caret
denotes omission.

Together with the string equation, we obtain
the following form of the holomorphic anomaly equation for the series $F_{g,\ba}^E$.
Let $\ba$ be a partition of $2h-2$, i.e. $\sum_{i=1}^n a_i=2h-2$, then, for $2h-2+n>0$, we have
\begin{align}
-24 \,  {\textstyle  \frac{ \partial}{\partial{E_2}}}  F_{h,\ba}^E =  & \  \sum_{1\leq i,j\leq n} F^E_{h-1,\ba-\vec e_i -\vec e_j} + \sum_{h_1+h_2=h \atop  \ba' \cup \ba'' = \ba }\sum_{1\leq i\leq l(\ba')\atop 1\leq j\leq l(\ba'')}F^E_{h_1,\ba'-\vec e_i}F^E_{h_2,\ba''-\vec e_j} \nonumber
\\& \ \  \    
-
2 \sum_{1\leq i\neq j \leq n}  \binom{a_i+a_j+1}{a_i} F^E_{h,\ \mathcal G_{ij}(\ba)}  \label{HAEforE}
\end{align}
Here for a partition $\ba =(a_1,\cdots,a_n)$, we define the gluing operation by
$$
\mathcal G_{ij}(\ba) = (a_1,\cdots \hat a_i,\cdots \hat a_j,\cdots, a_n) \cup ( a_{i}+a_j),
$$
and for the vectors $\vec e_i$ ($i=1,\cdots,n$):
$$
\vec e_i = (0,\cdots, 1,\cdots, 0)  \  \text{  with  $1$ lies in the $i$-th component},
$$
we define (c.f. Definition \ref{defncuppartition})
$$
\ba -\vec e_i := (a_1,\cdots,\widehat a_i,\cdots,a_n) \cup (a_i-1),\quad
$$
$$ 
\quad
\ba -\vec e_i -\vec e_j := \begin{cases} \qquad 
(a_1,\cdots,\widehat a_i,\cdots,a_n) \cup (a_i-2), & \text{ if $i=j$, }\\
(a_1,\cdots,\widehat a_i,\cdots,\widehat a_j,\cdots,a_n) \cup (a_i-1,a_j-1),& \text{ otherwise. }
\end{cases} 
$$

\begin{ex} 
Using \eqref{eq_F10}--\eqref{eq_F22}, one can check directly that 
$$
-12  \,  {\textstyle  \frac{ \partial}{\partial{E_2}}} F_{2,(1^2)}^E =     F^E _{1,(0^2)} + \big(F^E _{1,(0)}\big)^2
- 6 F^E_{2,(2)}\,,
$$
$$ -24 {\textstyle  \frac{ \partial}{\partial{E_2}}} F_{2,(2)}^E=F_{1,(0)}^E \,.$$
\end{ex}

\subsection{Holomorphic anomaly equation for local $\PP^2$}
We denote by $F_{g,n}^{K_{\PP^2}}$ the generating function for the local $\PP^2$ theory with $n$ insertions of the hyperplane classes. By the divisor equation, 
$$
F_{g,n}^{K_{\PP^2}} = \Big(Q\frac{d}{dQ}\Big)^n  F_{g}^{K_{\PP^2}}.
$$
We have the following {holomorphic anomaly equation} which was proved using various techniques in \cite{LhoP}, 
\cite{CI} and \cite{EMO,EO,FLZ,FRZZ}:
\begin{equation} \label{HAEforKP2}
  \frac{X}{3\, {I_{11}}^2} \cdot  \frac{\partial}{\partial S} F_{g,n}^{K_{\PP^2}} = \frac{1}{2} \sum_{\substack{g_1+g_2 = g\\  n_1+n_2=n \\ 2g_i-2+n_i\geq 0}} \binom{n}{n_1}\ F_{g_1,n_1+1}^{K_{\PP^2}}\cdot   F_{g_2,n_2+1}^{K_{\PP^2}} + \frac{1}{2}F_{g-1,n+2}^{K_{\PP^2}}.
\end{equation}

\subsection{Proof of the holomorphic anomaly equation for $(\PP^2,E)$}

We prove the holomorphic anomaly equation
\eqref{eq_HAEPE} for $F_{g,n}^{\PE}$
(Theorem \ref{HAEforrelative})  by induction on the genus $g$. We have 
$F_{0,n}^{\PE}=F_{0,n}^{K_{\PP^2}}$ and so 
\eqref{eq_HAEPE} holds for $g=0$ by 
\eqref{HAEforKP2}.

Let $g$ and $n$ such that $2g-2+n>0$.
By taking derivatives of both sides of 
Theorem
\ref{thm_locrel_SE}, we obtain
\begin{align}  \label{PErecursioninsertion}
F_{g,n}^{K_{\PP^2}}  
= & \  (-1)^g F_{g,n}^{\PE} +\!\! \!\!\!\! \sum_{0<h\leq g,\atop \gab  \in \mathcal P_{g,n}(h)} \!\!\!\!\!\!  (-1)^{h-1} F_{h,\ba}^E \cdot \Cont^{\PE}_{\gab}\quad  \\
 = & \ (-1)^g F_{g,n}^{\PE}+ 9\, F_{1,(0)}^E \cdot  (-1)^{g-2}F^{\PE}_{g-1,n+2}+\!\! \!\!\!\! \sum_{0<h\leq g,\atop \gab  \in \mathcal P^+_{g,n}(h)} \!\!\!\!\!\! 
(-1)^{h-1} F_{h,\ba}^E \cdot \Cont^{\PE}_{\gab}\nonumber
\end{align}
where we define
$$
\mathcal P_{g,n}(h):= \left\{ \gab= \{(g_i,a_i,B_i)\}_{i=1}^l  \Big|   \substack {h+\sum_i g_i=  g,\  \sum_i a_i = 2h-2, \\   \sum_i |B_i| =n, \  2g_i-2+2a_i+2 |B_i|\geq 0,\\ g_i,\ a_i \in\Z_{\geq 0},\  \coprod_i B_i=\{1,2,\cdots,n\}}  \right\}, $$
$$
\mathcal P^+_{g,n}(h):=\mathcal P_{g,n}(h) \Big\backslash  \Big\{  \big(g-1,0,\{1,2,\cdots,n\}\big) \Big\}, 
$$
$$
\Cont^{\PE} _{\gab}:=\frac{1}{|\Aut \gab|}\prod_i (-1)^{g_i-1}  3^{a_i+2} F_{g_i,a_i+b_i+2}^{\PE}.
$$
Note that $\bb$ corresponds to the assignment of insertions, and we allow $B_i$ to be empty in the definition of 
$\mathcal P_{g,n}(h)$. Given an element $\{(g_i,a_i,B_i)\}_{i=1}^l\in \mathcal P_{g,n}(h)$, we set $b_i=|B_i|$ to be the number of elements in $B_i$, and set $\bg=\{g_1,\cdots,g_l\}$, $\ba=\{a_1,\cdots,a_l\}$, $\bb=\{B_1,\cdots,B_l\}$.
We use $l(\bg),l(\ba),l(\bb)$ to denote the lengths of $\bg,\ba,\bb$ respectively. Here all the lengths are equal to $l$.
Finally, $\Aut \gab$ is the symmetry group consisting of permutation symmetries of $\gab$.

\medskip
We denote 
$\partial_S:=\frac{X}{3\, {I_{11}}^2} \cdot  \frac{\partial}{\partial S}$.
By \eqref{modularformtogenerator} we have
$$\textstyle 
 \partial_S  \ =  \ 
\frac{1}{18} \cdot   24  \frac{\partial}{\partial_{E_2 }}  \,.
$$
By applying the operator $\partial_S$ on \eqref{PErecursioninsertion}
and using
\eqref{eq_F10}, we obtain
\begin{align}
  {\partial_S} F_{g,n}^{K_{\PP^2}} 
 = & \ (-1)^g {\partial_S} F_{g,n}^{\PE}+  \frac{(-1)^{g-1}}{2} F^{\PE}_{g-1,n+2}   +
 \!\! \!\!\!\!\!\! \sum_{0<h\leq g,\atop \gab  \in \mathcal P_{g,n}(h)} \!\!\!\!\!\! \!\!\!\! 
(-1)^{h-1} F_{h,\ba}^E \cdot {\partial_S}\Cont^{\PE}_{\gab}\quad \nonumber \\
& \qquad  +   
 \!\! \!\!\!\! \sum_{0<h\leq g,\atop \gab  \in \mathcal P^+_{g,n}(h)} \!\!\!\!\!\!\!\!\! 
(-1)^{h-1} {\partial_S} F_{h,\ba}^E \cdot \Cont^{\PE}_{\gab}\\
 = & \ (-1)^g {\partial_S} F_{g,n}^{\PE}+  \frac{1}{2}  (-1)^{g-1}F^{\PE}_{g-1,n+2}   + \cC_1 +\cC_2+\cC_3+\cC_4
 \nonumber
\end{align}
where $\cC_1$ is the contribution of  the last term in the first line, and $\cC_2, \cC_3, \cC_4$ are the three contributions obtained by applying the holomorphic anomaly equation \eqref{HAEforE} to the term in the second line:
\begin{align*} 
\cC_1 :=  & \
 \!\! \!\!\!\! \sum_{0<h\leq g,\atop \gab  \in \mathcal P_{g,n}(h)} \!\!\!\!\!\!  (-1)^{h-1} F_{h,\ba}^E \cdot {\partial_S}\Cont^{\PE}_{\gab},\quad\\
\cC_2 :=  &\frac{1}{18} \ 
 \!\! \!\!\!\! \sum_{0<h\leq g,\atop \gab  \in \mathcal P^+_{g,n}(h)} \!\!\!\!\!\! 
 \sum_{1\leq i,j\leq l(\ba)} (-1)^h  F^E_{h-1,\ba-\vec e_i -\vec e_j}\cdot \Cont^{\PE}_{\gab},\\
\cC_3 :=  &\frac{1}{18} \ 
 \!\! \!\!\!\! \sum_{0<h\leq g,\atop \gab  \in \mathcal P^+_{g,n}(h)} \! \sum_{h_1+h_2=h \atop  \ba' \sqcup \ba'' = \ba } \sum_{1\leq i\leq l(\ba')\atop 1\leq j\leq l(\ba'')}
(-1)^{h_1-1}
F^E_{h_1,\ba'-\vec e_i}
(-1)^{h_2-1}
F^E_{h_2,\ba''-\vec e_j}
 \cdot \Cont^{\PE}_{\gab},\\
\cC_4 :=  &\frac{1}{18} \ 
 \!\! \!\!\!\! \sum_{0<h\leq g,\atop \gab  \in \mathcal P^+_{g,n}(h)} \!\!\!\!\!\! 
 { \sum_{1\leq i\neq j \leq l(\ba)}  2\, \binom{a_i+a_j+1}{a_i}(-1)^{h-1} F^E_{h,\ \mathcal G_{ij}(\ba)}}  
  \cdot \Cont^{\PE}_{\gab}.
\end{align*}

On the other hand, we can first apply the 
holomorphic anomaly equation \eqref{HAEforKP2} for local $\PP^2$. We then apply equation \eqref{PErecursioninsertion} to the right-hand side of \eqref{HAEforKP2}. We have
\begin{align*}
  {\partial_S} F_{g,n}^{K_{\PP^2}} 
 = & \ \frac{1}{2}
 \Big( (-1)^{g-1} F_{g-1,n+2}^{\PE} +\!\! \!\!\!\! \sum_{0<h\leq g-1,\atop \gab  \in \mathcal P_{g-1,n+2}(h)} \!\!\!\!\!\!   
(-1)^{h-1} F_{h,\ba}^E \cdot \Cont^{\PE}_{\gab} \Big)\\
   & \  + \frac{1}{2} \sum_{\substack{g_1+g_2 = g\\  n_1+n_2=n \\ 2g_i-2+n_i\geq 0}} \binom{n}{n_1}  \prod_{i=1}^2  \Big(  (-1)^{g_i} F_{g_i,n_i+1}^{\PE} +\!\! \!\!\!\! \sum_{0<h\leq g_i,\atop \gab  \in \mathcal P_{g_i,n_i+1}(h)} \!\!\!\!\!\!\!\!\!   (-1)^{h-1} F_{h,\ba}^E \cdot \Cont^{\PE}_{\gab} \Big) \\
    = & \ \frac{(-1)^{g-1}}{2}    F_{g-1,n+2}^{\PE} + \cC_2'   
   +  \frac{(-1)^{g}}{2} \!\!\!\!\!\! \sum_{\substack{g_1+g_2 = g\\  n_1+n_2=n \\ 2g_i-2+n_i\geq 0}} \!\!\! \binom{n}{n_1} (-1)^{g_1-1}F_{g_1,n_1+1}^{\PE}
(-1)^{g_2-1}F_{g_2,n_2+1}^{\PE} \\
 & \ +\cC_1'+\cC_3'  ,
\end{align*}
where $\cC_2'$ is the contribution of the last term in the first line, and $\cC_1', \cC_3'$ are the two types of contributions of the terms in the second line:
\begin{align*}
\cC_2'
 = &\frac{1}{2} \  \!\! \!\!\!\! \sum_{0<h\leq g-1,\atop \gab  \in \mathcal P_{g-1,n+2}(h)} \!\!\!\!\!\!   (-1)^{h-1}F_{h,\ba}^E \cdot \Cont^{\PE}_{\gab}, \\
\cC_1'
 = & \   \sum_{\substack{g_1+g_2 = g\\  n_1+n_2=n \\ 2g_i-2+n_i\geq 0}} \binom{n}{n_1}     (-1)^{g_1} F_{g_1,n_1+1}^{\PE} \
\!\! \!\!\!\! \sum_{0<h\leq g_2,\atop \gab  \in \mathcal P_{g_2,n_2+1}(h)} \!\!\!\!\!\! 
(-1)^{h-1} F_{h,\ba}^E \cdot \Cont^{\PE}_{\gab}, \\
\cC_3'
 = & \   \frac{1}{2} \sum_{\substack{g_1+g_2 = g\\  n_1+n_2=n \\ 2g_i-2+n_i\geq 0}} \binom{n}{n_1}  \!\! \!\!\!\!\!\!\! \sum_{0<h\leq g_1,\atop \gab  \in \mathcal P_{g_1,n_1+1}(h)} 
\!\!\!\!\!\!\!\!\!   
(-1)^{h-1} F_{h,\ba}^E \cdot \Cont^{\PE}_{\gab} \!\! \!\!\!\!\!\!\! \sum_{0<h\leq g_2,\atop \gab  \in \mathcal P_{g_2,n_2+1}(h)} \!\!\!\!\!\!\!\!\!   
(-1)^{h-1} F_{h,\ba}^E \cdot \Cont^{\PE}_{\gab}.
\end{align*}
Suppose that the holomorphic anomaly equation for $(\PP^2,E)$ (Theorem \ref{HAEforrelative}) holds  for $g'<g$. Namely
\begin{equation}  \label{HAEforrelativeg}
  \partial_S    F_{g',n}^{\PE} = \frac{1}{2} \sum_{\substack{g_1+g_2 = g'\\  n_1+n_2=n \\ 2g_i-2+n_i\geq 0}} \binom{n}{n_1}\ F_{g_1,n_1+1}^{\PE}\cdot   F_{g_2,n_2+1}^{\PE} \quad \text{ for $g'<g$ }.
\end{equation}
We prove the holomorphic anomaly equation for genus $g$ case by showing that
$$
\cC_1+\cC_4 = \cC_1',\qquad
\cC_2 = \cC_2',\qquad
\cC_3 = \cC_3'.\quad
$$
These three identities follow from the following three lemmas.

\begin{lem}
Suppose that the holomorphic anomaly equation for $(\PP^2,E)$ holds for all $g'<g$. Then we have
$$
\cC_1+\cC_4 = \cC_1'\,.$$
\end{lem}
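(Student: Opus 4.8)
The plan is to expand $\cC_1$ by means of the inductive hypothesis \eqref{HAEforrelativeg} and to show that its ``separating'' contributions reproduce $\cC_1'$, while the remaining contributions cancel $\cC_4$. Since $\partial_S$ is a derivation, I would first distribute it over the product defining $\Cont^{\PE}_{\gab}$, so that $\cC_1$ becomes a sum over the choice of a relative factor $F^{\PE}_{g_k,a_k+b_k+2}$ to which $\partial_S$ is applied. Every relative genus satisfies $g_k<g$ (because $h\geq 1$ forces $\sum_i g_i\leq g-1$) and the stability bound $2g_k-2+(a_k+b_k+2)>0$ holds on $\mathcal P_{g,n}(h)$, so \eqref{HAEforrelativeg} applies and replaces $\partial_S F^{\PE}_{g_k,a_k+b_k+2}$ by $\tfrac12\sum\binom{a_k+b_k+2}{p}F^{\PE}_{g',p+1}F^{\PE}_{g'',q+1}$. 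Graphically this splits the $k$-th relative vertex into two relative vertices joined by a new edge, the $a_k+b_k+2$ markings being distributed by the binomial.

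The decisive bookkeeping step is to split this binomial according to the origin of the markings. Of the $a_k+b_k+2$ markings, the $b_k$ elements of $B_k$ are honest legs while the other $a_k+2$ come from the operator $D^{a_k+2}$ encoding the edge to the central vertex $F^E_{h,\ba}$ that carries the descendent $\omega\psi^{a_k}$. Writing $\binom{a_k+b_k+2}{p}=\sum_{r}\binom{a_k+2}{r}\binom{b_k}{p-r}$ by Vandermonde, the term $r=0$ keeps all descendent markings, hence the whole edge to the centre, on one of the two pieces; cutting the new edge then detaches a genuine relative vertex $F^{\PE}_{g_1,n_1+1}$ from an honest star of genus $g-g_1$. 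Summing the leg factor $\binom{b_k}{n_1}$ over $k$ and over $\gab$ rebuilds $\binom{n}{n_1}$ and yields precisely $\cC_1'$.

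It then remains to prove that the terms with $r\geq 1$ cancel $\cC_4$. For $r\geq 1$ both pieces inherit part of the descendent structure of the edge to the centre, so these terms describe a configuration in which a single elliptic insertion is shared by two relative vertices; this is exactly the shape of the summands of $\cC_4$, where $\mathcal G_{ij}$ has merged the two insertions $\omega\psi^{a_i},\omega\psi^{a_j}$ into a single $\omega\psi^{a_i+a_j}$ while $\Cont^{\PE}_{\gab}$ retains both relative vertices. I would match the two expressions term by term under $a_k=a_i+a_j$, verifying that after summing over the ways the two pieces reconnect to the centre the descendent-splitting coefficients $\binom{a_k+2}{r}$ assemble into the Virasoro coefficient $\binom{a_i+a_j+1}{a_i}$ of \eqref{HAEforE}, and that the signs $(-1)^{g_i-1}$, the powers $3^{a_i+2}$, the factor $\tfrac{1}{18}$ and the automorphism factors $|\Aut\gab|$ all match with the opposite overall sign. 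This final comparison is the main obstacle: it forces one to reconcile the ``$+2$'' shift of the relative descendents with the ``$+1$'' shift in the Virasoro coefficient and to control the symmetry factor produced when an unordered pair of relative vertices is attached to one elliptic insertion, after which $\cC_1+\cC_4=\cC_1'$ follows.
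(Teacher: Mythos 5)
Your proposal is correct and is essentially the paper's own argument run in the opposite direction: the paper starts from $\cC_4$, applies the identity $\binom{a_i+a_j+1}{a_i}+\binom{a_i+a_j+1}{a_j}=\binom{a_i+a_j+2}{a_i+1}$ and Vandermonde's identity, and then uses the inductive hypothesis to resum pairs of relative factors into $-\cC_1$ plus part of $\cC_1'$, whereas you expand $\cC_1$ via the inductive hypothesis and Vandermonde and identify the separating ($r=0$) terms with $\cC_1'$ and the non-separating ($r\geq 1$) terms with $-\cC_4$. The ingredients you isolate --- the inductive HAE applied to each relative vertex, the Vandermonde split of legs versus descendent markings, the reconciliation of the ``$+2$'' and ``$+1$'' binomial shifts, and the sign/power-of-$3$/automorphism bookkeeping --- are exactly those carried out in the paper's proof.
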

\begin{proof}
By using the fact that 
\[\binom{a_i+a_j+1}{a_i}+\binom{a_i+a_j+1}{a_j}=\binom{a_i+a_j+2}{a_i+1},\]
we may write $\cC_4$ as 
\begin{equation}\label{pflm1-1}
\frac{1}{18}\sum_{0<h\leq g,\atop \gab  \in \mathcal P_{g,n}(h)} 
 { \sum_{1\leq i\neq j \leq l(\ba)} \binom{a_i+a_j+2}{a_i+1} (-1)^{h-1}
F^E_{h,\ \mathcal G_{ij}(\ba)}}  
  \cdot \Cont^{\PE}_{\gab}.
  \end{equation}
In the summation, we can replace $\mathcal P^+_{g,n}(h)$ by $\mathcal P_{g,n}(h)$ since $l(\ba)\geq 2$.

Let us fix a $\gab=\{(g_i,a_i,B_i)\}_{i=1}^l \in \mathcal P_{g,n}(h)$ and $1\leq s\neq t\leq l$. If we sum over those $i\neq j$ in \eqref{pflm1-1} such that
\[(g_i,a_i,B_i)=(g_s,a_s,B_s),\quad (g_j,a_j,B_j)=(g_t,a_t,B_t),\]
then the total contribution can be written as
\begin{equation}\label{pflm1-2}
\begin{split}
&(-1)^{h-1} 
F^E_{h,\ \mathcal G_{ij}(\ba)} \frac{1}{|\Aut(P_1)|}\Cont_{P_1}^{\PE}\cdot\\
&\left(-\frac{1}{2}(-1)^{g_s+g_t-1}3^{a_s+a_t+2}\binom{a_s+a_t+2}{a_s+1}F_{g_s,a_s+b_s+2}^{\PE}F_{g_t,a_t+b_t+2}^{\PE}\right)
\end{split}
\end{equation}
where $P_1=\{(g_k,a_k,B_k)\in \gab \mid k\neq s,t\}$ and
\begin{equation*}
\Cont_{P_1}^{\PE}=\prod_{k\neq s,t}(-1)^{g_k-1}3^{a_k+2}F_{a_k+b_k+2}^{\PE}.
\end{equation*}
Now let us vary $\gab\in P_{g,n}(h)$. We sum over those \eqref{pflm1-2} with
\[\gab=P_1\cup\{(g_s,a'_s,B'_s)\}\cup\{(g_t,a'_t,B'_t)\}\]
such that $a'_s+b'_s=a_s+b_s$ and $a'_t+b'_t=a_t+b_t$. Since $\sum a'_i=2h-2$ is fixed, we also deduce that $a'_s+a'_t=a_s+a_t$ and $b'_s+b'_t=b_s+b_t$. Then we get 
\begin{align*}
(-1)^{h-1} F^E_{h,\ \mathcal G_{ij}(\ba)} \frac{\Cont_{P_1}^{\PE}}{|\Aut(P_1)|}\cdot&\left(-\frac{1}{2}(-1)^{g_s+g_t-1}3^{a_s+a_t+2}F_{g_s,a_s+b_s+2}^{\PE}F_{g_t,a_t+b_t+2}^{\PE}\right)\\
&\sum_{\substack{a'_s+b'_s=a_s+b_s\\ 0\leq a'_s\leq a_s+a_t\\0\leq b'_s\leq b_s+b_t }}\binom{a_s+a_t+2}{a'_s+1}\binom{b_s+b_t}{b'_s}.
\end{align*}
Using the Vandermonde's identity
\begin{align*}
\sum_{\substack{a'_s+b'_s=a_s+b_s\\ 0\leq a'_s\leq a_s+a_t\\0\leq b'_s\leq b_s+b_t }}&\binom{a_s+a_t+2}{a'_s+1}\binom{b_s+b_t}{b'_s}=\\
&\binom{a_s+b_s+a_t+b_t+2}{a_s+b_s+1}-\binom{b_s+b_t}{a_s+b_s+1}-\binom{b_s+b_t}{a_t+b_t+1},
\end{align*}
the above equation can be further written as
\[T_1+T_2\]
where
\begin{align*}
T_1=& (-1)^{h-1} F^E_{h,\ \mathcal G_{ij}(\ba)} \frac{\Cont_{P_1}^{\PE}}{|\Aut(P_1)|}\\
&\left(-\frac{1}{2}(-1)^{g_s+g_t-1}3^{a_s+a_t+2}\binom{a_s+b_s+a_t+b_t+2}{a_s+b_s+1}F_{g_s,a_s+b_s+2}^{\PE}F_{g_t,a_t+b_t+2}^{\PE}\right),\\
T_2=&
(-1)^{h-1} F^E_{h,\ \mathcal G_{ij}(\ba)} \frac{\Cont_{P_1}^{\PE}}{|\Aut(P_1)|}\cdot\left(\binom{b_s+b_t}{a_s+b_s+1}+\binom{b_s+b_t}{a_t+b_t+1}\right)\\
&\frac{1}{2}(-1)^{g_s+g_t-1}3^{a_s+a_t+2}F_{g_s,a_s+b_s+2}^{\PE}F_{g_t,a_t+b_t+2}^{\PE}.
\end{align*}
Using the holomorphic anomaly equation \eqref{HAEforrelativeg} for $g'<g$, it is easy to see that the total contribution of those $T_1$ when we vary $\gab$ and $s,t$ is
\begin{align*}
-\cC_1+\sum_{0<h\leq g,\atop \gab  \in \mathcal P_{g,n}(h)}& 
(-1)^{h-1} F_{h,\ba}^E
\frac{1}{\Aut\gab}\sum_{1\leq i\leq l(\ba)}\prod_{j\neq i}(-1)^{g_j-1}3^{a_j+2}F_{g_j,a_j+b_j+2}^{\PE}\\
&\left(\sum_{g'_1+g'_2=g_i}(-1)^{g'_1}F_{g'_1,1}^{\PE}(-1)^{g'_2-1}3^{a_2+2}F_{g'_2,a_i+b_i+3}^{\PE}\right)
\end{align*}
It is easy to check that
\begin{align*}
\sum_{0<h\leq g,\atop \gab  \in \mathcal P_{g,n}(h)}\!\!\!\!\!\!\!& 
(-1)^{h-1} F_{h,\ba}^E \frac{1}{\Aut\gab}\sum_{1\leq i\leq l(\ba)}\prod_{j\neq i}(-1)^{g_j-1}3^{a_j+2}F_{g_j,a_j+b_j+2}^{\PE}\\
&\left(\sum_{g'_1+g'_2=g_i}(-1)^{g'_1}F_{g'_1,1}^{\PE}(-1)^{g'_2-1}3^{a_2+2}F_{g'_2,a_i+b_i+3}^{\PE}\right)
\end{align*}
equals 
\[ \sum_{g_1+g_2=g}    (-1)^{g_1} F_{g_1,1}^{\PE} \cdot \sum_{0<h\leq g_2,\atop \gab  \in \mathcal P_{g_2,n+1}(h)} 
\!\!\!\!\! (-1)^{h-1} F_{h,\ba}^E \cdot \Cont^{\PE}_{\gab}.\]
So it remains to check the total contribution of those $T_2$ to $\cC_1$ when we vary $\gab$ and $s,t$ is
\[\sum_{g_1+g_2=g \atop n_1+n_2=n,n_1>0} \binom{n}{n_1}     (-1)^{g_1} F_{g_1,n_1+1}^{\PE} \!\!\!\!\!\! \sum_{0<h\leq g_2,\atop \gab  \in \mathcal P_{g_2,n_2+1}(h)}
\!\!\!\!\!\!  
(-1)^{h-1} F_{h,\ba}^E \cdot \Cont^{\PE}_{\gab}\]
which is obvious.
\end{proof}



\begin{lem}\label{lem-tt}
$$
\cC_2 = \cC_2'
$$
\end{lem}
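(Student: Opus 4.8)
The plan is to prove Lemma~\ref{lem-tt} by a term-by-term matching of the summands of $\cC_2$ and $\cC_2'$. Conceptually the identity asserts that the genus-lowering \emph{loop} term $\sum_{i,j}F^E_{h-1,\ba-\vec e_i-\vec e_j}$ of the elliptic holomorphic anomaly equation \eqref{HAEforE}, applied to the central elliptic vertex of a comb graph in $\mathcal P^+_{g,n}(h)$, reproduces exactly the contribution of the two additional legs of $F_{g-1,n+2}^{\PE}$ after the latter is expanded by the local-relative correspondence \eqref{PErecursioninsertion} into graphs in $\mathcal P_{g-1,n+2}(h)$.

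First I would reindex the genus in $\cC_2$ so that the \emph{reduced} elliptic genus is $h$ (the original elliptic vertex then having genus $h+1$); the elliptic factor of $\cC_2$ becomes $F^E_{h,\ba-\vec e_i-\vec e_j}$, whose parts sum to $2h-2$, precisely the constraint met by the elliptic factor $F^E_{h,\ba'}$ of $\cC_2'$. Since the $(\PP^2,E)$ genera $g_k$ are untouched and $(-1)^{h+1}=(-1)^{h-1}$, the signs on the two sides agree. The core move associates to a pair $\bigl(\gab,(i,j)\bigr)$ with $\gab\in\mathcal P^+_{g,n}(h+1)$ a graph $\gab'\in\mathcal P_{g-1,n+2}(h)$ obtained by lowering the edge $\psi$-powers $a_i\mapsto a_i-1,\ a_j\mapsto a_j-1$ (or $a_i\mapsto a_i-2$ when $i=j$) and attaching the two loop markings $n+1,n+2$ as external markings to the corresponding $(\PP^2,E)$ vertices (both to vertex $i$ if $i=j$). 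Each affected vertex factor $F^{\PE}_{g_k,a_k+b_k+2}$ is \emph{unchanged}, because lowering $a_k$ by one and raising $b_k=|B_k|$ by one preserves $a_k+b_k$; the only numerical effect is that the lowered edge of $\cC_2$ carries $3^{a_k+2}$ while the matching $\cC_2'$ vertex carries only $3^{a_k+1}$, so on the two lowered edges $\cC_2$ exceeds $\cC_2'$ by $3^2=9$. This factor $9$ combines with the prefactor $\tfrac1{18}$ of $\cC_2$ to reproduce the prefactor $\tfrac12$ of $\cC_2'$.

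What remains, and what I expect to be the only genuine difficulty, is the bookkeeping of the automorphism factors $1/|\Aut\gab|$ versus $1/|\Aut\gab'|$ against the ordered double sum $\sum_{1\le i,j\le l(\ba)}$. The subtlety is that $\Aut\gab$ permutes $(\PP^2,E)$ vertices with empty external-marking sets and equal $(g_k,a_k)$, and attaching a labeled loop marking to such a vertex rigidifies it; meanwhile the two loop markings $n+1,n+2$ are themselves labeled and correspond to the ordered pair $(i,j)$, the diagonal $i=j$ matching the case where both land on one vertex. To control this I would pass to fully labeled configurations, ordering the vertices (hence the legs of the elliptic vertex) so that every $1/|\Aut|$ is replaced by a division by $l!$, establish the bijection $\bigl(\gab,(i,j)\bigr)\leftrightarrow\gab'$ at that level—where it is transparent, since removing $n+1,n+2$ recovers $(i,j)$ and the raised $\psi$-powers recover $\gab$—and then descend to isomorphism classes. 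Along the way one checks that the loop term vanishes exactly on the configurations excluded in passing from $\mathcal P_{g,n}$ to $\mathcal P^+_{g,n}$ (the single unstable $a=0$ vertex), and that the nonvanishing conditions $a_i\ge1,\ a_j\ge1$ (resp.\ $a_i\ge2$) correspond precisely to the loop markings being strippable, so the correspondence is a genuine bijection onto all of $\mathcal P_{g-1,n+2}(h)$.
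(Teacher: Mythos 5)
Your proposal is correct and takes essentially the same route as the paper's proof: the identical move of lowering $a_i,a_j$ and attaching the labels $n+1,n+2$ to the corresponding $(\PP^2,E)$ vertices, the same observation that $a_k+b_k$ (hence each $F^{\PE}$ factor) is preserved while the two powers of $3$ supply the factor $9$ converting $\tfrac{1}{18}$ into $\tfrac{1}{2}$, and the same fiber count $|\Aut\gab|/|\Aut(\bg',\ba',\bb')|$ for the correspondence --- which the paper simply asserts and you propose to verify by passing to labeled configurations. Your extra checks (vanishing of the loop term on the $h=1$ graphs, and the strippability conditions $a_i\geq 1$, resp.\ $a_i\geq 2$, characterizing the image) are points the paper leaves implicit, so your write-up is if anything slightly more careful than the published one.
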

\begin{proof}
Given a term 
\[\frac{1}{18}(-1)^h F^E_{h-1,\ba-\vec e_i -\vec e_j}\cdot \Cont^{\PE}_{\gab}\]
in $\cC_2$, it can be rewritten as
\[\frac{|\Aut(\bg',\ba',\bb')|}{|\Aut\gab|}\frac{1}{2}
(-1)^{h'-1}
F^E_{h',\ba'}\cdot\Cont^{\PE}_{(\bg',\ba',\bb')}\]
where $h'=h-1$, $\bg'=\bg$, $\ba'=\ba-\vec e_i -\vec e_j$ and $\bb'$
is a partition of the set $\{1,2,\cdots,n+2\}$ which can be determined from $\bb$ by adding $n+1$ to the set $B_i$ and adding $n+2$ to the set $B_j$. Obviously, $(\bg',\ba',\bb')\in P_{g-1,n+2}(h')$. So 
\[\frac{1}{2}(-1)^{h'-1}
F^E_{h',\ba'}\cdot\Cont^{\PE}_{(\bg',\ba',\bb')}\]
becomes one summand in $\cC_2'$. Now Lemma \ref{lem-tt} follows from the fact that for a fixed $(\bg',\ba',\bb')\in P_{g-1,n+2}(h')$, there are exactly $\frac{|\Aut\gab|}{|\Aut(\bg',\ba',\bb')|}$ choices of $\gab,i,j$ which gives $(\bg',\ba',\bb')$ via the above procedure. Actually, we see that $\gab$ can be determined from $(\bg',\ba',\bb')$. The only flexibility comes from the choices of $i$ and $j$.

\end{proof}

\begin{lem}$$
\cC_3 = \cC_3'
$$
\end{lem}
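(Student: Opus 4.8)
The plan is to establish $\cC_3=\cC_3'$ by a weight-preserving bijection between their index sets, in the same spirit as the proof of Lemma~\ref{lem-tt} ($\cC_2=\cC_2'$), the difference being that here the central elliptic vertex of a star graph is \emph{separated} into two vertices rather than carrying a non-separating node. A summand of $\cC_3$ is indexed by a star graph $\gab\in\mathcal P^+_{g,n}(h)$ together with a splitting $h_1+h_2=h$, $\ba'\sqcup\ba''=\ba$ and a pair of indices $1\le i\le l(\ba')$, $1\le j\le l(\ba'')$. I would read this datum as follows: the central vertex of genus $h$ breaks into two centers of genera $h_1$ and $h_2$; the outer $\PE$-vertices are distributed to the two sides according to whether their half-edge belongs to $\ba'$ or $\ba''$; and a separating node joins the two new centers. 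Cutting at this node yields precisely two star graphs $\gab^{(1)}\in\mathcal P_{g_1,n_1+1}(h_1)$ and $\gab^{(2)}\in\mathcal P_{g_2,n_2+1}(h_2)$, where $g_1,g_2$ collect $h_1,h_2$ and the genera of the outer vertices on each side, and $n_1+n_2=n$ is the split of the original markings. This is exactly the data summed in $\cC_3'$.

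The arithmetic core is the cancellation already exploited for $\cC_2=\cC_2'$: the operator $\vec e_i$ is absorbed by the node marking. Lowering the half-edge $\psi$-power $a_i\mapsto a_i-1$ on the first center while recording the node leg as a new external marking in $B_i$ (so $b_i\mapsto b_i+1$) leaves the outer factor $F^{\PE}_{g_i,a_i+b_i+2}$ untouched and turns the central insertion $\ba'-\vec e_i$ into the genuine $\psi$-profile of $\gab^{(1)}$; symmetrically for $j$ on the second center. The sole numerical effect is on the powers of $3$: in $\cC_3$ the factor $\prod_k 3^{a_k+2}$ inside $\Cont^{\PE}_{\gab}$ carries the unlowered $a_i,a_j$, whereas after cutting the two distinguished vertices contribute $3^{(a_i-1)+2}3^{(a_j-1)+2}=\tfrac19\,3^{a_i+2}3^{a_j+2}$. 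This $\tfrac19$ is exactly the discrepancy between the prefactors, since $\tfrac1{18}=\tfrac12\cdot\tfrac19$ and $\tfrac12$ is the coefficient of the separating term in the local $\PP^2$ equation \eqref{HAEforKP2}. The signs agree because $h_1+h_2=h$ and the outer genera are preserved by the cut, so $(-1)^{h_1-1}(-1)^{h_2-1}$ and the product of the $(-1)^{g_k-1}$ match on both sides.

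The hard part will be the bookkeeping of the automorphism and labelling factors, namely the identity
\[
\frac{1}{|\Aut\gab|}\cdot\#\bigl\{(\gab,h_1,h_2,\ba'\sqcup\ba'',i,j)\ \text{cutting to a fixed}\ (\gab^{(1)},\gab^{(2)})\bigr\}=\binom{n}{n_1}\frac{1}{|\Aut\gab^{(1)}|\,|\Aut\gab^{(2)}|}\,.
\]
Here the binomial $\binom{n}{n_1}$ records the distribution of the labels $\{1,\dots,n\}$ between the two stars, which on the $\cC_3$ side is encoded by the splitting $\ba'\sqcup\ba''$ together with the marking sets $B_k$; and the ordered inner sum of $\cC_3$ over $(h_1,\ba',i)$ versus $(h_2,\ba'',j)$ corresponds to the ordered sum over $(g_1,n_1),(g_2,n_2)$ that carries the factor $\tfrac12$ in $\cC_3'$, so no additional symmetry factor for swapping the two sides is needed beyond what $\tfrac1{18}=\tfrac12\cdot\tfrac19$ already supplies. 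I expect the two delicate points to be: first, an automorphism of $\gab$ permuting isomorphic outer vertices lying on opposite sides of the cut is replaced, after cutting, by the product $|\Aut\gab^{(1)}|\,|\Aut\gab^{(2)}|$ together with the freedom in choosing $i$ and $j$; and second, since the node marking may be permuted with the other markings of the same outer vertex, one must check that the number of admissible choices of $i,j$ compensates exactly the change in the stabilizer of the marked outer vertex. I would settle both by the orbit-counting argument used in Lemma~\ref{lem-tt}: fix $(\gab^{(1)},\gab^{(2)})$, reconstruct $\gab$ by merging the two centers and re-raising the two cut $\psi$-powers, and count the fibers of this reconstruction, which are indexed precisely by the admissible $(i,j)$.
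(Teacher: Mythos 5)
Your proposal is correct and takes essentially the same approach as the paper: the paper's entire proof of this lemma is ``The proof is similar to the proof of Lemma \ref{lem-tt}. We omit the details here,'' and your argument is precisely that omitted detail — the same term-by-term matching used for $\cC_2=\cC_2'$, adapted to a separating cut of the central elliptic vertex. Your quantitative checks are the right ones: absorbing $\vec e_i,\vec e_j$ into node markings so that each $F^{\PE}_{g_i,a_i+b_i+2}$ is unchanged under $a_i\mapsto a_i-1$, $b_i\mapsto b_i+1$, the discrepancy $\tfrac19$ in the powers of $3$ matching $\tfrac1{18}=\tfrac12\cdot\tfrac19$, the signs $(-1)^{h_1-1}(-1)^{h_2-1}$, and the $|\Aut|$/binomial orbit-counting identity, which is exactly the fiber count carried out in Lemma \ref{lem-tt}.
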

\begin{proof}
The proof is similar to the proof of Lemma \ref{lem-tt}. We omit the details here.
\end{proof}

\subsection{$S$-degree bound on $F_g^{\PE}$}

We prove the $S$-degree bound of Theorem \ref{fgproperty} (equivalently the $B$-degree bound of Theorem \ref{fgproperty1}), that is, for every $g \geq 2$, 
\begin{equation}\label{eq:S_bound} 
\deg_S F_g^{\PE} \leq 2g-3\,.
\end{equation}
As we only have 
\begin{equation*} 
\deg_S F_g^{K_{\PP^2}} \leq 3g-3
\end{equation*} 
in general, the bound 
\eqref{eq:S_bound} is not an obvious consequence of Theorem \ref{thm_locrel_SE}
and  
requires a non-trivial cancellation of higher degree terms.
For example,
we have observed such cancellation in the genus $2$ computation of Section 
\ref{sec:genus_2} (vanishing of the terms in $S^3/X$ and $S^2$).
Rather than trying to prove directly this cancellation in general, we show that 
\eqref{eq:S_bound}
follows from the holomorphic anomaly equation 
\eqref{eq_HAEPE}.

We prove \eqref{eq:S_bound} by induction on $g$. Using 
\eqref{D_Q_q}, we rewrite the
holomorphic anomaly equation \eqref{eq_HAEPE} as
\begin{equation} \label{eq:HAE_rw}
\frac{X}{3} \frac{\partial}{\partial S}F_g^{\PE}
= \frac{1}{4}
\sum_{g_1+g_2 = g, \atop g_i >0 \text{ for } i =1,2} \ 
\left( q\frac{d F_{g_1}^{\PE}}{dq}
\right)
\cdot 
\left( q \frac{ dF_{g_2}^{\PE}}{dq} 
\right)\,.
\end{equation}
By induction, we have for $g_j \geq 2$
\[ \deg_S F_{g_j}^{\PE} 
\leq 2g_j-3\,,\] 
and so using \eqref{eq:SX_derivative},
\[ \deg_S \left( q \frac{d F_{g_j}^{\PE}}{dq} \right)
\leq 2g_j-2 \,.\]
By \eqref{eq:D_F1}, this bound also holds for $g_j=1$. Therefore, the $S$-degree of the right-hand side of \eqref{eq:HAE_rw} is  $\leq (2g_1-2)+(2g_2-2)=2g-4$ and so the $S$-degree of $F_g^{\PE}$ is $\leq 2g-3$.

\appendix
\section{Product formulae for the relative theory}\label{sec:appx}
In this section, we mainly want to prove a product formula relating the relative theory of possibly disconnected domain and the one with connected domain (see Lemma \ref{lem_split1}). It implies Lemma \ref{lem_split2}
and so completes the last step in the localization calculation of Section \ref{sec:loc}. Before proving Lemma \ref{lem_split1}, we need a product formula relating the rubber theory of possibly disconnected domain and the one with connected domain. 

Let $D$ be a smooth projective variety and $L$ a line bundle on $D$. We consider a rubber target over $D$ obtained as
a chain of $\mathbb P_D(L\oplus \cO)$. Let $\bM_{\Gamma}^{\bullet\sim}(D)$ be a moduli space of relative stable maps to the rubber target, where $\Gamma$ is a possibly disconnected rubber graph (see \cite[Definition 2.4]{FWY}). The rubber theory is relative to the two ends of the rubber target. One of the ends of the rubber target has normal bundle $L^\vee$, and we denote target psi-class associated to this end by $\Psi_\infty$ (see also \cite{GV}*{Section 2.5}). The target psi-class associated to the other end will be denoted by $\Psi_0$.

For a vertex $v\in V(\Gamma)$, recall that
we denote by $\gamma_v$ the graph consisting of a single vertex $v$ plus all the decorations on $v$, and by
$g(v)$, $n(v)$, $b(v)$ the genus, number of markings and the curve class of the vertex $v$.
We say that a vertex $v$ is 
\emph{unstable} if the curve class of $v$ is pushed forward to $0$ on $D$, $v$ has only two relative markings and no absolute markings. We say that a vertex $v$ is \emph{stable} if it is not unstable. Let $V^{\mathrm s}(\Gamma)$ be the set of stable vertices and $V^{\mathrm{us}}(\Gamma)$ the set of unstable vertices. In the following theorem, we need to consider the stabilization map from $\bM_{\Gamma}^{\bullet\sim}(D)$ to the moduli of stable maps of $D$. Note that stabilization does not make sense on components corresponding to unstable vertices.
Our convention for the stabilization map
\[
\tau \colon \bM_{\Gamma}^{\bullet\sim}(D) \rightarrow \prod_{v\in V^{\mathrm s}(\Gamma)}\bM_{g(v),n(v)}(D,b(v))\times D^{|V^{\mathrm{us}}(\Gamma)|}
\]
is that we firstly stabilize stable components and send unstable components to the corresponding points in $D$. If $v$ is an unstable vertex, the two relative markings have the same multiplicity, that we denote by $d_v$.

\begin{thm}\label{thm_appd}
We have the following identity.
\begin{align*}
&\tau_*\Big(\dfrac{1}{t-\Psi_\infty} \cap [\bM^{\bullet\sim}_{\Gamma}(D)]^{\vir}\Big) \\
=& \dfrac{1}{\prod_{v\in V^{\mathrm{us}}(\Gamma)} d_v}
\tau'_*\left(\prod\limits_{v\in V^{\mathrm s}(\Gamma)}p_v^*\left(\dfrac{1}{t-\Psi_\infty}\right) \cap \big[\prod\limits_{v\in V^{\mathrm s}(\Gamma)}\bM^{\sim}_{\gamma_v}(D)\times D^{|V^{\mathrm{us}}(\Gamma)|} \big]^{\vir}\right),
\end{align*}
where each $\Psi_{\infty}$ on the left-hand side and right-hand side are the target psi-classes on their corresponding moduli, 
$p_v$ is the projection of the product of rubber moduli to $\bM^\sim_{\gamma_v}(D)$, and $\tau, \tau'$ are the corresponding stabilization maps to $\prod_{v\in V^{\mathrm s}(\Gamma)}\bM_{g(v),n(v)}(D,b(v))\times D^{|V^{\mathrm{us}}(\Gamma)|}$.
\end{thm}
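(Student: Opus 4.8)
The plan is to prove Theorem \ref{thm_appd} by separating two distinct sources of structure in $\bM^{\bullet\sim}_\Gamma(D)$: the splitting of the map-theoretic obstruction theory over the connected components of the domain, which is essentially formal, and the treatment of the single target cotangent class $\Psi_\infty$, which carries the actual content. First I would describe $\bM^{\bullet\sim}_\Gamma(D)$ concretely. Since all connected components of the domain map to one shared rubber target, obtained from a chain of $\PP_D(L\oplus\cO)$, this space is \emph{not} a plain product of the connected rubber spaces $\bM^\sim_{\gamma_v}(D)$: the fiberwise $\C^*$-scaling of the chain is quotiented only once globally, whereas the product $\prod_v \bM^\sim_{\gamma_v}(D)$ carries one scaling quotient per factor. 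I would make this precise by exhibiting $\bM^{\bullet\sim}_\Gamma(D)$ as a global $\C^*$-quotient of a fiber product of rigidified (pre-rubber) vertex moduli spaces, together with the unstable factors — degree-$0$ multiple covers of a fiber with two relative markings and automorphism weight $d_v$ — which map to points of $D$ under $\tau$.

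The map-obstruction theory is a direct sum over connected components, so the associated virtual class splits as an external product; the only coupling between the factors is through the shared rubber degrees of freedom and, correspondingly, through $\Psi_\infty$. The core of the proof is therefore the identity
\[
\tau_*\!\left(\frac{1}{t-\Psi_\infty}\cap[\bM^{\bullet\sim}_\Gamma(D)]^{\vir}\right)
= \frac{1}{\prod_{v\in V^{\mathrm{us}}(\Gamma)}d_v}\,\tau'_*\!\left(\prod_{v\in V^{\mathrm{s}}(\Gamma)} p_v^*\frac{1}{t-\Psi_\infty}\cap\Big[\textstyle\prod_v \bM^\sim_{\gamma_v}(D)\times D^{|V^{\mathrm{us}}(\Gamma)|}\Big]^{\vir}\right),
\]
in which a single global geometric series in $\Psi_\infty$ on the left must factor into the product of per-component series on the right. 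To establish this I would use rubber calculus: $\frac{1}{t-\Psi_\infty}$ is the $t$-equivariant contribution of smoothing the $\infty$-end of the shared rubber, and I would expand it as $\sum_{k\geq 0} t^{-k-1}\Psi_\infty^k$ and argue by induction on $k$, repeatedly applying the standard relation expressing $\Psi_\infty$ as the boundary divisor along which the $\infty$-most rubber component splits off (as in \cite{GV}, \cite{FWY}). Matching the combinatorics of which connected components attach across that splitting is what reorganizes the global series into the product, while the single overall scaling quotient produces exactly the prefactor $\frac{1}{\prod_{v\in V^{\mathrm{us}}}d_v}$.

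I expect the main obstacle to be precisely this factorization of the $\Psi_\infty$-series across connected components. Because the rubber target is shared, the global $\Psi_\infty$ intertwines the components in a way invisible to the external product of virtual classes, and one must check that the iterated boundary contributions reassemble into a clean product of geometric series with the correct $d_v$ weights and \emph{no} residual cross terms. Once this rubber-calculus bookkeeping is under control, identifying the stabilization maps $\tau$ and $\tau'$ and the external splitting of the virtual class is routine, and Lemma \ref{lem_split1} — hence Lemma \ref{lem_split2}, which is what the localization computation of Section \ref{sec:loc} requires — follows by specializing the disconnected rubber graph appropriately.
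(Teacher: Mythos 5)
Your plan contains a genuine gap at exactly the point you flag as "the main obstacle." The central step --- that expanding $\frac{1}{t-\Psi_\infty}=\sum_{k\geq 0}t^{-k-1}\Psi_\infty^k$ and repeatedly replacing $\Psi_\infty$ by the boundary divisor splitting off the $\infty$-most rubber level reassembles into a product of per-vertex geometric series --- is the entire content of the theorem, and you give no argument for it. Each such replacement produces a sum over all ways of distributing the connected components of the domain between the split-off level and the rest; iterating produces sums over chains of levels decorated by assignments of components to levels, and showing these recombine into $\prod_{v}p_v^*\bigl(\frac{1}{t-\Psi_\infty}\bigr)$ with the stated prefactor and no cross terms requires a nontrivial combinatorial (inclusion--exclusion) identity that the proposal merely asserts. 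Two subsidiary claims are also shaky. First, your description of $\bM^{\bullet\sim}_{\Gamma}(D)$ as a single global $\C^*$-quotient of a fiber product of rigidified vertex moduli is only valid over the locus where the target has not expanded; over the boundary, the shared chain couples the components through the whole expansion structure (which levels exist, and which components occupy them), not merely through the one class $\Psi_\infty$, so the assertion that the virtual class "splits as an external product" coupled only via $\Psi_\infty$ is itself a form of what must be proven. Second, the prefactor $1/\prod_{v\in V^{\mathrm{us}}(\Gamma)}d_v$ does not come from the global scaling quotient: it comes from the $\mu_{d_v}$ automorphism groups of the unstable components (degree-$d_v$ multiple covers of a rubber fiber, totally ramified at both ends), which on the right-hand side are replaced by plain factors of $D$.

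For contrast, the paper's proof avoids the rubber-side combinatorics entirely. It invokes a disconnected-domain version of \cite[Theorem 4.1]{FWY2}, which rewrites $\tau_*\bigl(\frac{1}{t-\Psi_\infty}\cap[\bM^{\bullet\sim}_{\Gamma}(D)]^{\vir}\bigr)$ in terms of the orbifold Gromov--Witten theory of the $r$-th root gerbe $\cD_0=\sqrt[r]{D/L}$, involving Chern classes of $-R^*\pi_*\mathcal L_r$. On the orbifold side the disconnected moduli space literally is the product of the connected ones, and $-R^*\pi_*\mathcal L_r$ is additive over components, so the factorization is manifest; applying the connected version of the same theorem in reverse to each factor, and matching unstable vertices of $\Gamma$ with unstable vertices in the localization of the root stack (which is where the $D$-factors and the $1/d_v$'s appear), yields the identity. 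If you want a direct rubber-calculus proof along your lines, you would need to actually establish the factorization identity above rather than defer it; as written, the proposal correctly isolates the difficulty but does not overcome it.
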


\begin{proof}
The theorem is an application of \cite[Theorem 4.1]{FWY2}. Note that \cite[Theorem 4.1]{FWY2} is stated for connected domains. But our situation requires disconnected domain and it is straightforward to check that the same proof works for disconnected domains. We omit the details here.

Let us recall the content of the theorem. In \cite[Theorem 4.1]{FWY2}, we consider $P_{D_0,r}$, which is the $r$th root stack of $P:=\mathbb P_D(L\oplus \cO)$. After the root stack construction, there are two invariant substacks $\cD_0, \cD_\infty$ under the fiberwise $\C^*$ action. $\cD_0$ is the one isomorphic to the root gerbe $\sqrt[r]{D/L}$ and $\cD_\infty$ is the one isomorphic to $D$.

The result \cite{FWY2}*{Theorem 4.1} compares rubber theory with the orbifold Gromov--Witten theory of the gerbe $\cD_0$. The topological data $\Gamma$ imposes contact orders on the two ends of the rubber target. In \cite{FWY2}, the end with normal bundle $L$ is called the $0$-side, and the other end with normal bundle $L^\vee$ is called the $\infty$-side. For the gerbe theory over $\cD_0$, we still use $\Gamma$ to represent the topological data where the weights of roots (contact order conditions) are replaced by suitable ages. A relative marking on the $0$-side of order $\mu$ corresponds to an orbifold marking of age $\mu/r$, whereas a relative marking on the $\infty$-side of order $\mu$ corresponds to an orbifold marking of age $(r-\mu)/r$. Under this convention, denote by $\bM^\bullet_{\Gamma}(\cD_0)$ the corresponding moduli of twisted stable maps to the gerbe $\cD_0$.

For simplicity, we assume all vertices on $\Gamma$ are stable. Denote the forgetful maps by the following.
\begin{align*}
  \begin{split}
  \tau_1:&\bM^\bullet_{\Gamma}(\cD_0)\rightarrow \prod\limits_{v\in V(\Gamma)}\bM_{g(v),n(v)}(D,b(v)),\\ 
  \tau_2:&\bM_{\Gamma}^{\bullet\sim}(D)\rightarrow \prod\limits_{v\in V(\Gamma)}\bM_{g(v),n(v)}(D,b(v))\,.
  \end{split}
\end{align*}
On the root gerbe $\cD_0$, there is a universal line bundle $L_r$. Let $\pi:\mathcal
C\rightarrow \bM^\bullet_{\Gamma}(\cD_0)$ be the universal curve and $f:\mathcal C\rightarrow
\cD_0$ the map to the target. Let $\mathcal L_r = f^*L_r$ and 
\[-R^*\pi_*\mathcal L_r:=R^1\pi_*\mathcal L_r-R^0\pi_*\mathcal L_r \in K^0(\bM^\bullet_{\Gamma}(\cD_0)).\]

As mentioned above, \cite[Theorem 4.1]{FWY2} can be generalized to moduli with disconnected domain using the same proof. Let $g$ be the arithmetic genus of the disconnected curve corresponding to the graph $\Gamma$. The disconnected version of the theorem implies that the following identity
\begin{align*}
\begin{split}
&(\tau_2)_*\left(\dfrac{1}{t-\Psi_\infty} \cap [\bM_{\Gamma}^{\bullet\sim}(D)]^{\vir}\right) \\
=&\dfrac{\left[  (\tau_1)_*\left(\sum\limits_{i=0}^\infty \left(\dfrac{t}{r}\right)^{g-i-1}c_i(-R^*\pi_*\mathcal L_r) \cap [\bM^\bullet_{\Gamma}(\cD_0)]^{\vir} \right)  \right]_{r^0}} {{\prod\limits_{i=1}^{\rho_\infty}\left(1+\dfrac{\ev_i^*c_1(L)-\nu_i\psi_i}{t}\right)}}  \\
\end{split}  
\end{align*}
holds in $A_*(\prod\limits_{v\in V(\Gamma)}\bM_{g(v),n(v)}(D,b(v)))[t,t^{-1}]$ as Laurent polynomials in the formal variable $t$.

The right-hand side decomposes into a product of such expressions according to the connected component of domain curves, because the disconnected moduli in orbifold theory is a product of connected moduli, and $-R^*\pi_*\mathcal L_r$ decomposes into a sum accordingly. Applying the original connected version of \cite[Theorem 4.1]{FWY2} to each factor, we conclude that the right-hand side is nothing but the pushforward of a product of 
$\left(\dfrac{1}{t-\Psi_\infty}\right)\cap [\bM_{\gamma_v}^\sim(D)]^\vir$, where $\Psi_\infty$ should be treated as the target psi-class of $\bM_{\gamma_v}^\sim(D)$.

If there are unstable vertices in $\Gamma$, the unstable vertices in $\Gamma$ correspond to unstable vertices in the localization of the root stack $P_{D_0,r}$. It is straightforward to add in factors of $D$ in the statement of the theorem and to match with the statement that we want to prove.
\end{proof}

Theorem \ref{thm_appd} has interesting corollaries. If we take the $1/t$ coefficient of the theorem, we find the following corollary.
\begin{cor}
If $\Gamma$ has more than two stable vertices, we have that \[\tau_*([\bM_{\Gamma}^{\bullet\sim}(D)]^\vir)=0\,.\]
\end{cor}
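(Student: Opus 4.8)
The plan is to extract the coefficient of $t^{-1}$ on both sides of the identity in Theorem \ref{thm_appd} and compare. Since $t$ is a formal variable and both sides are Laurent polynomials in $t$ with coefficients in the Chow group of $\prod_{v\in V^{\mathrm s}(\Gamma)}\bM_{g(v),n(v)}(D,b(v))\times D^{|V^{\mathrm{us}}(\Gamma)|}$, this comparison is legitimate term by term; the pushforwards $\tau_*$ and $\tau'_*$ are $t$-independent, so taking the $t^{-1}$-coefficient commutes with them.

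First I would expand the left-hand side. Writing
\[
\frac{1}{t-\Psi_\infty}=\sum_{k\geq 0}\Psi_\infty^k\, t^{-k-1},
\]
the coefficient of $t^{-1}$ is exactly the $k=0$ term, namely $\tau_*\big([\bM^{\bullet\sim}_{\Gamma}(D)]^{\vir}\big)$, which is the quantity we wish to show vanishes.

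Next I would analyze the pole order in $t^{-1}$ of the right-hand side. Each factor $p_v^*\big(\tfrac{1}{t-\Psi_\infty}\big)$ attached to a stable vertex $v$ expands as $\sum_{k\geq 0}(p_v^*\Psi_\infty)^k\,t^{-k-1}$ and therefore contributes at least one power of $t^{-1}$. Consequently the product $\prod_{v\in V^{\mathrm s}(\Gamma)}p_v^*\big(\tfrac{1}{t-\Psi_\infty}\big)$ is a Laurent series whose lowest-order term is of order $t^{-|V^{\mathrm s}(\Gamma)|}$. Whenever $\Gamma$ has at least two stable vertices (in particular more than two), this lowest order is $\leq -2$, so no term of order $t^{-1}$ appears; the scalar $\tfrac{1}{\prod_{v\in V^{\mathrm{us}}(\Gamma)}d_v}$ and the pushforward $\tau'_*$ do not affect this conclusion. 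Hence the coefficient of $t^{-1}$ on the right-hand side is zero, and matching the two $t^{-1}$-coefficients gives $\tau_*\big([\bM^{\bullet\sim}_{\Gamma}(D)]^{\vir}\big)=0$.

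The argument is essentially immediate once Theorem \ref{thm_appd} is available, so I do not anticipate a genuine obstacle: all the real content has been absorbed into Theorem \ref{thm_appd}, and the only point requiring minor care is the bookkeeping of pole orders, i.e. confirming that each stable vertex forces a strictly negative power of $t$ while unstable vertices merely rescale by a $t$-independent constant. This is precisely why the vanishing is governed by the number of \emph{stable} vertices.
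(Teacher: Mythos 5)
Your proof is correct and is exactly the paper's argument: the paper obtains this corollary precisely by extracting the $1/t$-coefficient of Theorem \ref{thm_appd}, and you have simply made the pole-order bookkeeping explicit. Note that your argument in fact establishes the vanishing whenever $\Gamma$ has at least two stable vertices, which is a (harmless) strengthening of the stated hypothesis.
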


If we take the coefficient of $1/t^{|V^s(\Gamma)|}$, we obtain the following corollary.
\begin{cor}
\begin{align*}
&\tau_*(\Psi_{\infty}^{(|V^{\mathrm s}(\Gamma)|-1)}\cap [\bM_{\Gamma}^{\bullet\sim}(D)]^\vir)\\
= &\dfrac{1}{\prod_{v\in V^{\mathrm{us}}(\Gamma)} d_v}\tau'_* \left(\big[\prod\limits_{v\in V^{\mathrm s}(\Gamma)}\bM^{\sim}_{\gamma_v}(D)\times D^{|V^{\mathrm{us}}(\Gamma)|}\big]^{\vir} \right).
\end{align*}
\end{cor}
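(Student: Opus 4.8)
The plan is to deduce Theorem \ref{thm_appd} from the rubber/orbifold comparison of \cite[Theorem 4.1]{FWY2}, whose essential feature is that it trades the geometrically rigid rubber target for the orbifold Gromov--Witten theory of a root gerbe, where disconnected moduli split cleanly into products. First I would recall the root-stack setup used there: form $P_{D_0,r}$, the $r$-th root stack of $P=\mathbb P_D(L\oplus\cO)$ along the zero section, with its two $\C^*$-invariant substacks, the root gerbe $\cD_0$ and the copy $\cD_\infty\cong D$. The topological type $\Gamma$ is transported to orbifold data by converting a relative marking of contact order $\mu$ on the $0$-side (resp.\ $\infty$-side) into an orbifold marking of age $\mu/r$ (resp.\ $(r-\mu)/r$), producing $\bM^\bullet_\Gamma(\cD_0)$ together with the universal class $-R^*\pi_*\mathcal L_r$.

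Next I would invoke the \emph{disconnected} version of \cite[Theorem 4.1]{FWY2}. That theorem is stated for connected domains, but its proof is a localization computation on the target that never uses connectedness of the source, so the same argument yields
\[
\tau_*\!\Big(\tfrac{1}{t-\Psi_\infty}\cap [\bM^{\bullet\sim}_\Gamma(D)]^\vir\Big)
= \frac{\Big[(\tau_1)_*\big(\textstyle\sum_{i\ge 0}(t/r)^{g-i-1}\, c_i(-R^*\pi_*\mathcal L_r)\cap[\bM^\bullet_\Gamma(\cD_0)]^\vir\big)\Big]_{r^0}}{\prod_{i=1}^{\rho_\infty}\big(1+\tfrac{\ev_i^*c_1(L)-\nu_i\psi_i}{t}\big)}\,.
\]
The point of passing through this formula is that its right-hand side factors over the connected components of the domain: the disconnected orbifold moduli is literally the product $\prod_v\bM_{\gamma_v}(\cD_0)$ of the vertex moduli, and $-R^*\pi_*\mathcal L_r$ splits as the external direct sum of the corresponding per-vertex classes. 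By the Whitney formula the total Chern class is multiplicative, and because the arithmetic genus distributes as $g-1=\sum_v(g(v)-1)$, the weights $(t/r)^{g-i-1}$ reassemble (via $\sum_v i_v=i$) into one factor $(t/r)^{g(v)-i_v-1}$ per vertex. Thus the numerator becomes, as a Laurent polynomial in $t$ and $r$, a product of per-vertex numerators, and the denominator factors over the $\infty$-side markings. Applying the connected statement of \cite[Theorem 4.1]{FWY2} in reverse to each factor then identifies the product with $\prod_{v\in V^{\mathrm s}(\Gamma)} p_v^*\big(\tfrac{1}{t-\Psi_\infty}\big)\cap[\bM^\sim_{\gamma_v}(D)]^\vir$.

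Finally, the unstable vertices require separate bookkeeping. In the localization of $P_{D_0,r}$ an unstable vertex of $\Gamma$ corresponds to a multiple cover of a fiber carrying two relative markings of common multiplicity $d_v$; such a component has no moduli beyond its image point in $D$, contributes the automorphism factor $1/d_v$, and after stabilization is recorded by a factor $D$. Inserting these $|V^{\mathrm{us}}(\Gamma)|$ copies of $D$ together with the overall $1/\prod_{v\in V^{\mathrm{us}}(\Gamma)}d_v$ matches the stated right-hand side. I expect the main obstacle to lie in the factorization step, specifically in verifying that extraction of the $r^0$-coefficient commutes with the component-wise product, i.e.\ that $[\prod_v P_v]_{r^0}=\prod_v[P_v]_{r^0}$ for the per-vertex numerators $P_v$. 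This is where the genuine (if routine) content sits: it must be justified from the $r$-polynomiality structure underlying \cite[Theorem 4.1]{FWY2}, whereby each connected factor is controlled by its constant term, so that the cross terms in $r$ do not contribute; once this commutation is secured, the remaining identifications are purely formal.
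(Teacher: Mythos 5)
Your proposal is not a proof of the stated corollary: it is a proof sketch of Theorem \ref{thm_appd} itself, and it stops exactly where the content of the corollary begins. The corollary is obtained from Theorem \ref{thm_appd} by a specific coefficient extraction in the formal variable $t$, and that step --- which is the entire proof the paper gives, and the only thing the corollary adds to the theorem --- appears nowhere in your argument. Concretely: expand both sides of Theorem \ref{thm_appd} as Laurent series in $1/t$ using
\[
\frac{1}{t-\Psi_\infty}=\sum_{k\geq 0}\Psi_\infty^{k}\,t^{-k-1},
\]
and compare coefficients of $t^{-|V^{\mathrm s}(\Gamma)|}$. On the left-hand side this coefficient is $\tau_*\big(\Psi_\infty^{|V^{\mathrm s}(\Gamma)|-1}\cap[\bM^{\bullet\sim}_{\Gamma}(D)]^{\vir}\big)$. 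On the right-hand side there are $|V^{\mathrm s}(\Gamma)|$ factors $p_v^*\bigl(\frac{1}{t-\Psi_\infty}\bigr)$, each expanding as $\sum_{k_v\geq 0}\Psi_\infty^{k_v}t^{-k_v-1}$, so each contributes at least one power of $t^{-1}$; picking out $t^{-|V^{\mathrm s}(\Gamma)|}$ forces $\sum_v(k_v+1)=|V^{\mathrm s}(\Gamma)|$, hence every $k_v=0$, so all psi-classes drop out and one is left with the fundamental virtual class times $1/\prod_{v\in V^{\mathrm{us}}(\Gamma)}d_v$. Without this comparison your write-up never explains why the power $\Psi_\infty^{|V^{\mathrm s}(\Gamma)|-1}$ appears on one side while no psi-classes appear on the other, which is precisely what has to be proved.

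As for the part you did write: it retraces, in essentially the same way as the paper, the proof of Theorem \ref{thm_appd} via the disconnected version of \cite[Theorem 4.1]{FWY2} (the root stack $P_{D_0,r}$, the gerbe $\cD_0$, the splitting of $-R^*\pi_*\mathcal L_r$ over connected components, and the unstable-vertex bookkeeping), so it is not a different route but a duplication of a result the paper establishes immediately before the corollary. That duplication is harmless, but if you take Theorem \ref{thm_appd} as given --- as the corollary intends --- the whole proof reduces to the one-line coefficient comparison above, and your concern about commuting the $r^0$-extraction with the product is a worry about the theorem's proof, not about the corollary.
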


More importantly, we can use Theorem \ref{thm_appd} to deduce Lemma \ref{lem_split2}. Using the notations of Section 
\ref{sec:locrel},
we repeat below the statement of Lemma \ref{lem_split2} for convenience.

\begin{lem}\label{lem_split1}
We have the following identity:
\begin{align*}
&\tau_*\bigg(\dfrac{t}{t+\Psi} \cap [\bM^\bullet_{\Gamma_2'}(X,D)]^\vir\bigg) \\
=& \tau'_*\bigg(\prod_{v\in V(\Gamma_2')} p_v^*\bigg( \dfrac{t}{t+\Psi} \bigg) \cap \big[\prod_{v\in V(\Gamma_2')}\bM_{\gamma_v}(X,D)\big]^\vir\bigg),
\end{align*}
where $p_v$ is the projection to the factor corresponding to $v$, and $\tau, \tau'$ are the corresponding stabilization maps to \[\prod_{v\in V(\Gamma_2')}\bM_{g(v),n(v)}(X,b(v))\times_{X^{r(v)}} D^{r(v)}\] with $n(v)$ the number of half-edges on $v$ and $r(v)$ the number of roots on $v$.
\end{lem}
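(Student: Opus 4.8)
The goal is to prove Lemma \ref{lem_split1}, which is the analogue for relative stable maps of Theorem \ref{thm_appd} (the rubber statement proved just above). My plan is to deduce Lemma \ref{lem_split1} from Theorem \ref{thm_appd} by a degeneration-to-the-normal-cone argument, reducing the geometry of $\bM^\bullet_{\Gamma_2'}(X,D)$ to a combination of relative maps to $D$ and rubber maps over $D$, to which Theorem \ref{thm_appd} applies directly. First I would recall that the target psi-class $\Psi$ on $\bM^\bullet_{\Gamma_2'}(X,D)$ is the pullback of the cotangent line at $\infty$ from the Artin stack $\mathcal{T}$ of expanded degenerations of $(X,D)$, exactly as set up in Lemma \ref{lem:psi} of Section \ref{sec:loc}. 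The key structural input is that the factor $\frac{t}{t+\Psi}$ expands as $\sum_{k\geq 0}(-1)^k \Psi^k/t^k$, and each power $\Psi^k$ corresponds geometrically to forcing $k$ degenerations of the target at the $\infty$-end; these degenerations produce rubber components over $D$ sitting between the main $X$-level and the relative divisor.

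\textbf{Key steps.} The argument would proceed as follows. I would first establish the \emph{connected} case by the same root-stack/orbifold comparison that underlies \cite{FWY2}*{Theorem 4.1}: namely, the relative theory of $(X,D)$ with the insertion $\frac{t}{t+\Psi}$ can be compared, via the degeneration formula attaching a rubber over $D$ at the $\infty$-end, to an expression in which $\Psi$ becomes the target psi-class on the rubber factor. Then, for possibly disconnected $\Gamma_2'$, I would use that both the moduli space $\bM^\bullet_{\Gamma_2'}(X,D)$ and its virtual class factor as products over the connected components (vertices) $v \in V(\Gamma_2')$, since there are no edges in $\Gamma_2'$ after the splitting in Section \ref{sec:loc}; this is exactly the product structure that lets the single insertion $\frac{t}{t+\Psi}$ on the left distribute into the product $\prod_v p_v^*\big(\frac{t}{t+\Psi}\big)$ on the right. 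The crucial compatibility is that the target psi-class $\Psi$, although a priori a global class on the disconnected moduli, splits as a sum of per-vertex contributions once one passes through the degeneration; this mirrors the decomposition of $-R^*\pi_*\mathcal{L}_r$ used in the proof of Theorem \ref{thm_appd}. I would invoke Theorem \ref{thm_appd} componentwise to identify each factor and then reassemble.

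\textbf{The main obstacle.} The delicate point is precisely the splitting of the generating factor $\frac{t}{t+\Psi}$ across components. On the left-hand side $\Psi$ is a single target psi-class governing the simultaneous expansion of the (connected or disconnected) target, whereas on the right each vertex carries its own $\Psi$. The reason these match is a nontrivial combinatorial identity: summing over all ways of distributing the total number of target degenerations among the stable vertices, weighted by the automorphism and gluing factors, reproduces the product $\prod_v \frac{t}{t+\Psi}$. This is the analogue of the Vandermonde-type collapse seen elsewhere in the paper, and making it rigorous requires carefully tracking how $\mathcal{T}$ decomposes when the domain is disconnected and how the cotangent line at $\infty$ restricts to each rubber level. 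I expect most of the genuine work to lie in verifying that the ``per-vertex $\Psi$'' obtained from Theorem \ref{thm_appd} assembles correctly, and in handling the unstable vertices (multiple covers of a fiber), which contribute the automorphism factors $1/\prod_v d_v$ that must cancel against the edge-gluing factors exactly as in the localization bookkeeping of Section \ref{sec:loc}. Since Lemma \ref{lem_split2} is the special case relevant there, once the disconnected product formula of Theorem \ref{thm_appd} is in hand the deduction is formal, and I would close the proof by specializing to the graph $\Gamma_2'$ and matching notation.
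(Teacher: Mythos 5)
There is a genuine gap at the central step of your proposal. You assert that ``both the moduli space $\bM^\bullet_{\Gamma_2'}(X,D)$ and its virtual class factor as products over the connected components (vertices)'' and that this product structure ``lets the single insertion $\frac{t}{t+\Psi}$ on the left distribute into the product.'' This is false: in the expanded-degeneration theory, a relative stable map with disconnected domain maps to a \emph{single} expanded target, so the expansions of the different connected components are synchronized, and $\bM^\bullet_{\Gamma_2'}(X,D)$ is not the product of the spaces $\bM_{\gamma_v}(X,D)$ (each of which carries its own, independent expansion); nor do the virtual classes agree except after pushforward, which is exactly the content to be proved. If such a product structure existed, Lemma \ref{lem_split1} and Theorem \ref{thm_appd} would both be vacuous --- indeed, the only place a genuine product structure appears is on the \emph{orbifold} side in the proof of Theorem \ref{thm_appd}, where disconnected absolute moduli of twisted stable maps to $\cD_0$ do factor; that is the whole point of passing through \cite{FWY2}. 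Your own ``main obstacle'' paragraph concedes the issue (you describe $\Psi$ as ``a single target psi-class governing the simultaneous expansion''), so the proposal assumes at its key step precisely what it later admits must be proven. Relatedly, ``first establish the connected case'' has no content here: for connected $\Gamma_2'$ the two sides of the lemma are literally identical, so the entire difficulty is the disconnected-to-product comparison.

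The proposal is also missing a second, independent ingredient. The paper's proof runs as follows: (i) a product rule for pushforwards of relative virtual classes \emph{without} insertions, $\tau_*[\bM^{\bullet}_{\Gamma_2'}(X,D)]^{\vir} = \tau'_*\big[\prod_v \bM_{\gamma_v}(X,D)\big]^{\vir}$, proved by passing to log stable maps via \cite{AMW} and \cite{KLR}; (ii) the identity $\Psi=\delta$ (Lemma \ref{lem:psi}), by which $\frac{\Psi}{-t-\Psi}\cap[\,\cdot\,]^{\vir}=\frac{\delta}{-t-\Psi}\cap[\,\cdot\,]^{\vir}$ expands, via the splitting formula \eqref{eqn:productrule}, into a sum over degenerations $((\Gamma_2')_1,(\Gamma_2')_2)$ with a rubber factor over $D$ carrying $\frac{1}{-t-\Psi_0}$ and a relative factor $\bM^\bullet_{(\Gamma_2')_2}(X,D)$; (iii) Theorem \ref{thm_appd} applied to the rubber factor, together with the no-insertion product rule (i) applied to the relative factor; (iv) a matching of both sides in which the summand $1$ of $1+\frac{\Psi}{-t-\Psi}$ accounts for splittings whose rubber vertices are all unstable. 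Your plan invokes only Theorem \ref{thm_appd}; but the expansion in (ii) always produces a relative factor still mapping to $(X,D)$, and splitting \emph{that} factor vertex by vertex requires (i), which is not a consequence of Theorem \ref{thm_appd} and needs the log-geometric input. Without (i), and without the recursive use of $\Psi=\delta$ made precise (your heuristic that ``$\Psi^k$ forces $k$ degenerations'' is only the first step of that recursion), the combinatorial matching you appeal to has no formula to run on.
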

Lemma \ref{lem_split2} is a special case of Lemma \ref{lem_split1} with $n(v)=r(v)=1$.

\begin{proof}
First of all, we have a weaker product rule without involving psi-classes:
\begin{equation}\label{lem_split}
\tau_*[\bM^{\bullet}_{\Gamma_2'}(X,D)]^{\vir} = \tau'_*\bigg( \big[\prod_{v\in V(\Gamma_2')}\bM_{\gamma_v}(X,D) \big]^{\vir} \bigg).
\end{equation}
A proof can be formulated by passing to moduli of stable log maps using \cite{AMW} and work out the perfect obstruction theory and virtual cycles (for example, see \cite{KLR}*{Section 9.2, 9.3}).


Next, observe that $\dfrac{\Psi}{-t-\Psi}$ is in fact $\dfrac{\delta}{-t-\Psi}$ where $\delta$ is the divisor corresponding to the locus where the target degenerates. Expanding $\delta$, we have
\begin{align}\label{eqn:productrule}
\begin{split}
&\left(\dfrac{\delta}{-t-\Psi}\right) \cap [\bM^\bullet_{\Gamma_2'}(X,D)]^\vir \\
=& \sum\limits_{\mathfrak i=((\Gamma_2')_{1},(\Gamma_2')_{2})} \dfrac{\prod_{i=1}^{m(\mathfrak i)} d_i}{\Aut(\mathfrak i)} (\tau_{\mathfrak i})_* \Bigg( \bigg( \dfrac{1}{-t-\Psi_0} \bigg) \cap [\bM^{\bullet\sim}_{(\Gamma_2')_{1}} (D) \times_{D^{k_{\mathfrak i}}} \bM^\bullet_{(\Gamma_2')_{2}}(X,D)]^{\vir}  \Bigg),
\end{split}
\end{align}
where it is easy to see that the restriction of $\Psi$ to $\bM^{\bullet\sim}_{(\Gamma_2')_{1}}(D)$ becomes $\Psi_0$. The splitting of $\Gamma_2'$ into $(\Gamma_2')_{1},(\Gamma_2')_{2}$ are determined by the splitting of each component of $\Gamma_2'$. Thus, the next step is to split the virtual classes according to the components of $(\Gamma_2')_{1}$ and $(\Gamma_2')_{2}$. First of all, Equation \eqref{lem_split} (simply replace $\Gamma_2'$ by $(\Gamma_2')_{2}$) tells us that the pushforward of $[\bM^\bullet_{(\Gamma_2')_{2}}(X,D)]^{\vir}$ splits into a product of cycles according to each component (product rule). As to the rubber moduli, Theorem \ref{thm_appd} implies that $\dfrac{1}{-t-\Psi_0} \cap [\bM^{\bullet\sim}_{(\Gamma_2')_{1}} (D)]^{\vir}$ also satisfies the product rule. Note that the statement of the theorem uses $\Psi_\infty$ in order to match with \cite[Theorem 4.1]{FWY2}. But switching the rubber target upside down turns $\Psi_\infty$ into $\Psi_0$ with the rest unchanged. To fit the exact statement of the theorem, we also need to change $t$ into $-t$.

Now we can apply \eqref{eqn:productrule} to the left-hand side of Lemma \ref{lem_split1}. Note that a vertex in $(\Gamma'_2)_1$ can be either stable or unstable. On the right-hand side, we expand the product $\prod\limits_{v\in V(\Gamma'_2)}p_v^*\left(1+\dfrac{\Psi}{-t-\Psi}\right)$ and apply \eqref{eqn:productrule} again to each $\dfrac{\Psi}{-t-\Psi}\cap [\bM_{\gamma_v}(X,D)]^\vir$. The summand $1$ in $p_v^*\left(1+\dfrac{\Psi}{-t-\Psi}\right)$ covers the case when $\gamma_v$ splits into $((\gamma_v)_1,(\gamma_v)_2)$ where all the vertices in $(\gamma_v)_1$ (corresponding to rubber moduli) are unstable. Such cases might appear on the left-hand side of Lemma \ref{lem_split1} but are missing when applying \eqref{eqn:productrule} to each $\dfrac{\Psi}{-t-\Psi}\cap [\bM_{\gamma_v}(X,D)]^\vir$ on the right-hand side. It is straightforward to check that both sides match.
\end{proof}

\bibliographystyle{amsxport}
\bibliography{universal-BIB}
\end{document}